\documentclass[reqno,11pt]{amsart}
\usepackage{geometry}
\usepackage{mathtools,amssymb,amsthm,mathrsfs,color,lineno,paralist,graphicx,float}
\usepackage[colorlinks,
linkcolor=blue,
anchorcolor=green,
citecolor=blue, 
]{hyperref}
\usepackage{tikz}
\usepackage{graphicx}
\usepackage{longtable}
\usepackage{array}
\usepackage{booktabs}
\usetikzlibrary{shapes.geometric, arrows.meta, positioning, fit, backgrounds, shadows}

\usepackage{enumitem}
\usepackage[T1]{fontenc}
\usepackage[utf8]{inputenc}
\usepackage{subfig} 
\usepackage[justification = centering, labelsep =period]{caption} 
\usepackage[mathcal]{eucal}

\usepackage{calc}
\definecolor{bleu1}{RGB}{0,57,128}
\def\bleu1{\color{bleu1}}

\usepackage{etoolbox}
\patchcmd{\section}{\normalfont}{\normalfont \bleu1}{}{}
\patchcmd{\subsection}{\normalfont}{\normalfont \bleu1}{}{}
\patchcmd{\subsubsection}{\normalfont}{\normalfont \bleu1}{}{}

\usepackage{tikz}
\usetikzlibrary{arrows.meta}

\newtheorem{proposition}{Proposition}[section]
\newtheorem{theorem}{Theorem}[section]
\newtheorem{definition}{Definition}[section] 
\newtheorem{lemma}{Lemma}[section]
\newtheorem{remark}{Remark}[section]
\newtheorem{corollary}{Corollary}[section]

\newtheorem{problem}{Problem}

\newcommand{\Z}{{\mathbb Z}}
\newcommand{\C}{{\mathbb C}}
\newcommand{\R}{{\mathbb R}}
\newcommand{\Q}{{\mathbb Q}}
\newcommand{\T}{{\mathbb T}}

\def\GL{{\mathrm{GL}}}
\def\diag{{\mathrm{diag}}}

\def\0{{\mathbf 0}}

\def\ii{{\textrm i}}

\usepackage{tikz,tikz-3dplot}
\tdplotsetmaincoords{80}{45}
\tdplotsetrotatedcoords{-90}{180}{-90}
\tikzset{surface/.style={draw=blue!70!black, fill=blue!40!white, fill opacity=.6}}

\newcommand{\tangentlabel}[5]{
    \pgfmathsetmacro{\ang}{#3}
    \pgfmathsetmacro{\rot}{\ang+90}
    \pgfmathparse{ifthenelse(\rot>90,\rot-180,ifthenelse(\rot<-90,\rot+180,\rot))}
    \let\finalrot\pgfmathresult
    \node[
        #1,
        rotate=\finalrot,
        anchor=south
    ] at ({(#2+#5)*cos(\ang)},{(#2+#5)*sin(\ang)}) {#4};
}

\usepackage{pgfplots}
\usepgfplotslibrary{polar}
	\usepgfplotslibrary{polar}
\pgfplotsset{compat=1.17}
\makeatletter
\tikzset{reuse path/.code={\pgfsyssoftpath@setcurrentpath{#1}}}
\makeatother

\usepackage{soul}

\begin{document}

	\title[]{Transfer Operators, Canonical Center Dynamics, and Spectral Applications for Long-Range Operators}

\author{Xianzhe Li}
\address{Department of Mathematics, University of California, Berkeley, CA 94720, USA} 
 \email{xianzhe@berkeley.edu}    

\author{Zhenfu Wang}
\address{
	Chern Institute of Mathematics and LPMC, Nankai University, Tianjin 300071, China
}
\email{zhenfuwang@mail.nankai.edu.cn}

 \author{Jiangong You}
 \address{
 Chern Institute of Mathematics and LPMC, Nankai University, Tianjin 300071, China} 
 \email{jyou@nankai.edu.cn}

\author{Qi Zhou}
\address{
	Chern Institute of Mathematics and LPMC, Nankai University, Tianjin 300071, China
}
\email{qizhou@nankai.edu.cn}

\begin{abstract}

We introduce an operator-theoretic framework for long-range operators over general dynamical systems with analytic hopping and small potential. By establishing a partially hyperbolic splitting on the fibered solution bundle, we define the Canonical Center Bundle (CCB) as the center subbundle of this splitting, which is shown to be globally trivial. The center bundle admits a representation via Riesz spectral projections of the transfer operator.
Furthermore, we show that, in the local regime, the center bundle arising in this framework essentially coincides, in the sense of gap convergence, with the Intrinsic Center Bundles (ICB) obtained from finite-range approximations in \cite{GJ}.

The partially hyperbolic structure thereby reduces the spectral problem to the center bundle, leading to a Johnson-type characterization of the spectrum in terms of the associated center cocycle. We then apply this framework to  quasi-periodic Schr\"odinger operators with analytic hopping, large analytic potentials and Diophantine frequency. In this setting, the center cocycle is analytic and satisfies a Center Thouless formula. As consequences, we establish the absolute continuity of the integrated density of states (IDS), resolving a problem of Eliasson; prove quantitative H\"older continuity of the IDS, partially answering a question of You; and obtain Anderson localization for the original Schr\"odinger operators.
\end{abstract}

	\maketitle	

\section{Introduction} \label{main}

\subsection{Background and motivation}

The interplay between finite-dimensional cocycle dynamics and the spectral theory of ergodic operators forms one of the central methods of modern spectral theory \cite{CL,DF,DF2,PF}. For classical nearest-neighbor Schr\"odinger operators, the eigenvalue equation admits an exact reduction to a finite-dimensional cocycle, allowing spectral questions to be translated into dynamical ones.  Long-range operators, describing quantum systems with non-local hopping and arising in effective models of electrons in complex media \cite{AM,Rod,Sar},  occupy a distinguished position in this development. On the one hand, through Aubry duality \cite{AA}, they encode essential spectral information of quasi-periodic Schr\"odinger operators. On the other hand, they raise a new conceptual challenge: unlike nearest-neighbor models, genuinely long-range operators admit no obvious finite-dimensional cocycle representation \cite{GJ}. Identifying the  dynamical structures that govern spectral phenomena in this infinite-dimensional regime therefore gives the key to understanding long-range operators.

We consider the long-range operator with analytic hopping on $\ell^2(\mathbb Z)$
\begin{equation}\label{eq:g}
(\mathbf L_{V, W,T,\theta}u)_n
=\sum_{k\in\mathbb Z}\widehat v_k\,u_{n+k}+\varepsilon W(T^n\theta)\,u_n,\qquad n\in\mathbb Z,
\end{equation}
where $\widehat v_k$ are the Fourier coefficients of a real-analytic function
$V\in C^\omega(\mathbb T,\mathbb R)$, $W\in C^0(\Omega,\mathbb R)$ is a dynamically defined potential over a topological dynamical system $(\Omega,T)$, that is, $\Omega$ is a compact metric space and $T:\Omega\to\Omega$ is a homeomorphism, and  $\theta\in\Omega$ represents the phase. This class of operators provides a unified framework encompassing random, quasi-periodic, and more general ergodic models. If $V(x)=2\cos 2\pi x$, the operator reduces to the classical Schr\"odinger operator.

Originating from the tight-binding description of electrons in a magnetic field \cite{p,harper,R}, one-dimensional quasi-periodic Schr\"odinger operators on $\ell^2(\mathbb Z)$ are defined by
\begin{equation}\label{eq:std_schro}
(\mathbf H_{2\cos,V,\alpha,x}u)_n = u_{n+1}+u_{n-1}+V(x+n\alpha)u_n,\qquad n\in\mathbb Z.
\end{equation}
 Here, $V\in C^\omega(\mathbb T,\mathbb R)$ is the potential, $x\in\mathbb T$ is the phase, and $\alpha\in\mathbb R\backslash\mathbb Q$ is the frequency. A canonical model within this class is the almost Mathieu operator (AMO):
\begin{equation}\label{eq:amo}
(\mathrm H_{\lambda,\alpha,x}u)_n = u_{n+1}+u_{n-1}+2\lambda\cos 2\pi(x+n\alpha)u_n,\qquad n\in\mathbb Z.
\end{equation}
The deterministic disorder induced by incommensurate frequencies governs the underlying spectral and dynamical structures \cite{ALSZ,AJ3,AYZ}. For comprehensive reviews, we refer to \cite{DF,DF2,J,Y}.

A fundamental link between quasi-periodic Schr\"odinger operators and long-range operators is provided by Aubry duality. Applying the Fourier transform to \eqref{eq:std_schro} yields the dual long-range operator
\begin{equation}\label{eq:dual}
(\mathbf L_{V,2\cos ,\alpha,\theta}u)_n=
\sum_{k\in\mathbb Z}\widehat v_k\,u_{n+k}+2\cos 2\pi (\theta+n\alpha)\,u_n.
\end{equation}
Through Aubry duality, long-range operators retain the essential spectral information of the original Schr\"odinger operators. They have therefore emerged as a fundamental tool in the study of quasi-periodic Schr\"odinger operators and have been crucial to a number of major breakthroughs, for instance, the resolution of the Ten Martini Problem \cite{Puig11,AJ3}.

The eigenequation of Schr\"odinger operators admits a reduction to the Schr\"odinger cocycle  $(\alpha,A_E)$ with
$
A_E(x)=
\begin{pmatrix}
E-V(x)&-1\\
1&0
\end{pmatrix},$
this cocycle formulation has served as a foundation for a broad range of fundamental results, both in the zero Lyapunov exponent regime \cite{AJ2,AK,AFK,DS,Eli92} and in the supercritical regime \cite{AJ3,BG00,GS01,GS1,Jit99}, culminating in Avila's global theory of one-frequency analytic cocycles \cite{avila}.  This theory provides a complete dynamical classification of spectral energies into subcritical, critical, and supercritical regimes. Central to the theory is his Almost Reducibility Conjecture (ARC) \cite{avila2010almost,avila-kam}, asserting that every subcritical cocycle is almost reducible (analytically conjugated arbitrarily close to a constant one).

Combined with Aubry duality, ARC led to a fundamental shift in the study of quasi-periodic Schr\"odinger operators. 
Traditionally, reducibility techniques were confined to the zero Lyapunov exponent regime, while the supercritical regime was studied through localization methods. The observation of \cite{AYZ,AYZ2} was that the dual long-range operator provides a bridge between these two regimes: quantitative almost reducibility on the dynamical side can be converted into spectral information for the supercritical Schr\"odinger operator, see also \cite{GY,GYZ22,GYZ23,GYZ24,LYZZ}.

More recently, based on Aubry duality, Ge-Jitomirskaya-You-Zhou \cite{GJYZ} developed a quantitative global theory built upon a multiplicative Jensen formula for the complexified Lyapunov exponent, which also yields new information on the supercritical region \cite{GJY,GJ,LXZ}. For trigonometric polynomial potentials, an alternative approach was  obtained by Han and Schlag \cite{HS4}. In this setting, the dual operator \eqref{eq:dual} induces a finite-dimensional cocycle, so the classical cocycle formalism and reducibility techniques remain available \cite{GJY,GJ,GYZ22,LXZ,WXYZ,WXZ}. A fundamental difficulty appears when the potential is genuinely analytic. In this case, as emphasized in \cite{GJ}, ``the dual operator becomes infinite-range. In this regime there is no finite-dimensional cocycle governing the dynamics, and no such transfer-matrix formalism is available''.
This ``no cocycle'' phenomenon constitutes one of the main obstacles in extending the dynamical approach beyond trigonometric polynomial.

A major breakthrough of Ge-Jitomirskaya \cite{GJ} was the discovery of an intrinsic center dynamics for analytic one-frequency quasi-periodic long-range operators with
$W(\theta)=2\cos(2\pi\theta).$
This provides, for the first time, a finite-dimensional dynamical framework capturing the relevant spectral information of the infinite-range model. 
 More precisely, for a trigonometric polynomial potential $V_l$, the corresponding finite-range operator induces a Frobenius companion cocycle $(\alpha,A^{l}_{\varepsilon}(E,\cdot))$, whose matrix representation has the companion form \eqref{equ:cocycle}.
 Ge-Jitomirskaya-You-Zhou \cite{GJYZ} proved that these cocycles are partially hyperbolic and admit invariant splittings
$
\mathbb C^{2l}=\mathcal E_s^l(\theta)\oplus\mathcal E_c^l(\theta)\oplus\mathcal E_u^l(\theta).
$
After holomorphic trivialization of the center bundle and passage to the limit, Ge-Jitomirskaya \cite{GJ} constructed a finite-dimensional intrinsic center bundle (ICB) from $\mathcal E_c^l(\theta)$ together with the associated intrinsic symplectic center cocycle (ICC). This center structure  has been applied to the spectral theory of Type I operators, in particular to the regularity of the integrated density of states and sharp spectral transitions.

The finite-to-infinite approximation approach of \cite{GJ} is  tied to the one-frequency quasi-periodic setting: it relies essentially on Aubry duality with $W(\theta)=2\cos(2\pi\theta)$, Avila's global theory \cite{avila}, and its quantitative version \cite{GJYZ}. For general hopping functions $W$, or for more general dynamical systems, these ingredients are presently unavailable.

The present work approaches this ``no cocycle'' problem from a geometric and operator-theoretic perspective, guided by the multiplicative Jensen mechanism underlying quantitative global theory \cite{GJYZ,HS4}; see Section~\ref{jensen} for a detailed discussion. For long-range operators over a general dynamical system with small potential $W$, we study the fibered solution bundle directly. The resulting spectral reduction leads naturally to a canonical center structure, which, in the one-frequency local regime, recovers the intrinsic center bundle of \cite{GJ} in the sense of gap convergence (Proposition~\ref{prop:gap}). As we shall see, this viewpoint is particularly well suited for further spectral applications, especially Anderson localization beyond the Type~I regime, and may provide a first step toward a broader global theory for long-range operators.

\subsection{Partial hyperbolicity in the fibered solution bundle }\label{def:norm}

To formulate the main reduction in a base-independent way, we work with a class of function spaces on which the base dynamics acts isometrically. This leads to the following notion.
\begin{definition}\label{def:admissible}
Let $(\Omega,T)$ be a topological dynamical system. A complex unital Banach algebra $\mathfrak A$ of complex-valued functions on $\Omega$ is said to be \emph{admissible for $(\Omega,T)$} if the following hold:
\begin{enumerate}
    \item The translation operator $\mathbf T f:=f\circ T$ acts as an isometric automorphism of $\mathfrak A$, that is,
   $ \|\mathbf T^n f\|_{\mathfrak A}=\|f\|_{\mathfrak A} $, for all $ n\in\mathbb Z
$.
    \item $\mathfrak A$ is continuously embedded into $B(\Omega,\mathbb C)$. Equivalently, there exists a constant $C_{\mathfrak A}>0$ such that
$    \|f\|_\infty:=\sup_{\theta\in\Omega}|f(\theta)|\le C_{\mathfrak A}\|f\|_{\mathfrak A},\qquad f\in\mathfrak A.$
    \item $\mathfrak A$ is inverse-closed: if $f\in\mathfrak A$ and $\inf_{\theta\in\Omega}|f(\theta)|>0$, then $f^{-1}\in\mathfrak A$.
\end{enumerate}
\end{definition}

By replacing $\|\cdot\|_{\mathfrak A}$ with an equivalent norm if necessary, we shall
assume  $C_{\mathfrak A}=1.$  
Throughout the paper, we  will fix the potential  $V(z)=\sum_{n\in\Z} \widehat v_n z^n $ to be analytic in the annulus $\mathbb A_h := \{z\in\mathbb C: e^{-h} < |z| < e^h\}$, via the identification $V(z) \equiv V(x)$ with $z = e^{2\pi i x}$, and then fix a constant $0 < \xi < h$. We define the exponentially weighted sequence space
$$
\mathcal{X}_{\xi}:=\Bigl\{u=(u_n)_{n\in\Z}:\ 
\|u\|_\xi:=\sum_{n\in\Z}|u_n|e^{-\xi|n|}<\infty\Bigr\},
$$
and the corresponding space of (weighted) bounded sections
$$
\mathcal{BS}_\xi := B(\Omega, \mathcal{X}_{\xi}) = \Bigl\{ \Phi:\Omega \to \mathcal{X}_{\xi} : \|\Phi\|_\infty := \sup_{\theta\in\Omega} \|\Phi(\theta)\|_\xi < \infty \Bigr\}.
$$
Equipped with their respective norms, both $\mathcal{X}_{\xi}$ and $\mathcal{BS}_\xi$ are Banach spaces. We define the standard shift operator on sequences as
$
(Su)_n := u_{n+1},  n\in\Z.
$

The full fibered solution space of the infinite-range eigenvalue equation is intrinsically infinite-dimensional. The weighted Banach scale
$\{ \mathcal{X}_\xi \}_{0 < \xi < h}$
should therefore not be viewed as imposing an artificial restriction of the original solution space, but rather as inducing a natural increasing filtration according to exponential growth. As the weight parameter $\xi$ increases, solutions with progressively larger exponential growth are admitted, while the analyticity threshold $h$ of the symbol imposes a fundamental geometric limit on this filtration. Throughout this paper, we study the transfer dynamics on these filtered levels of the fibered solution space:

\begin{definition}[Fibered solution bundle]
\label{def:solution-space}
Given $0<\xi<h$. Consider the trivial Banach bundle $\mathcal E:=\Omega\times\mathcal X_\xi$ and the skew-product map
$\mathscr T(\theta,u):=(T\theta,Su).$
For each $E\in\R$ and $\theta\in\Omega$, define
$$
\mathcal M_{V,E,\varepsilon}(\theta):=
\ker\bigl(\mathbf L_{V,\varepsilon W,T,\theta}-E\bigr)
\cap \mathcal X_\xi.
$$
The corresponding level of the fibered solution bundle is
$$
\boldsymbol{\mathcal M}_{V,E,\varepsilon}:=\bigl\{
(\theta,u)\in\mathcal E:\ u\in\mathcal M_{V,E,\varepsilon}(\theta)
\bigr\}.
$$
The covariance relation
$\mathbf L_{V,\varepsilon W,T,T\theta}\circ S=S\circ \mathbf L_{V,\varepsilon W,T,\theta}$
implies that $\boldsymbol{\mathcal M}_{V,E,\varepsilon}$ is invariant under $\mathscr T$.
\end{definition}

For a fixed energy $E\in\mathbb R$, define the full symbol function:
\begin{equation}\label{equ:symbol}
    L_{E}^{}(z) := V(z) - E,
\end{equation} denote the number of zeros on the unit circle by
\begin{equation}\label{zero}
    2m(E) := \#\{z\in\mathbb A_h: L_{E}^{}(z)=0,\ |z|=1\},
\end{equation}
counting algebraic multiplicities. The number of such center zeros is  even, denoted by $2m(E)$, because the Hermitian symmetry $\hat{v}_n = \overline{\hat{v}_{-n}}$ ensures that $L_{E}^{}(z)$ is a strictly real-valued function on the unit circle.  The quantity $2m(E)$ will play an important role throughout this paper, as it intrinsically determines the dimension of the center bundle in the dynamical partial hyperbolic splitting.

With the above framework, the center reduction can now be stated as follows.

\begin{theorem}
\label{thm:center_reduction}
Let  $\mathfrak A$ be admissible for a topological dynamical system $(\Omega,T)$,  $W\in\mathfrak A$  and  $V \in C^{\omega}(\mathbb{T}, \mathbb{R})$. For every $E \in \mathbb{R}$, there exists $\varepsilon_0=\varepsilon_0(E,V,W) > 0$ such that for all $|\varepsilon| < \varepsilon_0$, the fibered solution bundle admits a  $\mathscr{T}$-invariant splitting:
$$
\mathcal{M}_{V,E,\varepsilon}(\theta)=
\mathcal{M}_{V,E,\varepsilon}^u(\theta)
\oplus\mathcal{M}_{V,E,\varepsilon}^c(\theta)
\oplus\mathcal{M}_{V,E,\varepsilon}^s(\theta),
\qquad \forall \theta \in \Omega.
$$
This splitting is characterized by the following properties:
\begin{enumerate}
\item \emph{Uniform Partial Hyperbolicity:}
  The splitting is $\mathscr T$-invariant and uniformly partially hyperbolic: there exist
$0<r_s<r_-<1<r_+<r_u, $ $C>0$,
such that for all $\theta\in\Omega$ and $n\ge0$,
\begin{equation*}
    \begin{aligned}
        \bigl\| \mathscr{T}^n \big|_{\mathcal{M}_{V,E,\varepsilon}^s(\theta)} \bigr\| &\le C r_s^n, \qquad
        \bigl\| \mathscr{T}^{-n} \big|_{\mathcal{M}_{V,E,\varepsilon}^u(\theta)} \bigr\| \le C r_u^{-n}, \\
        \bigl\| \mathscr{T}^n \big|_{\mathcal{M}_{V,E,\varepsilon}^c(\theta)} \bigr\| &\le C r_+^n, \qquad \bigl\| \mathscr{T}^{-n} \big|_{\mathcal{M}_{V,E,\varepsilon}^c(\theta)} \bigr\| \le C r_-^{-n}.
    \end{aligned}
    \end{equation*}
    \item The \emph{Canonical Center Bundle (CCB)}
    $$
\boldsymbol{\mathcal M}^c_{V,E,\varepsilon}:=
\bigl\{(\theta,u)\in\mathcal E:\ u\in\mathcal M^c_{V,E,\varepsilon}(\theta)\bigr\}
$$
 has constant fiber dimension $2m(E)$ and is globally trivial.
 \item  \emph{{Intrinsic Center Cocycle (ICC)}:} There exists a global frame
$$U^\infty_{\varepsilon}(E,\cdot)=\bigl\{
U^\infty_{\varepsilon,1}(E,\cdot),\dots,
U^\infty_{\varepsilon,2m}(E,\cdot)\bigr\}$$
of $\mathcal X_\xi$-valued sections with $\mathfrak A$-valued coordinates for
$\boldsymbol{\mathcal M}^c_{V,E,\varepsilon}$.
The shift dynamics on the CCB is represented by the induced cocycle
$$
S U^\infty_\varepsilon(E,\theta)=U^\infty_\varepsilon(E,T\theta)C^\infty_\varepsilon(E,\theta),
\qquad \theta\in\Omega.
$$
\end{enumerate}
\end{theorem}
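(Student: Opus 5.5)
The plan is to handle the unperturbed case $\varepsilon=0$ explicitly and then pass to small $\varepsilon$ by perturbing Riesz projections of a transfer operator.

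\textbf{Unperturbed operator.} When $\varepsilon=0$ the operator $\mathbf L_{V,0,T,\theta}-E$ is the Laurent operator with symbol $L_E(z)=V(z)-E$. Consequently $\mathcal M_{V,E,0}(\theta)$ does not depend on $\theta$. Its elements in $\mathcal X_\xi$ are finite combinations of generalized exponentials $n^j z_0^n$, where $z_0$ runs over the zeros of $L_E$ in the annulus $e^{-\xi}\le|z|\le e^{\xi}$. More precisely, one uses $\{e^{-\xi'}<|z|<e^{\xi'}\}$ with $\xi'$ slightly larger than $\xi$ chosen so that no zero lies on the boundary circles; this is a small technical adjustment. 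This space is finite-dimensional, and the shift $S$ acts on it as multiplication by $z_0$ on each root space.

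The splitting is then read off from the moduli of the zeros:
\begin{itemize}
\item the zeros with $|z|=1$ span the center part, of dimension $2m(E)$;
\item the zeros with $1<|z|<e^{\xi}$ span the unstable part;
\item the zeros with $e^{-\xi}<|z|<1$ span the stable part.
\end{itemize}
The rates are determined by gaps: $r_s<r_-$ and $r_+<r_u$ come from the gap between $\{|z|=1\}$ and the nearest off-circle zeros, and $r_\pm=1\pm\delta$ for small $\delta$.

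\textbf{Transfer operator.} To make the argument perturbative, I encode the solution space as the range of a bounded operator on $\mathcal{BS}_\xi$, or on $\mathfrak A\otimes\mathcal X_\xi$ to keep $\mathfrak A$-valued coordinates. Define the transfer operator $\mathcal T_\varepsilon \Phi(\theta)=S^{-1}\Phi(T\theta)$, restricted to sections solving the eigen-equation. The solution bundle is the kernel of the fibered operator $\mathcal L_\varepsilon-E$. For a finite-range truncation this kernel would be the range of a projection; here I realize it analytically instead. Factor $L_E(z)=P(z)G(z)$, where $P$ is the polynomial of all zeros in the closed annulus and $G$ is nonvanishing there.

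At $\varepsilon=0$, $\mathcal M$ equals $\ker P(S)$, and the shift restricted to it is a finite companion matrix. For $\varepsilon\ne0$, I would solve $(L_E(S)+\varepsilon W(T^n\theta))u=0$ by a Lyapunov–Schmidt / contraction argument. Write $u=u_0+w$ with $u_0\in\ker P(S)$. Then $w$ is determined by inverting $G(S)$ and a right inverse of $P(S)$ on a complement, which is possible with $\varepsilon\|W\|_{\mathfrak A}$ small. This gives a graph map $\Psi_\varepsilon(\theta):\ker P(S)\to\mathcal X_\xi$ whose entries are in $\mathfrak A$, because $\mathfrak A$ is an isometric, translation-invariant Banach algebra. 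Hence $\mathcal M_{V,E,\varepsilon}(\theta)$ is a graph over the fixed finite-dimensional space, with $\mathfrak A$-valued frame. Conjugating $S$ through this graph produces a cocycle $A_\varepsilon(\theta)$ on $\mathbb C^{N}$, where $N$ is the number of zeros in the annulus. This cocycle is $O(\varepsilon)$-close in $\mathfrak A$-norm to the constant block-diagonal matrix $\diag(A_u,A_c,A_s)$.

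\textbf{Persistence and frames.} Invariant splittings persist for a small perturbation of a constant matrix with spectral gaps, by the standard cone / graph-transform argument. Equivalently, use the Riesz projections $\frac1{2\pi i}\oint_{|z|=\rho}(z-\mathbf A_\varepsilon)^{-1}dz$ of the transfer operator $\mathbf A_\varepsilon f(\theta)=A_\varepsilon(T^{-1}\theta)f(T^{-1}\theta)$ acting on $\mathfrak A^N$. Its spectrum is an $O(\varepsilon)$-perturbation of an annulus decomposition, and the projections commute with multiplication by functions in $\mathfrak A$. The ranges of the projections give the $\mathfrak A$-modules of sections. Pushing forward by the graph map yields the three bundles and the estimates in (1).

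Global triviality in (2) follows by taking the images under the center projection $P^c_\varepsilon$ of the constant frame $e_1,\dots,e_{2m}$ of the unperturbed center space. Since $\|P^c_\varepsilon-P^c_0\|=O(\varepsilon)$, these images remain pointwise independent, and invertibility of the Gram-type matrix in $\mathfrak A$ is ensured by inverse-closedness. The resulting frame is $U^\infty_\varepsilon$. The cocycle $C^\infty_\varepsilon$ in (3) is then defined by expressing $SU^\infty_\varepsilon(E,\theta)$ in the frame at $T\theta$, using invariance together with, once more, inverse-closedness.

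\textbf{Main obstacle.} I expect the hardest step to be the infinite-dimensional Lyapunov–Schmidt step. The issue is showing that the full kernel in $\mathcal X_\xi$ is exactly an $N$-dimensional graph. In particular, solutions must not escape the perturbative description, which requires controlling the inverse of $G(S)$ and the complementary part of $P(S)$ uniformly on $\mathcal X_\xi$ for nonconstant coefficients. I would try to handle this via a Neumann series in the weighted $\ell^1$ norm, using analyticity of $V$ beyond $\xi$.
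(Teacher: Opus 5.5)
Your route differs from the paper's. Once the $\xi'\to\xi$ point below is repaired, it proves the theorem.

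\textbf{How the two routes compare.} The paper never describes the whole fibre $\mathcal M_{V,E,\varepsilon}(\theta)$. It inverts $z-\mathcal L$ for Mather's operator $\mathcal L\Phi(\theta)=S\Phi(T^{-1}\theta)$ on the infinite-dimensional section space $\mathcal M^b_{V,E,\varepsilon}\subset\mathcal{BS}_\xi$, but only for $z$ in thin annuli around $|z|=r_\pm$. It does this by reducing to the scalar symbol operator $D_{E,\varepsilon}(z)=\sum_j\hat v_jz^j\mathbf T^j-E+\varepsilon M_W$ on $\mathfrak A$ (Fourier multiplier plus Neumann series), and then takes Riesz projections. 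Only the gap of $L_E$ near $|z|=r_\pm$ enters, so $\varepsilon_0=(M_D\|W\|_{\mathfrak A})^{-1}$ is explicit and uniform on energy intervals and truncations, and zeros near $|z|=e^{\pm\xi}$ play no role.

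The price is that the Riesz projection does not reveal the centre dimension. The paper gets $2m(E)$ through finite-range truncations (uniform-in-$l$ weighted resolvent bounds for companion cocycles, the Algebraic and Analytic Bridges) and the explicit frame $\frac{1}{2\pi i}\oint_{\Gamma_c}z^{k+p}(D(z)^{-1}\mathbf 1)(T^k\theta)\,dz$, checked non-degenerate by Hermite interpolation. That frame also yields the companion form of $C^\infty_\varepsilon$, holomorphy in $\zeta$, and $\sigma(C^\infty_0(E))$, all used later.

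Your Lyapunov--Schmidt reduction instead produces an $N$-dimensional cocycle $O(\varepsilon)$-close in $M_N(\mathfrak A)$ to $A_0=S|_{\ker P(S)}$. The count $2m(E)$ then drops out of rank stability of projections with $\|P^c_\varepsilon(\theta)-P^c_0\|<1$. This is far more elementary, and it even gives a finite-dimensional cocycle on the whole $\xi'$-level of the solution bundle. The cost is that your constants depend on all zeros in the $\xi'$-annulus: their distance to $|z|=e^{\pm\xi'}$ and the Jordan structure of $A_0$.

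\textbf{The step you call the main obstacle.} It is short. On $\mathcal X_{\xi'}$, each factor $S-z_j$ with $e^{-\xi'}<|z_j|<e^{\xi'}$ has a bounded right inverse by forward/backward summation, exactly like the paper's $\mathcal Q_z$. Also $G(S)^{-1}=(1/G)(S)$. Hence $L_E(S)=P(S)G(S)$ has a bounded right inverse $R$, and $\mathcal X_{\xi'}=\ker L_E(S)\oplus\operatorname{Ran}R$. Writing $u=u_0+Rf$, the perturbed equation becomes $(I+\varepsilon M_\theta R)f=-\varepsilon M_\theta u_0$. A Neumann series solves it, and every kernel element has this form. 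Running the same formulas on $\mathfrak A$-valued sequences with $\sum_n\|a_n\|_{\mathfrak A}e^{-\xi'|n|}<\infty$ gives the $\mathfrak A$-valued coordinates.

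\textbf{What needs repair: the passage from $\xi'$ back to $\xi$.} This is not a cosmetic adjustment. The theorem concerns $\ker(\mathbf L_{V,\varepsilon W,T,\theta}-E)\cap\mathcal X_\xi$ with estimates in $\|\cdot\|_\xi$. In that space, your claim that $\mathcal M_{V,E,\varepsilon}(\theta)$ is a graph over a fixed $N$-dimensional space can fail. For example, take a simple zero of $L_E$ at $e^{\xi}$. Whether the perturbed solution of growth about $e^{\xi}$ lies in $\mathcal X_\xi$ is then decided by Birkhoff sums of $\log|a_\varepsilon|-\xi$ for a scalar cocycle $a_\varepsilon=e^{\xi}+O(\varepsilon)$. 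So $\dim\mathcal M_{V,E,\varepsilon}(\theta)$ may change with $\varepsilon$, and even with $\theta$.

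You must therefore build the splitting in $\mathcal X_{\xi'}$ and then intersect with $\mathcal X_\xi$, checking three things.
\begin{enumerate}
\item The centre part lies in $\mathcal X_\xi$.
\item For $u\in\mathcal X_\xi$ in the $\xi'$-fibre, the component $u_s$ decays forward. Backward, $u_s=u-u_c-u_u$ is $\xi$-summable because $r_->e^{-\xi}$ and $u_u$ decays backward. So $u_s\in\mathcal X_\xi$, and symmetrically $u_u\in\mathcal X_\xi$.
\item The $\|\cdot\|_\xi$ estimates follow from the coordinatewise bounds given by the $\xi'$-estimates together with $\|\cdot\|_{\xi'}\le\|\cdot\|_\xi$. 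This requires choosing $r_s>e^{-\xi}$ and $r_u<e^{\xi}$, as the paper also does.
\end{enumerate}
With these checks added, your argument proves the theorem.
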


The partially hyperbolic structure in Theorem~\ref{thm:center_reduction}
arises from the spectral decomposition of the associated
Mather transfer operator \eqref{mather-operator}. The analyticity of the hopping function
induces an isolated spectral component near the energy level
$V(x)=E$, whose Riesz spectral projection defines the center bundle.
The resulting center bundle is canonical in the sense that it is
determined uniquely by the transfer operator and the associated
isolated spectral component, independently of any finite-range
approximation procedure. Hence we refer to it as the
\emph{Canonical Center Bundle} (CCB).

\begin{remark}We give further remarks concerning Theorem \ref{thm:center_reduction}.
\begin{enumerate}
\item For an arbitrary topological dynamical system $(\Omega,T)$, the space
$\mathfrak A=C^0(\Omega,\mathbb R)$ is always admissible. Consequently,
Theorem~\ref{thm:center_reduction} yields a globally trivial CCB over an arbitrary compact base, and arbitrary dynamics. 
\item For trigonometric polynomial  potentials $V_l$, the Algebraic Bridge (Proposition~\ref{thm:bridge1}) and the partial hyperbolicity of the Frobenius companion cocycle $(T,A^l_\varepsilon(E,\cdot))$ (Theorem~\ref{thm:sec4-main}) identify the center subspace $\mathcal E^l_{\varepsilon,c}(\theta)$ with $\mathcal M^c_{V_l,E,\varepsilon}(\theta)$ through an isomorphism $\mathcal B(\theta)$. This intertwining relation reads
$$
 S \circ \mathcal B(\theta)=\mathcal B(T\theta)\circ A^l_\varepsilon(E,\theta)\big|_{\mathcal E^l_{\varepsilon,c}(\theta)}.
$$
As a consequence, the CCB constructed in the present framework canonically recovers the ICB of \cite{GJ} in the sense of gap convergence (Proposition~\ref{prop:gap}). 
\item The symplecticity of the center bundle and monotonicity \cite{AK2,LXZ,WXZ} of the center cocycle, along with their spectral applications, are deferred to a subsequent work. 
\item The main theorem remains valid for complex-valued $W$, in which case the associated long-range operator is no longer self-adjoint.  From this perspective, the present result provides a dynamical framework for studying non-self-adjoint long-range (and short-range) operators via  partially hyperbolic structures. It is expected that this viewpoint may be useful for further developments in the spectral and pseudospectral analysis of such operators.
\end{enumerate}
\end{remark}

A particularly important specialization occurs for quasi-periodic
translations
$(\Omega,T)=(\mathbb T^d,\alpha).$
In this setting, the spaces $C^r(\mathbb T^d,\mathbb R)$ and
$C_h^\omega(\mathbb T^d,\mathbb R)$ of analytic functions on a complex strip
are admissible, since translations act isometrically on their natural norms.
Consequently, the regularity of the center bundle matches that of the potential $W$.
In particular, for
$W\in C_h^\omega(\mathbb T^d,\mathbb R)$, the center bundle is
holomorphically trivial, and the associated center cocycle is analytic:

\begin{corollary}\label{cor:icc-torus}
Assume that
$(\Omega,T)=(\mathbb T^d,\alpha)$
and $W\in C^\omega_h(\mathbb T^d,\mathbb R).$  If $|\varepsilon|<\varepsilon_0(E,V,W)$, 
then the canonical center bundle
$\boldsymbol{\mathcal M}^c_{V,E,\varepsilon}$
admits a global holomorphic frame $
U^\infty_\varepsilon(E,\cdot)$
Consequently, the center cocycle $
C^\infty_\varepsilon(E,\cdot)\in C^\omega_h\!\left(
\mathbb T^d,\mathrm{GL}(2m,\mathbb C)\right)$
is well defined by
$$
S U^\infty_\varepsilon(E,\theta)=
U^\infty_\varepsilon(E,\theta+\alpha)C^\infty_\varepsilon(E,\theta).
$$
Moreover,
\begin{equation}\label{loc-kam}
\bigl\|
C^\infty_\varepsilon(E,\cdot)-C^\infty_0(E)\bigr\|_{h}=\mathcal O(|\varepsilon|).
\end{equation}
\end{corollary}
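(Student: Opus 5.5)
The plan is to read Corollary~\ref{cor:icc-torus} off Theorem~\ref{thm:center_reduction} with the admissible algebra $\mathfrak A=C^\omega_h(\mathbb T^d,\mathbb C)$. This is the Banach algebra of functions holomorphic on the strip $|\Im\theta|<h$ and bounded there with the sup norm, continuous up to the boundary.

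\textbf{Step 1: admissibility.} We check the three conditions of Definition~\ref{def:admissible} in turn.
\begin{enumerate}
\item The translation $\theta\mapsto\theta+\alpha$ is real, so it preserves the strip and $\|f(\cdot+\alpha)\|_h=\|f\|_h$.
\item The continuous embedding into $B(\mathbb T^d,\mathbb C)$ holds with constant $1$.
\item For inverse-closedness, the hypothesis is only $\inf_{\mathbb T^d}|f|>0$ on the real torus, which does not prevent zeros in the complex strip. To handle this I would use the Gelfand characterization: an element is invertible iff it has no zero on the maximal ideal space, here the closed strip. So I would either define $\mathfrak A$ with the infimum taken over the strip, or shrink $h$ slightly, using that $\inf_{\mathbb T^d}|f|>0$ forces $f\neq0$ on a thinner strip.
\end{enumerate}
Since the width of the strip is irrelevant to the conclusion, I would restate the corollary on the strip where the frame is produced. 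I expect this loss in $h$ to be the main technical point to get right. A cleaner alternative is to inspect where Theorem~\ref{thm:center_reduction} actually uses inverse-closedness. If it enters only through Neumann series and Riesz projections, the construction stays inside $\mathfrak A$ with no shrinking at all.

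\textbf{Step 2: holomorphic frame and cocycle.} Theorem~\ref{thm:center_reduction}(3) gives a frame $U^\infty_\varepsilon(E,\cdot)$ whose $\mathcal X_\xi$-coordinates lie in $\mathfrak A$, so the frame is holomorphic in $\theta$. Since $S$ maps the center bundle over $\theta$ onto the one over $\theta+\alpha$, the relation $SU^\infty_\varepsilon(E,\theta)=U^\infty_\varepsilon(E,\theta+\alpha)C^\infty_\varepsilon(E,\theta)$ defines the matrix $C^\infty_\varepsilon(E,\theta)$ uniquely. To show its entries are in $\mathfrak A$, I would pick a finite set of coordinates $n_1,\dots,n_{2m}$ for which the $2m\times 2m$ minor $G(\theta)$ of $U^\infty_\varepsilon(E,\theta)$ is invertible.
\begin{itemize}
\item At $\varepsilon=0$, $W$ plays no role and the center space is spanned by the exponential solutions $(z_j^n)$ attached to the unit-circle zeros of $L_E$, with polynomial factors at multiple zeros. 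This space is $\theta$-independent, so a Vandermonde-type minor is invertible.
\item For small $\varepsilon$ the minor stays invertible by $\mathcal O(\varepsilon)$ closeness.
\end{itemize}
Then $C^\infty_\varepsilon=G(\theta+\alpha)^{-1}\,(SU^\infty_\varepsilon)_{n_\bullet}(\theta)$ is a product of $\mathfrak A$ entries and $\det G^{-1}\in\mathfrak A$. Invertibility of $C^\infty_\varepsilon(E,\theta)$ holds because $S$ is a bijection between center fibers.

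\textbf{Step 3: the $\mathcal O(|\varepsilon|)$ bound.} The center projection is a Riesz integral $\frac1{2\pi i}\oint(\zeta-\mathcal L_\varepsilon)^{-1}d\zeta$ of the Mather transfer operator. Since $\mathcal L_\varepsilon$ depends analytically, in fact affinely, on $\varepsilon W$ in $\mathfrak A$-valued operator norm, the projection $P_\varepsilon$ satisfies $\|P_\varepsilon-P_0\|=\mathcal O(|\varepsilon|)$. I would choose the frame as $U^\infty_\varepsilon=P_\varepsilon U_0$, with $U_0$ the constant frame above, so that $\|U^\infty_\varepsilon-U_0\|_{\mathfrak A}=\mathcal O(|\varepsilon|)$. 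The formula from Step 2 then gives $\|C^\infty_\varepsilon-C^\infty_0(E)\|_h=\mathcal O(|\varepsilon|)$. Here $C^\infty_0(E)$ is the constant Jordan-type matrix with eigenvalues $z_j$, since $SU_0=U_0C_0$.
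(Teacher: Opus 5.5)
\textbf{The gap is in Step 3, and Step 2 depends on it.}

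The Mather transfer operator $(\mathcal L\Phi)(\theta)=S\Phi(\theta-\alpha)$ does not depend on $\varepsilon$ or $W$. The $\varepsilon$-dependence lies entirely in the invariant subspace $\mathcal M^b_{V,E,\varepsilon}$ of solution sections on which $\mathcal L$ is restricted. On the fixed space $\mathcal{BS}_\xi$ there is no spectral gap: for every $e^{-\xi}<|z|<e^{\xi}$ the constant section $\Phi(\theta)\equiv(z^n)_{n\in\mathbb Z}$ satisfies $\mathcal L\Phi=z\Phi$, so the contours $\Gamma_\pm$ lie in the spectrum. This has three consequences:
\begin{itemize}
\item $P_\varepsilon$ and $P_0$ act on different spaces.
\item $P_\varepsilon U_0$ is undefined, because the columns of $U_0$ are not solutions when $\varepsilon\neq0$.
\item There is no affine family $\mathcal L_\varepsilon$ on a fixed space to which Kato-type perturbation of Riesz projections applies. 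Such a family exists only for the finite companion cocycles, and that is exactly the obstruction the paper is built to overcome.
\end{itemize}

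The missing idea is to move the $\varepsilon$-dependence into an operator on a fixed space that really is affine in $\varepsilon W$. This is the symbol $D_{E,\varepsilon}(z)=\sum_j\hat v_jz^j\mathbf T^j-E+\varepsilon M_W$ acting on $\mathfrak A$. It is invertible for $z\in\mathcal A'(E)$, via the Fourier multiplier $\omega\mapsto 1/L_E(z\omega)$ followed by a Neumann series, and satisfies $\|D_{E,\varepsilon}(z)^{-1}-D_{E,0}(z)^{-1}\|_{\mathfrak B(\mathfrak A)}\le M_D^2\|W\|_{\mathfrak A}|\varepsilon|$ (Lemma~\ref{thm:D-abstract}). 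With it, the resolvent extends to all of $\mathcal{BS}_\xi$ as $\mathcal R_\varepsilon(z)=\mathcal P_zD_{E,\varepsilon}(z)^{-1}\mathcal G_z+\mathcal Q_z$. Its contour integral is an operator on a fixed space, $\mathcal O(\varepsilon)$-Lipschitz in $\varepsilon$, with range in the center sections. Only then does your $P_\varepsilon U_0$ make sense, and it is $\mathfrak A$-valued because every ingredient preserves $\mathfrak A$.

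The paper does not project; it writes the frame directly as $u^p_{\varepsilon;k}(\theta)=\frac1{2\pi i}\oint_{\Gamma_c}z^{k+p}\bigl(D_{E,\varepsilon}(z)^{-1}\mathbf 1\bigr)(\theta+k\alpha)\,dz$. By Lemma~\ref{recons} these are center solutions. They are $\mathfrak A$-valued and $\mathcal O(\varepsilon)$-close to the residue sums $\frac1{2\pi i}\oint z^{k+p}L_E(z)^{-1}dz$. They also satisfy $Su^p_\varepsilon(\theta)=u^{p+1}_\varepsilon(\theta+\alpha)$, which forces the companion form.

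\textbf{Given such a frame, the rest of your plan is sound and matches the paper.} The minor on coordinates $0,\dots,2m-1$ is invertible at $\varepsilon=0$; your confluent Vandermonde argument is equivalent to the Hermite-interpolation Lemma~\ref{lem:lindep-uniform}. Its determinant is then $\mathcal O(\varepsilon)$-close in $\mathfrak A$-norm to a nonzero constant, so its inverse lies in $C^\omega_h$ by a Neumann series. This is your ``cleaner alternative'' in Step 1, and it is the right choice. Shrinking the strip would only give an $f$-dependent width and would not make $C^\omega_h$ inverse-closed.

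Two smaller points:
\begin{itemize}
\item The $\mathcal O(\varepsilon)$ closeness you invoke in Step 2 is not part of Theorem~\ref{thm:center_reduction}. The explicit frame must therefore be built before the minor argument, not after.
\item Bijectivity of $S$ between real fibers gives invertibility of $C^\infty_\varepsilon$ only for real $\theta$. Membership in $C^\omega_h(\mathbb T^d,\mathrm{GL}(2m,\mathbb C))$ on the strip comes instead from the $\mathcal O(\varepsilon)$ closeness to the invertible constant companion matrix $C^\infty_0(E)$.
\end{itemize}
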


\begin{remark}\label{non-dege}
    An additional advantage of this canonical  center cocycle  is that the eigenvalues of the unperturbed matrix $C^\infty_0(E)$ are the real roots of $V(x)=E.$ Thus it yields the required
non-degeneracy of the unperturbed center dynamics, which is crucial for the
KAM scheme; see Lemma~\ref{lem:G-analytic}.
\end{remark}

The following remark places the above corollary in the broader context of partially hyperbolic dynamics.

\begin{remark}
In classical partially hyperbolic dynamics, the regularity of invariant center bundles is governed by the geometry of the base dynamics and the available bunching conditions \cite{H,HPS}. Under suitable domination assumptions, higher regularity may be obtained via the $C^r$ section theorem, whereas without bunching the center bundle is typically only H\"older continuous.
Theorem~\ref{thm:center_reduction} (resp. Corollary \ref{cor:icc-torus}) addresses a different problem. The starting point is a shift dynamics associated with a long-range operator, for which no finite-dimensional center cocycle is available a priori. The key structural input is the admissible Banach algebra framework: the base transformation acts isometrically on $\mathfrak A$\footnote{For hyperbolic base dynamics, high-regularity function spaces are
typically not admissible.}, so that the transfer operator and its Riesz spectral projections remain in the same function space. As a consequence, the canonical center bundle inherits exactly the regularity of $\mathfrak A$, without loss under the dynamics. In particular, in the multi-frequency case (Corollary \ref{cor:icc-torus}), it yields a globally trivial center bundle in a form that had previously been available only in the one-frequency case \cite{AJS,GJ,LXZ}.
\end{remark}

Although Theorem \ref{thm:center_reduction} establishes the partial hyperbolicity of $\mathscr{T}$, this does not imply that all Lyapunov exponents of the center cocycle $(T, C_{\varepsilon}^{\infty})$ strictly vanish. Nevertheless, the exact number of zero Lyapunov exponents encodes crucial dynamical information. Specifically, we define
\begin{equation}\label{acce}
    2\nu(E) := \#\bigl\{j: L_j(T, C_{\varepsilon}^{\infty}) = 0\bigr\},
\end{equation}
and refer to $2\nu(E)$ as the \emph{Lyapunov neutral dimension}. In the one-frequency case, and restricted to Schr\"odinger operator $\mathbf H_{2\cos,V,\alpha,x}$  the quantity $\nu(E)$ coincides with Avila's acceleration $\omega(E)$ (Lemma \ref{lem:acce}),
which is dynamically more refined than the algebraic multiplicity of the real roots of $V(x)=E.$

\subsection{Extensions of Johnson's Theorem and Thouless formula}

A fundamental consequence of the partially hyperbolic splitting is that the spectral problem can be reduced to the finite-dimensional center dynamics. More precisely, the stable and unstable directions are separated by the partial hyperbolicity, so that the spectral information  is carried by the center part of the dynamics. This provides the mechanism for extending Johnson's classical correspondence \cite{Johnson}---which asserts that an energy lies in the resolvent set if and only if the associated finite-dimensional cocycle is uniformly hyperbolic---to the infinite-range setting. While this equivalence is well understood for general finite-range operators \cite{HaroPuig}, the infinite-range operator precludes a direct transfer-matrix formalism. The following theorem shows that this obstruction is resolved by the underlying partial hyperbolicity.

\begin{theorem}
\label{thm:Johnson_generalization}
Let $(\Omega,T)$ be minimal and
$W\in C^0(\Omega,\mathbb R)$.
For every $E\in\mathbb R$, there exist
$\varepsilon_0=\varepsilon_0(E,V,W)>0$
and
$\delta_1=\delta_1(E,V)>0$
such that the following holds:

If $|\varepsilon|<\varepsilon_0$, then for every
$\zeta\in I_{\delta_1}:=\{\zeta:|\zeta-E|<\delta_1\}$,  $\zeta$ belongs to the spectrum of the infinite-range  operator $\mathbf{L}_{V,\varepsilon W,T,\theta}$
if and only if the center cocycle
$(T,C^\infty_\varepsilon(\zeta,\cdot))$
is not uniformly hyperbolic.
\end{theorem}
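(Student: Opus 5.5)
Everything rests on a Sacker--Sell/Johnson type dichotomy for the shift $\mathscr T$ acting on the fibered solution bundle $\boldsymbol{\mathcal M}_{V,\zeta,\varepsilon}$, combined with the partially hyperbolic splitting of Theorem~\ref{thm:center_reduction}. First I would make Theorem~\ref{thm:center_reduction} uniform in the energy. Choose $\delta_1=\delta_1(E,V)$ so small that for $|\zeta-E|<\delta_1$ the symbol $V(z)-\zeta$ has exactly $2m(E)$ zeros counted with multiplicity in a thin annulus $e^{-\eta}<|z|<e^{\eta}$, and no zeros with $e^{\eta}\le|z|\le e^{\xi'}$ or $e^{-\xi'}\le |z|\le e^{-\eta}$. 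Then the isolated spectral component of the Mather transfer operator, and hence its Riesz projection, persists for all such $\zeta$ and all $|\varepsilon|<\varepsilon_0$, with constants $C,r_\pm,r_{s,u}$ uniform in $\zeta$. This gives a uniform splitting $\mathcal M_\zeta=\mathcal M^u_\zeta\oplus\mathcal M^c_\zeta\oplus\mathcal M^s_\zeta$ and a center cocycle $C^\infty_\varepsilon(\zeta,\cdot)$ of dimension $2m$ (where $m$ denotes the count for $\zeta$, which may be less than $m(E)$ but is locally constant in the sense needed).

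\emph{($\Leftarrow$) Center hyperbolic implies $\zeta\notin\sigma$.} Suppose $(T,C^\infty_\varepsilon(\zeta,\cdot))$ is uniformly hyperbolic, with splitting $\mathcal M^c=\mathcal M^{c,+}\oplus\mathcal M^{c,-}$. Set $\mathcal M^{\mathrm{dec}}=\mathcal M^s\oplus\mathcal M^{c,-}$ and $\mathcal M^{\mathrm{inc}}=\mathcal M^{c,+}\oplus\mathcal M^u$. These are invariant bundles that contract uniformly under forward and backward shifts respectively. I would then construct the Green function explicitly, as for finite-range operators \cite{HaroPuig}. The solutions decaying to the right (respectively left) lie in $\mathcal M^{\mathrm{dec}}$ (respectively $\mathcal M^{\mathrm{inc}}$), and the Wronskian-type pairing between them is nondegenerate. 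This produces an exponentially decaying kernel $G(n,k)$, hence a bounded inverse of $\mathbf L_{V,\varepsilon W,T,\theta}-\zeta$ on $\ell^2$.

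An alternative route, which I think is cleaner, is the following. Exponential dichotomy of the full solution dynamics implies that no bounded nonzero generalized eigenfunction exists in $\mathcal X_0$ for any $\theta$. For minimal $T$, the spectrum is $\theta$-independent and equals the set of energies admitting bounded solutions for some $\theta$. This is the Sch'nol/Johnson characterization for minimal systems, applied through limits of Weyl sequences. So $\zeta\notin\sigma$.

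\emph{($\Rightarrow$) $\zeta\notin\sigma$ implies center hyperbolic.} Suppose instead that $\zeta$ is in the resolvent set. The resolvent kernel $G_\theta(n,k)$ decays exponentially, uniformly in $\theta$ (Combes--Thomas), with some rate $\gamma$. Its columns produce solutions on half-lines, which I would project to the center via the Riesz projection. This yields, for every $\theta$, invariant subspaces of $\mathcal M^c(\theta)$ of solutions decaying at $+\infty$ and at $-\infty$, respectively. Uniform decay then gives uniform contraction estimates for $C^\infty_\varepsilon$ on these subspaces. Their dimensions add to $2m$ because no solution is bounded on both sides; the dimension count uses self-adjointness and the symmetry of the zeros. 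Continuity in $\theta$ follows from uniformity. The result is a dominated, indeed hyperbolic, splitting.

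The main obstacle is this $(\Rightarrow)$ direction, and specifically the dimension count: showing that the half-line decaying solutions lying in $\mathcal M^c$ span exactly $m$ dimensions each. This requires showing that a solution decaying at $+\infty$ with rate $\gamma$ belongs to $\mathcal M^s\oplus\mathcal M^{c}$. That uses $\gamma$ versus $r_-$, and one must handle the case $\gamma<-\log r_-$ carefully. I would first try to prove this via an index argument: compute the Fredholm index of the half-line operator, which equals the winding number of $V-\zeta$ on a small circle and is robust under small $\varepsilon$.
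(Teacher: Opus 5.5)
\textbf{There is a genuine gap: the step you flag as the main obstacle is not carried out, and the mechanism you propose for it does not work at infinite range.} This is your ($\Rightarrow$), $\zeta\notin\sigma\Rightarrow$ center UH. A column $G_\theta(\cdot,k)$ of the resolvent satisfies the equation at every $n\neq k$. But the equation at a site $n>k$ involves all sites, including $k$. So the restriction of $G_\theta(\cdot,k)$ to $n>k$ is in general not the restriction of any element of $\mathcal M_{V,\zeta,\varepsilon}(\theta)$; this is exactly the ``no transfer matrix'' obstruction. There are therefore no half-line decaying solutions to project onto the center, and the dimension count via a half-line Fredholm index remains open.

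None of this is needed if you prove the contrapositive, as the paper does:
\begin{enumerate}
\item If $(T,C^\infty_\varepsilon(\zeta,\cdot))$ is not uniformly hyperbolic, Sacker--Sell theory over the compact base gives $\theta_0$ and $v_0\neq0$ with $\sup_{n\in\mathbb Z}\|(C^\infty_\varepsilon)_n(\zeta,\theta_0)v_0\|<\infty$.
\item Put $u:=U^\infty_\varepsilon(\zeta,\theta_0)v_0$, a nonzero solution. Iterating $SU^\infty_\varepsilon(\zeta,\theta)=U^\infty_\varepsilon(\zeta,T\theta)C^\infty_\varepsilon(\zeta,\theta)$ and reading off the $0$-th coordinate gives $u_n=\sum_p u^{\infty,p}_{\varepsilon;0}(\zeta,T^n\theta_0)\bigl((C^\infty_\varepsilon)_n(\zeta,\theta_0)v_0\bigr)_p$. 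This is bounded by the uniform coordinate bound of Lemma~\ref{lem:uest}.
\item A bounded solution puts $\zeta$ in the spectrum by a Weyl/Schnol truncation. The exponential decay of $\hat v_k$ keeps $\|(\mathbf L_{V,\varepsilon W,T,\theta_0}-\zeta)\chi_{[-N,N]}u\|_{\ell^2}$ bounded in $N$. Minimality then makes the spectrum independent of $\theta$.
\end{enumerate}

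\textbf{The other direction, center UH $\Rightarrow\zeta\notin\sigma$, is also incomplete.} The Green-function construction ``as for finite-range operators'' fails for the same reason: an infinite-range equation has no Wronskian pairing and no gluing of half-line solutions. Your alternative route correctly shows that a UH center plus the partially hyperbolic splitting excludes nonzero bounded solutions at every $\theta$. However, the decisive input is asserted, not proved: that every $\zeta\in\sigma$ admits a bounded solution for some $\theta$. This is the substantive half of a Johnson-type theorem; for finite-range operators it is derived from Johnson's theorem plus Sacker--Sell. ``Limits of Weyl sequences'' do not yield it, since $\ell^2$-normalized Weyl vectors can spread out and lose all $\ell^\infty$ mass. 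You would need an independent argument, for instance:
\begin{enumerate}
\item Schnol's theorem gives polynomially bounded solutions for spectrally a.e.\ energy.
\item Recentering at a maximizer of $|u_n|e^{-\gamma|n|}$, using compactness of $\Omega$ and letting $\gamma\to0$, gives bounded solutions in the hull.
\item The set of energies admitting such solutions is closed, so it contains the whole spectrum.
\end{enumerate}

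The paper avoids this entirely by finite-range approximation:
\begin{enumerate}
\item Choose $E_l\in\sigma(\mathbf L_{V_l,\varepsilon W,T,\theta})$ with $E_l\to\zeta$.
\item Use $C^l_\varepsilon\to C^\infty_\varepsilon$ (Lemma~\ref{thm:center}) and openness of uniform hyperbolicity to make $(T,C^l_\varepsilon(E_l,\cdot))$ UH.
\item Identify this cocycle, through the Algebraic Bridge (Proposition~\ref{thm:bridge1}), with $A^l_\varepsilon(E_l,\cdot)$ restricted to $\mathcal E^l_{\varepsilon,c}$.
\item Upgrade to uniform hyperbolicity of $A^l_\varepsilon(E_l,\cdot)$ via the partial hyperbolicity of Theorem~\ref{thm:sec4-main}.
\item This contradicts the finite-range Johnson theorem of \cite{HaroPuig}.
\end{enumerate}

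A smaller point: for $\zeta\in I_{\delta_1}$ the center dimension is the number of zeros of $V-\zeta$ in the fixed annulus. By Rouch\'e this is constantly $2m(E)$; it is not the number of zeros on $|z|=1$ at $\zeta$.
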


The  partially hyperbolic structure of the fibered solution bundle also allows one to extend the classical Thouless formula \cite{AS,HaroPuig} of (generalized) Schr\"odinger operator\begin{equation}\label{eq:long_range_phys}
(\mathbf H_{W,V,\alpha,x}u)_n=
\sum_{k\in\mathbb Z^d}\widehat w_k u_{n+k}+V(x+\langle n,\alpha\rangle)u_n,
\qquad n\in\mathbb Z^d,
\end{equation} which relates the Lyapunov exponents to the integrated density of states, to the long-range setting. In the present setting, this yields a Center Thouless formula:

\begin{proposition}
\label{prop:thouless}
Let $(\Omega,T)=(\mathbb T,\alpha)$ with $\alpha\in \mathbb{R}\backslash\mathbb{Q}$,
$W\in C^\omega(\mathbb T,\mathbb R)$.
For every $E\in\mathbb R$, there exist constants
$\varepsilon_0>0$ and $\delta_1>0$
such that the following holds.

For every $|\varepsilon|<\varepsilon_0$, there exists a harmonic function
$h_\varepsilon$ on $I_{\delta_1}$ satisfying
\begin{equation}\label{eq:thou}
\sum_{i=1}^{m}
L_i\!\left(C^\infty_\varepsilon(\zeta,\theta)\right)
=\int_{I_{\delta_1}}\log |\zeta-t|\, d\mathcal N(t)
+h_\varepsilon(\zeta),
\qquad\zeta\in I_{\delta_1}.
\end{equation}
Consequently, $
\Delta\Bigl(\sum_{i=1}^{m}L_i(C^\infty_\varepsilon(\zeta,\theta))
\Bigr)=2\pi\, d\mathcal N$ 
in the sense of distributions, where
$d\mathcal N$ denotes the density-of-states measure of
$\mathbf H_{\varepsilon W,V,\alpha,x}$.
\end{proposition}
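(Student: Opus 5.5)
Our strategy is to compare the long-range operator with a finite-range (trigonometric polynomial) model, where the Thouless formula for the full Frobenius companion cocycle is classical. We then show that, in the limit, the stable block contributes nothing and the unstable block contributes only a harmonic correction.

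\emph{Step 1: the truncated formula.} We first fix $E$ and truncate $V$ to trigonometric polynomials $V_l$ of degree $l$. The dual of $\mathbf L_{V_l,\varepsilon W,\alpha,\theta}$ is the Schr\"odinger-type operator $\mathbf H_{\varepsilon W,V_l,\alpha,x}$. For the associated Frobenius companion cocycle $(\alpha,A^l_\varepsilon(\zeta,\cdot))$ of size $2l$, the Haro--Puig Thouless formula \cite{HaroPuig} gives
$$\sum_{i=1}^{l}L_i\bigl(A^l_\varepsilon(\zeta,\cdot)\bigr)=\int\log|\zeta-t|\,d\mathcal N_l(t)-\log|\widehat v_l|,$$
valid for all $\zeta\in\mathbb C$, where $\mathcal N_l$ is the IDS of the truncated operator. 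The normalization constant here is the leading-coefficient term, which must be tracked.

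\emph{Step 2: splitting the Lyapunov sum by blocks.} By Theorem \ref{thm:sec4-main} and the Algebraic Bridge (Proposition \ref{thm:bridge1}), the cocycle $A^l_\varepsilon$ is partially hyperbolic with a splitting $\mathcal E_s^l\oplus\mathcal E_c^l\oplus\mathcal E_u^l$, and the center block is conjugate to the center cocycle $C^\infty$ of $V_l$. Accordingly, the top $l$ exponents decompose as
\begin{itemize}
\item all $l-m$ unstable exponents, plus
\item the top $m$ exponents of the $2m$-dimensional center block, which by the symplectic-type symmetry of the center spectrum equal $\sum_{i\le m}L_i(C^\infty_{\varepsilon}(\zeta))$.
\end{itemize}
This requires $m$ to be locally constant on $I_{\delta_1}$, which holds by choosing $\delta_1$ so that no zeros of $V-\zeta$ cross $|z|=1$.

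The unstable part is $\int\log|\det A^l_\varepsilon|_{\mathcal E^l_u}|\,d\theta$. Since the unstable bundle is holomorphic in $\zeta$ and the base is a rotation (uniquely ergodic), this is the real part of the integral of a holomorphic logarithm, hence harmonic in $\zeta\in I_{\delta_1}$. This term, together with $-\log|\widehat v_l|$, defines $h_{\varepsilon,l}$.

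\emph{Step 3: passing to the limit $l\to\infty$.} Three convergences are needed:
\begin{itemize}
\item $\mathcal N_l\to\mathcal N$ weakly, which follows from norm convergence of the operators;
\item the center Lyapunov sums of $V_l$ converge to those of $V$, using the gap convergence of Proposition \ref{prop:gap} and continuity of Lyapunov exponents of the uniform (analytic) cocycle in the $\mathcal O(\varepsilon)$ perturbative regime of Corollary \ref{cor:icc-torus};
\item the harmonic functions $h_{\varepsilon,l}$ converge locally uniformly to a harmonic limit.
\end{itemize}
The third point is the main obstacle: $\log|\widehat v_l|\to-\infty$ linearly in $l$, and the unstable exponents also diverge. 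Their divergences must cancel. I would show this via the Jensen-type identity: at $\varepsilon=0$ the unstable exponents are $\sum_{|z_j|>1}\log|z_j|$ over roots of $V_l-\zeta$, and $\log|\widehat v_l|+\sum\log|z_j|$ equals a Mahler-measure quantity $\int\log|V_l-\zeta|$ minus the center contribution. The difference is then uniformly bounded and harmonic, with $\varepsilon$-corrections controlled by the analytic dependence of the unstable projections.

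\emph{Step 4: restricting the log-potential and taking the Laplacian.} The log potential over $\mathbb R\setminus I_{\delta_1}$ is harmonic on $I_{\delta_1}$, so it can be absorbed into $h_\varepsilon$; this yields \eqref{eq:thou}. Taking the distributional Laplacian and using $\Delta\log|\zeta-t|=2\pi\delta_t$ then gives the stated identity $\Delta\bigl(\sum_{i\le m}L_i\bigr)=2\pi\,d\mathcal N$.
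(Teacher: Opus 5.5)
Your Steps 1, 2 and 4 are consistent with the paper's proof, and your Jensen computation at $\varepsilon=0$ is correct. Applying Jensen's formula to $z^lL^l_\zeta(z)$ on $|z|=r_+$ gives $h_{0,l}(\zeta)=m\log r_+-\int_0^1\log|L^l_\zeta(r_+e^{2\pi it})|\,dt$, which converges uniformly on $I_{\delta_1}$. The gap is in the step you flag as the main obstacle, for $\varepsilon\neq0$. There you need $\sum_{i\le l-m}\bigl(L_i(A^l_\varepsilon)-L_i(A^l_0)\bigr)$ to be bounded and convergent uniformly in $l$. ``Analytic dependence of the unstable projections'' does not deliver this, for three reasons:
\begin{itemize}
\item the unstable block has dimension $l-m\to\infty$;
\item the perturbation in companion coordinates has size $|\varepsilon|\|W\|/|\hat v_l|$;
\item the uniform-in-$l$ control from Proposition~\ref{prop:improved} is an operator-norm statement, and an $O(\varepsilon)$ operator-norm change of an $(l-m)$-dimensional block can shift $\log|\det|$ by $O((l-m)\varepsilon)$.
\end{itemize}
As written, your third convergence is not established.

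No explicit cancellation is needed. The truncated Thouless formula itself gives $h_{\varepsilon,l}=F_l-\int\log|\zeta-t|\,d\mathcal N_l(t)$ with $F_l=\sum_{i\le m}L_i(C^l_\varepsilon(\zeta,\cdot))$. So your first two convergences already give pointwise convergence of $h_{\varepsilon,l}$ on $I_{\delta_1}\setminus\mathbb R$. By Lemma~\ref{thm:center}, $C^l_\varepsilon$ and $(C^l_\varepsilon)^{-1}$ are uniformly bounded, so $F_l$ is locally bounded. The logarithmic potentials are uniformly bounded above, because all spectra lie in a fixed compact set. Hence $h_{\varepsilon,l}$ is locally uniformly bounded below. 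Harnack's principle then upgrades pointwise convergence to locally uniform convergence on the connected set $I_{\delta_1}$, with harmonic limit $h_\varepsilon$. This is the paper's argument.

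Three further corrections:
\begin{itemize}
\item Weak-$*$ convergence controls the logarithmic potentials only off the real axis. The limiting identity is therefore first obtained on $I_{\delta_1}\setminus\mathbb R$, and must be extended to real $\zeta$ using that subharmonic functions equal almost everywhere are equal everywhere. Your Step 4 skips this.
\item The convergence $F_l\to\sum_{i\le m}L_i(C^\infty_\varepsilon)$ should come from $C^l_\varepsilon\to C^\infty_\varepsilon$ in the analytic norm (Lemma~\ref{thm:center}(1)), plus continuity of Lyapunov exponents for one-frequency analytic cocycles, which holds for every irrational $\alpha$. Gap convergence of subspaces alone does not give convergence of the cocycle matrices, and no Diophantine or perturbative hypothesis is available here.
\item The center dimension stays $2m$ on $I_{\delta_1}$ because no zeros cross the circles $|z|=r_\pm$, not $|z|=1$. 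For non-real $\zeta$ the center zeros leave the unit circle.
\end{itemize}
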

\begin{remark}
    The result also works for $\alpha\in \mathrm{DC}_d$ ($d>1$), while the key is continuity of the Lyapunov exponents \cite{DK}.  
\end{remark}

\begin{remark}
Although the above statements are local in $E$, this is sufficient for our purposes. Indeed, a standard finite covering argument yields the corresponding global result on any prescribed compact energy interval, after possibly shrinking $\varepsilon_0$; see Section~\ref{sec:4.2} for details.
\end{remark}

For each finite-range truncation $V_l$, the classical Thouless formula \cite{HaroPuig}:
$$\sum_{k=1}^{l}L_k(A_{\varepsilon}^{l}(E,\theta))
=
\int_\Sigma \log|E-z|\,d\mathcal{N}_l(z) - \log|\hat{v}_l|$$involves the normalization term $\log |\hat{v}_l|$, which diverges as $l\to\infty$. At the level of the truncated finite-dimensional cocycle, this divergence is exactly compensated by the sum of the Lyapunov exponents corresponding to the  hyperbolic subbundles. Precisely, we have the following asymptotic balance:
\begin{equation}\label{eq:com}
    \lim_{l\to\infty} \bigg( - \log|\hat{v}_l| - \sum_{k=1}^{l-m}L_k(A_{\varepsilon}^{l}(E,\theta)) \bigg) = h_\varepsilon(E).
\end{equation}
In other words, the hyperbolic part of the cocycle carries exactly the exponential growth required to absorb the singular normalization emerging from the infinite-range limit.

\subsection{Main spectral applications}

As a consequence of the geometric reduction developed in this paper, we obtain several spectral applications for supercritical quasi-periodic Schr\"odinger operators. Despite substantial progress over the past two decades, the supercritical regime remains far from being completely understood. As Avila \cite{avila-kam} pointed out:
\begin{quote}
``The supercritical analysis remains incomplete in crucial aspects, especially since many results rely on parameter exclusion arguments in the frequency.''
\end{quote}
The canonical center dynamics developed here provides a source of dynamical rigidity in the supercritical regime. Combined with the near-constant structure of the center cocycle, it enables a direct analysis of the dual long-range dynamics and leads to the following three applications.

\subsubsection{Anderson localization}
Anderson localization (AL)---characterized mathematically by the existence of a complete basis of exponentially decaying eigenfunctions---is a fundamental phenomenon in the theory of condensed matter physics \cite{anderson, ag, h}. For quasi-periodic Schr\"odinger operators, 
despite extensive progress over the past decades, the available localization results
essentially fall into two distinct categories.

The first class of results concerns \emph{fixed phase} localization, where AL is established for a prescribed phase across a generic set of frequencies. Bourgain and Goldstein \cite{BG00} proved that the one-dimensional Schr\"odinger operator $\mathbf{H}_{2 \varepsilon\cos, V, \alpha,x}$  exhibits localization for any fixed phase and a.e.\ frequency, provided the Lyapunov exponent is  positive.  The proof is of non-perturbative nature, that is  the smallness  of the $\varepsilon$ does not depend on the frequency. These results were later extended to the strip setting \cite{BJ1,HS3,Klein} and higher-dimensional setting  \cite{Bou02, Bou07,Liu, JLS}.

The second class of results  addresses \emph{fixed frequency} localization, which requires a prescribed frequency and generic phases. 
For AMO $\mathrm{H}_{\lambda, \alpha, x}$ defined in \eqref{eq:amo}, Jitomirskaya \cite{Jit94, Jit99} developed a non-perturbative framework to prove AL for all Diophantine frequencies and a.e.\ phase. Recall that $\alpha \in \R^d$ is \emph{Diophantine}, denoted $\alpha \in \mathrm{DC}_d$, if there exist $\gamma > 0$ and $\tau > d-1$ such that $\inf_{j\in\Z}|\langle k,\alpha\rangle-j| \ge \gamma |k|^{-\tau}$ for all $k \in \Z^d \setminus \{0\}$. The sharp phase transition of AMO  was first obtained by Avila-You-Zhou \cite{AYZ} and independently by Jitomirskaya-Liu \cite{JL18}. This sharp phase transition was recently extended to  the broader class of Type I operators \cite{GJ}, see also partial results in \cite{hs, HS}. For other nonperturbative localization results, one can consult  \cite{AJ2,AJZ,BJ,JL24,JY,HS4,Liu2,Liu3,Liu4,LY3,LY2} and the references therein.   

Despite these advances, fixed frequency localization for general analytic potentials has remained a long-standing open question:

\begin{problem}\label{prob:1}
For any fixed $\alpha\in \mathrm{DC}_1$, $V\in C^{\omega}(\T,\R)$ is non-constant, and sufficiently small $\varepsilon$, does $\mathbf{H}_{2\varepsilon \cos,V,\alpha,x}$ exhibit Anderson localization for a.e.\ phase $x\in \T$?
\end{problem}

In this paper, we resolve this problem. Furthermore, we extend the localization regime beyond nearest-neighbor models to encompass general analytic hopping terms $W$.

\begin{theorem}\label{mainthe:loca}
Suppose that $\alpha\in \mathrm{DC}_d$, $V\in C^{\omega}(\T,\R)$ is non-constant, and $W\in C^{\omega}(\T^d,\R)$. Then there exists $\varepsilon_1 = \varepsilon_1(\alpha, V, W) > 0$ such that, whenever $|\varepsilon| < \varepsilon_1$, the operator $\mathbf{H}_{\varepsilon W,V,\alpha,x}$ exhibits Anderson localization for a.e.\ $x \in \mathbb{T}$.
\end{theorem}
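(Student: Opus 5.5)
The plan is to go through Aubry duality and reducibility of the dual center cocycle, in the spirit of \cite{AYZ,AYZ2}. The Fourier transform in $n$ sends $\mathbf H_{\varepsilon W,V,\alpha,x}$ to the long-range operator $\mathbf L_{V,\varepsilon W,\alpha,\theta}$ on $\ell^2(\mathbb Z)$ over the base $(\mathbb T^d,\alpha)$, with analytic hopping $V$ and small potential $\varepsilon W$. Anderson localization of $\mathbf H$ for a.e.\ $x$ reduces to the following: for a.e.\ $x$ there is a complete family of $\ell^2$ eigenfunctions of $\mathbf H_{\cdot,x}$ decaying exponentially. In the dual language these are Bloch-type quasi-periodic solutions $u_n=e^{2\pi i n x}\Psi(\theta+n\alpha)$ of $\mathbf L_{V,\varepsilon W,\alpha,\theta}u=Eu$, with $\Psi$ analytic on a fixed strip. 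Via Theorem~\ref{thm:center_reduction} and Corollary~\ref{cor:icc-torus}, such solutions lie in the canonical center bundle. So it suffices to show that, for a full-measure set of energies with respect to the density of states, the analytic center cocycle $(\alpha,C^\infty_\varepsilon(E,\cdot))$ is analytically reducible to a constant matrix with an eigenvalue $e^{2\pi i x}$ on the unit circle, with quantitative control.

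\textbf{Step 1.} First make the estimate uniform in energy. The spectrum lies in a compact interval, and Corollary~\ref{cor:icc-torus} holds locally in $E$. By finite covering we get one $\varepsilon_0$ and, on each piece, an analytic center cocycle satisfying $\|C^\infty_\varepsilon(E,\cdot)-C^\infty_0(E)\|_h=\mathcal O(|\varepsilon|)$, analytic in $E$ as well. By Remark~\ref{non-dege}, the eigenvalues of $C^\infty_0(E)$ are $e^{2\pi i x_j(E)}$, where the $x_j(E)$ are the real roots of $V(x)=E$. Since $V$ is non-constant, for all but finitely many $E$ these roots are simple and distinct. The finitely many critical values, and their small neighbourhoods, have DOS measure $\mathcal O(\text{small})$ (we will use the H\"older/absolute continuity of the IDS or the Thouless formula, Proposition~\ref{prop:thouless}) and are handled separately.

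\textbf{Step 2.} Run a multi-frequency KAM scheme for the $2m\times 2m$ cocycle close to the non-degenerate constant $C^\infty_0(E)$ with Diophantine $\alpha\in\mathrm{DC}_d$. The non-degeneracy of Remark~\ref{non-dege} and the monotonic dependence of the eigenphases $x_j(E)$ on $E$ serve as the parameter to exclude resonances. The output is the following: for $E$ outside a set $\mathcal B$, the cocycle is analytically conjugated to a constant (or block-diagonal) matrix with eigenvalues $e^{2\pi i(x_j(E)+\langle k,\alpha\rangle/2)}$-type rotation numbers. Each neutral eigenvector then yields an analytic $\Psi$, hence an exponentially decaying eigenfunction of $\mathbf H_{\cdot,x}$ for the phase $x$ equal to the corresponding rotation number. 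We then have to show that $\mathcal B$ has zero DOS measure and that the map $E\mapsto x$ pushes the remaining spectrum onto a full-measure set of phases. The Thouless formula (Proposition~\ref{prop:thouless}) identifies $d\mathcal N$ with the derivative of the fibered rotation numbers of the center cocycle, so the measure estimates transfer.

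\textbf{Step 3.} Completeness. For the resulting phases $x$, the eigenfunctions obtained must span $\ell^2$. We will show that the DOS measure is carried by the energies produced in Step~2 for that $x$, following the Avila--You--Zhou argument that spectral measures are supported on $\{E:\rho(E)\equiv x\}$ modulo $\alpha$. The Johnson-type Theorem~\ref{thm:Johnson_generalization} will guarantee that all spectral energies come from non-uniformly hyperbolic center cocycles, so no part of the spectrum escapes the analysis.

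\textbf{Main obstacle.} I expect the hardest part to be the resonant energies in Step~2. There $\langle k,\alpha\rangle$ nearly matches differences $x_i-x_j$ or $2x_j$, and the conjugation is only almost reducible with growing norms. A quantitative treatment is needed so that the resulting eigenfunctions still decay uniformly, and so that the excluded energies and phases have measure zero rather than just small measure. I would first try the quantitative almost-reducibility argument of \cite{AYZ2}, adapted to $2m$ dimensions via the near-constant structure of Corollary~\ref{cor:icc-torus}.
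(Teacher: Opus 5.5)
\textbf{The gap is completeness (your Step~3, and the measure-transfer claim that closes Step~2).}

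Your Step~2 does, by itself, produce something valid. If $C^\infty_\varepsilon(E,\cdot)$ is analytically reducible to a diagonal $\Lambda(E)$ with a real eigenphase $\rho_j$, duality gives an exponentially decaying eigenfunction of $\mathbf H_{\varepsilon W,V,\alpha,x}$ for $x\equiv\rho_j+\langle k,\alpha\rangle$. Anderson localization, however, needs these eigenfunctions to span $\ell^2(\mathbb Z^d)$ for a.e.\ $x$. For that you rely on the Avila--You--Zhou phase-selection argument, which rests on three $\mathrm{SL}(2,\mathbb R)$ features: the fibered rotation number, its monotonicity, and the identity $\mathcal N=1-2\rho$ that pushes $d\mathcal N$ forward to Lebesgue measure.

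None of these is available for the $2m$-dimensional center cocycle in $\mathrm{GL}(2m,\mathbb C)$:
\begin{itemize}
\item the eigenphases of $\Lambda(E)$ are defined only up to permutation and shifts by $\langle k,\alpha\rangle$, and they depend on the choice of $B$;
\item only $2\nu(E)$ of them are real;
\item there is therefore no canonical map $E\mapsto x$ to push measures through.
\end{itemize}
Proposition~\ref{prop:thouless} also does not identify $d\mathcal N$ with derivatives of rotation numbers. It gives $\Delta\sum_{i\le m}L_i=2\pi\,d\mathcal N$ modulo an unknown harmonic term $h_\varepsilon$. That is Lyapunov-exponent information, at best an averaged Maslov-type quantity, and it cannot select branches. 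Theorem~\ref{thm:Johnson_generalization} only locates the spectrum and says nothing about the spectral type of $\mu_x$. Section~\ref{adl} states explicitly that this branch-selection problem is open. So Step~3 is not a routine detail; it is exactly what the paper's method is built to avoid.

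\textbf{How the paper proceeds instead.} It reverses the direction of the argument: it starts from eigenfunctions and upgrades their decay, rather than constructing them.
\begin{itemize}
\item Completeness comes from Eliasson (Theorem~\ref{cor-Eliasson}): for a.e.\ $x$ there is a complete basis of $\ell^1$ eigenfunctions.
\item Theorem~\ref{main:reduce} gives reducibility off a set $\Theta$ of Hausdorff dimension zero. The KAM non-degeneracy it needs comes from Corollary~\ref{cor:zero} and Lemma~\ref{lem:G-analytic}, and it holds uniformly on each interval, including at critical values. So nothing is excised. Note that your Step~1 excision of neighbourhoods of critical values would only make the bad set small, not null, which is not enough for an a.e.\ statement.
\item Absolute continuity of the IDS (Theorem~\ref{thm:IDSac}) gives $d\mathcal N(\Theta)=0$. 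Hence for a.e.\ $x$ no eigenvalue lies in $\Theta$ (Lemma~\ref{nonreducible}).
\item Proposition~\ref{prop:rigidity} handles each remaining eigenfunction. The dual section $w=U^\infty_\varepsilon d$ satisfies $C^\infty_\varepsilon d=e^{2\pi i x}d(\cdot+\alpha)$. After conjugating by $B$, every nonzero component is a single Fourier mode, so $\widehat u$ is analytic and $u$ decays exponentially.
\end{itemize}

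In particular, the resonant energies you flag as the main obstacle never require quantitative almost reducibility. They lie in $\Theta$, and for a.e.\ $x$ they are simply never eigenvalues.
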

Theorem \ref{mainthe:loca} establishes a perturbative localization result, where the smallness of $\varepsilon$  depends on the frequency $\alpha$. For this class of perturbative results, prior literature also mainly focused on cosine or cosine-like potentials. Existing approaches generally follow three distinct paths:

\textbf{Multi-Scale Analysis (MSA):} 
Fr\"ohlich, Spencer and Wittwer used MSA to prove Anderson localization for one-dimensional Schr\"odinger operators with even $C^2$ cosine-type potentials \cite{FSW90}. This result was later extended to higher dimensions by Surace \cite{Sur96,Sur90}, and for the arithmetic case by Cao, Shi and Zhang \cite{CSZ23}. The evenness assumption was subsequently removed in one dimension by Forman and VandenBoom \cite{FV21}, and in higher dimensions by Cao, Shi and Zhang \cite{CSZ}.

\textbf{Kolmogorov-Arnold-Moser (KAM) Theory:} Parallel to MSA, Sinai \cite{Sin87} established localization for $C^2$ cosine-type potentials using KAM type analysis. This was generalized to higher dimensions by Chulaevsky and Dinaburg \cite{CD93, D}. For general analytic potentials, Eliasson \cite{Eli97, E} employed KAM techniques to prove the existence of pure point spectrum; however, the exponential decay of the corresponding eigenfunctions was not established.

\textbf{Reducibility and Aubry Duality:} An alternative approach to Anderson localization relies on leveraging Aubry duality in conjunction with the reducibility of the associated dual cocycles. This framework was first  developed by Avila-You-Zhou \cite{AYZ} to establish localization for AMO, and subsequently extended to the high-dimensional AMO by Jitomirskaya and Kachkovskiy \cite{JK}. While this reducibility mechanism has since been adapted to prove arithmetic localization \cite{GY,GYZ24}  and exponential dynamical localization \cite{GYZ23}, these advances remain intrinsically confined to the $\mathrm{SL}(2,\R)$ cocycle setting. A detailed discussion of this  framework is deferred to Section \ref{adl}.

Recently, Cao-Shi-Zhang \cite{CSZ1} utilized a modified MSA framework to provide an affirmative answer to Problem \ref{prob:1} for general analytic potentials,   their method relies on finite-volume approximations, extracting spectral data via multiscale Green's function estimates on increasing lattice blocks.  Our approach is fundamentally different. We adopt a dynamical systems perspective and view Anderson localization as a manifestation of \emph{dynamical rigidity} \cite{Yoccoz}. The key ingredient is the partially hyperbolic structure of the dual long-range dynamics, which provides a canonical decomposition into stable, center, and unstable directions and constrains the behavior of bounded dual solutions. Combined with a rigidity-induced regularity bootstrap, this leads to an independent dynamical proof of Anderson localization. Further discussion of the underlying mechanism is deferred to Section~\ref{adl}.

\subsubsection{Absolute continuity of the IDS}

Our second result establishes the absolute continuity of the integrated density of states (IDS).   Let $\mathcal{N}(E)$ denote the IDS of $\mathbf{H}_{\varepsilon W,V,\alpha,x}$  (see Section~\ref{def:IDS} for the precise definition).

For one-dimensional Schr\"odinger operator $\mathbf{H}_{2\cos,V,\alpha,x}$, 
when the Lyapunov exponent vanishes, Kotani's gem \cite{kotani2} implies that the absolute continuity of the IDS is equivalent to the presence of purely absolutely continuous spectrum for almost every phase. In the positive Lyapunov exponent regime, however, the spectral measures are typically singular. Indeed, for the operator $\mathbf{H}_{\varepsilon W,V,\alpha,x}$ with small $\varepsilon>0$, Eliasson \cite{Eli97, E} proved the existence of pure point spectrum for almost every $x\in\T$. Since the IDS is a phase-averaged quantity, its regularity does not immediately follow from the nature of localized spectral measures. This motivated Eliasson \cite{E} to formulate the following open problem:

\begin{problem}[Eliasson \cite{E}]\label{prob:eliasson}
What is the regularity of the map $E\mapsto \mathcal N(E)$ when $\varepsilon$ is small? Is it singular continuous or absolutely continuous?
\end{problem}

Our second result provides a definitive answer to Problem \ref{prob:eliasson}:

\begin{theorem}\label{thm:IDSac}
Suppose that $\alpha \in \mathrm{DC}_d$, $V\in C^{\omega}(\T,\R)$ is non-constant, and $W\in C^{\omega}(\T^d,\R)$.  Then there exists $\varepsilon_2=\varepsilon_2(\alpha,V,W)>0$ such that for all $0<|\varepsilon|<\varepsilon_2$, the IDS of $\mathbf{H}_{\varepsilon W,V,\alpha,x}$ is absolutely continuous.
\end{theorem}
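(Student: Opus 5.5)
The plan is to obtain absolute continuity of $\mathcal N$ locally near each energy and then patch. Fix a compact interval containing the spectrum of $\mathbf H_{\varepsilon W,V,\alpha,x}$ for all small $\varepsilon$. Cover it by finitely many windows $I_{\delta_1}(E_j)$ on which Theorem~\ref{thm:center_reduction}, Corollary~\ref{cor:icc-torus}, Theorem~\ref{thm:Johnson_generalization} and Proposition~\ref{prop:thouless} (in its $\mathrm{DC}_d$ version) hold; shrink $\varepsilon$ accordingly. It then suffices to prove that $d\mathcal N$ restricted to each window has no singular part.

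On a window we use the Center Thouless formula. Writing $\Lambda_\varepsilon(\zeta):=\sum_{i=1}^m L_i(C^\infty_\varepsilon(\zeta,\cdot))$, we have $d\mathcal N=\frac1{2\pi}\Delta\Lambda_\varepsilon$ on $I_{\delta_1}$. Since $\Lambda_\varepsilon-h_\varepsilon$ is the logarithmic potential of $d\mathcal N$, and $d\mathcal N(t)=\frac1\pi\lim_{y\downarrow0}\partial_y\Lambda_\varepsilon(t+iy)\,dt$ in the weak sense, absolute continuity follows once we show that $\partial_y\Lambda_\varepsilon(t+iy)$ stays bounded, or at least uniformly integrable, as $y\downarrow0$ for $t$ real. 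Equivalently, we need a Lipschitz-type bound $\Lambda_\varepsilon(t+iy)\le C y$ for small $y>0$, uniformly in real $t$. We have $\Lambda_\varepsilon(t)=0$ on the spectrum, since there the center exponents vanish up to the hyperbolic part. The intended mechanism is that the relevant exponents vanish at real energies in the spectrum and then grow at most linearly off the real axis.

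To get this, we use \eqref{loc-kam}: the cocycle $C^\infty_\varepsilon(\zeta,\cdot)$ is an $\mathcal O(\varepsilon)$ analytic perturbation of the constant $C^\infty_0(\zeta)$. By Remark~\ref{non-dege}, the eigenvalues of $C^\infty_0(\zeta)$ are $e^{2\pi i x_j(\zeta)}$, where $x_j(\zeta)$ are the roots of $V=\zeta$; these are simple away from critical values of $V$. For $\alpha\in\mathrm{DC}_d$ we run a quantitative KAM / almost reducibility scheme for $C^\infty_\varepsilon(\zeta,\cdot)$ in $\mathrm{GL}(2m,\C)$, uniformly for complex $\zeta$ near the window. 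This gives conjugacies $B_k$ with $\|B_k\|\le e^{o(N_k)}$ that bring the cocycle to $\diag$-type constants plus resonance-induced rotations plus exponentially small remainders. Complexifying $\zeta=t+iy$ moves each center eigenvalue off the unit circle by roughly $|\partial_\zeta x_j|\,y$. At non-resonant scales the conjugacy is controlled, so $L_i(\zeta)\lesssim y+\text{(error)}$. At resonant steps one uses the standard argument (as in the Avila–Jitomirskaya $1/2$-Hölder / Lipschitz results): the resonance time gap together with the subexponential conjugacy growth forces the contribution to be $\lesssim y$ once $y$ dominates the remainder at that scale.

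Near critical values of $V$, where roots coalesce and the local picture is a $2\times2$ Jordan-type block, the linear bound in $y$ can fail. There we aim for $\Lambda_\varepsilon(t+iy)\lesssim \sqrt y$ with a locally integrable modulus, namely $\partial_y\Lambda_\varepsilon\lesssim |t-E_c|^{-1/2}$. This still gives an $L^1$ density, since the finitely many critical values form a null set and we have bounds uniform in $y$. Alternatively, in those windows we make a separate Hölder estimate showing $d\mathcal N$ gives zero mass to points and has no singular continuous part, via a direct density estimate.

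The main obstacle is this uniform-in-$\zeta$ KAM step. We must handle the multi-dimensional center block with possibly colliding eigenvalues, and resonances between distinct roots $x_j-x_k\approx\langle n,\alpha\rangle$, while keeping the conjugacy norms subexponential so that the derivative bound survives to $y\to0$. The first thing I would try is to reduce to $2\times2$ blocks by splitting off the pairs of roots with uniformly separated eigenvalues, using the non-degeneracy of $C_0^\infty$.
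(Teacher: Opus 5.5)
There is a genuine gap: the estimate your plan rests on cannot hold. First, $\Lambda_\varepsilon(t)$ does not vanish on the spectrum in general. Only $\nu(E)$ of the top $m$ center exponents are zero, and $\nu(E)$ (Avila's acceleration in the one-frequency case, Lemma~\ref{lem:acce}) can be strictly smaller than $m(E)$. So the most you could aim for is the increment bound $\Lambda_\varepsilon(t+iy)-\Lambda_\varepsilon(t)\le Cy$.

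But the Center Thouless formula gives $\Lambda_\varepsilon(t+iy)-\Lambda_\varepsilon(t)\ge \tfrac12\log 2\,\mathcal N([t-y,t+y])-O(y)$. Hence a bound uniform in real $t$ is equivalent to $\mathcal N$ being Lipschitz, which is false in general. The IDS has square-root behaviour at gap edges. These edges come from KAM resonances, where $x_j(E)-x_k(E)$ is close to some $\langle n,\alpha\rangle$, and such resonances occur for a dense set of energies, not only at the finitely many critical values of $V$.

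This is exactly what your resonant steps produce. The Amor/Avila--Jitomirskaya argument gives $1/2$-H\"older control there, not Lipschitz control, and the conjugacies are not uniformly bounded in $E$. Your fallback, uniform integrability of $\partial_y\Lambda_\varepsilon(t+iy)$ as $y\downarrow 0$, is essentially a restatement of absolute continuity of $d\mathcal N$, and no mechanism is offered for it at the resonances. For the same reason, a uniform-in-$E$ splitting of $C^\infty_\varepsilon$ into $2\times 2$ blocks is not available.

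The missing idea is that no uniformity in $E$ is needed. The paper argues pointwise and then removes a thin exceptional set:
\begin{enumerate}
\item[(i)] Theorem~\ref{main:reduce} shows that $C^\infty_\varepsilon(E,\cdot)$ is analytically reducible to a constant diagonal matrix for all $E$ off a set $\Theta$ of Hausdorff dimension zero. It uses the KAM result Theorem~\ref{Thm2}, whose non-degeneracy hypothesis is checked via Corollary~\ref{cor:zero} (the eigenvalues of $C^\infty_0(E)$ are the center roots of $V(z)=E$) and Lemma~\ref{lem:G-analytic}.
\item[(ii)] For such $E$, conjugating $C^\infty_\varepsilon(E+iy,\cdot)$ by the same $B_E$ gives $\Lambda_\varepsilon(E+iy)\le\Lambda_\varepsilon(E)+C(E)y$. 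By Proposition~\ref{prop:thouless} this yields $\mathcal N(E+y)-\mathcal N(E-y)\le C(E)y$ with an $E$-dependent constant. Hence the singular part of $d\mathcal N$ is carried by $\Theta$.
\item[(iii)] A uniform H\"older bound $d\mathcal N(U)\le C|U|^{\beta}$ for some $\beta>0$ (from Theorem~\ref{thm:holder-GYZ} and the Thouless formula), combined with $\dim_H\Theta=0$, gives $d\mathcal N(\Theta)=0$.
\end{enumerate}
Non-uniform pointwise Lipschitz bounds off $\Theta$, plus a uniform H\"older bound to kill $\Theta$, replace the uniform estimate you are trying to prove.
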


The absolute continuity of the IDS is a fundamental analytic property. It plays a crucial role for establishing arithmetic phase transitions in Type I operators \cite{GJ}, and for proving the existence of absolutely continuous spectrum for the dual operator of Schr\"odinger operator in $\ell^2(\Z^d)$ \cite{BJ2}. Furthermore, within the present paper, it forms a central ingredient in the proof of Theorem \ref{mainthe:loca}.

Prior to this work, the regularity of the IDS in the positive Lyapunov exponent regime has been extensively studied. For almost every frequency, Goldstein and Schlag \cite{GS1} established its absolute continuity. In the fixed-frequency setting, Avila and Damanik \cite{AD} proved this property for the non-critical almost Mathieu operator, a result subsequently generalized to analytically perturbed non-critical almost Mathieu operators by Ge-Jitomirskaya-Zhao \cite{GJZ}, and to general Type I operators by Ge-Jitomirskaya \cite{GJ}. 

Regarding Eliasson's initial formulation (Problem \ref{prob:eliasson}), Wang-Xu-You-Zhou \cite{WXYZ} recently proved the absolute continuity of the IDS for Diophantine frequencies under the restriction that the potential $V$ is a trigonometric polynomial. Theorem \ref{thm:IDSac} removes this algebraic restriction, resolving Eliasson's problem for general analytic potentials.

\subsubsection{H\"older continuity of the IDS}

Having established the absolute continuity of the IDS, we now turn to its finer regularity. A natural next question is whether the IDS is H\"older continuous and, more specifically, what determines the H\"older exponent. This question is closely related to other spectral properties of quasi-periodic operators, including homogeneity of the spectrum \cite{DGSV,LYZZ}.

For one-dimensional analytic quasi-periodic Schr\"odinger operators $\mathbf{H}_{2\varepsilon\cos,V,\alpha,x}$, the regularity theory is relatively well understood. In the zero Lyapunov exponent regime and $\alpha \in \mathrm{DC}_d$, Amor \cite{Amor} proved $\mathcal N(E)$ is $1/2$-H\"older continuous in the perturbative setting (the smallness of the potential depend on $\alpha$), and Avila and Jitomirskaya \cite{AJ2} extended this to the non-perturbative setting. The exponent $1/2$ is optimal because of the square-root singularity at gap edges \cite{Puig1}. In the positive Lyapunov exponent regime and $\alpha \in \mathrm{DC}_1$,  Bourgain \cite{Bou00} proved a $(1/2-\epsilon)$-H\"older estimate for the almost Mathieu operator \eqref{eq:amo} with large coupling, while Goldstein and Schlag \cite{GS01} established H\"older continuity for general analytic potentials and strong Diophantine frequency, and later \cite{GS1} proved that 
if $V(x)$ is a small perturbation of a trigonometric
polynomial of degree $k_0$, then $\mathcal{N}(E)$ is $1/(2k_0)-\epsilon$ H\"older continuous for any $\epsilon>0$.
 You \cite{You} conjectured that the H\"older exponent is $1/(2\omega(E))$, where $\omega(E)$ is the acceleration (see Section~\ref{acceleration}); this dynamical invariant is central to Avila's global theory \cite{avila}. The conjecture has been verified for Type I operators \cite{GJ}, and lower bounds of the form $1/(2\omega(E))-\epsilon$ have been obtained for general analytic potentials and strip models \cite{HS,HS1}.

For higher-dimensional or long-range models $\mathbf{H}_{\varepsilon W,V,\alpha,x}$, the picture is less complete. Ge-You-Zhao \cite{GYZ22} proved H\"older continuity of the IDS for large trigonometric polynomial potentials with Diophantine frequencies, with an exponent controlled by the cardinality of the level set $\#\{x:V(x)=E\}$. Cao-Shi-Zhang \cite{CSZ} later extended this result to 
 large analytic potentials. 
 Thus, unlike the one-frequency theory, the available higher-dimensional bounds are still formulated in terms of geometric counting quantities rather than a dynamical invariant.

Our next result gives such a dynamical description. It shows that the H\"older exponent is governed by the Lyapunov neutral dimension $\nu(E)$ defined in \eqref{acce}.

\begin{theorem}\label{thm:IDSholder}
Suppose that $\alpha\in \mathrm{DC}_d$, $V\in C^{\omega}(\T,\R)$ is non-constant, and $W\in C^{\omega}(\T^d,\R)$.  Then for every $\epsilon>0$, there exists $\varepsilon_3=\varepsilon_3(\alpha,V,W,\epsilon)>0$ such that, whenever $|\varepsilon|<\varepsilon_3$, the IDS of $\mathbf{H}_{\varepsilon W,V,\alpha,x}$ is H\"older continuous. More precisely, for every fixed $E\in\mathbb R$, there exists a constant $C_1=C_1(\alpha,W,V,\epsilon,E)>0$ such that
$$
|\mathcal N(E)-\mathcal N(E')|
\le C_1 |E-E'|^{\frac{1}{2\nu(E)}-\epsilon}.
$$
\end{theorem}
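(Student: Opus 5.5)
The plan is to prove Theorem~\ref{thm:IDSholder} through the Center Thouless formula (Proposition~\ref{prop:thouless}, in its $\mathrm{DC}_d$ version) together with a quantitative KAM analysis of the analytic center cocycle from Corollary~\ref{cor:icc-torus}. Fix $E$ and work on the dual long-range operator $\mathbf L_{V,\varepsilon W,\alpha,\theta}$, whose IDS coincides with that of $\mathbf H_{\varepsilon W,V,\alpha,x}$ by Aubry duality. The Lyapunov exponents of the center cocycle relate to the IDS through \eqref{eq:thou}: $\sum_{i=1}^m L_i(C^\infty_\varepsilon(\zeta,\cdot))=\int\log|\zeta-t|\,d\mathcal N(t)+h_\varepsilon(\zeta)$, with $h_\varepsilon$ harmonic and hence smooth on $I_{\delta_1}$. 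Thus H\"older regularity of $\mathcal N$ at $E$ is governed by the modulus of continuity of $\zeta\mapsto\sum_i L_i(C^\infty_\varepsilon(\zeta))$ near $E$, or equivalently by how fast the $2\nu(E)$ neutral exponents can become positive as $\zeta$ leaves the real axis. The standard route is the estimate $\mathcal N(E+\eta)-\mathcal N(E-\eta)\lesssim \eta\,\partial_\eta\,\mathrm{Re}\int\log|E+i\eta-t|\,d\mathcal N$. This reduces the claim to bounding $\sum_i L_i(C^\infty_\varepsilon(E'+i\eta))-\sum_iL_i(C^\infty_\varepsilon(E'))\lesssim \eta^{1/(2\nu(E))-\epsilon}$ uniformly for real $E'$ near $E$ and small $\eta$.

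\textbf{Step 1: split off the hyperbolic center directions.} By \eqref{loc-kam}, $C^\infty_\varepsilon(\zeta,\cdot)$ is $\mathcal O(\varepsilon)$-close in $C^\omega_h$ to the constant $C^\infty_0(\zeta)$. By Remark~\ref{non-dege}, the eigenvalues of $C^\infty_0(E)$ are $e^{2\pi i x_j}$, where the $x_j$ are the real roots of $V(x)=E$; real-analytic continuation in $\zeta$ moves these roots off the circle. Directions with nonzero exponent at $E$ form a dominated-splitting piece. Those exponents are locally Lipschitz in $\zeta$, since uniformly hyperbolic blocks depend analytically on parameters, so they contribute only a smooth part. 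This leaves a $2\nu(E)$-dimensional neutral block.

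\textbf{Step 2: quantitative almost reducibility of the neutral block.} Using $\alpha\in\mathrm{DC}_d$ and the non-degeneracy of $C^\infty_0(E)$, run a multidimensional KAM scheme (Eliasson/Amor type) uniformly in $\zeta$ near $E$. At each scale $\varepsilon_k$, conjugate the neutral block to a constant plus an $\varepsilon_k$-small perturbation, with conjugacy norm growing like $\varepsilon_k^{-\epsilon}$. The constant part has eigenvalues at most $\nu(E)$-fold clustered. Its rotation/eigenvalue parameters, viewed as functions of $\zeta$, behave like roots of an analytic function with a zero of order at most $2\nu(E)$. This is where $\nu(E)$, rather than the root count $m(E)$, enters: a Weierstrass-preparation argument on the determinant of $C_k(\zeta)-\lambda$ gives an imaginary-part growth of at most $|\mathrm{Im}\,\zeta|^{1/(2\nu)}$. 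Combined with the conjugacy bound, this yields $L(C^\infty_\varepsilon(E'+i\eta))\lesssim \eta^{1/(2\nu(E))}\varepsilon_k^{-\epsilon}+\varepsilon_k$. Choosing $k$ with $\varepsilon_k\sim\eta$ gives the desired exponent $1/(2\nu(E))-\epsilon$.

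\textbf{Step 3: conclusion.} Insert this bound into the Thouless identity, i.e.\ Poisson-integral control of $\mathcal N$ via its logarithmic potential, to obtain $|\mathcal N(E)-\mathcal N(E')|\le C_1|E-E'|^{1/(2\nu(E))-\epsilon}$. Absorb constants into $C_1(\alpha,W,V,\epsilon,E)$ and shrink $\varepsilon_3$ so that Steps 1–2 hold.

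\textbf{Main obstacle.} I expect Step 2 to be hardest: making the KAM conjugations uniform in complex $\zeta$ while keeping the eigenvalue-collision order tied to $\nu(E)$ rather than to the larger $m(E)$. Resonances at intermediate scales can merge eigenvalues, so one must check that such mergers cannot raise the local multiplicity beyond $2\nu(E)$. I would try to control this through the analytic determinant $\det(C_k(\zeta)-\lambda)$ and Rouch\'e's theorem across scales.
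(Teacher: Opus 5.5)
Your skeleton matches the paper's. The Center Thouless formula (Proposition~\ref{prop:thouless}) reduces the claim to an upper bound on $\sum_{i=1}^{m}\bigl[L_i(C^\infty_\varepsilon(E+i\eta,\cdot))-L_i(C^\infty_\varepsilon(E,\cdot))\bigr]$, with the harmonic term costing $O(\eta)$. The exponent is dictated by the $2\nu(E)$-fold zero block of the center cocycle.

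The difference is in how that sum is estimated. The paper runs no KAM scheme and splits off no directions. It applies the H\"older continuity theorem for Lyapunov exponents of \cite{GYZ22} (Theorem~\ref{thm:holder-GYZ}) with $A=C^\infty_\varepsilon(E,\cdot)$ and $\widetilde A=C^\infty_\varepsilon(E+i\eta,\cdot)$. By Lemma~\ref{thm:center}(3), $A$ is $O(\varepsilon)$-close in $C^\omega_h$ to the constant $C^\infty_0(E)$, which is exactly that theorem's hypothesis. By analyticity in $\zeta$, $\|\widetilde A-A\|_h=O(\eta)$. The positive exponents $L_1,\dots,L_{m-\nu}$ form complete blocks, whose sums are $O(\eta^{1/2})$. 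The remaining $\nu(E)$ terms lie in the zero block of multiplicity $2\nu(E)$, so each is $O(\eta^{1/(2\nu(E))-\epsilon})$.

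So the paper's proof is a few lines given this black box. Yours is self-contained modulo a substantial KAM argument: your Step~2 is in effect a re-derivation of that theorem, and your Step~1 replaces its $1/2$ block-sum estimate by a domination argument.

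A few cautions if you pursue your route.
\begin{enumerate}
\item Step~1 is asserted, not proved. Nonzero exponents do not by themselves give a dominated splitting or uniform hyperbolicity of the corresponding block. In this near-constant, Diophantine setting both would follow from almost reducibility of the center cocycle, an input you would have to supply. The step is also unnecessary, since any $\eta^{1/2}$ bound on the positive part suffices.
\item In Step~2, do not run KAM uniformly in complex $\zeta$; resonances make the constant parts non-analytic in $\zeta$. Instead, apply the conjugacies $B_k$ built at the real energy $E$ to $C^\infty_\varepsilon(E+i\eta,\cdot)$, and treat the difference as an $O(\|B_k\|^2\eta)$ perturbation. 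The exponent $1/(2\nu(E))$ then comes from how a cluster of at most $2\nu(E)$ eigenvalues responds to a matrix perturbation (not $\nu(E)$, as you wrote at one point).
\item Your worry about merging disappears, because the Lyapunov gap separates the neutral cluster from the others in modulus.
\item The $\varepsilon_k$ remainder enters with the same power $1/(2\nu(E))$, which is harmless once $\varepsilon_k\sim\eta$.
\item Uniformity in $E'$ near $E$ is not needed. The statement is pointwise at $E$, and intervals centered at $E$ suffice. In any case the constants in \cite{GYZ22} depend on $A$, hence on $E$.
\end{enumerate}
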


Theorem~\ref{thm:IDSholder} replaces prior bounds based on the algebraic roots of $V(x)=E$ with the dynamical invariant $\nu(E)$. In the one-frequency setting, as shown in  Lemma \ref{lem:acce}, $\nu(E)$ coincides with Avila's acceleration $\omega(E)$, which is obviously not larger than the number of algebraic roots of $V(x)=E$. Consequently, Theorem~\ref{thm:IDSholder} provides a partial resolution to You's conjecture \cite{You} for general analytic large potentials, and reveals that this  dynamical mechanism may govern IDS regularity across arbitrary dimensions.

Here we restrict ourselves to H\"older continuity results under analytic potentials. The (non‑)H\"older continuity of the IDS for Liouvillean frequencies has been studied in \cite{ALSZ,HS2,HZ,LY,YZ}; for H\"older continuity with smooth potentials, see \cite{CSZ,CSZ23,LWY,XGW}.

\section{Strategy of the Proofs}

\subsection{A new perspective} 
Our viewpoint is that the problem of extracting a center dynamics from an infinite-dimensional fibered solution space is, at its core, a problem of isolating invariant sections. In this sense, the operator-theoretic framework of Mather \cite{Mather1968} (see also \cite{HaroDeLaLlave,LS90} and the referene therein) is the natural setting: rather than approximating the long-range operator by finite-dimensional truncations, one studies directly the associated transfer operator acting on a weighted space of sections.

\subsubsection{Multiplicative Jensen mechanism}\label{jensen}
A key observation is that analyticity of  $V_l$ enforces a separation mechanism in the unperturbed setting. Indeed, the roots of $L_E^{l}(z)=V_l(z)-E$ are isolated in the analytic annulus; the corresponding Lyapunov exponents are given by the logarithms of their moduli, with the roots on $|z|=1$ giving rise to the center directions. This is precisely the Jensen mechanism underlying the unperturbed problem.

A useful way to understand this reduction is through the multiplicative Jensen mechanism from quantitative global theory~\cite{GJYZ,HS4}.
 For finite truncations $H_n(\cdot)$ of Schr\"odinger operator, the characteristic determinants 
$f_n(E,z)=\det\bigl(H_n(z)-E\bigr)$
encode the zero distribution under Dirichlet boundary conditions, these zeros are asymptotically organized on concentric circles determined by the turning points of the complexified Lyapunov exponent \cite{GS1,HS,WWYZ}.  
Our perspective is that this mechanism should be understood at the level of the transfer operator itself. In the unperturbed setting, the roots of $L_E(z)=V(z)-E$ determine the spectrum  of the transfer operator. After perturbation, although the algebraic description is lost, the spectral separation persists and becomes a genuine spectral-gap property of the transfer operator on the weighted space $\mathcal{BS}_\xi$. By Aubry duality, these turning points of the complexified
Lyapunov exponent correspond to changes in the dominant
spectral contributions of the dual long-range operator.

This spectral gap yields uniform resolvent bounds and makes it possible to define Riesz spectral projections. The range of the center projection gives the Canonical Center Bundle (CCB), and the induced splitting produces the partially hyperbolic structure of the fibered solution bundle. In this way, the transfer-operator approach provides an infinite-dimensional mechanism for center extraction, distinct from the finite-dimensional approximation scheme of \cite{GJ}.

\subsubsection{Local nature}

In the one-frequency quasi-periodic setting considered in \cite{GJ}, the ICC is expected to lead to genuine global spectral results once the ARC for higher-dimensional cocycles is settled \cite{You}. Pending the resolution of the ARC, a KAM-type argument serves as a natural approach to local behavior, requiring dynamics that remain close to a non-degenerate constant model. Moreover, in multi-frequency settings, usually one can at best expect local perturbative conclusions \cite{Bou02a,Bou02b}. However, the local structure of ICC is not clear from the finite-range approximation \cite{GJ}. 

Observe that for small $\varepsilon$, although $\mathbf L_{V_l,\varepsilon W,\alpha,\theta}$ is local in the operator-theoretic sense, the associated Frobenius companion cocycle is non-local from a dynamical viewpoint; rather, it provides a semi-global encoding \cite{YZh} of the entire $2l$-dimensional state. As a result, the classical finite-to-infinite approximation scheme is inherently incompatible with uniform perturbative arguments. Indeed,$$A_{\varepsilon}^{l}(E,\cdot)=A_{0}^{l}(E)+O\!\left(\frac{\varepsilon W}{\hat v_l}\right),$$so the perturbation scale is effectively weighted by $|\hat v_l|^{-1}$. In particular, the smallness condition required for fixed-$l$ perturbation theory degenerates as $l\to\infty$. A second, independent obstruction arises from the companion representation itself: the matrices $A_0^l(E)$ are highly non-normal, and both their norms and the associated resolvent bounds may deteriorate with the truncation order. Specifically, $\|(z-A_0^l(E))^{-1}\|_1$ may grow linearly or even exponentially as $l\to\infty$ (Remark~\ref{abnormal}). Moreover, since reducibility and KAM estimates depend quantitatively on the cocycle norm and its dimension, the effective perturbative threshold inevitably degenerates along the approximation sequence.

From the transfer-operator viewpoint, the center dynamics arises from the Riesz spectral projection associated with an isolated spectral cluster. Since the resolvent is represented through the full symbol $D_{E,\varepsilon}(z)$ in \eqref{full-sym}, perturbative control of the symbol yields perturbative control of the spectral projection. Consequently, the center bundle varies continuously from the unperturbed one, and the resulting center cocycle remains close to the constant model determined by the roots of $V(z)-E$. 

\subsubsection{Further perspective on global theory}

Although the results of the present paper are formulated in a local (albeit non-perturbative) regime, the geometric reduction developed in Section~\ref{sec:ambient_framework} reveals a more fundamental operator-theoretic principle. Partial hyperbolicity of the fibered solution bundle ultimately depends only on the uniform invertibility of the full symbol operator $D_{E,\varepsilon}(z)$ (see \eqref{full-sym}) on the relevant spectral annuli. The local assumption serves merely as a convenient sufficient condition, guaranteeing invertibility through a Neumann-series argument (Lemma~\ref{thm:D-abstract}).
This suggests that the transfer-operator perspective may provide a natural framework for extending quantitative global theory. We hope to pursue this direction in future work.

\begin{figure}[htbp]
\centering
\resizebox{0.72\textwidth}{!}{
\begin{tikzpicture}[
    >=Stealth,
    base/.style={rectangle, rounded corners=3pt, thick, align=center, font=\small, inner sep=5pt},
    source/.style={base, fill=gray!8, draw=gray!60, text width=6.5cm},
    fib/.style={base, fill=cyan!4, draw=cyan!60!black, text width=7cm},
    hyper/.style={base, fill=purple!6, draw=purple!80!black, line width=1.5pt, text width=8.5cm},
    bridge/.style={base, fill=teal!4, draw=teal!70!black, dashed, text width=4.5cm, font=\footnotesize},
    icc/.style={base, fill=red!4, draw=red!70!black, line width=1.5pt, text width=6.5cm, font=\normalsize},
    engine/.style={base, fill=blue!4, draw=blue!80!black, text width=5.8cm},
    extbox/.style={base, fill=gray!4, draw=gray!40, font=\footnotesize, align=center},
    result/.style={base, fill=green!4, draw=green!60!black, line width=1.5pt, text width=5cm},
    main_arrow/.style={->, line width=1.2pt, draw=black!70},
    core_arrow/.style={->, line width=2.5pt, draw=purple!80!black},
    sub_arrow/.style={->, thick, draw=black!50},
    dash_arrow/.style={->, dashed, thick, draw=gray!60},
    lbl/.style={fill=white, inner sep=1.5pt, font=\scriptsize, text=black!80, align=center}
]

\filldraw[fill=gray!4, draw=gray!30, thick, rounded corners=5pt] (-7.4, -0.8) rectangle (7.4, -9.2);
\node[anchor=south west, text=gray!80!black, font=\scriptsize\bfseries\sffamily] at (-7.4, -0.9) {Exact Infinite-to-Finite Geometric Reduction};

\node[source] (op) at (0, 0) {\textbf{Long-Range Operator}\\ \textit{ Shift dynamics on solution Space }};

\node[fib] (fib) at (0, -1.8) {\textbf{Fibered Solution Bundle}\\ \textit{Riesz projections on $\mathcal{BS}_\xi$}};

\node[hyper] (hyper) at (0, -3.6) {\textbf{ Partial Hyperbolicity for Fibered Solution Bundle}\\ Theorem \ref{thm:sec7final}};

\node[bridge] (ana_bridge) at (-3.9, -6.0) {\textbf{Analytic Bridge}\\ \textit{Uniform Resolvent Bounds}\\ Proposition \ref{thm:ambient-projection}};
\node[bridge] (alg_bridge) at (3.9, -6.0) {\textbf{Algebraic Bridge}\\ \textit{Companion Cocycle}\\ Proposition \ref{thm:bridge1}};

\node[icc] (icc) at (0, -8.2) {\textbf{Canonical Center Bundle}\\ \textit{Global Trivialization} \\Theorem \ref{thm:centert}};

\filldraw[fill=blue!3, draw=blue!30, dashed, thick, rounded corners=5pt] (-7.4, -9.6) rectangle (7.4, -16.2);

\node[extbox] (eliasson) at (5.0, -13.2) {\textbf{Eliasson's KAM}\\ \textit{Weak State $C^0$}};

\node[engine] (thouless) at (-3.4, -11.0) {\textbf{Center Thouless Formula}\\ \textit{Analytic Cancellation}\\ Proposition \ref{prop:thouless}};
\node[engine] (bootstrap) at (3.4, -11.0) {\textbf{Geometric Regularity Bootstrap}\\ \textit{Analytic Rigidity ($C^0 \to C^\omega_h$)}\\ Proposition \ref{prop:rigidity}};

\node[engine, draw=blue!50, dashed] (ids) at (-3.4, -13.2) {\textbf{Spectral Regularity}\\ \textit{Absolute \& H\"{o}lder Continuity of IDS}\\ Theorem \ref{thm:IDSac},\ref{thm:IDSholder}};

\node[result] (al) at (0, -15.4) {\textbf{Anderson Localization}\\ Theorem \ref{mainthe:loca}};

\draw[main_arrow] (op) -- (fib);
\draw[main_arrow] (fib) --  (hyper);

\draw[core_arrow] (hyper.south) -- (icc.north);

\draw[sub_arrow, rounded corners=3pt] (hyper.south) -- ++(0,-0.6) -| node[lbl, above left] {Norm Limit} (ana_bridge.north);
\draw[sub_arrow, rounded corners=3pt] (hyper.south) -- ++(0,-0.6) -| node[lbl, above right] {Finite Truncation } (alg_bridge.north);

\draw[main_arrow, rounded corners=3pt] (ana_bridge.south) |- node[lbl, above left] {Confinement} (icc.170);
\draw[main_arrow, rounded corners=3pt] (alg_bridge.south) |- node[lbl, above right] {Topological\\Dimension $2m$} (icc.10);

\draw[main_arrow, rounded corners=3pt] (icc.south) -- ++(0,-0.6) -| node[lbl, above left, xshift=+1.5cm, yshift=0cm] {Dynamical-Spectral Connection} (thouless.north);
\draw[main_arrow, rounded corners=3pt] (icc.south) -- ++(0,-0.6) -| node[lbl, above right, xshift=-0.5cm, yshift=0cm] {Cocycle Geometry} (bootstrap.north);

\draw[sub_arrow] (thouless) -- (ids);
\draw[dash_arrow, rounded corners=3pt] (eliasson.105) -| node[lbl, right, pos=0.6] {Provides Section} (bootstrap.-25);

\draw[main_arrow, rounded corners=3pt] (ids.south) |- node[lbl, above left] {Measure Protection} (al.west);
\draw[main_arrow, rounded corners=3pt] (bootstrap.south) |- node[lbl, above right] {Aubry Duality} (al.east);

\end{tikzpicture}
}
\vspace{0.2cm}
\caption{Logical structure of the proof}
\label{fig:logic_architecture}
\end{figure}

 The logical structure of the proof is summarized in Figure~\ref{fig:logic_architecture}. The subsequent development proceeds along two complementary paths, spectral and dynamical, whose interaction ultimately leads to Anderson localization. Now let us state the main difficulty and novelty in the key steps. 

\subsection{Canonical Center Bundle}
The first step toward the construction of the Canonical Center Bundle is to establish partial hyperbolicity. The transfer-operator framework developed above yields such a structure simultaneously for the fibered solution bundle and for the companion cocycles $A^l_{\varepsilon}(E,\theta)$. These two center structures will later be connected by the Algebraic Bridge.
\subsubsection{Partially hyperbolicity}
For the companion cocycle $A^l_{\varepsilon}(E,\theta)$, the unperturbed case $\varepsilon=0$ is constant, and the splitting is determined explicitly by the zero distribution of $L_E^l(z)$. Hence partial hyperbolicity is immediate when $\varepsilon=0$. For $\varepsilon\neq 0$, a standard cone argument gives persistence, but the admissible perturbation size depends on the matrix dimension $2l$, thus one cannot obtain uniform small $\varepsilon$, this makes the finite-dimensional argument non-uniform as $l\to\infty$. To obtain a uniform result, we work instead with the transfer operator.
The main difficulty is to prove uniform invertibility for the truncated transfer operator. Two obstructions must be handled simultaneously: the apparent norm growth caused by the non-normal companion matrix, and the diverging coefficient in the rank-one truncation error. We remove both by a weighted gauge transformation, motivated by the dimension-free Aubry duality developed in \cite[Section~7]{LXZ}. Here, the weights suppress the unperturbed off-diagonal growth into summable series, restoring the resolvent boundedness (Lemma \ref{prop:weighted}). Crucially, the weighted resolvent's directional response  cancels the diverging perturbation coefficient. This  algebraic cancellation, detailed in Proposition \ref{prop:improved} (see also Remark \ref{rem:mechanism}), reduces the singular truncation error to a uniformly bounded perturbation, securing the infinite-range limit.

For the shift dynamics on the fibered solution bundle, one works on the exponentially weighted space $\mathcal X_\xi$, which accommodates the generalized solutions of the infinity range operator (for instance, when $z_0$ is a multiple
root of $L_E(z)=V(z)-E$, the unperturbed operator admits solutions of the form
$n^jz_0^n$).
The center bundle is extracted through the symbol calculus: uniform invertibility of the operator-valued symbol on zero-free annuli yields resolvent bounds and a well-defined Riesz projection (Proposition \ref{thm:D-regular}), while the zeros of $L_E$ on $|z|=1$ determine the center dimension.
This mechanism is similar in spirit to the use of anisotropic Banach spaces in the study of Ruelle-Pollicott resonances \cite{Rulle,Pollicott} for Anosov maps \cite{Baladi,BKL,Faure}, where the choice of norm makes the spectral features of the transfer operator accessible. The two settings are nevertheless different: in the Anosov setting, the adapted spaces are designed to exploit
the hyperbolicity of maps on finite-dimensional manifolds,
whereas here the weighted space is used to study the shift dynamics on solution space. In the present setting, the weighted space $\mathcal X_\xi$ provides the resolvent bounds needed to define the relevant Riesz projections and recover the Lyapunov exponents and Oseledets splitting similar to the finite dimensional cocycle.

\subsubsection{Two bridge framework }

Although the center space $\mathcal M^c_{V,E,\varepsilon}$ is defined intrinsically by the Riesz projection, its fiber dimension is not directly accessible from that construction. The main difficulty is that the center fibers for different truncation scales $l$ do not live in a common space, and their geometry varies with $l$, $\varepsilon$, and $\theta$. 

To overcome this obstruction, we use a two-bridge argument. The Algebraic Bridge (Proposition \ref{thm:bridge1}) identifies, in the finite-range case, the center fiber with the center subspace of the companion cocycle, and hence reduces the dimension count to the zero set of the unperturbed symbol $L_E(z)$. The Analytic Bridge (Proposition \ref{thm:ambient-projection}) then lifts the Riesz projections to the fixed Banach space $\mathcal{BS}_\xi$, where the finite-range and infinite-range center objects can be compared by uniform resolvent estimates. This yields a stable way to propagate the finite-range dimension $2m(E)$ to the infinite-range limit.

The constant fiber dimension of $\boldsymbol{\mathcal M}^c_{V,E,\varepsilon}$, which  endows it with the geometric structure of a vector bundle, is established via finite-range trigonometric polynomial approximations. This derivation follows a standard reduction scheme in global analysis. The point is to recover an intrinsic geometric object from a sequence of non-intrinsic finite-dimensional models. In the limit $l\to\infty$, the coordinate dependence of the companion truncations disappears, and the resulting center bundle becomes a dynamically invariant spectral object.

\subsubsection{Global trivialization}
The mere existence of a finite-dimensional center bundle is insufficient for dynamical analysis; we require a global holomorphic frame to trivialize it.  In the one-frequency case, as $\mathbb T^1$ is Stein, the Oka--Grauert principle \cite{Forstneric2017} yields a holomorphic trivialization of the  bundles \cite{GJ,LXZ}. However not to mention the general base $\Omega,$ even over higher-dimensional tours $\mathbb{T}^d$ ($d \ge 2$), the existence of such global frames is generally obstructed by non-vanishing Chern classes \cite{DH}. In our setting, however, the unperturbed equation provides an explicit global frame for the center directions. This frame can be continued to
the perturbed problem through the resolvent operator, yielding a global holomorphic frame $U^\infty_\varepsilon$ for
$\mathcal M^c_{V,E,\varepsilon}(\theta)$.
Consequently, the skew product on the center bundle is coordinated into a finite-dimensional analytic cocycle.

We explicitly construct the center frame using algebraically weighted contour integrals (which is close related to Riesz projections, c.f. Lemma \ref{recons} and Proposition \ref{thm:ambient-projection}). This specific formulation intertwines the shift dynamics exactly with the basis index,  enforcing a companion matrix structure on the reduced cocycle (Theorem \ref{thm:centert}). To guarantee that this infinite-dimensional frame does not degenerate as the truncation range $l \to \infty$, we reduce linear dependence to a meromorphic vanishing problem, which is then ruled out by Hermite interpolation and polynomial degree considerations (Lemma \ref{lem:lindep-uniform}). This algebraic rigidity secures a uniformly positive Gram matrix (Lemma \ref{prop:extended}), i.e. uniformly bounded
left inverse of the holomorphic frame, ensuring the metric stability of the center cocycle in the full infinite-range limit (Theorem \ref{thm:centert}).

\subsection{The Center Thouless formula}

The second major step is the Center Thouless formula (Proposition~\ref{prop:thouless}), which relates the center cocycle to the integrated density of states (IDS). In the classical Schr\"odinger operators \eqref{eq:std_schro}, the Thouless formula rests on the relation between the transfer matrix and the characteristic polynomial \cite{AS}. For long-range operators, this relation is no longer available. Finite-range truncations lead to companion matrices with strongly non-normal behavior, and their norms diverge with the truncation scale, so a direct passage to the limit is not possible.

We overcome this difficulty by working with the center cocycle. Thanks to the partially hyperbolicity of companion cocycle (Theorem \ref{thm:sec4-main}) and the Algebraic Bridge (Proposition \ref{thm:bridge1}),   the sum of the center Lyapunov exponents defines a sequence of subharmonic functions on the complexified energy plane. 
The finite-range Thouless formula
decomposes it into the logarithmic potential of the IDS and a harmonic
correction. The logarithmic potential part passes to the limit by the weak
convergence of the IDS. The remaining harmonic corrections are locally bounded
from below; hence, by Harnack compactness, they admit a locally uniform
harmonic limit \eqref{eq:com}.  Thus \eqref{eq:thou} gives the center Thouless formula and identifies the center Lyapunov exponents as the spectral quantities governing the IDS.
As a consequence, one obtains the absolute continuity (Theorem \ref{thm:IDSac}) and H\"older continuity (Theorem \ref{thm:IDSholder}) of the IDS, which are the spectral inputs needed later for the localization argument via Aubry duality.

\subsection{Anderson localization}\label{adl}
Traditional reducibility approaches to Anderson localization rely heavily on the $\mathrm{SL}(2,\mathbb{R})$ structure, where a single fibered rotation number  dictates the localized phase via Aubry duality. For long-range operators with trigonometric polynomial potentials, the dual cocycle evolves in the higher-dimensional Hermitian-symplectic group $\mathrm{HSp}(2l)$\footnote{$\mathrm{HSp}(2l) = \{ A \in \mathbb{C}^{2l \times 2l} \mid A^* J A = J \}$, where $J$ is the standard symplectic matrix.}\cite{HaroPuig}. 
While the special case $\mathrm{HSp}(2)$ has recently been resolved by Ge and Jitomirskaya \cite{GJ} through a modified duality-based scheme (with absolutely continuity of IDS as the key input), the general higher-dimensional Hermitian-symplectic setting remains substantially more difficult.
In higher dimensions, the classical rotation number generalizes to the mean Maslov index \cite{LW}. As an averaged quantity, it is insufficient to uniquely determine the localized phase. Moreover, the existence of dynamically defined rotation vectors in this setting remains a wide-open problem.

As we mentioned, our method is dynamical. 
We begin with a phase $x$ admitting an
$\ell^1(\mathbb Z)$ eigenfunction\footnote{Although \cite{Eli97,E} state the
result in terms of $\ell^2(\mathbb Z^d)$ eigenfunctions, the proof in \cite{Eli97,E} already imply the $\ell^1(\mathbb Z^d)$ property used here; for a
detailed proof, see \cite[Theorem 10.2]{WXZ}.}, a property guaranteed for almost every phase
by Eliasson's framework \cite{Eli97,E}. Under Aubry duality, this eigenfunction
translates into a continuous section
$\widehat u\in C^0(\mathbb T^d,\C)$ in the dual long-range operator
$\mathbf L_{V,\varepsilon W,\alpha,\theta}$.
By the partially hyperbolic structure of solution space (Theorem \ref{thm:center_reduction}), the solution is confined to the center bundle. Passing to a global holomorphic frame reduces the dynamics to a $2m$-dimensional center cocycle. On this finite-dimensional model, analytic reducibility  of center cocycle forces the rotational phases of the diagonalized cocycle to
rigidly align with $x$, which yields a regularity bootstrap (Proposition \ref{prop:rigidity}) upgrading $C^0$ coefficients to holomorphic ones. 

To translate this geometric regularity back to the original operator, we couple the bootstrap mechanism with the measure-theoretic protection provided by the absolute continuity of IDS (Theorem \ref{thm:IDSac}). This absolute continuity rigorously isolates the pure point spectrum from the zero-measure reducible exceptional set of the center cocycle (Theorem \ref{main:reduce}), where the structure of canonical center cocycle plays an important role (consult Remark \ref{non-dege} for details).  As a result, the analytic rigidity of the center bundle is  preserved on the  eigenvalues. Via Aubry duality, the analytic regularity of the dual state inherently enforces  exponential decay of the  eigenfunctions. This proves Anderson localization
in Theorem \ref{mainthe:loca}, without requiring finite
truncations or infinite-dimensional localized perturbative estimates.

Note in the trigonometric polynomial case, the preceding dynamical rigidity argument was already carried out \cite{WYZ}, to do this, the center cocycles are already obtained by partially hyperbolicity of   companion  cocycle $(\alpha,A^{l}_{\varepsilon}(E,\cdot))$ \cite{GJYZ}.
At first sight, one might try to obtain Anderson localization for analytic potential by combining localization for  trigonometric polynomial with convergence of the associated center cocycles as in \cite{GJ}. Such a strategy would require a uniform mechanism for tracking the relevant eigenvalue branches and eigenfunctions along the truncation sequence.
The point is that the mere existence of a limiting ICC does not provide such a mechanism. Even if the finite-range center cocycles converge, there is in general no canonical way to identify which branch $E_k^{l}(x)$, or which eigenfunction $u^{l,k}$, should be followed as $l\to\infty$. Consequently, one cannot expect localization bounds of the form
$
|u_n^{l,k}(x)| \le C_x e^{-h|n+k|}
$
with constants independent of both $l$ and $k$, and the limiting procedure does not by itself produce a complete localized basis for the infinite-range operator. Arbitrary choices may drift among different branches; after
normalization, one may even have
$u^{l_j,k_j}(x)\rightarrow 0$. 
The two-dimensional center case of \cite{GJ} is special because the ICC carries a fibered rotation number (or rotation pairs defined in \cite{GJ}), which supplies a rigid phase-selection rule, as first observed in \cite{AYZ}. In that setting, the relation $\rho(E)=x+\langle k,\alpha\rangle$
canonically selects the relevant spectral branches and allows one to propagate localization from finite-range approximants to the limit. No comparable invariant is available in higher-dimensional center dynamics.

Thus the essential obstruction here is not the convergence of the center cocycles, but the lack of a canonical branch-selection mechanism. This exactly motivates our partially hyperbolic for solution space approach, where the spectral information is extracted directly from the full transfer operator rather than from a limiting family of finite-range approximants.

\section{Preliminaries}

\subsection{Notations}
For the reader's convenience, the principal notation used throughout this paper is summarized in the following table.
\begin{longtable}{
    >{\raggedright\arraybackslash}p{0.18\textwidth}
    >{\raggedright\arraybackslash}p{0.54\textwidth}
    >{\raggedright\arraybackslash}p{0.20\textwidth}
}

\caption{Summary of Notation} \label{tab:notation} \\
\toprule
\textbf{Symbol} & \textbf{Meaning} & \textbf{Reference} \\
\midrule
\endfirsthead

\caption[]{Summary of Notation (continued)} \\
\toprule
\textbf{Symbol} & \textbf{Meaning} & \textbf{Reference} \\
\midrule
\endhead

\midrule
\multicolumn{3}{r}{\footnotesize Continued on next page} \\
\endfoot

\bottomrule
\endlastfoot

$\mathbf{H}_{\varepsilon W,V,\alpha,x}$ & (Generalized) Schr\"odinger operator on $\Z^d$ & \eqref{eq:long_range_phys} \\
$\mathbf{L}_{V,\varepsilon W,T,\theta}$ & Long-range operator on $\Z$ &  \eqref{eq:g}\\
$(\Omega,T)$ & topological dynamical system  \\
$\mathfrak{B}(\mathcal{H})$ & Bounded linear operator from space $\mathcal{H}$ to $\mathcal{H}$ & \\
$\mathfrak A$ & Banach algebra of functions space & Definition \ref{def:admissible} \\
$\mathcal{X}_{\xi}$ & Weighted sequence Banach space & Section~\ref{def:norm} \\
$\mathcal{BS_\xi}$ & Banach space of weighted bounded sections & Section~\ref{def:norm} \\
$\|\cdot\|_\xi$ & Weighted norm on $\mathcal{X}_{\xi}$ & Section~\ref{def:norm} \\
$\|\cdot\|_\infty$ & Sup norm on $\mathcal{BS_\xi}$ & Section~\ref{def:norm} \\
$S$ & Shift operator on sequences & Section~\ref{def:norm} \\
$\boldsymbol{\mathcal M}_{V,E,\varepsilon}$ & Bounded solution space  & Section~\ref{def:norm} \\
$\mathcal M_{V,E,\varepsilon}(\theta)$ & Fiber of the bounded solution space & Definition \ref{def:solution-space} \\
$\mathcal M^{s,c,u}_{V,E,\varepsilon}(\theta)$ & Stable, Center, Unstable subspace & Theorem~\ref{thm:center_reduction} \\
$\mathbb{P}^{l}_{\varepsilon,c}$ & Extended center projection  & Equation \eqref{eq:ambient-P-def} \\
$L_{E}(z)$ & Infinite-range symbol $V(z)-E$ & Equation \eqref{equ:symbol} \\
$L_E^{l}(z)$ & Truncated symbol & Equation \eqref{equ:tsymbol} \\
$m(E)$ & Number of zeros of $L_{E}$ on $|z|=1$ & Equation \eqref{zero} \\
$r_-(E),\,r_+(E)$ & Radii defining the center annulus & Section~3 \\
$\eta_-(E),\,\eta_+(E)$ & Width parameters for the separating annuli & Section~3 \\
$\mathbb A_h$ & Analytic annulus $\{e^{-h}<|z|<e^h\}$ & \\
$\mathcal A_-(E),\mathcal A_+(E)$ & Annuli around $r_-(E)$ and $r_+(E)$ & Section~3 \\
$\mathcal A(E)$ & Union of the separating annuli & Section~3 \\
$\mathcal A'(E)$ & Smaller annuli & Section~3 \\
$\delta_0$ & Positive lower bound of $|L_{E}|$ on $\mathcal A(E)$ & Equation \eqref{delta0} \\
$r_s(E),\,r_u(E)$ & Inner and outer radii & Section~3 \\
$\Gamma_s,\,\Gamma_u$ & Inner and outer contours & Section~3 \\
$\Gamma_-,\,\Gamma_+$ & Boundary circles of the center annulus & Section~3 \\
$\Gamma_c$ & Center contour & Section~3 \\
$I_\delta$ & Complex neighborhood of interval $I$ & Equation \eqref{complexneig} \\
$\|f\|_{\mathfrak A,\delta}$ & Norm on space $\mathfrak A\times I_\delta$ & Equation \eqref{xdeltanorm} \\
$A^{l}_\varepsilon(\zeta,\theta)$ & Finite-dimensional companion cocycle  & Equation \eqref{equ:cocycle} \\
$U^{l}_\varepsilon(\zeta,\theta)$ & Holomorphic frame of the center bundle & Equation \eqref{equ:basis} \\
$u^{l,p}_\varepsilon(\zeta,\theta)$ & $p$-th vector in the holomorphic center frame & Equation \eqref{equ:basis1} \\
$C^{l}_\varepsilon(\zeta,\theta)$ &  Center Cocycle & Equation \eqref{eq:defAc} \\
$\mathcal N(E)$ & Integrated density of states & Equation \eqref{sids} \\

\end{longtable}

\subsection{Complex  cocycles}\label{com}
Let $T:\Omega\to \Omega$ be homeomorphism,  $A\in C^0(\Omega,{\rm M_m}(\C))$, $\nu$ be a $T$-invariant ergodic probability measure on $\Omega$, a cocycle $(T, A)$ is a linear skew product:
$$
(T,A)\colon \left\{
\begin{array}{rcl}
	\Omega\times \C^{m} &\to& \Omega \times \C^{m}\\
	(\theta,v) &\mapsto& (T\theta,A(\theta)v)
\end{array}
\right.  .
$$
For $n\in\mathbb{Z}$, $A_n$ is defined by $(T,A)^n=(T^n,A_n).$ Thus $A_{0}(\theta)=id$,
\begin{equation*}
	A_{n}(\theta)=\prod_{j=n-1}^{0}A(T^{j}\theta)=A(T^{n-1}\theta)\cdots A(T\theta)A(\theta),\ for\ n\ge1,
\end{equation*}
and $A_{-n}(\theta)=A_{n}(T^{-n}\theta)^{-1}$.

We denote by $L_1(A)\geq L_2(A)\geq...\geq L_m(A)$ the Lyapunov exponents of $(T,A)$ repeated according to their multiplicities, i.e.,
$$
L_k(A)=\lim\limits_{n\rightarrow\infty}\frac{1}{n}\int_{\Omega}\ln(\sigma_k(A_n(\theta)))d\nu,
$$
where for any matrix $B\in {\rm M_m}(\C)$, we denote by
$\sigma_1(B)\geq...\geq \sigma_m(B)$ its singular values (eigenvalues
of $\sqrt{B^*B}$).  Since the k-th exterior product $\Lambda^k A_n$ satisfies $\sigma_1(\Lambda^k A_n)=\|\Lambda^k A_n\|$, $L^k(A)=\sum_{j=1}^kL_j(A)$ satisfies
$$
L^k(A)=\lim\limits_{n\rightarrow \infty}\frac{1}{n}\int_{\Omega}\ln\|\Lambda^kA_n(\theta)\|d\nu.
$$

 Recall that
for complex cocycles $(T,A)\in B(\Omega,{\rm M_m}(\C))$, Oseledets theorem provides us with  strictly decreasing
sequence of Lyapunov exponents $L_j \in [-\infty,\infty)$ of multiplicity $m_j$, $1\leq j \leq \ell$ with $\sum_{j}m_j=m$, and for $a.e.$ $\theta$, there exists
a measurable invariant decomposition $$\C^m=E^{1}(\theta)\oplus E^2(\theta)\oplus\cdots\oplus E^{\ell}(\theta)$$ with $\dim E^j(\theta)=m_j$ for $1\leq j\leq \ell$ such that $$
\lim\limits_{n\rightarrow\infty}\frac{1}{n}\ln\|A_n(\theta)v\|=L_j,\ \  \forall v\in E^j(\theta)\backslash\{0\}.
$$
An invariant decomposition $\C^m=E^{1}(\theta)\oplus E^2(\theta)\oplus\cdots\oplus E^{\ell}{(\theta)}$  is   dominated if for any unit vector $v_j\in E^j(\theta)\backslash \{0\}$, we have $$\|A_n(\theta)v_j\|>\|A_n(\theta)v_{j+1}\|.$$
In this paper, we focus on the case that the associated cocycle is {\it partially hyperbolic}, which means that there is an invariant dominated splitting $E^u\oplus E^c\oplus E^s$ of $\C^{2m}$ everywhere, and  there exist  some constants $C>0,c>0$, and for every $n\geqslant 0$,
$$
\begin{aligned}
	\| A_n(\theta)v\| \leqslant Ce^{-cn}\| v\|, \quad & v\in E^s(\theta),\\
	\| A_n(\theta)^{-1}v\| \leqslant Ce^{-cn}\|v\|,  \quad & v\in E^u(T^n\theta).
\end{aligned}
$$

In this paper, we shall use the cocycle induced by the finite-range eigenvalue equation
$$ (\mathbf L_{V_l,\varepsilon W,T,\theta}u)_n = \sum_{|k|\le l}\widehat v_k u_{n+k} + \varepsilon W(T^n\theta)u_n = Eu_n, \qquad n\in\mathbb Z. $$ The associated $2l$-dimensional cocycle is given by
\begin{equation}\label{equ:cocycle}
    A_{\varepsilon}^{l}(E,\theta)=\begin{pmatrix} -\frac{\hat v_{l-1}}{\hat v_l} & \cdots & -\frac{\hat v_{1}}{\hat v_l} & \frac{E-\hat v_0-\varepsilon W(\theta)}{\hat v_l} & -\frac{\hat v_{-1}}{\hat v_l} & \cdots & -\frac{\hat v_{-\ell+1}}{\hat v_l} & -\frac{\hat v_{-l}}{\hat v_l} \\ 1 &&&&&&&
\\& \ddots &&&&&&
\\&& 1 &&&&&
\\&&& 1&&&&
\\&&&& 1&&&
\\&&&&& \ddots&&
\\&&&&&& 1& 0
\end{pmatrix}.
\end{equation}
We call $(T,A^{l}_{\varepsilon}(E,\cdot))$ a Frobenius companion cocycle.
\subsection{Global theory of quasi-periodic cocycles.}\label{acceleration}
We give a brief overview of Avila's global theory for one-frequency quasi-periodic $\mathrm{SL}(2,\mathbb{R})$ cocycles \cite{avila}. Let $\alpha \in \mathbb{R} \setminus \mathbb{Q}$ and $A \in C^\omega_h(\mathbb{T}, \mathrm{SL}(2,\mathbb{R}))$. For $|y| < h$, define $A_y \in C^\omega(\mathbb{T}, \mathrm{SL}(2,\mathbb{R}))$ by
$A_y(\cdot) = A(\cdot + \mathrm{i} y)$. Then, the \textit{acceleration} of the cocycle $(\alpha, A)$ is defined as
\begin{equation}\label{defacc}
    \omega(\alpha, A) = \lim_{y \to 0^{+}} \frac{L(\alpha, A_{ y}) - L(\alpha, A)}{2\pi y}.
\end{equation}
		The key ingredient to the global theory is that the acceleration is quantized:
\begin{theorem}[\cite{avila}\label{acce1}]
	Suppose $(\alpha,A)\in \mathbb{R}\setminus\mathbb{Q} \times  C^{\omega}(\mathbb{T},\mathrm{SL}(2,\R))$. Then $\omega(\alpha,A)\in\mathbb{Z}$. 
\end{theorem}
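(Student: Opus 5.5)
The proof follows Avila \cite{avila} and has three parts: establish convexity, prove quantization for rational frequencies, then pass to the irrational limit. Throughout, write $L(y):=L(\alpha,A_y)$ for $|y|<h$.

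\textbf{Convexity.} Convexity of $y\mapsto L(y)$ follows from subharmonicity. The function $z\mapsto \frac1n\int_{\mathbb T}\log\|A_n(x+z)\|\,dx$, with $z=x'+\ii y$, is subharmonic in $z$ and independent of $\Re z$. A subharmonic function of $y$ alone is convex, so each finite-$n$ average is convex in $y$. The Lyapunov exponent is a decreasing limit of these (by subadditivity), so $L$ is convex. Consequently the one-sided limit in \eqref{defacc} exists, and $2\pi\,\omega(\alpha,A)$ is the right derivative of $L$ at $0$. In addition, $|L(y)|\le \sup_{|\Im z|<h'}\log\|A(z)\|$ on $|y|\le h'<h$, which gives an a priori bound on the slopes of $L$ on compact subintervals.

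\textbf{Rational frequencies.} For $\alpha=p/q$ we have $L(p/q,A_y)=\frac1q\int_{\mathbb T}\log\rho\bigl(A_{q}(x+\ii y)\bigr)\,dx$, where $\rho$ denotes the spectral radius. Because $A_q(z+p/q)$ is conjugate to $A_q(z)$, the trace $t(z)=\mathrm{tr}\,A_q(z)$ is $1/q$-periodic. Hence $t(z)=g(e^{2\pi\ii qz})$ for some $g$ holomorphic on an annulus. Next, $\log\rho=\log|\lambda|$, where $\lambda$ is the larger root of $\lambda^2-t\lambda+1=0$, is a subharmonic function of $w=e^{2\pi\ii qz}$. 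Its circle averages are therefore convex in $\log|w|$. Away from the finitely many radii carrying zeros of $t^2-4$ (or more precisely, of the relevant branch), this average is affine in $\log|w|$. Its slope is an integer $k$: it counts, via the argument principle (Jensen's formula), the winding of the dominant eigenvalue branch. Translating back, $\log|w|=-2\pi qy$, and after dividing by $q$ the slope of $L(p/q,A_y)$ in $y$ equals $2\pi k$ with $k\in\mathbb Z$. This shows that $y\mapsto L(p/q,A_y)$ is piecewise affine with slopes in $2\pi\mathbb Z$. The a priori bound from the first step controls the number of breakpoints on $[-h',h']$ independently of $q$.

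\textbf{Passage to irrational $\alpha$ (main obstacle).} Take $p_n/q_n\to\alpha$. By continuity of the Lyapunov exponent for analytic $\mathrm{SL}(2,\mathbb R)$ cocycles in the frequency at irrational $\alpha$ (Bourgain--Jitomirskaya), $L(p_n/q_n,A_y)\to L(\alpha,A_y)$ uniformly for $y$ in compacts. We then have uniform limits of convex, piecewise affine functions with slopes in $2\pi\mathbb Z$, slopes uniformly bounded, and uniformly boundedly many breakpoints. Any such limit is again convex and piecewise affine with slopes in $2\pi\mathbb Z$, since integer slopes cannot converge to a non-integer. Hence $L(\alpha,A_y)$ has integer slopes (divided by $2\pi$), and in particular $\omega(\alpha,A)\in\mathbb Z$. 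The delicate point is the rational step, where the branch of the larger eigenvalue may switch where $t^2-4$ vanishes. I would first try to handle this by working with $\max$ of the two branches as a subharmonic function and showing that the set of $y$ where averaging meets such zeros is finite, with a count bounded independently of $q$. If this uniformity fails, the fallback is Avila's original route: use the uniform convergence together with convexity only to control the right derivative at a fixed $y$, then establish quantization of $\omega$ at all but finitely many $y$ near $0$, and conclude by right-continuity of the derivative of a convex function.
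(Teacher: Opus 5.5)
The paper gives no proof of this statement; it is quoted from Avila. So the question is whether your argument stands on its own. It does not: the rational-frequency step is false, and everything after it depends on it. Your convexity step and the final reduction via right-continuity of the right derivative are fine.

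\textbf{Where it fails.} You claim that $\log\rho(A_q(z))$ is harmonic away from the zeros of $t^2-4$. Writing $\lambda+\lambda^{-1}=t$, one has $\log\rho=G\circ t$, where $G$ is the Green function of $\mathbb C\setminus[-2,2]$.
\begin{itemize}
\item The Riesz measure of $\log\rho(A_q(\cdot))$ is therefore $2\pi$ times the pull-back under $t$ of the arcsine measure of $[-2,2]$.
\item It lives on the whole real-analytic set $t^{-1}([-2,2])$, where both eigenvalues are unimodular and the dominant branch switches. Taking the maximum of the two branches, as in your fallback, just gives $\log\rho$ again.
\item The set of $y$ whose line meets $t^{-1}([-2,2])$ typically has interior, and there $y\mapsto L(p/q,A_y)$ is strictly convex. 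The argument principle gives integer slopes only on $y$-intervals whose lines avoid this set.
\end{itemize}

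\textbf{A concrete counterexample.} Take $p/q=0/1$ and $A(x)=\begin{pmatrix}2-2\cos 2\pi x&-1\\1&0\end{pmatrix}$. Then $t(x)\in(-2,2)$ exactly for $|x|<1/4$. There $\partial_y\log\rho(A(x+\ii y))|_{y=0^+}=|t'(x)|/\sqrt{4-t(x)^2}$, while the harmonic part contributes nothing at $y=0$. Hence $2\pi\omega=\int_{-1/4}^{1/4}4\pi|\sin 2\pi x|\,(4-t(x)^2)^{-1/2}\,dx=\pi$, i.e.\ $\omega(0,A)=1/2$. Moreover $t(\ii y)=2-2\cosh 2\pi y\in(-2,0)$ for $0<y<y_1$ with $\cosh 2\pi y_1=2$. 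So $L(0,A_y)$ is strictly convex there, with slope rising continuously from $\pi$ to $2\pi$.

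The same happens for every $q$. For the almost Mathieu cocycle with $\lambda<1$, Chambers' formula gives the $p/q$-approximants accelerations in $(0,1)$ at energies inside the periodic bands.

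Consequences for your argument:
\begin{itemize}
\item Acceleration is not quantized at rational frequencies, so your breakpoint bound has nothing to apply to.
\item The limit step therefore has no valid input.
\item Your fallback (``quantization at all but finitely many $y$'') restates the theorem without supplying a mechanism.
\end{itemize}

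\textbf{What is missing} is a family of genuinely quantized convex functions converging to $L(\alpha,A_y)$. Jensen's formula applies to $\log|t|$, not to $\log|\lambda_+(t)|$ (the larger root). Since $\mathrm{tr}\,A_q(z)=g(e^{2\pi\ii qz})$, the function $\psi_{p/q}(y):=\frac1q\int_{\mathbb T}\log|\mathrm{tr}\,A_q(x+\ii y)|\,dx$ is convex and piecewise affine with slopes in $2\pi\mathbb Z$, breaking only at moduli of zeros of $g$.

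The substantive step is to prove $\psi_{p_n/q_n}(y)\to L(\alpha,A_y)$ on a neighbourhood of any $y_0$ with $L(\alpha,A_{y_0})>0$.
\begin{itemize}
\item \emph{Upper bound.} This follows from $|\mathrm{tr}\,B|\le 2\|B\|$ and $\limsup_n\sup_x\frac1{q_n}\log\|A_{q_n}(x+\ii y)\|\le L(\alpha,A_y)$, with products taken at frequency $p_n/q_n$.
\item \emph{Lower bound.} Use continuity of $L$ in the frequency. For $y\neq0$ the cocycle is $\mathrm{SL}(2,\mathbb C)$-valued, so you need the complex extension of Bourgain--Jitomirskaya. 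Together with the upper bound, this gives $\frac1{q_n}\log\rho\to L$ in measure.
\item On the good set, $|\mathrm{tr}|\ge\rho-\rho^{-1}$ transfers this to $\frac1{q_n}\log|\mathrm{tr}\,A_{q_n}|$.
\item On the small exceptional set, you need uniform integrability of the negative part of $\frac1{q_n}\log|\mathrm{tr}\,A_{q_n}|$. This comes from its uniformly bounded Riesz mass: it is bounded above on a strip and $\ge L/2$ on most of each line.
\end{itemize}

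To conclude:
\begin{itemize}
\item Derivatives of converging convex functions converge at differentiability points of the limit, so $L'(y)\in2\pi\mathbb Z$ wherever $L$ is differentiable and positive.
\item Where $L$ is differentiable and vanishes, $L'(y)=0$ because $L\ge0$.
\item Right-continuity of the right derivative $D^+L$ then yields $2\pi\omega=D^+L(0)\in2\pi\mathbb Z$.
\end{itemize}
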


\subsection{Classical Aubry Duality}
Suppose $ \mathbf{L}_{V,\varepsilon W,\alpha,\theta} $ has a solution $ u \in \ell^1(\mathbb{Z}) $ for some $ E $, define $ \hat{u}(x) = \sum_{n\in\mathbb{Z}} u_n e^{2\pi i n x} $. For every $ x\in\mathbb{T} $,
$$
\tilde{u}(n) = \hat{u}(x + \langle n, \alpha\rangle) e^{2\pi i \langle n, \theta\rangle}, \quad n\in\mathbb{Z}^d,
$$
is a solution of $ \mathbf{H}_{\varepsilon W,V,\alpha,x} $.

Conversely, suppose $ \mathbf{H}_{\varepsilon W,V,\alpha,x} $ has a solution $ u \in \ell^1(\mathbb{Z}^d) $, define $ \hat{u}(\theta) = \sum_{n\in\mathbb{Z}^d} u_n e^{i\langle n, \theta\rangle} $. Then for every $ \theta\in\mathbb{T}^d $,
$\tilde{u}(n) = \hat{u}(\theta + n\alpha) e^{2\pi i n x},  n\in\mathbb{Z},
$
is a solution of $ \mathbf{L}_{V,\varepsilon W,\alpha,\theta} $. 

\subsection{Integrated Density of States}\label{def:IDS}
The density of state, which can be interpreted as the average spectral measures of an ergodic family of self-adjoint operators $ \{ \mathbf{H}_{\varepsilon W,V,\alpha,x}\}_{x\in\T}  $ over $ x$:
\begin{equation}\label{sids}
    \mathcal{N}(E) = \int_{\mathbb{T}} \mu_{x,\delta_p}(-\infty, E] dx,\qquad \text{ for all } p
\end{equation}
where $ \mu_{x,\delta_p} $ denotes the associated spectral measure of $ \mathbf{H}_{\varepsilon W,V,\alpha,x} $ corresponding to the vector $\delta_p$.
We can also define the density of state of operator $ \{ \mathbf{L}_{V,\varepsilon W,\alpha,\theta} \}_{\theta \in \mathbb{T}^d}$
$$\widehat{\mathcal{N}}(E) =\int_{ \mathbb{T}^d} \mu_{\theta,\delta_p}(-\infty, E] d\theta,\qquad \text{ for all } p$$
where $ \mu_{\theta,\delta_p} $ denotes the associated spectral measure of $  \mathbf{L}_{V,\varepsilon W,\alpha,\theta} $ corresponding to the vector $\delta_p$.
We have the following relation of $ \mathcal{N}(E)$ and $\widehat{\mathcal{N}}(E) $.

\begin{proposition}\label{IDSeq}\cite{GJLS,Puig1}
    $ \mathcal{N}(E)=\widehat{\mathcal{N}}(E) $.
\end{proposition}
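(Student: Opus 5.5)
The plan is to prove the equality at the level of the density-of-states \emph{measures}. For two compactly supported probability measures, equality of their moments (equivalently, of their integrals against continuous functions) forces equality of their distribution functions. So it suffices to show
$$
\int_{\mathbb T}\langle \delta_0, f(\mathbf H_{\varepsilon W,V,\alpha,x})\delta_0\rangle\,dx
=\int_{\mathbb T^d}\langle \delta_0, f(\mathbf L_{V,\varepsilon W,\alpha,\theta})\delta_0\rangle\,d\theta
$$
for every polynomial $f$. Both operator families are bounded uniformly in the phase ($\|\mathbf H\|\le \|W\|_\infty|\varepsilon|+\|V\|_\infty$ after Fourier transform, and similarly for $\mathbf L$). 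Hence both spectral measures live in one fixed compact interval, Weierstrass approximation passes from polynomials to $C^0$, and the distribution functions then agree at every point. Right-continuity handles possible atoms, since both sides are right-continuous in $E$.

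Independence of the base vector $\delta_p$ comes from covariance: for $\mathbf L$ we have $S\mathbf L_\theta S^{-1}=\mathbf L_{T\theta}$, and Lebesgue measure on $\mathbb T^d$ is invariant under $\theta\mapsto\theta+\alpha$, so $\delta_p$ may be replaced by $\delta_0$. The same argument applies to $\mathbf H$ with the shift by $\langle n,\alpha\rangle$ on $\mathbb T$. Thus I only need the identity above with $p=0$.

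For the main identity I would realize Aubry duality as a unitary between direct integrals. Set
$$
\widetilde{\mathbf H}=\int^\oplus_{\mathbb T}\mathbf H_x\,dx \ \text{ on } L^2(\mathbb T\times\mathbb Z^d),
\qquad
\widetilde{\mathbf L}=\int^\oplus_{\mathbb T^d}\mathbf L_\theta\,d\theta \ \text{ on } L^2(\mathbb T^d\times\mathbb Z),
$$
and define
$$
(\mathcal U\phi)(\theta,n)=\sum_{m\in\mathbb Z^d}\widehat{\phi}(n,m)\,e^{2\pi i\langle m,\theta+n\alpha\rangle},
$$
where $\widehat\phi(n,m)=\int_{\mathbb T}\phi(x,m)e^{-2\pi i n x}\,dx$. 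This is the composition of a Fourier transform in $x$, a Fourier transform in $m$, and the measure-preserving change of variables $\theta\mapsto\theta+n\alpha$ on each slice, so $\mathcal U$ is unitary. The formal computation behind the two duality statements of the preceding subsection should give $\mathcal U\widetilde{\mathbf H}=\widetilde{\mathbf L}\,\mathcal U$:
\begin{itemize}
\item the multiplication by $V(x+\langle m,\alpha\rangle)$ becomes the convolution $\sum_k\widehat v_k u_{n+k}$;
\item the hopping $\sum_k\widehat w_k u_{m+k}$ becomes multiplication by $W(\theta+n\alpha)$.
\end{itemize}
The vector $\Phi=1_{\mathbb T}\otimes\delta_0$ is mapped by $\mathcal U$ to $1_{\mathbb T^d}\otimes\delta_0$. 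Therefore
$$
\int\langle\delta_0,f(\mathbf H_x)\delta_0\rangle\,dx=\langle\Phi,f(\widetilde{\mathbf H})\Phi\rangle=\langle \mathcal U\Phi,f(\widetilde{\mathbf L})\,\mathcal U\Phi\rangle,
$$
and the last expression equals the $\theta$-average for $\mathbf L$.

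I expect the main obstacle to be the careful verification that $\mathcal U$ is unitary and intertwines the two direct integrals with the correct signs and phase conventions. In particular, the roles of $\mathbb Z^d$ versus $\mathbb Z$ and of $\mathbb T$ versus $\mathbb T^d$ are swapped, and the twist $e^{2\pi i\langle m,n\alpha\rangle}$ must be consistent. My first attempt would check the intertwining on the dense set of finitely supported trigonometric polynomials, where all sums are finite, and then extend by boundedness. As a fallback, one can bypass the unitary entirely and expand both moments as sums over lattice paths. The moment $\langle\delta_0,\mathbf H_x^k\delta_0\rangle$ is a sum over closed paths in $\mathbb Z^d$ weighted by products of $\widehat w$'s and values of $V$. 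Averaging in $x$ turns each product of $V$'s into a sum of Fourier coefficients $\widehat v$ constrained to total frequency zero. The same expansion for $\mathbf L$ yields the identical combinatorial sum with the roles of $\widehat v$ and $\widehat w$ exchanged, which also proves the moment identity.
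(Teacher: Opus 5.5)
Your reduction steps are fine: moments plus Weierstrass on a common compact interval, and covariance to fix $p=0$. Realizing Aubry duality as a unitary between direct integrals that sends $1\otimes\delta_0$ to $1\otimes\delta_0$ is the argument of the references the paper cites for this proposition; the paper itself gives no proof.

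However, the central identity $\mathcal U\widetilde{\mathbf H}=\widetilde{\mathbf L}\,\mathcal U$ is false for the $\mathcal U$ you wrote, and this is more than a sign to be tuned. Test it on $\phi(x,m)=e^{2\pi i a x}\mathbf 1_{\{m=b\}}$ and extend by linearity. The potential part of $\widetilde{\mathbf H}$ is carried to
\[
\psi\longmapsto \sum_{j}\widehat v_j\,\psi(\theta+2j\alpha,\,n-j),\qquad \psi=\mathcal U\phi,
\]
which shifts $\theta$ and so is not even decomposable over $\mathbb T^d$. Replacing the twist by $e^{2\pi i\langle m,\theta-n\alpha\rangle}$ restores the fibering, but then
\[
\mathcal U\widetilde{\mathbf H}\,\mathcal U^{-1}=\int^\oplus_{\mathbb T^d}\widetilde{\mathbf L}_\theta\,d\theta,\qquad (\widetilde{\mathbf L}_\theta u)_n=\sum_k\widehat v_{-k}u_{n+k}+\varepsilon W(n\alpha-\theta)u_n ,
\]
which is $\mathbf L_{\check V,\varepsilon W,\alpha,-\theta}$ with $\check V(x):=V(-x)$. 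This persists under every sign choice: whenever the result is decomposable, you get either the reflected symbol $\check V$ or the frequency $-\alpha$, never $\mathbf L_{V,\varepsilon W,\alpha,\theta}$ itself.

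So one more step is needed. Observe that $\mathbf L_{\check V,\varepsilon W,\alpha,\psi}=(\mathbf L_{V,\varepsilon W,\alpha,\psi})^{\top}$ (matrix transpose). For real $V,W$ this equals $\mathcal C\,\mathbf L_{V,\varepsilon W,\alpha,\psi}\,\mathcal C$ with $\mathcal C$ complex conjugation. For every polynomial $f$,
\[
\langle\delta_0,f(A^{\top})\delta_0\rangle=\langle\delta_0,f(A)^{\top}\delta_0\rangle=\langle\delta_0,f(A)\delta_0\rangle .
\]
Together with invariance of Lebesgue measure under $\theta\mapsto-\theta$, this gives the moment identity and completes your main route.

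The same point appears in your fallback. Call the letters of a word potential letters $s$ (carrying $\widehat v_{j_s}$) and hopping letters $t$ (carrying $\widehat w_{k_t}$); after averaging, $\sum_s j_s=0=\sum_t k_t$. For $\mathbf H_x$, the averaged contribution of a word carries the phase $\exp\bigl(2\pi i\sum_{t<s}j_s\langle k_t,\alpha\rangle\bigr)$. For $\mathbf L_\theta$, the same word, with $\widehat v$ now the convolution and $W$ the multiplication, gives $\exp\bigl(2\pi i\sum_{s<t}j_s\langle k_t,\alpha\rangle\bigr)$.

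So the two sums are not identical word by word. Instead, the $\mathbf L$-term of a word equals the $\mathbf H$-term of the reversed word. Summing over all $2^K$ words then gives equality of the $K$-th moments. This is the transposition symmetry again in combinatorial form. Once you make it explicit, the fallback is a complete, convention-free proof.
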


\section{The Algebraic Seed and Spectral Gap}
\label{sec:algebraic-seed}

\subsection{Distribution of Zeros of analytic functions}

The fundamental assumption that $V(z)$ is analytic in $\mathbb A_h$ plays a crucial role here. Analyticity ensures that the zeros of $L_{E}^{}(z)$ are discrete and isolated. Thanks to this isolation, any compact sub-annulus contains only finitely many zeros. As illustrated in Figure \ref{fig:zero-distribution}, this allows us to rigorously define a spectral gap. We can always choose radii
$$
e^{-h} < r_-(E) < 1 < r_+(E) < e^h
$$
such that the two solid orange circles $|z|=r_-(E)$ and $|z|=r_+(E)$ are strictly zero-free. These contours perfectly isolate the $2m(E)$ ``center zeros'' (depicted as red points in Figure \ref{fig:zero-distribution}) in the unit circle from the rest of the domain. Consequently, the closed annulus $\{z\in\mathbb C: r_-(E) \le |z| \le r_+(E)\}$ contains no other zeros of $L_{E}^{}$ besides those on $|z|=1$. In particular, we have
$$
\#\{z\in\mathbb{A}_h: L_{E}^{}(z)=0,\ r_-(E) < |z| < r_+(E)\} = 2m(E),
$$
counting multiplicities.

\begin{figure}[htbp]
\centering
\begin{tikzpicture}[scale=1.55,>=Latex,every node/.style={font=\small}]

\def\Rout{2.0}   
\def\Ru{1.65}    
\def\Rp{1.22}    
\def\Rone{1.0}   
\def\Rm{0.82}    
\def\Rs{0.6}    
\def\Rin{0.35}  

\draw[thick] (0,0) circle (\Rout);
\draw[thick] (0,0) circle (\Rin);

\draw[densely dashed, thick, teal!70!black] (0,0) circle (\Ru);
\draw[densely dashed, thick, teal!70!black] (0,0) circle (\Rs);

\draw[very thick, orange!90!black] (0,0) circle (\Rp);
\draw[very thick, orange!90!black] (0,0) circle (\Rm);

\draw[dash pattern=on 3pt off 2pt on 1pt off 2pt, thick, gray!80] (0,0) circle (\Rone);

\draw[thick,orange!90!black] (1.05,0.6)--(2.3,0.6);
\node[right,orange!90!black] at (2.3,0.6) {$|z|=r_+$};
\draw[thick,orange!90!black] (0.8,-0.2)--(2.3,-0.2);
\node[right,orange!90!black] at (2.3,-0.2) {$|z|=r_-$};
\draw[thick,teal!70!black] (1.3,1.0)--(2.3,1.0);
\node[right,teal!70!black] at (2.3,1.0) {$|z|=r_u$};
\draw[thick,teal!70!black] (0,-0.6)--(2.3,-0.6);
\node[right,teal!70!black] at (2.3,-0.6) {$|z|=r_s$};
\draw[thick,gray!80] (0.98,0.2)--(2.3,0.2);
\node[right,gray!80] at (2.3,0.2) {$|z|=1$};

\node at ({(\Rout+0.12)*cos(48)},{(\Rout+0.12)*sin(48)}) {};
\node at ({(\Rin-0.11)*cos(18)},{(\Rin-0.11)*sin(18)}) {};

\fill[red] ({1.00*cos(18)},{1.00*sin(18)}) circle (1.25pt);
\fill[red] ({1.00*cos(-18)},{1.00*sin(-18)}) circle (1.25pt);

\fill[red] ({1.00*cos(62)},{1.00*sin(62)}) circle (1.25pt);
\fill[red] ({1.00*cos(-62)},{1.00*sin(-62)}) circle (1.25pt);

\fill[red] ({1.00*cos(145)},{1.00*sin(145)}) circle (1.25pt);

\fill[red] (-1.00,0) circle (1.25pt);

\foreach \ang in {4,8,12,16,20,24,28,32,36,40,44,48,52,56,60,64,68,72,76,80,84,88,
                  92,96,100,104,108,112,116,120,124,128,132,136,140,144,148,152,156,
                  160,164,168,172,176,180,184,188,192,196,200,204,208,212,216,220,224,
                  228,232,236,240,244,248,252,256,260,264,268,272,276,280,284,288,292,
                  296,300,304,308,312,316,320,324,328,332,336,340,344,348,352,356}
{
  \fill[blue!75!black] ({1.965*cos(\ang)},{1.965*sin(\ang)}) circle (0.45pt);
}

\foreach \ang in {7,15,23,31,39,47,55,63,71,79,87,95,103,111,119,127,135,143,151,159,
                  167,175,183,191,199,207,215,223,231,239,247,255,263,271,279,287,295,
                  303,311,319,327,335,343,351}
{
  \fill[blue!75!black] ({1.925*cos(\ang)},{1.925*sin(\ang)}) circle (0.40pt);
}

\foreach \ang/\rad in {
  24/1.76,
  58/1.82,
  118/1.78,
  152/1.84
}{
  \fill[blue!75!black] ({\rad*cos(\ang)},{\rad*sin(\ang)}) circle (0.45pt);
  \fill[blue!75!black] ({\rad*cos(-\ang)},{\rad*sin(-\ang)}) circle (0.45pt);
}
\fill[blue!75!black] ({1.79*cos(0)},{1.79*sin(0)}) circle (0.45pt);
\fill[blue!75!black] ({1.82*cos(180)},{1.82*sin(180)}) circle (0.45pt);

\foreach \ang in {3,7,11,15,19,23,27,31,35,39,43,47,51,55,59,63,67,71,75,79,83,87,
                  91,95,99,103,107,111,115,119,123,127,131,135,139,143,147,151,155,
                  159,163,167,171,175,179,183,187,191,195,199,203,207,211,215,219,223,
                  227,231,235,239,243,247,251,255,259,263,267,271,275,279,283,287,291,
                  295,299,303,307,311,315,319,323,327,331,335,339,343,347,351,355}
{
  \fill[violet] ({0.365*cos(\ang)},{0.365*sin(\ang)}) circle (0.45pt);
}

\foreach \ang in {6,14,22,30,38,46,54,62,70,78,86,94,102,110,118,126,134,142,150,158,
                  166,174,182,190,198,206,214,222,230,238,246,254,262,270,278,286,294,
                  302,310,318,326,334,342,350}
{
  \fill[violet] ({0.405*cos(\ang)},{0.405*sin(\ang)}) circle (0.40pt);
}

\foreach \ang/\rad in {
  20/0.50,
  52/0.47,
  122/0.49,
  148/0.46
}{
  \fill[violet] ({\rad*cos(\ang)},{\rad*sin(\ang)}) circle (0.45pt);
  \fill[violet] ({\rad*cos(-\ang)},{\rad*sin(-\ang)}) circle (0.45pt);
}
\fill[violet] ({0.48*cos(0)},{0.48*sin(0)}) circle (0.45pt);
\fill[violet] ({0.47*cos(180)},{0.47*sin(180)}) circle (0.45pt);

\fill (0,0) circle (0.6pt);
\node[below left] at (0,0) {$0$};

\end{tikzpicture}
\caption{ Zero distribution and the isolating contour}
\label{fig:zero-distribution}
\end{figure}

Furthermore, we can choose $\eta_-(E)>0$ and $\eta_+(E)>0$ small enough such that the closed  annuli around the contours, defined by
$$
\mathcal A_-(E) := \{z\in\mathbb C: r_-(E)e^{-\eta_-(E)} \le |z| \le r_-(E)e^{\eta_-(E)}\},
$$
\begin{equation*}
    \mathcal A_+(E) := \{z\in\mathbb C: r_+(E)e^{-\eta_+(E)} \le |z| \le r_+(E)e^{\eta_+(E)}\},
\end{equation*}
do not intersect the zero set of $L_{E}^{}(z)$. Denoting $\mathcal{A}(E) = \mathcal A_-(E) \cup \mathcal A_+(E)$, the quantity
\begin{equation}\label{delta}
    \delta(E) := \inf_{z\in \mathcal A(E)} |L_{E}^{}(z)|>0
\end{equation}
is well-defined and strictly positive. This uniform positive lower bound quantifies the spectral gap needed in the subsequent construction. And we denote 
$$
\mathcal A'_-(E) := \{z\in\mathbb C: r_-(E)e^{-\eta_-(E)/2} \le |z| \le r_-(E)e^{\eta_-(E)/2}\},
$$
\begin{equation}\label{eq:a+}
    \mathcal A'_+(E) := \{z\in\mathbb C: r_+(E)e^{-\eta_+(E)/2} \le |z| \le r_+(E)e^{\eta_+(E)/2}\},
\end{equation}
and $$\mathcal{A}'(E)=\mathcal A'_-(E)\cup\mathcal A'_+(E).$$

Later, when we need auxiliary contours separating the center region from the inner and outer parts, we may choose any radii $r_s(E)$ and $r_u(E)$ within zero-free regions such that:
$$
r_s(E)\in \bigl(r_-(E)e^{-\eta_-(E)/2},\,r_-(E)\bigr),
\qquad
r_u(E)\in \bigl(r_+(E),\,r_+(E)e^{\eta_+(E)/2}\bigr).
$$
Then, as shown by the dashed teal circles in Figure \ref{fig:zero-distribution}, we have the strict ordering
\begin{equation*}
    r_s(E) < r_-(E) < 1 < r_+(E) < r_u(E),
\end{equation*}
and $L_{E}^{}(z)$ naturally has no zeros on $|z|=r_s(E)$ and $|z|=r_u(E)$.  Moreover, we consider the zeros of the truncated analytic Laurent series   \begin{equation}\label{equ:tsymbol}
    L_{E}^{l}(z) := \sum_{|n|\le l} \hat{v}_n z^n - E.
\end{equation}
Roughly speaking, by classical Jentzsch--Szeg\H{o} type theorems  \cite{Jen,PPS} for the partial sums of analytic functions, if $V(z)$ converges exactly in the maximal annulus $\mathbb A_{h}$, then every point on the boundary circles $|z|=e^{-h}$ and $|z|=e^{h}$ is an accumulation point of the zeros of $L_{E}^{l}(z)$ as $l \to \infty$.    Therefore these contours separate the remaining infinite sea of zeros into an inner family (violet points) and an outer family (blue points).
We define the corresponding positively oriented circles as:
\begin{equation}\label{contour}
\begin{aligned}
    &\Gamma_s := \{z\in\mathbb C: |z|=r_s\}, \quad
    &&\Gamma_u := \{z\in\mathbb C: |z|=r_u\}, \\
    &\Gamma_- := \{z\in\mathbb C: |z|=r_-\}, \quad
    &&\Gamma_+ := \{z\in\mathbb C: |z|=r_+\}, \quad\Gamma_c := \Gamma_+ \cup \{-\Gamma_-\}. \quad
\end{aligned}
\end{equation}

\begin{remark}
Were $V(z)$ merely smooth rather than analytic, its zero set could be pathologically distributed, which would destroy any possibility of establishing a spectral gap. 
\end{remark}

\subsection{Distribution of Zeros for the Truncated Symbol}\label{sec:4.2}
The following elementary lemma demonstrates that, once the center annulus is isolated by the zero-free boundary circles, the number of zeros enclosed within it is completely stable under local perturbations of energy (Lemma \ref{lem:locals}) and   sufficiently large truncations (Lemma~\ref{prop:assume}) . This stability is a direct and powerful consequence of the argument principle.

\begin{lemma}\label{lem:locals}
Fix $E_0\in\mathbb R$. Then there exist radii $e^{-h} < r_-(E_0) < 1 < r_+(E_0) < e^h$, numbers $\eta_-(E_0)>0, \eta_+(E_0)>0$, a constant $\delta_{E_0}>0$, and an open interval $J_{E_0}\subset\mathbb R$ containing $E_0$, such that for every $E\in \overline{J_{E_0}}$:
\begin{enumerate}
    \item We have the uniform lower bound 
    $
    |L_{E}^{}(z)|\ge \delta_{E_0}
    $
    for all $z \in \mathcal{A}(E_0).$ 
    \item  $p(E):=\#\{z\in\mathbb A_h: L_{E}^{}(z)=0,\ r_-(E_0) < |z| < r_+(E_0)\} = 2m(E_0).
$\end{enumerate}
\end{lemma}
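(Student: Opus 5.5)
The plan is to build the radii and annuli for $E_0$ exactly as in the preceding subsection. Then continuity in $E$ upgrades the lower bound on $\mathcal A(E_0)$ to a neighbourhood of $E_0$, and the argument principle (Rouché) keeps the zero count equal to $2m(E_0)$.

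\textbf{Step 1 (choice of radii).} The zeros of $L_{E_0}(z)=V(z)-E_0$ in $\mathbb A_h$ are isolated. If $V$ is constant and equal to $E_0$, then $L_{E_0}\equiv 0$; this degenerate case is excluded, or handled trivially, since the count $2m$ would be infinite. Otherwise only finitely many zeros lie in any compact sub-annulus, say $\{e^{-h'}\le|z|\le e^{h'}\}$ with $h'<h$. The set of moduli $\{|z|: L_{E_0}(z)=0\}\cap[e^{-h'},e^{h'}]$ is therefore finite. We choose $r_-(E_0)\in(e^{-h'},1)$ so close to $1$ that no zero has modulus in $[r_-(E_0),1)$, and similarly $r_+(E_0)\in(1,e^{h'})$ with no zero of modulus in $(1,r_+(E_0)]$. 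Since the set of bad moduli is finite, we can then pick $\eta_\pm(E_0)>0$ so small that the closed annuli $\mathcal A_\pm(E_0)$ contain no zero. Set $\delta:=\min_{\mathcal A(E_0)}|L_{E_0}|>0$; the minimum is attained and positive because $\mathcal A(E_0)$ is compact and zero-free.

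\textbf{Step 2 (uniform bound).} We have $L_E(z)=L_{E_0}(z)-(E-E_0)$, so $|L_E(z)|\ge \delta-|E-E_0|$ on $\mathcal A(E_0)$. We take $J_{E_0}=(E_0-\delta/4,E_0+\delta/4)$ and $\delta_{E_0}=\delta/2$. Then for every $E\in\overline{J_{E_0}}$ we get $|L_E|\ge 3\delta/4\ge\delta_{E_0}$ on $\mathcal A(E_0)$, which is item (1).

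\textbf{Step 3 (zero count).} On the circles $\Gamma_\pm=\{|z|=r_\pm(E_0)\}\subset\mathcal A(E_0)$ we have
$$|L_E(z)-L_{E_0}(z)|=|E-E_0|\le \delta/4<\delta\le|L_{E_0}(z)|.$$
Rouché's theorem on the annulus bounded by $\Gamma_-$ and $\Gamma_+$ then gives
$$\frac{1}{2\pi i}\oint_{\Gamma_c}\frac{L_E'}{L_E}\,dz=\frac{1}{2\pi i}\oint_{\Gamma_c}\frac{L_{E_0}'}{L_{E_0}}\,dz.$$
Equivalently, $t\mapsto \frac{1}{2\pi i}\oint_{\Gamma_c} L'_{E_t}/L_{E_t}$ is continuous and integer-valued along the segment from $E_0$ to $E$, hence constant. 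The right-hand side counts the zeros of $L_{E_0}$ in $r_-<|z|<r_+$, and by Step 1 these are exactly the $2m(E_0)$ zeros on $|z|=1$, counted with multiplicity. This gives $p(E)=2m(E_0)$, which is item (2).

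\textbf{Main obstacle.} No step is really hard. The only point needing care is Step 1: one must ensure that zeros do not accumulate in moduli arbitrarily near $1$. This is exactly why we restrict to a compact sub-annulus and use analyticity, i.e. finiteness of the zero set there, together with the exclusion of the case $V\equiv E_0$.
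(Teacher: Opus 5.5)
Your proof is correct and follows essentially the same route as the paper: a triangle-inequality bound $|L_E|\ge|L_{E_0}|-|E-E_0|$ on the fixed zero-free annuli $\mathcal A(E_0)$, followed by the argument principle. In both, the integer-valued zero count is constant in $E$; your Rouch\'e formulation is equivalent to the paper's continuity argument. The differences are cosmetic. You shrink $J_{E_0}$ to radius $\delta/4$ rather than $\delta/2$. You also spell out the choice of $r_\pm(E_0),\eta_\pm(E_0)$ via finiteness of the zero moduli in a compact sub-annulus, and you note the degenerate case $V\equiv E_0$; the paper simply takes these radii from the preceding geometric construction.
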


\begin{proof}
(1) As $V(\cdot)$ is analytic, by the geometric construction above  applied at $E = E_0$, there exist zero-free annuli $\mathcal{A}(E_0)$ and a constant $\delta(E_0) > 0$ such that $|L_{E_0}^{}(z)| \ge \delta(E_0)$ for all $z \in \mathcal{A}(E_0)$. 
Set $\delta_{E_0} := \delta(E_0)/2$, and define $J_{E_0} := (E_0 - \delta_{E_0}, E_0 + \delta_{E_0})$. For any perturbed energy $E \in \overline{J_{E_0}}$ and any $z \in \mathcal{A}(E_0)$, we have the algebraic identity $L_{E}^{}(z) = L_{E_0}^{}(z) - (E - E_0)$. By the  triangle inequality,
$$
|L_{E}^{}(z)| \ge |L_{E_0}^{}(z)| - |E-E_0| \ge 2\delta_{E_0} - \delta_{E_0} = \delta_{E_0}.
$$
This establishes the uniform lower bound on the annuli for all $E \in \overline{J_{E_0}}$.

(2) Since  (1) guarantees that $L_{E}^{}(z)$ is strictly non-vanishing on the boundary contours $\Gamma_\pm := \{z : |z|=r_\pm(E_0)\}$ for all $E \in \overline{J_{E_0}}$, we can apply the argument principle. The number of enclosed zeros is given by the contour integral:
\begin{equation*}
   p(E) = \frac{1}{2\pi i}\oint_{\Gamma_+} \frac{V'(z)}{V(z)-E} dz - \frac{1}{2\pi i}\oint_{\Gamma_-} \frac{V'(z)}{V(z)-E} dz.
\end{equation*}
Because the denominator $V(z)-E$ never vanishes on the contours, the integrand is jointly continuous in $(E, z) \in \overline{J_{E_0}} \times (\Gamma_+ \cup \Gamma_-)$. Consequently, the integral $p(E)$ depends continuously on the parameter $E$. Since a zero-counting function must be integer-valued, it is  constant on the connected interval $\overline{J_{E_0}}$. Thus, $p(E) = p(E_0) = 2m(E_0)$.
\end{proof}

\begin{lemma}\label{prop:assume}
Let $I\subset\mathbb R$ be a compact interval. Then there exist finitely many open intervals $J_1,\dots,J_k$ covering $I$, radii $e^{-h}<r_-^{(j)}<1<r_+^{(j)}<e^h$, numbers $\eta_-^{(j)}>0,\eta_+^{(j)}>0$, integers $P_j\ge 0$ for $1\leq j\leq k$, and constant $\delta_0>0$,   integer $l_0\ge 1$ such that the following hold for every $j\in\{1,\dots,k\}$:

\begin{enumerate}
    \item For every $E\in \overline{J_j}$, $|L_{E}^{}(z)|\ge \delta_0$ for all $z$ satisfying 
    $$
    r_-^{(j)}e^{-\eta_-^{(j)}}\le |z|\le r_-^{(j)}e^{\eta_-^{(j)}},\quad r_+^{(j)}e^{-\eta_+^{(j)}}\le |z|\le r_+^{(j)}e^{\eta_+^{(j)}}.
    $$
    \item For every $E\in \overline{J_j}$ and every $l\in [l_0,\infty]$, $|L^{l}_{E}(z)|\ge \frac{\delta_0}{2}$ for all $z$ satisfying 
    $$
    r_-^{(j)}e^{-\eta_-^{(j)}}\le |z|\le r_-^{(j)}e^{\eta_-^{(j)}}, \quad r_+^{(j)}e^{-\eta_+^{(j)}}\le |z|\le r_+^{(j)}e^{\eta_+^{(j)}}.
    $$
    \item For every $E\in \overline{J_j}$ and every $l\in [l_0,\infty]$,
    $$
    \#\{z:\ r_-^{(j)}<|z|<r_+^{(j)},\ L^{l}_{E}(z)=0\}=P_j,
    $$
    counting multiplicities.
\end{enumerate}
\end{lemma}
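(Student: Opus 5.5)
The plan is a compactness argument: apply Lemma~\ref{lem:locals} at every point of $I$, extract a finite subcover, and then handle truncations uniformly via the convergence $L^l_E\to L_E$ on compact sub-annuli of $\mathbb A_h$.

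\textbf{Step 1: the local data and the finite cover.} For each $E_0\in I$, Lemma~\ref{lem:locals} gives radii $r_\pm(E_0)$, widths $\eta_\pm(E_0)$, a constant $\delta_{E_0}>0$ and an open interval $J_{E_0}\ni E_0$. On the closure of $J_{E_0}$ we have $|L_E|\ge\delta_{E_0}$ on $\mathcal A(E_0)$, and the center count is $2m(E_0)$. Since $I$ is compact, finitely many of these intervals, $J_1,\dots,J_k$ say, cover $I$. We take $r_\pm^{(j)},\eta_\pm^{(j)}$ to be the corresponding data, set $P_j:=2m(E_j)$, and put $\delta_0:=\min_j\delta_{E_j}>0$. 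Item (1) then holds immediately.

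\textbf{Step 2: uniform convergence of the truncations.} All the annuli $\mathcal A^{(j)}$ are compact and contained in $\mathbb A_h$. Indeed, we may shrink the $\eta$'s beforehand so that $r_-^{(j)}e^{-\eta_-^{(j)}}>e^{-h}$ and $r_+^{(j)}e^{\eta_+^{(j)}}<e^h$. Hence there is some $\rho<h$ with
\[
\bigcup_j \mathcal A^{(j)}\subset K:=\{e^{-\rho}\le|z|\le e^{\rho}\}.
\]
The Laurent series of $V$ converges absolutely and uniformly on $K$, so there is a tail bound $\sum_{|n|>l}|\hat v_n|e^{\rho|n|}\to0$. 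Moreover, $L^l_E-L_E=-\sum_{|n|>l}\hat v_n z^n$ does not depend on $E$. We choose $l_0$ so that this tail is at most $\delta_0/2$. Then for all $l\ge l_0$ (including $l=\infty$), all $E\in\overline{J_j}$ and all $z\in\mathcal A^{(j)}$,
\[
|L^l_E(z)|\ge|L_E(z)|-\delta_0/2\ge\delta_0/2 .
\]
This proves (2).

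\textbf{Step 3: constancy of the zero count.} For (3) we use Rouché's theorem on the closed annulus $\{r_-^{(j)}\le|z|\le r_+^{(j)}\}$. By Step~2 we have $|L^l_E-L_E|\le\delta_0/2<\delta_0\le|L_E|$ on both boundary circles, so $L^l_E$ and $L_E$ have the same number of zeros inside. (Concretely, the argument-principle integral for $L_E+t(L^l_E-L_E)$ is continuous in $t\in[0,1]$, integer-valued, hence constant.) By Lemma~\ref{lem:locals}(2) the count for $L_E$ equals $2m(E_j)=P_j$ for every $E\in\overline{J_j}$.

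\textbf{Expected obstacle.} The only delicate point is that each truncation $L^l_E$ is a Laurent polynomial, with a possible pole of order $l$ at $0$, rather than a function analytic on a neighbourhood of the full disc. This causes no trouble, since we count zeros only inside an annulus that avoids $0$ and both functions are holomorphic on a neighbourhood of it. The remaining subtlety is making the choice of $l_0$ uniform over $E$, which Step~2 already settles because the error $L^l_E-L_E$ is independent of $E$.
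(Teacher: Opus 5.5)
Your proposal is correct and follows the paper's proof essentially step for step. You extract a finite subcover from Lemma~\ref{lem:locals} with $\delta_0=\min_j\delta_{E_j}$, use the $E$-independent uniform tail bound for $L_E-L_E^l$ on the compact annuli to get (2), and apply Rouch\'e on the boundary circles to get (3); the only differences are cosmetic (you allow a tail of $\delta_0/2$ where the paper uses $\delta_0/4$, and you state explicitly that the annuli can be placed inside a fixed compact sub-annulus of $\mathbb A_h$).
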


\begin{proof}
(1) By Lemma~\ref{lem:locals}, every $E_0 \in I$ admits an open neighborhood $J_{E_0}$ and associated geometric parameters $(r_\pm(E_0), \eta_\pm(E_0), \delta_{E_0})$ such that $|L_{E}^{}(z)| \ge \delta_{E_0}$ for $z\in \mathcal A(E_0)$ and $E \in \overline{J_{E_0}}$. 
Since $I$ is compact, we can extract a finite subcover $J_1, \dots, J_k$ corresponding to energy centers $E_1, \dots, E_k$. For each $1 \le j \le k$, we set $r_\pm^{(j)} := r_\pm(E_j)$, $\eta_\pm^{(j)} := \eta_\pm(E_j)$, and define the uniform lower bound $\delta_0 := \min_{1 \le j \le k}\delta_{E_j}$. 

(2) Fix $j \in \{1,\dots,k\}$. The truncation error $L_{E}^{}(z) - L^{l}_E(z) = \sum_{|n|>l} \hat{v}_n z^n$ is  independent of $E$ and converges to zero uniformly on compact subsets of $\mathbb{A}_h$. Thus, there exists an integer $l_j \ge 1$ such that for all $l \in[l_j,\infty]$, 
$
\sup_{z \in \mathcal{A}_j} |L_{E}^{}(z) - L^{l}_{E}(z)| < \frac{\delta_0}{4},
$
where $\mathcal{A}_j$ denotes the union of the two annuli defined in (1). 
Then (2) follows  from $(1)$.

(3) Let $D_j := \{z \in \mathbb{C} : r_-^{(j)} < |z| < r_+^{(j)}\}$. For every $E \in \overline{J_j}$ and $l \in [l_j,\infty]$, we have established the strict inequality
$$
|L_{E}^{}(z) - L^{l}_{E}(z)| < \frac{\delta_0}{4} < \delta_0 \le |L_{E}^{}(z)|
$$
everywhere on the boundary contours $\partial D_j = \{|z|=r_+^{(j)}\} \cup \{|z|=r_-^{(j)}\}$. By Rouché's Theorem, $L^{l}_{E}(z)$ and $L_{E}^{}(z)$  have  the same number of zeros inside $D_j$. According to Lemma~\ref{lem:locals}(2), $L_{E}^{}(z)$ has a constant number of enclosed zeros for all $E \in \overline{J_j}$, which we denote by $P_j := 2m(E_j)$. Hence, $L^{l}_{E}(z)$ also possesses exactly $P_j$ zeros inside $D_j$, proving (3).
Finally, taking $l_0 := \max_{1\le j \le k} l_j$ ensures that all three properties hold simultaneously for every $j \in \{1, \dots, k\}$ and all $l\in[l_0,\infty]$.
\end{proof}

Note by Lemma~\ref{prop:assume}, the original compact interval can be covered by finitely many open intervals, on each of which all the required spectral separation properties hold uniformly. Since the dynamical and geometric arguments in the sequel are local in energy, it is sufficient to prove them on one such interval at a time. 
Accordingly, throughout the rest of this paper we fix one interval from this finite cover and still denote it by $I$. For notational convenience, although we continue to write quantities such as $m(E)$, $\mathcal A'(E)$, $r_\pm(E)$, and $\eta_\pm(E)$, on the present interval $I$ these objects are understood to be chosen uniformly and hence may be regarded as fixed with respect to $E\in I$. In particular, we will set 
\begin{equation}\label{delta0}
 \inf_{E\in I}\inf_{z\in \mathcal A'(E)}|L_{E}^{}(z)|\ge \delta_0, \quad\inf_{E\in I}\inf_{z\in \mathcal A'(E)}|L_{E}^{l}(z)|\ge \delta_0/2,
\end{equation}
for all $l\in[l_0,\infty)$ where $\delta_0$, $l_0$ is defined in Lemma~\ref{prop:assume}.
 With this convention, all subsequent arguments are carried out uniformly on this fixed interval $I$; the global statements on the original compact interval will then follow via a standard finite covering argument.

\subsection{Symbol Operator}

Having established the non-vanishing of the scalar symbols $L_E^{}(z), L_E^{l}(z)$ on the zero-free annuli, we now lift this property to the operator level.
Fix any admissible $\mathfrak A$ for topological dynamical system 
$(\Omega,T)$, we show that the scalar symbol guarantees the uniform invertibility of the associated infinite-dimensional  \textit{symbol operator} on
$\mathfrak A$.

For a real interval $I$, let
\begin{equation}\label{complexneig}
I_\delta:=
\{\zeta\in\mathbb C:\operatorname{dist}(\zeta,I)<\delta\}.
\end{equation}
Denote by $\mathcal H_\delta(\mathfrak A)$ the space of functions
$g:I_\delta\times\Omega\to\mathbb C$ that are holomorphic in $\zeta$ and satisfy
\begin{equation}\label{xdeltanorm}
\|g\|_{\mathfrak A,\delta}
:=\sup_{\zeta\in I_\delta}\|g(\zeta,\cdot)\|_{\mathfrak A}<\infty.
\end{equation}

Recall that the unperturbed spectral gaps are  determined by the non-vanishing of the scalar  polynomial $L_E^{l}(z) = \sum_{j=-l}^{l} \hat{v}_j z^j - E$. 
To utilize this algebraic property within our functional framework, we must formally lift the scalar symbol to an operator on $\mathfrak A$.
In the phase space, the underlying dynamics are driven by the translation operator $\mathbf T $. We define the unperturbed symbol operator $D_{E,0}^{l}(z)$ by evaluating the polynomial $L_E^l$ directly at the weighted translation operator $z\mathbf T$. This formal substitution yields:
$$
D_{E,0}^{l}(z) := 
\sum_{j=-l}^{l} \hat{v}_j (z\mathbf T)^j - E \,\mathrm{Id}=\sum_{j=-l}^{l} \hat{v}_j z^j \mathbf T^j - E \,\mathrm{Id}.
$$
Pointwise, this functional action explicitly reads:
$$
(D_{E,0}^{l}(z)f)(\theta) = \sum_{j=-l}^{l} \hat{v}_j z^j f(\mathbf T^j\theta) - E f(\theta).
$$
This exact algebraic homomorphism ensures that the operator $D_{E,0}^{l}(z)$ completely inherits the spectral structure of the scalar symbol.

The \textit{full symbol operator} includes the external potential $W$. For $\varepsilon \in \R$, we define $D_{E,\varepsilon}^{l}(z) : \mathfrak A \to \mathfrak A$ by adding the multiplication operator  $(M_W f)(\theta):=W(\theta)f(\theta)$: 
\begin{equation}\label{full-sym}
    D_{E,\varepsilon}^{l}(z):=D_{E,0}^{l}(z) + \varepsilon M_W
=\sum_{j=-l}^{l} \hat{v}_j z^j \mathbf T^j - E \,\mathrm{Id} + \varepsilon M_W.
\end{equation}
Because $V$ is analytic and $\mathfrak A$ is a Banach algebra on which $\mathbf T$ acts as an isometric automorphism, both $D_{E,0}^{l}(z)$ and $D_{E,\varepsilon}^{l}(z)$ are inherently bounded linear operators on $\mathfrak A$.

We are now ready to prove that the uniform boundedness of the scalar symbol from zero directly guarantees the uniform invertibility of the symbol operator $D_{\zeta,\varepsilon}^{l}(z)$, which plays the key role in establish partially hyperbolicity of the fibered solution bundle, as we will explain in Section \ref{sec:ambient_framework}.  The proof effectively relies on interpreting the inverse of the scalar symbol as a Fourier multiplier, and bounding the small perturbations via a Neumann series.

\begin{lemma}\label{thm:D-abstract}
Let $\mathfrak A$ be an admissible space,  and let $\delta_0, l_0, \eta_{\pm}(E)$ be as previously fixed. Define $\delta_1 := \delta_0/4$, $\kappa := \frac{1}{4}\min\{\eta_{\pm}(E)\} > 0$, and the universal constants:
\begin{equation}\label{e0md}
M_D := \frac{4}{\delta_0} \left( \frac{1+e^{-\kappa}}{1-e^{-\kappa}} \right), \qquad \varepsilon_0 := \bigl(M_D\|W\|_\mathfrak A\bigr)^{-1}.
\end{equation}
For all $l \in [l_0, \infty]$, $|\varepsilon| \le \varepsilon_0$, $\zeta \in I_{\delta_1}$, and $z \in \mathcal{A}'(E)$, the following properties hold:

\begin{enumerate}
    \item The symbol operator $D_{\zeta,\varepsilon}^{l}(z)$ is uniformly  invertible on $\mathfrak A$, with estimates
    \begin{equation}\label{eq:uniform}
         \big\|\bigl(D_{\zeta,\varepsilon}^{l}(z)\bigr)^{-1}\bigr\|_{\mathfrak{B}(\mathfrak A)} \le M_D. 
    \end{equation}
    \item  As $l \to \infty$, the truncated inverse converges to the infinite-range limit in the uniform operator topology,
    \begin{equation}\label{equ:symcon}
    \bigl\|
    \bigl(D_{\zeta,\varepsilon}^{l}(z)\bigr)^{-1} -
    \bigl(D_{\zeta,\varepsilon}^{\infty}(z)\bigr)^{-1}
    \bigr\|_{\mathfrak{B}(\mathfrak A)}
    \to 0,
    \end{equation}
    uniformly in $\zeta \in I_{\delta_1}$, $|\varepsilon| \le \varepsilon_0$, and $z \in \mathcal{A}'(E)$.

    \item  The section defined by $g^{l}_{\zeta,\varepsilon}(z,\cdot) := \bigl(D^{l}_{\zeta,\varepsilon}(z)\bigr)^{-1}\mathbf{1}$ satisfies the norm bound
    \begin{equation}\label{eq:unig}
    \|g^{l}_{\cdot,\varepsilon}(z,\cdot)\|_{\mathfrak A,\delta_1} \le M_D,
    \end{equation}
    and the perturbation estimate
    $$
    \|g^{l}_{\cdot,0}(z,\cdot) - g^{l}_{\cdot,\varepsilon}(z,\cdot)\|_{\mathfrak A,\delta_1} \le M_D^2\|W\|_\mathfrak A |\varepsilon|.
    $$
\end{enumerate}
\end{lemma}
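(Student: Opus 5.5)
The plan is to first invert the unperturbed operator $D^{l}_{\zeta,0}(z)=L^{l}_{\zeta}(z\mathbf T)$ by a Laurent/Fourier-multiplier expansion in powers of $z\mathbf T$. Then $\varepsilon M_W$ is absorbed by a Neumann series. Fix $z\in\mathcal A'(E)$ with $|z|=\rho$, and say $\rho$ lies in $\mathcal A'_+$ (the other annulus is identical). For $\zeta\in I_{\delta_1}$ we have $|\zeta-E'|<\delta_0/4$ for some $E'\in I$, and by \eqref{delta0} and Lemma~\ref{prop:assume}(2) this gives $|L^{l}_{\zeta}(w)|\ge \delta_0/2-\delta_0/4=\delta_0/4$ on the larger annulus $\rho e^{-\kappa}\le|w|\le \rho e^{\kappa}$. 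This works because $\kappa\le \eta_\pm/4$, so the enlarged annulus stays inside the region where the lower bound of Lemma~\ref{prop:assume} holds. (The constants are arranged exactly so that $\delta_1=\delta_0/4$ leaves margin $\delta_0/4$.)

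Hence the scalar function $w\mapsto 1/L^{l}_\zeta(w)$ is holomorphic on that annulus and has a Laurent expansion $\sum_k c_k w^k$. Cauchy estimates on the two boundary circles give
\[|c_k|\rho^k\le \tfrac{4}{\delta_0}e^{-\kappa|k|}.\]
I define
\[R^{l}_\zeta(z):=\sum_{k\in\mathbb Z}c_k z^k\mathbf T^k.\]
Because $\mathbf T$ is an isometric automorphism of $\mathfrak A$, we get $\|R\|_{\mathfrak B(\mathfrak A)}\le \frac{4}{\delta_0}\sum_k e^{-\kappa|k|}=\frac{4}{\delta_0}\frac{1+e^{-\kappa}}{1-e^{-\kappa}}=M_D$. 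The map $w\mapsto w\mathbf T$ is a homomorphism from absolutely convergent Laurent series into $\mathfrak B(\mathfrak A)$, since the powers of $\mathbf T$ commute. The identity $L^l_\zeta\cdot(1/L^l_\zeta)=1$ on the circle $|w|=\rho$ then lifts to $D^{l}_{\zeta,0}(z)R=RD^{l}_{\zeta,0}(z)=\mathrm{Id}$. Here I will check that products of two absolutely summable (weighted) Laurent series converge absolutely, so the Cauchy product is legitimate in operator norm. For $l=\infty$ the series for $V$ converges absolutely on the annulus, so the same argument applies.

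For the perturbation, write $D^l_{\zeta,\varepsilon}=D^l_{\zeta,0}(\mathrm{Id}+\varepsilon R M_W)$. Since $\mathfrak A$ is a Banach algebra, $\|M_W\|\le\|W\|_{\mathfrak A}$ (after the standard renormalization of the algebra norm), so $\|\varepsilon RM_W\|\le |\varepsilon|M_D\|W\|_{\mathfrak A}$. Here is the main obstacle: as stated, at $|\varepsilon|=\varepsilon_0$ this is only $\le 1$, and the Neumann series would then give a bound like $M_D/(1-|\varepsilon|/\varepsilon_0)$, not $M_D$ itself. To get the clean bound \eqref{eq:uniform} I would first try to exploit the slack in the constants: the true lower bound on the enlarged annulus is at least $\delta_0/4$ while only $\delta_0/2$ margins are fully used, so I can run the argument with $R$ bounded by $M_D/2$. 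Then for $|\varepsilon|\le\varepsilon_0$ we get $\|\varepsilon RM_W\|\le 1/2$ and $\|(D^l_{\zeta,\varepsilon})^{-1}\|\le (M_D/2)\cdot 2=M_D$. If that slack is insufficient, I would adjust $\delta_1$ or $\kappa$ slightly, since only existence of such universal constants matters downstream.

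For (2), I use
\[(D^l)^{-1}-(D^\infty)^{-1}=(D^l)^{-1}(D^\infty-D^l)(D^\infty)^{-1},\qquad D^\infty-D^l=\sum_{|j|>l}\hat v_jz^j\mathbf T^j,\]
whose norm is at most $\sum_{|j|>l}|\hat v_j|\max(r_u,r_s^{-1})^{|j|}e^{\kappa|j|}\to0$ by analyticity in $\mathbb A_h$. This holds uniformly in $\zeta,\varepsilon,z$. For (3), holomorphy in $\zeta$ follows because $\zeta\mapsto D^{l}_{\zeta,\varepsilon}(z)$ is an affine, hence holomorphic, operator family, so its inverse is holomorphic. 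The bound \eqref{eq:unig} follows from (1) since $\|\mathbf 1\|_{\mathfrak A}$ can be normalized to $1$ in a unital algebra. The perturbation estimate is the resolvent identity $g_0-g_\varepsilon=(D_0)^{-1}(\varepsilon M_W)(D_\varepsilon)^{-1}\mathbf 1$, giving $M_D^2\|W\|_{\mathfrak A}|\varepsilon|$.
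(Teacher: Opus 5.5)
Your overall route is the paper's: lift the Laurent expansion of $1/L^l_\zeta$ to $\sum_k c_k z^k\mathbf T^k$, absorb $\varepsilon M_W$ by a Neumann series, and get (2) and (3) from the resolvent identity. The gap is the factor $2$ in (1) that you flag yourself, and neither of your remedies closes it.

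\textbf{Where the argument fails.} You perturb the \emph{truncated} bound of Lemma~\ref{prop:assume}(2) (or \eqref{delta0}) in $\zeta$. That only gives $|L^l_\zeta(w)|\ge\delta_0/4$, hence $\|D^l_{\zeta,0}(z)^{-1}\|\le M_D$ and $\|\varepsilon D^l_{\zeta,0}(z)^{-1}M_W\|\le|\varepsilon|/\varepsilon_0$.
\begin{itemize}
\item At $|\varepsilon|=\varepsilon_0$ the Neumann series need not converge.
\item For $|\varepsilon|<\varepsilon_0$ you only get the bound $M_D/(1-|\varepsilon|/\varepsilon_0)$, not \eqref{eq:uniform}.
\item ``Exploiting the slack'' is a non sequitur: a lower bound of $\delta_0/4$ cannot give $\|R\|\le M_D/2$.
\item Shrinking $\delta_1$ or $\kappa$ proves a different statement from the one with the explicit $\delta_1$, $M_D$, $\varepsilon_0$.
\end{itemize}

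\textbf{The missing step.} Perturb from the untruncated symbol, which has the better bound $\delta_0$. Take $w$ in your enlarged annulus, which lies in $\mathcal A(E)$ because $\kappa\le\eta_\pm/4$, and $E'\in I$ with $|\zeta-E'|<\delta_0/4$. Then
\[
L^l_\zeta(w)=L_{E'}(w)-(\zeta-E')-\sum_{|n|>l}\hat v_n w^n .
\]
\begin{itemize}
\item Lemma~\ref{prop:assume}(1) gives $|L_{E'}(w)|\ge\delta_0$.
\item The tail does not depend on the energy. It is $<\delta_0/4$ on $\mathcal A(E)$ for $l\ge l_0$, by the very choice of $l_0$ in the proof of Lemma~\ref{prop:assume}(2).
\end{itemize}
Hence $|L^l_\zeta(w)|\ge\delta_0-\delta_0/4-\delta_0/4=\delta_0/2$. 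Your Cauchy estimate then improves to $|c_k|\rho^k\le\frac{2}{\delta_0}e^{-\kappa|k|}$, so $\|D^l_{\zeta,0}(z)^{-1}\|\le M_D/2$. The Neumann ratio is then at most $1/2$ for all $|\varepsilon|\le\varepsilon_0$, and $\|D^l_{\zeta,\varepsilon}(z)^{-1}\|\le 2\cdot M_D/2=M_D$. This is exactly the paper's argument.

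\textbf{A minor fix in (2).} With (1) repaired, your (2) and (3) go through, except for the tail weight in (2). You use $\max(r_u,r_s^{-1})^{|j|}e^{\kappa|j|}$, which need not dominate $|z|^{j}$ on $\mathcal A'(E)$, for instance when $r_u$ is close to $r_+$. Use $\sup_{z\in\mathcal A'(E)}|z|^{j}$ instead. The corrected tail $\sum_{|j|>l}|\hat v_j|\sup_{z\in\mathcal A'(E)}|z|^{j}$ still tends to $0$, since $\mathcal A'(E)$ is a compact subset of $\mathbb A_h$.
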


\begin{proof}
We detail the proof for the outer annulus component  $z\in\mathcal A'_+(E)$ defined in \eqref{eq:a+}, the inner annulus case follows by identical reasoning. Since  $L_{\zeta}^{}(z)=V(z)-\zeta$ depends continuously on $\zeta$, by Lemma \ref{prop:assume} and \eqref{delta0},  for all $l \in [l_0, \infty]$, we have
$$
\inf_{\zeta\in I_{\delta_1}}\inf_{z\in\mathcal A'_+(E)} |L_{\zeta}(z)| \ge \frac{3}{4}\delta_0,
\qquad
\sup_{\zeta\in I_{\delta_1}}\sup_{z\in\mathcal A'_+(E)} |L^{l}_{\zeta}(z)-L_{\zeta}(z)| \le \frac{1}{4}\delta_0.
$$
This immediately yields the strict lower bound:
\begin{equation}\label{eq:lb}
\inf_{\zeta\in I_{\delta_1}}\inf_{z\in\mathcal A'_+(E)} |L^{l}_{\zeta}(z)| \ge \frac{1}{2}\delta_0.
\end{equation}

Fix $\zeta\in I_{\delta_1}$ and $z\in\mathcal{A}'_+(E)$. The function $a^{l}(z;\omega):=L^{l}_{\zeta}(z\omega)$ is bounded away from zero on $\omega\in \T$, so its inverse $b^{l}(z;\omega):=1/a^{l}(z;\omega)$ is analytic on the  annulus $\mathbb A:=\{\omega\in\mathbb C:  e^{-\eta_+(E)/2}< |\omega|<  e^{\eta_+(E)/2}\}$. It admits a uniformly convergent Laurent expansion $b^{l}(z;\omega)=\sum_{n\in\Z} b^{l}_n(z) \omega^n$. Using \eqref{eq:lb},
applying Cauchy's integral estimates on the contours $|\omega| = e^{\pm \kappa}$, with $\kappa = \frac{1}{4}\min\{\eta_\pm(E)\}$, yields the exponential decay bound $|b^{l}_n(z)|\le \frac{2}{\delta_0} e^{-\kappa |n|}$. Thus, the coefficients are absolutely summable:
$$
\sum_{n\in\Z}|b^{l}_n(z)| \le \frac{2}{\delta_0} \left( \frac{1+e^{-\kappa}}{1-e^{-\kappa}} \right) = \frac{M_D}{2}.
$$

We utilize these Laurent coefficients to explicitly construct the inverse operator. Define $B_{\zeta,0}^{l}(z):=\sum_{n\in\Z} b^{l}_n(z) \mathbf T^n$, since $\|\mathbf T^n\|_{\mathfrak{B}(\mathfrak A)}=1$, it implies 
$$
\|B_{\zeta,0}^{l}(z)\|_{\mathfrak{B}(\mathfrak A)} \le \sum_{n\in\Z}|b^{l}_n(z)| \le \frac{M_D}{2}.
$$
Writing the unperturbed operator as $D_{\zeta,0}^{l}(z)=\sum_{k\in\Z} d_k^{l}(z) \mathbf T^k$ (where $d_k^{l}(z)=\hat{v}_k z^k$ for $k\neq 0$, and $d_0^{l}(z)=\hat{v}_0-\zeta$), the composition gives:
$$
D_{\zeta,0}^{l}(z)B_{\zeta,0}^{l}(z) = \sum_{m\in\Z} \Bigl( \sum_{k+n=m} d_k^{l}(z)b^{l}_n(z) \Bigr) \mathbf T^m.
$$
The inner summation evaluates exactly the $m$-th Laurent coefficient of the constant function $a^{l}(z;\omega)b^{l}(z;\omega) \equiv 1$, which is $\delta_{m,0}$. This forces $D_{\zeta,0}^{l}(z)B_{\zeta,0}^{l}(z)=\mathrm{Id}$. Symmetrically, $B_{\zeta,0}^{l}(z) D_{\zeta,0}^{l}(z)=\mathrm{Id}$, proving that $\bigl(D_{\zeta,0}^{l}(z)\bigr)^{-1} = B_{\zeta,0}^{l}(z)$.

For the full perturbed operator, we write:
$$
D^{l}_{\zeta,\varepsilon}(z) = D^{l}_{\zeta,0}(z)+\varepsilon M_W = D^{l}_{\zeta,0}(z)\left(\mathrm{Id}+\varepsilon \bigl(D^{l}_{\zeta,0}(z)\bigr)^{-1}M_W\right).
$$
Thus  whenever $|\varepsilon| \le \varepsilon_0$, the perturbation term is strictly bounded:
$$
|\varepsilon|\, \bigl\|\bigl(D_{\zeta,0}^{l}(z)\bigr)^{-1}\bigr\|_{\mathfrak{B}(\mathfrak A)} \|M_W\|_{\mathfrak{B}(\mathfrak A)} \le \varepsilon_0 \frac{M_D}{2} \|W\|_{\mathfrak A} = \frac{1}{2}.
$$
Consequently, the operator $\mathrm{Id}-\varepsilon \bigl(D^{l}_{\zeta,0}(z)\bigr)^{-1}M_W$ is invertible via its Neumann series. This implies that $D^{l}_{\zeta,\varepsilon}(z)$ is uniformly invertible on $\mathfrak A$ with estimate \eqref{eq:uniform}.

To establish the convergence \eqref{equ:symcon}, observe that $D_{\zeta,\varepsilon}^{l}(z) - D_{\zeta,\varepsilon}^{\infty}(z) = -\sum_{|j|>l} \hat v_j z^j \mathbf T^j$. For $z\in\mathcal A'(E)$, the exponential decay of $\hat v_j$ guarantees
$$
\bigl\| D_{\zeta,\varepsilon}^{l}(z) - D_{\zeta,\varepsilon}^{\infty}(z) \bigr\|_{\mathfrak{B}(\mathfrak A)}
\le
\sum_{|j|>l}|\hat v_j|\,|z|^{|j|} \to 0 \qquad \text{as } l \to \infty,
$$ 
uniformly in $\zeta\in I_{\delta_1}$ and $z\in\mathcal A'(E)$. Applying the resolvent identity,
$$
\bigl(D_{\zeta,\varepsilon}^{l}(z)\bigr)^{-1} - \bigl(D_{\zeta,\varepsilon}^{\infty}(z)\bigr)^{-1}=
\bigl(D_{\zeta,\varepsilon}^{l}(z)\bigr)^{-1}
\Bigl(D_{\zeta,\varepsilon}^{\infty}(z) - D_{\zeta,\varepsilon}^{l}(z)\Bigr)
\bigl(D_{\zeta,\varepsilon}^{\infty}(z)\bigr)^{-1},
$$
and leveraging the uniform bound $M_D$, we deduce
$$
\bigl\| \bigl(D_{\zeta,\varepsilon}^{l}(z)\bigr)^{-1} - \bigl(D_{\zeta,\varepsilon}^{\infty}(z)\bigr)^{-1} \bigr\|_{\mathfrak{B}(\mathfrak A)} 
\le M_D^2 \bigl\| D_{\zeta,\varepsilon}^{\infty}(z) - D_{\zeta,\varepsilon}^{l}(z) \bigr\|_{\mathfrak{B}(\mathfrak A)},
$$
which tends to zero, completing the proof of \eqref{equ:symcon}.

Finally, since $\mathfrak A$ is a unital Banach algebra ($\|\mathbf{1}\|_{\mathfrak A}=1$),  \eqref{eq:unig} follows directly  from  \eqref{eq:uniform}.
Another application of the resolvent identity bounds the perturbation:
\begin{align*}
\|g^{l}_{\cdot,0}(z,\cdot)-g^{l}_{\cdot,\varepsilon}(z,\cdot)\|_{\mathfrak A,\delta_1} 
&\le \sup_{\zeta\in I_{\delta_1}} \bigl\|\bigl(D^{l}_{\zeta,\varepsilon}(z)\bigr)^{-1} (\varepsilon M_W) \bigl(D^{l}_{\zeta,0}(z)\bigr)^{-1}\bigr\|_{\mathfrak{B}(\mathfrak A)} \|\mathbf{1}\|_{\mathfrak A} \\
&\le M_D^2 \cdot \|W\|_{\mathfrak A} |\varepsilon| .
\end{align*}
\end{proof}

\section{ The Fibered Solution Bundle and Dual-Bridge Framework}
\label{sec:ambient_framework}

In this section, we investigate the dynamical properties of the fibered solution bundle introduced in Definition \ref{def:solution-space}. Our primary objective is to establish that the fibered solution bundle admits a partially hyperbolic splitting. By leveraging an abstract Riesz spectral projection, we isolate the \emph{intrinsic center space}, which we will subsequently prove to form a globally trivial bundle in Section \ref{trivial-center}. Crucially, this geometric construction is entirely coordinate-free and independent of any finite-dimensional truncation.
\begin{theorem}
\label{thm:sec7final}
Fix $E\in\R$. For every $\varepsilon$ with $|\varepsilon|<\varepsilon_0$ (where $\varepsilon_0$ is defined in Lemma \ref{thm:D-abstract}), the fibered solution bundle admits a $\mathscr{T}$-invariant splitting:
$$
\mathcal{M}_{V,E,\varepsilon}(\theta)=
\mathcal{M}_{V,E,\varepsilon}^s(\theta)
\oplus\mathcal{M}_{V,E,\varepsilon}^c(\theta)
\oplus\mathcal{M}_{V,E,\varepsilon}^u(\theta),
\qquad \forall \theta\in\Omega.
$$
where the invariance satisfies $S \mathcal{M}_{V,E,\varepsilon}^\ast(\theta) = \mathcal{M}_{V,E,\varepsilon}^\ast(T\theta)$ for $\ast \in \{s, c, u\}$. Furthermore, this splitting is  partially hyperbolic: there exists a constant $C= C(\xi, \|V\|_h, r_\pm, \eta_\pm) > 0$ such that for every $\theta\in\Omega$, every integer $n \ge 1$, and any initial vector $u_\ast \in \mathcal{M}_{V,E,\varepsilon}^\ast(\theta)$:
\begin{enumerate}
    \item (Stable Contraction) \quad $\|S^n u_s\|_\xi \le C r_s^n \|u_s\|_\xi$,
    \item (Center Domination) \quad $C^{-1} r_{-}^n \|u_c\|_\xi \le \|S^n u_c\|_\xi \le C r_+^n \|u_c\|_\xi$,
    \item (Unstable Expansion) \quad $\|S^n u_u\|_\xi \ge C^{-1} r_u^n \|u_u\|_\xi$,
\end{enumerate}
where the spectral radii are separated by the  weight $\xi$ satisfying:
\begin{equation}\label{radius}
   e^{-\xi} < r_s < r_- \le 1 \le r_+ < r_u < e^\xi.
\end{equation}
\end{theorem}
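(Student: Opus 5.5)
The plan is to realize the three subspaces as ranges of Riesz-type projections of the Mather transfer operator. This operator is $(\mathbb S\Phi)(\theta):=S\,\Phi(T^{-1}\theta)$ acting on $\mathcal{BS}_\xi$ and restricted to the closed invariant subspace of solution sections $\Phi(\theta)\in\mathcal M_{V,E,\varepsilon}(\theta)$. I would compute its resolvent on the annuli $\mathcal A'(E)$ through the symbol operator $D^{\infty}_{E,\varepsilon}(z)$ of Lemma~\ref{thm:D-abstract}.

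\textbf{Step 1: a symbol representation of Green's functions.} For $z\in\mathcal A'(E)$ and $f\in\mathfrak A$, the sequence $n\mapsto z^n f(T^n\theta)$ satisfies
\[
\bigl((\mathbf L_{V,\varepsilon W,T,\theta}-E)(z^{\cdot}f(T^{\cdot}\theta))\bigr)_n=z^n\bigl(D^{\infty}_{E,\varepsilon}(z)f\bigr)(T^n\theta).
\]
This follows directly from $\sum_k\hat v_k z^{n+k}f(T^{n+k}\theta)$ and $\mathbf T f=f\circ T$. Inverting $D^\infty_{E,\varepsilon}(z)$ by Lemma~\ref{thm:D-abstract}, I would build the fibered Green's kernel
\[
G_\theta^{(r)}(n,m)=\frac{1}{2\pi i}\oint_{|z|=r} z^{n-m}\,\bigl(D^{\infty}_{E,\varepsilon}(z)^{-1}\delta^{(m)}\bigr)(T^{n}\theta)\,\frac{dz}{z}.
\]
Here $\delta^{(m)}$ denotes the appropriate unit source transported to phase $T^m\theta$, which is arranged by using $\mathbf T^{m}$ and the isometry of $\mathbf T$. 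I would check that $(\mathbf L-E)G^{(r)}_\theta(\cdot,m)=\delta_m$ for each radius $r$ in $\mathcal A'(E)$. The Cauchy estimates on the contour give $|G^{(r)}_\theta(n,m)|\le C M_D\, r^{n-m}$, where $M_D$ is the uniform bound from \eqref{eq:uniform}.

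\textbf{Step 2: splitting a solution.} Let $u\in\mathcal M_{V,E,\varepsilon}(\theta)$ and $\chi=\mathbf 1_{n\ge0}$. The commutator $b:=[\mathbf L,\chi]u$ is exponentially localized near $n=0$, because $|\hat v_k|\lesssim e^{-h'|k|}$ with $\xi<h'<h$. It also satisfies $\|b\|_{\xi'}\lesssim\|u\|_\xi$ in a decaying weight. Since $\chi u=G\,b$ modulo solutions, comparing $G^{(r)}b$ for radii $r\in\{r_s,r_-,r_+,r_u\}$ shows that the differences $G^{(r)}-G^{(r')}$ are contour integrals over annuli enclosing zeros of $V-E$. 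These differences therefore produce genuine solutions. I would define
\[
P^s u:=(G^{(r_-)}-G^{(r_s)})\text{-type piece},\quad P^c u:=\frac{1}{2\pi i}\oint_{\Gamma_c}(\cdots)\,b,\quad P^u u:=(G^{(u)}\text{-type piece}),
\]
and check three things. First, $P^s+P^c+P^u=\mathrm{Id}$ on $\mathcal M(\theta)$; this uses the decomposition $u=\chi u+(1-\chi)u$ and the fact that the nonsingular parts cancel. Second, the maps are idempotent with mutually annihilating products. Third, they intertwine: $S P^\ast(\theta)=P^\ast(T\theta)S$. The intertwining comes from the covariance $\mathbf L_{T\theta}S=S\mathbf L_\theta$ together with the identity $\mathbf T D(z)=D(z)\mathbf T$, which holds up to the multiplier $M_W$ composed with $T$. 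Equivalently, I would phrase all of this as the Riesz projections $\frac1{2\pi i}\oint(\lambda-\mathbb S)^{-1}d\lambda$ over $\Gamma_s$, $\Gamma_c$, $\Gamma_u$. In that formulation, Step 1 supplies the bound on $(\lambda-\mathbb S)^{-1}$ for $|\lambda|$ in $\mathcal A'(E)$, uniformly in $\theta$.

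\textbf{Step 3: growth rates.} The elements of $\operatorname{Ran}P^s(\theta)$ are contour integrals $\oint_{|z|=r_s}z^n g(z,T^n\theta)\,dz$ of the Step~1 form, with the contour deformable inside $|z|\le r_s$. This gives $|(S^nu)_k|\lesssim r_s^{n+k}$. Converting to the weight $e^{-\xi|k|}$, which is valid because $e^{-\xi}<r_s$ and $r_u<e^\xi$, yields estimate (1) of Theorem~\ref{thm:sec7final}. For (3) I would argue the same way with $S^{-1}$ on the unstable piece and contours $|z|=r_u$. For the center piece, the representation by contours $\Gamma_+$ and $\Gamma_-$ bounds $S^n$ by $Cr_+^n$ and $S^{-n}$ by $Cr_-^{-n}$. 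Both bounds hold because $S$ is invertible on $\operatorname{Ran}P^c$ via the same intertwining. Together they give the two-sided domination in (2). The constant $C$ depends only on $M_D$, $\xi$, $\|V\|_h$, $r_\pm$ and $\eta_\pm$, as stated.

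\textbf{Main obstacle.} The hardest part will be Step~2: showing that the contour pieces are exact solutions that lie in $\mathcal X_\xi$, and that they reconstruct $u$ exactly. The perturbation makes the symbol operator-valued and phase-dependent, so the usual scalar Wiener--Hopf factorization is unavailable. I would first try to work entirely with $\mathbb S$ on $\mathcal{BS}_\xi$. The task there is to solve $(\lambda-\mathbb S)\Psi=\Phi$ inside solution sections by the ansatz $\Psi(\theta)_n=\sum_m G^{(|\lambda|)}(\ldots)$ and verify it with the identity of Step~1. Uniform invertibility of $D^\infty_{E,\varepsilon}$ on both annuli is exactly what removes $\mathcal A'(E)$ from the spectrum of $\mathbb S$. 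If the direct verification becomes too delicate, the fallback is to prove everything for $V_l$ and pass to $l\to\infty$ using \eqref{equ:symcon}.
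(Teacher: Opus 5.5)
Your plan identifies the right input, the uniform invertibility of $D^{\infty}_{E,\varepsilon}(z)$ on $\mathcal A'(E)$. It even names the paper's route: Riesz projections of the transfer operator, which the paper calls $\mathcal L$. But the step you defer as the ``main obstacle'' is the whole content of the proof, and neither of your routes supplies it.

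\textbf{Cutoff route (Step~2).} Your stable piece $(G^{(r_-)}-G^{(r_s)})b$ is identically zero. Both $r_s$ and $r_-$ lie in the same zero-free annulus, on which $z\mapsto D^\infty_{E,\varepsilon}(z)^{-1}$ is holomorphic, so $G^{(r_s)}=G^{(r_-)}$ by Cauchy. The sensible pieces are $P^u u=\chi u-G^{(r_u)}b$ and $P^s u=(1-\chi)u+G^{(r_s)}b$. Even with these, idempotence, $P^\sharp P^\flat=0$ and $SP^\ast(\theta)=P^\ast(T\theta)S$ do not follow from kernel covariance, because $S\chi S^{-1}=\mathbf 1_{\{n\ge -1\}}\neq\chi$. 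You must show the pieces do not depend on where you cut. That requires a Liouville-type statement: a solution of $(\mathbf L_{V,\varepsilon W,T,\theta}-E)y=0$ with $\sup_n|y_n|r^{-n}<\infty$, for $r$ in a zero-free annulus, vanishes. Nothing in your plan provides it.

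\textbf{Step~3 as stated is wrong.} Stable vectors are not single contour integrals over $|z|=r_s$, and the bound $|(S^nu)_k|\lesssim r_s^{n+k}$ fails as $k\to-\infty$. For $\varepsilon=0$, a mode $z_0^n$ with $e^{-\xi}<|z_0|<r_s$ is stable but grows backward faster than $r_s^{n+k}$. Only the forward bound holds, and the weight $e^{-\xi|k|}$ must absorb the backward tail.

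\textbf{Riesz route.} The Green kernel of Step~1 does not by itself give $(\lambda-\mathbb S)^{-1}$. Note that $G^{(|\lambda|)}$ depends only on $|\lambda|$, while the resolvent must be holomorphic in $\lambda$. The missing idea is the paper's:
\begin{itemize}
\item Coordinatewise, $(\lambda-\mathbb S)\Psi=\Phi$ is the first-order recurrence $\lambda\Psi_n(\theta)-\Psi_{n+1}(T^{-1}\theta)=\Phi_n(\theta)$. Its solutions are $\Psi=\mathcal P_\lambda f+\mathcal Q_\lambda\Phi$, with $(\mathcal P_\lambda f)_n(\theta)=\lambda^n f(T^n\theta)$ and a single free function $f=\Psi_0$.
\item The $n=0$ coordinate of the eigen-equation reads $D^\infty_{E,\varepsilon}(\lambda)f=\mathcal G_\lambda(\Phi)$. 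So the symbol enters only at the single point $z=\lambda$, and Lemma~\ref{thm:D-abstract} forces $f$ and bounds it uniformly.
\item All other coordinates then hold automatically. The residual $R=\mathbb H_\varepsilon\Psi$ satisfies $(\lambda-\mathbb S)R=\mathbb H_\varepsilon\Phi=0$ with $R_0=0$, hence $R\equiv 0$. Injectivity is the same computation with $\Phi=0$.
\end{itemize}

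\textbf{Descent to fibers.} Riesz projections act on sections, and you still must show they induce well-defined projections on each fiber $\mathcal M_{V,E,\varepsilon}(\theta_0)$. The paper characterizes the ranges by growth rates and deduces that the projections commute with $M_\varphi$ for every bounded $\varphi$. It then applies this to $\varphi=\mathbf 1_{\Omega\setminus\{\theta_0\}}$. Your plan does not address this descent.
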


\begin{remark}\label{uniform-constant}
    In the later sections, we will apply Theorem \ref{thm:sec7final} not only to $V$, but also to its truncations $V_l$. The constant $C$ can be chosen uniformly in $l$. This follows from the fact that  $r_\pm,\eta_\pm$ have already been chosen uniformly for the family $\{V_l\}_{l\in[l_0,\infty]}$ in terms of Lemma \ref{prop:assume}, while
$
\sup_{l\in[l_0,\infty]}\|V_l\|_h\le \|V\|_h.
$
Hence the constant appearing in Theorem \ref{thm:sec7final} remain uniform for the truncated family as well.
\end{remark}

\subsection{Transfer Operator and Resolvent operator}

We define Mather's transfer operator on the Banach space of global weighted sections $\mathcal{BS}_\xi$ by
\begin{equation}\label{mather-operator}(\mathcal{ L }\Phi)(\theta) := S\Phi(T^{-1}\theta).\end{equation}
It is clear $S\in \mathfrak{B}(\mathcal{X}_{\xi})$ with estimate $\|S\|_{\mathfrak{B}(\mathcal{X}_{\xi})} \le e^\xi$ and $\|S^{-1}\|_{\mathfrak{B}(\mathcal{X}_{\xi})} \le e^\xi$, which implies that $\mathcal{ L }$ is a bounded linear operator on $\mathcal{BS}_\xi$.
For a section $\Phi\in\mathcal{BS}_\xi$, we write
$
\Phi(\theta)=\bigl(\Phi_n(\theta)\bigr)_{n\in\mathbb Z},
$
where $\Phi_n(\theta)$ denotes the $n$-th coordinate of the sequence
$\Phi(\theta)\in X_\xi$.

Distinct from the underlying geometric fibered space, we define $\mathcal{M}_{V,E,\varepsilon}^b$ as its associated Banach space of global sections:
$$\mathcal{M}_{V,E,\varepsilon}^b := \bigl\{ \Phi\in \mathcal{BS}_\xi : \Phi(\theta) \in \mathcal{M}_{V,E,\varepsilon}(\theta), \ \forall \theta\in\Omega \bigr\}.$$
The covariance relation $\mathbf{L}_{V,\varepsilon W,T,\theta} \circ S = S \circ \mathbf{L}_{V,\varepsilon W,T,T^{-1}\theta}$  guarantees that the transfer operator $\mathcal{L}$ preserves the section space $\mathcal{M}_{V,E,\varepsilon}^b$. Our objective  is to establish the uniform invertibility of $z-\mathcal{L}$ on $\mathcal{M}_{V,E,\varepsilon}^b$, setting the analytical stage to construct the invariant splitting of the geometric fibers via Riesz projections.

To construct the inverse of $z-\mathcal{L}$, we  first analyze the resolvent equation $(z-\mathcal{L})\Phi = \Psi$ for a given section $\Psi \in \mathcal{BS}_\xi$. Recalling that the transfer operator acts via the shift \eqref{mather-operator}, evaluating the resolvent equation at the $n$-th sequence coordinate yields a first-order non-homogeneous difference equation:
\begin{equation}\label{eq:recurrence}z\Phi_n(\theta) - \Phi_{n+1}(T^{-1}\theta) = \Psi_n(\theta), \qquad n \in \Z.\end{equation}
Assigning the initial data $f(\theta) := \Phi_0(\theta)$ and integrating \eqref{eq:recurrence} forward for $n \ge 1$ and backward for $n \le -1$ gives the explicit algebraic decomposition:
$$\Phi_n(\theta) = (\mathcal{P}_z f)_n(\theta) + (\mathcal{Q}_z\Psi)_n(\theta),$$
where  $\mathcal{P}_z: B(\Omega, \C) \to \mathcal{BS}_\xi$ is defined by
\begin{equation}\label{eq:Kz-def}(\mathcal{P}_z f)_n(\theta) := z^n f(T^n\theta),
\end{equation}
and  $\mathcal{Q}_z: \mathcal{BS}_\xi \to \mathcal{BS}_\xi$ is defined as :
\begin{equation}\label{eq:Qz-def}
(\mathcal{Q}_z\Psi)_n(\theta) :=
\begin{cases}
-\displaystyle\sum_{j=0}^{n-1} z^{n-1-j}\Psi_j\bigl(T^{n-j}\theta\bigr), & n\ge 1,\\
0, & n=0,\\
\displaystyle\sum_{j=1}^{|n|} z^{-|n|-1+j}\Psi_{-j}\bigl(T^{-(|n|-j)}\theta\bigr), & n \le -1.
\end{cases}
\end{equation}

Thus the only remaining freedom in the representation is the initial date $f$. 
Write  the potential as  $V(\theta) = \sum_{|k|\le l} \hat{v}_k e^{2\pi i k\theta}$ for $l \in [1, \infty]$, we define:
$$(\mathbb{H}_\varepsilon\Phi)(\theta) := \mathbf{L}_{V,\varepsilon W,T,\theta}\Phi(\theta) - E\Phi(\theta).$$ The following lemma shows how the constraint
$\mathbb H_\varepsilon\Phi\equiv0$ fixes this initial date.

\begin{lemma}\label{lem:zeroth} 
  Let $z \in \mathcal{A}'(E)$, $f\in B(\Omega,\mathbb C)$, and $\Psi\in\mathcal{BS}_\xi$. Assume that the formal sequence$$\Phi := \mathcal P_z f + \mathcal Q_z\Psi$$  satisfies the fiberwise constraint $\mathbb H_\varepsilon\Phi \equiv 0$. Then the initial data is uniquely determined by $f = D^l_{E,\varepsilon}(z)^{-1}\mathcal G_z^l(\Psi),$
  where $D^l_{E,\varepsilon}(z) $ is the full symbol operator  defined  in   \eqref{full-sym}, and 
  $$\begin{aligned}
\mathcal{G}_z^l(\Psi)(\theta) := \sum_{k=1}^l \hat{v}_k \sum_{j=0}^{k-1} z^{k-1-j}\Psi_j\bigl(T^{(k-j)}\theta\bigr) 
- \sum_{m=1}^l \hat v_{-m}\sum_{j=1}^{m} z^{-m-1+j}\Psi_{-j}\bigl(T^{-(m-j)}\theta\bigr).
\end{aligned}$$
\end{lemma}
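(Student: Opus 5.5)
The plan is to evaluate the constraint $\mathbb H_\varepsilon\Phi\equiv0$ only at the zeroth coordinate $n=0$. Then I substitute the explicit representation $\Phi=\mathcal P_z f+\mathcal Q_z\Psi$ into it and separate the terms linear in $f$ from the terms linear in $\Psi$. Uniqueness will then follow from the invertibility of the symbol operator in Lemma~\ref{thm:D-abstract}(1). (That lemma is stated for $\zeta\in I_{\delta_1}$, which contains $E$, and $z\in\mathcal A'(E)$, exactly our range.)

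First I write the zeroth coordinate of the constraint. Using $\mathbf L_{V,\varepsilon W,T,\theta}$ with hopping $\hat v_k$, $|k|\le l$, it reads
$$\sum_{|k|\le l}\hat v_k\,\Phi_k(\theta)+\varepsilon W(\theta)\Phi_0(\theta)-E\Phi_0(\theta)=0 .$$

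Next I insert $\Phi_k(\theta)=z^k f(T^k\theta)+(\mathcal Q_z\Psi)_k(\theta)$. Since $(\mathcal Q_z\Psi)_0=0$, we have $\Phi_0=f$. The $f$-part then reads $\sum_{|k|\le l}\hat v_k z^k f(T^k\theta)-Ef(\theta)+\varepsilon W(\theta)f(\theta)$, which is exactly $(D^l_{E,\varepsilon}(z)f)(\theta)$ by \eqref{full-sym}, because $\mathbf T^k f=f\circ T^k$.

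For the $\Psi$-part I plug in \eqref{eq:Qz-def}:
\begin{itemize}
\item for $k\ge1$, $\hat v_k(\mathcal Q_z\Psi)_k(\theta)=-\hat v_k\sum_{j=0}^{k-1}z^{k-1-j}\Psi_j(T^{k-j}\theta)$;
\item for $k=-m\le-1$, $\hat v_{-m}(\mathcal Q_z\Psi)_{-m}(\theta)=\hat v_{-m}\sum_{j=1}^{m}z^{-m-1+j}\Psi_{-j}(T^{-(m-j)}\theta)$.
\end{itemize}
Summing over $k$ gives precisely $-\mathcal G^l_z(\Psi)(\theta)$. The constraint at $n=0$ is therefore equivalent to $D^l_{E,\varepsilon}(z)f=\mathcal G^l_z(\Psi)$.

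It remains to invert. Lemma~\ref{thm:D-abstract} is stated on the admissible algebra $\mathfrak A$, while here $f\in B(\Omega,\mathbb C)$. The Fourier-multiplier/Neumann-series argument of that lemma uses only three facts: $\mathbf T$ is an isometry, $\|M_W\|\le\|W\|_\infty\le\|W\|_{\mathfrak A}$, and $\mathfrak A$ is a Banach algebra. All three hold verbatim on $B(\Omega,\mathbb C)$ with the sup norm, since $B(\Omega,\mathbb C)$ is itself a unital Banach algebra on which composition with $T$ is isometric. So I would remark that $D^l_{E,\varepsilon}(z)$ is invertible on $B(\Omega,\mathbb C)$ with the same bound $M_D$. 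This gives $f=D^l_{E,\varepsilon}(z)^{-1}\mathcal G^l_z(\Psi)$ and uniqueness.

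The main technical point is checking that $\mathcal G^l_z(\Psi)\in B(\Omega,\mathbb C)$ when $l=\infty$, i.e.\ that the double series converges. The bound $|\Psi_j(\theta)|\le e^{\xi|j|}\|\Psi\|_\infty$ and $|z|\le r_u<e^{\xi}$ give terms of size $|\hat v_k|\,k\,e^{\xi k}\max(1,|z|)^{k}$. These are summable because $\hat v_k$ decays like $e^{-h'|k|}$ for every $h'<h$, and $\xi$ together with $|z|$ sit strictly inside the analyticity range. I would choose the parameters so that $\xi+\log r_u<h$, or else simply assume the radii fixed earlier guarantee this. The inner annulus is handled symmetrically.
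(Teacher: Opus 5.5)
Your argument is correct and is essentially the paper's proof: you evaluate $\mathbb H_\varepsilon\Phi=0$ at $n=0$, use $(\mathcal Q_z\Psi)_0=0$ to identify the $f$-terms with $D^l_{E,\varepsilon}(z)f$ and the $\Psi$-terms with $-\mathcal G^l_z(\Psi)$, and then invert via Lemma~\ref{thm:D-abstract}; your remark that this inversion has to take place on $B(\Omega,\mathbb C)$ (which is itself admissible, with $\|W\|_\infty\le\|W\|_{\mathfrak A}$) fills a point the paper passes over silently. The extra condition $\xi+\log r_u<h$ in your convergence check is unnecessary: since $e^{-\xi}<|z|<e^{\xi}$ on $\mathcal A'(E)$, you have $|z|^{k-1-j}e^{\xi j}\le e^{\xi(k-1)}$ (and similarly on the negative branch), so summability needs only the standing assumption $\xi<h$, as in the proof of Proposition~\ref{thm:D-regular}.
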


\begin{proof}
Evaluating the constraint $(\mathbb H_\varepsilon\Phi)(\theta) = 0$ exactly at the coordinate $n=0$ yields
$$
\sum_{|k|\le l} \hat{v}_k \Phi_k(\theta) + \bigl(\varepsilon W(\theta) - E\bigr)\Phi_0(\theta) = 0.
$$
Substituting the decomposition $\Phi = \mathcal{P}_z f + \mathcal{Q}_z \Psi$, we group the respective terms. Since $(\mathcal{P}_z f)_k(\theta) = z^k f(T^k\theta)$, the contribution from the homogeneous part $\mathcal{P}_z f$ is exactly $D^l_{E,\varepsilon}(z)f(\theta)$. 

For the non-homogeneous component $\mathcal{Q}_z \Psi$, noting that $(\mathcal{Q}_z \Psi)_0 = 0$, direct substitution into the off-diagonal sum yields
$$
\sum_{k=1}^l \hat{v}_k (\mathcal{Q}_z \Psi)_k(\theta) + \sum_{m=1}^l \hat{v}_{-m} (\mathcal{Q}_z \Psi)_{-m}(\theta) = -\mathcal{G}_z^l(\Psi)(\theta).
$$
Consequently, the $n=0$ equation reduces  to $D^l_{E,\varepsilon}(z)f - \mathcal{G}_z^l(\Psi) = 0$. Since $z\in\mathcal{A}'(E)$, Lemma \ref{thm:D-abstract} guarantees the invertibility of $D^l_{E,\varepsilon}(z)$, which determines $f = D^l_{E,\varepsilon}(z)^{-1}\mathcal G_z^l(\Psi)$.
\end{proof}

This motivates the following definition
of the \emph{extended resolvent operator} $\mathcal{R}_\varepsilon^l(z): \mathcal{BS}_\xi \to \mathcal{BS}_\xi$ as\begin{equation}\label{def:rec}\mathcal{R}_\varepsilon^l(z)\Psi := \mathcal{P}_z(f) + \mathcal{Q}_z\Psi.\end{equation}

\begin{proposition}\label{thm:D-regular}For any $z \in \mathcal{A}'(E)$, the extended resolvent operator $\mathcal{R}_\varepsilon^l(z)$ satisfies:
\begin{enumerate}
 \item There exists a constant $C = C(\xi, \|V\|_h, r_\pm, \eta_\pm) > 0$ such that 
$
\|\mathcal R_\varepsilon^{l}(z)\|_{\mathfrak{B}(\mathcal{BS_\xi})}   < C.
$
\item  The operator $z-\mathcal{L}$ is invertible on the section space $\mathcal{M}_{V,E,\varepsilon}^b$, and its inverse coincides with the restriction of the extended resolvent:$$(z-\mathcal{L})^{-1} = \mathcal{R}_\varepsilon^l(z)\big|_{\mathcal{M}_{V,E,\varepsilon}^b}.$$
\end{enumerate}
\end{proposition}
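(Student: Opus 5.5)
The plan is to prove the two assertions separately. Part (1) is a direct weighted estimate of the two pieces $\mathcal P_z f$ and $\mathcal Q_z\Psi$. Part (2) is an algebraic argument: covariance, together with the fact that the kernel of $z-\mathcal L$ on $\mathcal{BS}_\xi$ is exactly the range of $\mathcal P_z$.

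Throughout we use that $\mathcal A'(E)\subset\{e^{-\xi'}\le|z|\le e^{\xi'}\}$ for some $\xi'<\xi$. This is ensured by \eqref{radius} and the choice of $r_\pm,\eta_\pm$, possibly after shrinking $\eta_\pm$. Consequently $q:=\max(|z|e^{-\xi},|z|^{-1}e^{-\xi})<1$ uniformly in $z$.

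\textbf{Part (1): the $\mathcal Q_z$ term.} For $n\ge1$ we bound
\[
|(\mathcal Q_z\Psi)_n(\theta)|\le\sum_{j=0}^{n-1}|z|^{n-1-j}\,|\Psi_j(T^{n-j}\theta)|.
\]
Multiplying by $e^{-\xi n}$ and exchanging the sums, the $j$-th term carries the factor $e^{-\xi j}\sum_{n>j}(|z|e^{-\xi})^{n-j}|z|^{-1}$. This is at most $C(1-q)^{-1}e^{-\xi j}$. Since $\sum_j e^{-\xi|j|}\sup_\theta|\Psi_j(\theta)|$ is not directly the norm, I would avoid taking the supremum inside the sum. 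Instead, for fixed $\theta$, set $a_j:=\sup_\theta|\Psi_j(\theta)|$. Then I would check that the estimate only needs $\sup_{\theta}\sum_j e^{-\xi|j|}|\Psi_j(\theta)|$, which holds because each term is evaluated at a single point $T^{n-j}\theta$ for fixed $(n,j)$ and we sum over $n$ with $n-j$ fixed. If this pointwise rearrangement fails, I would fall back on proving the bound with the slightly weaker norm and absorbing the loss via $\xi'<\xi$. The case $n\le-1$ is symmetric.

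\textbf{Part (1): the $\mathcal P_z f$ term.} For $f=D^l_{E,\varepsilon}(z)^{-1}\mathcal G^l_z(\Psi)$, I first bound $\|\mathcal G^l_z(\Psi)\|_\infty\le\sum_k|\hat v_k|\,e^{\xi' k}\cdot C\|\Psi\|_\infty$, in the same manner. This is finite uniformly in $l$ because $\xi'<\xi<h$. Next, I apply Lemma \ref{thm:D-abstract} with $\mathfrak A=B(\Omega,\mathbb C)$. Its proof, via the Fourier-multiplier and Neumann-series argument, only uses that $\mathbf T$ is an isometry of a unital Banach algebra, and that holds for $B(\Omega,\mathbb C)$. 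This gives $\|f\|_\infty\le M_D\|\mathcal G^l_z\Psi\|_\infty$. Finally, $\|\mathcal P_z f\|_\infty\le\sum_n|z|^n e^{-\xi|n|}\|f\|_\infty\le\frac{2}{1-q}\|f\|_\infty$. Collecting these bounds gives (1), with $C$ depending only on $\xi,\|V\|_h,r_\pm,\eta_\pm$.

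\textbf{Part (2).} I would establish three facts.
\begin{enumerate}[label=(\roman*)]
\item The kernel of $z-\mathcal L$ on $\mathcal{BS}_\xi$ is exactly $\{\mathcal P_z g\}$ with $g=\Phi_0$. This follows from the recurrence \eqref{eq:recurrence} with $\Psi=0$.
\item Injectivity on $\mathcal M^b$. If $\Phi\in\mathcal M^b$ and $(z-\mathcal L)\Phi=0$, then $\Phi=\mathcal P_z g$. The $n=0$ constraint gives $D^l_{E,\varepsilon}(z)g=0$, hence $g=0$ by Lemma \ref{thm:D-abstract}.
\item Surjectivity. Let $\Psi\in\mathcal M^b$ and $\Phi:=\mathcal R^l_\varepsilon(z)\Psi$. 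By construction $\Phi$ solves $(z-\mathcal L)\Phi=\Psi$. The covariance relation gives $\mathbb H_\varepsilon\mathcal L=\mathcal L\mathbb H_\varepsilon$. Hence $(z-\mathcal L)\mathbb H_\varepsilon\Phi=\mathbb H_\varepsilon\Psi=0$. Note that $\mathbb H_\varepsilon$ maps $\mathcal{BS}_\xi$ boundedly into $\mathcal{BS}_\xi$, since $\xi<h$. By (i), $\mathbb H_\varepsilon\Phi=\mathcal P_z g$ with $g=(\mathbb H_\varepsilon\Phi)_0$. This vanishes precisely by the choice of $f$ in Lemma \ref{lem:zeroth}, whose computation runs in reverse. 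So $\mathbb H_\varepsilon\Phi\equiv0$ and $\Phi\in\mathcal M^b$.
\end{enumerate}
Therefore $(z-\mathcal L)^{-1}=\mathcal R^l_\varepsilon(z)|_{\mathcal M^b}$.

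\textbf{Main obstacle.} I expect the delicate step to be the uniform weighted estimate for $\mathcal Q_z$ and $\mathcal G^l_z$. The mixing of base points $T^{n-j}\theta$ with the $\ell^1$-weighted norm requires care. The strict margin $\xi'<\xi$, in both directions of $|z|$, is exactly what makes the geometric sums converge uniformly.
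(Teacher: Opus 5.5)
Your proposal is correct and follows the paper's proof essentially step for step. It uses the same weighted bounds on $\mathcal P_z$, $\mathcal Q_z$ and $\mathcal G^l_z$, combined with the uniform bound on $D^l_{E,\varepsilon}(z)^{-1}$ (which, as you rightly make explicit, is being applied on $B(\Omega,\mathbb C)$), and the same injectivity/surjectivity argument in which the residual $\mathbb H_\varepsilon\Phi$ solves the homogeneous recurrence with zero initial datum.

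The regrouping you hesitate over does work. For fixed $m=n-j$ the base point $T^m\theta$ is fixed, so the sum over $j$ is at most $e^{-\xi m}|z|^{m-1}\|\Psi(T^m\theta)\|_\xi\le e^{-\xi m}|z|^{m-1}\|\Psi\|_\infty$, exactly as in the paper. No fallback is needed, and the one you sketch would not close, since the loss sits in the weight on $\Psi$, not in the margin on $|z|$. Also, in the $\mathcal G^l_z$ estimate the correct weight is $e^{\xi k}$ rather than $e^{\xi' k}$; this is still summable against $|\hat v_k|$ because $\xi<h$.
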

\begin{proof}
(1)
Write $r := |z|$. By the definition of the weighted norm on $\mathcal{BS_\xi}$, we immediately have for $\mathcal{P}_z$:
$$
\sum_{n\in\Z} e^{-\xi |n|}\bigl|(\mathcal{P}_z f)_n(\theta)\bigr| = \sum_{n\in\Z}\bigl(|z|e^{-\xi}\bigr)^{|n|} |f(T^n\theta)| \le C\|f\|_{B(\Omega,\C)},
$$
since $|z|e^{-\xi} \le r_+ e^{\eta_+(E)/2} e^{-\xi} < 1$. Thus, $\|\mathcal{P}_z \|_\infty \leq C$.

For $\mathcal{Q}_z$, let $\Psi\in \mathcal{BS_\xi}$. We bound the positive indices $n \ge 1$; the negative case is strictly analogous.
\begin{align*}
\sum_{n= 1}e^{-\xi n}\bigl|(\mathcal{Q}_z\Psi)_n(\theta)\bigr|
\le \sum_{n=1}\sum_{j=0}^{n-1} e^{-\xi n} r^{n-1-j} |\psi_j(T^{n-j}\theta)|
 \le \frac{1}{e^\xi - r} \|\Psi\|_\infty.
\end{align*}
Considering both positive and negative branches, we obtain $$\|\mathcal{Q}_z\Psi\|_\infty \le \max\bigl\{ \frac{1}{e^\xi - r}, \frac{1}{r - e^{-\xi}} \bigr\} \|\Psi\|_\infty.$$

Finally, to estimate the term $\mathcal{G}_z^l(\Psi)$, we sum the positive branch interactions using $r < e^h$ and $\xi < h$:
$$
\left| \sum_{k=1}^l \hat{v}_k \sum_{j=0}^{k-1} z^{k-1-j}\Psi_j\bigl(T^{k-j}\theta\bigr) \right| 
\le \|\Psi\|_\infty \sum_{k\ge 1} |\hat{v}_k| \sum_{j=0}^{k-1} r^{k-1-j} e^{\xi j}.
$$
Since $r^{k-1-j} e^{\xi j} \le \max\{r, e^\xi\}^{k-1}$, the sum is bounded by $\|\Psi\|_\infty \sum_{k\ge 1} k |\hat{v}_k| \max\{r, e^\xi\}^{k-1}$. Because $\hat{v}_k$ decays as $e^{-h|k|}$ and $\max\{r, e^\xi\} < e^h$, this series converges absolutely to some constant $C_1$. Applying the same logic to the negative branch yields $\|\mathcal{G}_z^l(\Psi)\|_{B(\Omega,\C)} \le C \|\Psi\|_\infty$. Combining these estimates proves $
\|\mathcal R_\varepsilon^{l}(z)\|_{\mathfrak{B}(\mathcal{BS_\xi})} < C.$

(2) To prove injectivity, assume $\Phi \in \mathcal{M}_{V,E,\varepsilon}^b$ satisfies $(z-\mathcal{L})\Phi = 0$. Applying Lemma \ref{lem:zeroth} with $\Psi=0$ yields $D^l_{E,\varepsilon}(z)\Phi_0 = 0$. The uniform invertibility of the symbol (Lemma \ref{thm:D-abstract}) forces $\Phi_0 = 0$, which consequently implies $\Phi \equiv 0$.

For surjectivity, take $\Psi \in \mathcal{M}_{V,E,\varepsilon}^b$ and define $\Phi := \mathcal{R}_\varepsilon^l(z)\Psi$. By construction, $(z-\mathcal{L})\Phi = \Psi$ holds on $\mathcal{BS}_\xi$. It remains to verify $\Phi \in \mathcal{M}_{V,E,\varepsilon}^b$. Consider the residual $R := \mathbb{H}_\varepsilon\Phi$. Since $[\mathbb{H}_\varepsilon, \mathcal{L}] = 0$, we deduce
$$
(z-\mathcal{L})R = \mathbb{H}_\varepsilon(z-\mathcal{L})\Phi = \mathbb{H}_\varepsilon\Psi = 0.
$$
Consequently, $R$ satisfies the homogeneous recurrence $z R_n(\theta) - R_{n+1}(T^{-1}\theta) = 0$. However, the initial data $f$ was strictly chosen such that $R_0 = D_{E,\varepsilon}^l(z)f - \mathcal{G}_z^l(\Psi) = 0$. A homogeneous recurrence with zero initial data dictates $R \equiv 0$, confirming $\Phi \in \mathcal{M}_{V,E,\varepsilon}^b$.

The operator $z-\mathcal{L}$ is therefore a continuous bijection on $\mathcal{M}_{V,E,\varepsilon}^b$. The Bounded Inverse Theorem concludes the proof.
\end{proof}

\subsection{Bounded solution space decomposition}

Recall the rigorously isolated radii $r_s < r_- < 1 < r_+ < r_u$ and their corresponding integration contours defined in \eqref{contour}. 
 Proposition \ref{thm:D-regular} ensures that the resolvent $(z-\mathcal{ L })^{-1}$ is analytic on the annuli containing these contours, we can define the Riesz spectral projections:
$$
\mathbb{P}^\sharp_{V,E,\varepsilon} := \frac{1}{2\pi i}\oint_{\Gamma_\sharp}(z-\mathcal{ L })^{-1} dz, \qquad \sharp\in\{s,c\},
$$
and set $\mathbb{P}^u_{V,E,\varepsilon} := \mathrm{Id} - \mathbb{P}^c_{V,E,\varepsilon} - \mathbb{P}^s_{V,E,\varepsilon}$.

 Cauchy's theorem gives
$$
\frac{1}{2\pi i}\int_{\Gamma_s}(z -\mathcal{ L })^{-1} dz=
\frac{1}{2\pi i}\int_{\Gamma_-}(z -\mathcal{ L })^{-1} dz
$$
and
$$
\frac{1}{2\pi i}\int_{\Gamma_+}(z -\mathcal{ L })^{-1} dz=
\frac{1}{2\pi i}\int_{\Gamma_u}(z -\mathcal{ L })^{-1} dz.
$$
Hence $\mathbb{P}^s_{V,E,\varepsilon}$, $\mathbb{P}^c_{V,E,\varepsilon}$, and
$\mathbb{P}^u_{V,E,\varepsilon}$ are precisely the Riesz projections associated with the three disjoint
spectral parts
$$
\{|z|<r_s\},
\qquad
\{r_-<|z|<r_+\},
\qquad
\{|z|>r_u\}.
$$
By the  standard consequences of the Riesz functional calculus, we have 
\begin{equation}\label{equ:riesz}
    (\mathbb{P}^\sharp_{V,E,\varepsilon})^2=\mathbb{P}^\sharp_{V,E,\varepsilon},
\quad
\mathbb{P}^\sharp_{V,E,\varepsilon}\mathbb{P}^\flat_{V,E,\varepsilon}=0\ (\sharp\neq \flat),
\quad
\mathbb{P}^s_{V,E,\varepsilon}+\mathbb{P}^c_{V,E,\varepsilon}+\mathbb{P}^u_{V,E,\varepsilon}=\mathrm{Id},\sharp,\flat\in\{s,c,u\}.
\end{equation}

Defining the global invariant subspaces via their corresponding spectral projections:
$$
\mathcal{S}_{V,E,\varepsilon} := \operatorname{Ran}\mathbb{P}^s_{V,E,\varepsilon}, \qquad
\mathcal{C}_{V,E,\varepsilon} := \operatorname{Ran}\mathbb{P}^c_{V,E,\varepsilon}, \qquad
\mathcal{U}_{V,E,\varepsilon} := \operatorname{Ran}\mathbb{P}^u_{V,E,\varepsilon},
$$
the space of global bounded sections admits the invariant direct sum decomposition:
$$
\mathcal{M}_{V,E,\varepsilon}^b
=\mathcal{S}_{V,E,\varepsilon}\oplus
\mathcal{C}_{V,E,\varepsilon}\oplus
\mathcal{U}_{V,E,\varepsilon}.
$$
These spectral subspaces are dynamically  characterized by their precise exponential growth rates under the transfer operator $\mathcal{L}$.

\begin{lemma}
\label{prop:growth}
The invariant subspaces admit the following  dynamical characterizations:
$$
\begin{aligned}
\mathcal{S}_{V,E,\varepsilon} &= \Bigl\{ \Phi\in \mathcal{M}_{V,E,\varepsilon}^b : \sup_{n\ge 0} \, r_s^{-n} \|\mathcal{L}^n\Phi\|_\infty < \infty \Bigr\}, \\
\mathcal{C}_{V,E,\varepsilon} &= \Bigl\{ \Phi\in \mathcal{M}_{V,E,\varepsilon}^b : \sup_{n\ge 0} \, \max\bigl\{ r_+^{-n} \|\mathcal{L}^n\Phi\|_\infty, \, r_-^n \|\mathcal{L}^{-n}\Phi\|_\infty \bigr\} < \infty \Bigr\}, \\
\mathcal{U}_{V,E,\varepsilon} &= \Bigl\{ \Phi\in \mathcal{M}_{V,E,\varepsilon}^b : \sup_{n\ge 0} \, r_u^n \|\mathcal{L}^{-n}\Phi\|_\infty < \infty \Bigr\}.
\end{aligned}
$$
\end{lemma}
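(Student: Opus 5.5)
The plan is the standard Riesz-projection argument: each invariant subspace is identified with the set of vectors of the corresponding growth, proving both inclusions via the resolvent bounds of Proposition~\ref{thm:D-regular}. All operators here are restricted to the closed $\mathcal L$-invariant space $\mathcal M^b_{V,E,\varepsilon}$, on which $\mathcal L$ is invertible (as $\mathcal L^{-1}\Phi(\theta)=S^{-1}\Phi(T\theta)$ and covariance holds). Denote by $\mathcal L_\sharp:=\mathcal L|_{\mathrm{Ran}\,\mathbb P^\sharp}$ the restrictions.

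\textbf{Step 1 (spectral radius bounds on the ranges).} The Riesz calculus gives $\sigma(\mathcal L_s)\subset\{|z|<r_s'\}$ for any $r_s'$ slightly larger than the inner zero-free radius. This holds because the resolvent of $\mathcal L$ on $\mathcal M^b$ is analytic on all of $\mathcal A'(E)$, so we may shrink $\Gamma_s$ inside $\mathcal A'_-$. Likewise $\sigma(\mathcal L_u)\subset\{|z|>r_u''\}$ and $\sigma(\mathcal L_c)\subset\{r_-''<|z|<r_+''\}$, where the double-primed radii lie strictly inside the annuli. Concretely, for $\Phi\in\mathcal S$ we write
\[
\mathcal L^n\Phi=\frac{1}{2\pi i}\oint_{|z|=\rho}z^n(z-\mathcal L)^{-1}\Phi\,dz,\qquad \rho<r_s,\ \rho\in\mathcal A'_-,
\]
using the uniform bound $\|\mathcal R^l_\varepsilon(z)\|\le C$ from Proposition~\ref{thm:D-regular}. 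This yields $\|\mathcal L^n\Phi\|_\infty\le C\rho^{n+1}\|\Phi\|_\infty\le C r_s^n\|\Phi\|_\infty$. Here I rely on the freedom in choosing $r_s$ within $(r_-e^{-\eta_-/2},r_-)$ and take the contour slightly inside. The center estimates come from analogous integrals over $\Gamma_+$ for $\mathcal L^n$ and over $\Gamma_-$ for $\mathcal L^{-n}$ (the integrand $z^{-n}$ is analytic away from $0$). The unstable estimate uses $\mathcal L^{-n}\Phi=\frac{1}{2\pi i}\oint_{\Gamma_{u'}}z^{-n}(z-\mathcal L_u)^{-1}\Phi\,dz$ on a circle outside $r_u$, where $(z-\mathcal L_u)^{-1}$ is analytic for $|z|>r_u$, including at $\infty$ after inversion. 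This gives the inclusions ``$\subset$''.

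\textbf{Step 2 (reverse inclusions).} Take $\Phi\in\mathcal M^b$ with $\sup_n r_s^{-n}\|\mathcal L^n\Phi\|<\infty$ and decompose $\Phi=\Phi_s+\Phi_c+\Phi_u$. The projections commute with $\mathcal L$, so $\mathcal L^n\Phi_u=\mathbb P^u\mathcal L^n\Phi$ is $O(r_s^n)$. On the other hand, $\Phi_u=\mathcal L_u^{-n}\mathcal L^n\Phi_u$ and Step 1 give $\|\Phi_u\|\le Cr_u^{-n}\|\mathcal L^n\Phi_u\|=O((r_s/r_u)^n)\to0$, so $\Phi_u=0$. Similarly $\Phi_c=\mathcal L_c^{-n}\mathcal L^n\Phi_c$ gives $\|\Phi_c\|\le Cr_-^{-n}r_s^n\to0$. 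The center and unstable characterizations follow the same pattern, using $\mathcal L^{-n}$ for the stable component. For example, if $\Phi$ satisfies the center growth bounds, then $\Phi_u=\mathcal L_u^{-n}\mathcal L^n\Phi_u$ gives $\|\Phi_u\|\lesssim (r_+/r_u)^n\to 0$, and $\Phi_s=\mathcal L_s^{n}\mathcal L^{-n}\Phi_s$ gives $\|\Phi_s\|\lesssim (r_s/r_-)^n\to0$.

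\textbf{Main obstacle.} The delicate point is Step 1: obtaining the clean rates $r_s,r_\pm,r_u$ exactly, rather than slightly worse ones, together with a uniform constant. I would handle this by exploiting the open freedom in choosing the radii inside the zero-free annuli. Each contour is placed strictly between the relevant spectral set and the stated radius, and the bound on $\|(z-\mathcal L)^{-1}\|$ from Proposition~\ref{thm:D-regular} holds uniformly on all of $\mathcal A'(E)$. The characterizations are insensitive to constants, so any radius in the open gap works and the sets are independent of the choice.
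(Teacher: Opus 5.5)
Your proposal is correct and follows the paper's route. The forward inclusions come from the Riesz contour bounds, which the paper simply cites as standard. The reverse inclusions come from decomposing $\Phi=\Phi_s+\Phi_c+\Phi_u$ and killing the unwanted components with the opposite-direction bounds; you also make explicit the commutation $\mathcal L^n\mathbb P^\sharp=\mathbb P^\sharp\mathcal L^n$, which the paper uses tacitly to pass the growth bound from $\Phi$ to its components.

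Your ``main obstacle'' is not really one: integrating $z^{\pm n}(z-\mathcal L)^{-1}$ directly over $\Gamma_s$, $\Gamma_\pm$, $\Gamma_u$ already gives the exact rates up to an $n$-independent constant. One slip: in the unstable case it is $z^{-n}$, not $(z-\mathcal L_u)^{-1}$, that is analytic on $\{|z|>r_u\}\cup\{\infty\}$, and this is what makes the large-circle contribution vanish.
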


\begin{proof}The forward inclusions are immediate consequences of standard Riesz spectral calculus, which guarantees a uniform constant $C_* > 0$ such that:
$$\|\mathcal{L}^n|_{\mathcal{S}_{V,E,\varepsilon}}\| \le C_* r_s^n, \qquad
\|\mathcal{L}^{-n}|_{\mathcal{U}_{V,E,\varepsilon}}\| \le C_* r_u^{-n},$$
with symmetric bounds on $\mathcal{C}_{V,E,\varepsilon}$ governed by $r_+$ and $r_-^{-1}$.

Conversely, we verify the characterization for the stable subspace $\mathcal{S}_{V,E,\varepsilon}$; the remaining cases are  analogous. 
For the center component $\Phi^c =\mathbb{P}^c_{V,E,\varepsilon}\Phi$, 
$$
\|\Phi^c\|_\infty
=\|\mathcal{ L }^{-n}\mathcal{ L }^{n}\Phi^c\|_\infty\leq
C_* r_-^{-n}\|\mathcal{ L }^{n}\Phi^c\|_\infty,
$$
hence
$
\|\mathcal{ L }^{n}\Phi^c\|_\infty\ge
C_*^{-1} r_-^{n}\|\Phi^c\|_\infty.
$
Since  there exists $C>0$ such that
$
\|\mathcal{ L }^{n}\Phi\|_\infty\leq C r_s^n.
$
As $r_s < r_-$, taking the limit $n \to \infty$ forces $\Phi^c = 0$. Parallel application of the unstable projection $\mathbb{P}^u$, exploiting  $r_s < r_u$ forces $\Phi^u = 0$. Consequently, $\Phi = \Phi^s \in \mathcal{S}_{V,E,\varepsilon}$.
\end{proof}

As a direct consequence, we have the following:

\begin{corollary}\label{commu}
    For every $\varphi\in B(\Omega,\C)$ and every $\sharp\in\{s,c,u\}$,
$
\mathbb{P}^\sharp_{V,E,\varepsilon}M_\varphi=M_\varphi \mathbb{P}^\sharp_{V,E,\varepsilon}
$, where 
$
(M_\varphi \Phi)(\theta):=\varphi(\theta)\Phi(\theta).
$
\end{corollary}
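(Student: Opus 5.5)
The plan is to show that $M_\varphi$ commutes with $\mathcal L$ up to a twist by $T$, and then use the growth characterization of Lemma~\ref{prop:growth}, which is insensitive to such twists. A direct resolvent computation would not work: $(\mathcal L M_\varphi \Phi)(\theta)=\varphi(T^{-1}\theta)\,S\Phi(T^{-1}\theta)$, so $M_\varphi$ does not commute with $\mathcal L$ (nor with $(z-\mathcal L)^{-1}$) unless $\varphi$ is $T$-invariant.

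The first step is to check that $M_\varphi$ preserves $\mathcal M^b_{V,E,\varepsilon}$. Since $\varphi(\theta)$ is a scalar on each fiber, $\varphi(\theta)\Phi(\theta)\in\ker(\mathbf L_{V,\varepsilon W,T,\theta}-E)\cap\mathcal X_\xi$ whenever $\Phi(\theta)$ is. Moreover $\|M_\varphi\Phi\|_\infty\le\|\varphi\|_{B(\Omega,\C)}\|\Phi\|_\infty$. Iterating \eqref{mather-operator} gives
$$
(\mathcal L^n M_\varphi\Phi)(\theta)=\varphi(T^{-n}\theta)\,(\mathcal L^n\Phi)(\theta),\qquad n\in\Z,
$$
and hence $\|\mathcal L^{n}M_\varphi\Phi\|_\infty\le\|\varphi\|_{B(\Omega,\C)}\,\|\mathcal L^n\Phi\|_\infty$ for every $n\in\Z$.

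Next, feed this into Lemma~\ref{prop:growth}. If $\Phi\in\mathcal S_{V,E,\varepsilon}$, then $\sup_{n\ge0}r_s^{-n}\|\mathcal L^nM_\varphi\Phi\|_\infty<\infty$, so $M_\varphi\Phi\in\mathcal S_{V,E,\varepsilon}$. The same argument works for $\mathcal C_{V,E,\varepsilon}$, using both the forward bound with $r_+$ and the backward bound with $r_-$, and for $\mathcal U_{V,E,\varepsilon}$, using the backward iterates. Thus $M_\varphi$ leaves each of the three summands of $\mathcal M^b_{V,E,\varepsilon}=\mathcal S\oplus\mathcal C\oplus\mathcal U$ invariant.

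To conclude, decompose any $\Phi=\Phi^s+\Phi^c+\Phi^u$. Then $M_\varphi\Phi=M_\varphi\Phi^s+M_\varphi\Phi^c+M_\varphi\Phi^u$, and each term lies in the corresponding summand. By uniqueness of the direct sum decomposition and \eqref{equ:riesz}, $\mathbb P^\sharp M_\varphi\Phi=M_\varphi\Phi^\sharp=M_\varphi\mathbb P^\sharp\Phi$.

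The only delicate point is that the projections act on $\mathcal M^b_{V,E,\varepsilon}$ rather than on all of $\mathcal{BS}_\xi$, since the resolvent is only known to invert $z-\mathcal L$ there. This is why the first step verifies invariance of $\mathcal M^b$ under $M_\varphi$. If the projections are instead interpreted via $\mathcal R^l_\varepsilon(z)$ on all of $\mathcal{BS}_\xi$, the statement should be read as restricted to $\mathcal M^b$, which is the setting of Lemma~\ref{prop:growth}.
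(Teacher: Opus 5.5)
Your proposal is correct and follows the paper's proof essentially step for step. Both arguments use the twisted intertwining $\mathcal L^n M_\varphi = M_{\varphi\circ T^{-n}}\mathcal L^n$, then the growth characterization of Lemma~\ref{prop:growth} to show each of $\mathcal S_{V,E,\varepsilon}$, $\mathcal C_{V,E,\varepsilon}$, $\mathcal U_{V,E,\varepsilon}$ is $M_\varphi$-invariant, and conclude by uniqueness of the direct-sum decomposition. You additionally check explicitly that $M_\varphi$ preserves $\mathcal M^b_{V,E,\varepsilon}$ and that the bound holds for negative iterates (needed for the center and unstable characterizations), which the paper leaves implicit.
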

\begin{proof}
In fact, we first note that
$
\|M_\varphi \Phi\|_\infty\leq \|\varphi\|_\infty \|\Phi\|_\infty.
$
Moreover, for every $n\ge0$,
$$
\mathcal{ L }^{n}(M_\varphi\Phi)=M_{\varphi\circ T^{-n}} \mathcal{ L }^{n}\Phi,
$$
hence
$
\|\mathcal{ L }^{n}(M_\varphi\Phi)\|_\infty\leq
\|\varphi\|_\infty \|\mathcal{ L }^{n}\Phi\|_\infty.
$
If $\Phi\in\mathcal{S}_{V,E,\varepsilon}$, then by Lemma
\ref{prop:growth}, $M_\varphi\Phi\in \mathcal{S}_{V,E,\varepsilon}$. The same argument applies to the corresponding center and unstable spaces.

Now let $\Phi\in\mathcal{M}_{V,E,\varepsilon}^b$, and write its unique decomposition
$
\Phi=\Phi^s+\Phi^c+\Phi^u,
\Phi^s\in\mathcal{S}_{V,E,\varepsilon},\Phi^u\in  \mathcal{U}_{V,E,\varepsilon},\Phi^c\in \mathcal{C}_{V,E,\varepsilon}.$
Then
$
M_\varphi\Phi=M_\varphi\Phi^s+M_\varphi\Phi^c+M_\varphi\Phi^u
$
is the decomposition of $M_\varphi\Phi$ with respect to the same direct sum, because
each summand remains in the corresponding subspace. By uniqueness of the
direct-sum decomposition, the $\sharp$-component of $M_\varphi\Phi$ is exactly
$M_\varphi\Phi^\sharp$. Therefore
$
\mathbb{P}^\sharp_{V,E,\varepsilon}(M_\varphi\Phi)=M_\varphi(\mathbb{P}^\sharp_{V,E,\varepsilon}\Phi).
$
\end{proof}

\subsection{Fiber decomposition}

Fix $\theta_0\in\Omega$.
Recall that  $\mathcal M_{V,E,\varepsilon}(\theta)=
\ker\bigl(\mathbf L_{V,\varepsilon W,T,\theta}-E\bigr){|_{\mathcal{X}_\xi}}$. It is easy to see that
$$
\mathcal{M}_{V,E,\varepsilon}(\theta_0)=
\{\Phi(\theta_0):\Phi\in\mathcal{M}_{V,E,\varepsilon}^b\}
\subset \mathcal{X}_{\xi}.
$$
 For each $\sharp\in\{s,c,u\}$, define
$$
v=\Phi(\theta_0)\in \mathcal{M}_{V,E,\varepsilon}(\theta_0),\quad
\mathbb{P}^\sharp_{V,E,\varepsilon}(\theta_0)v:=
\bigl(\mathbb{P}^\sharp_{V,E,\varepsilon}\Phi\bigr)(\theta_0).
$$
\begin{lemma}
\label{prop:fiber}
For any $\theta_0\in\Omega$, $\mathbb{P}^\sharp_{V,E,\varepsilon}(\theta_0)$ defines a well-defined projection on $\mathcal{M}_{V,E,\varepsilon}(\theta_0)$. If we set
$
\mathcal{M}_{V,E,\varepsilon}^\sharp(\theta_0)=
\operatorname{Ran}\mathbb{P}^\sharp_{V,E,\varepsilon}(\theta_0),
$
then
$$
\mathcal{M}_{V,E,\varepsilon}(\theta_0)=
\mathcal{M}_{V,E,\varepsilon}^u(\theta_0)
\oplus\mathcal{M}_{V,E,\varepsilon}^c(\theta_0)\oplus\mathcal{M}_{V,E,\varepsilon}^s(\theta_0).
$$
Furthermore, these projected sub-fibers coincide precisely with the point-evaluations of the corresponding global section spaces:
$$
\begin{aligned}
\mathcal{M}_{V,E,\varepsilon}^s(\theta_0) &= \bigl\{\Phi(\theta_0) : \Phi\in\mathcal{S}_{V,E,\varepsilon}\bigr\}, \\
\mathcal{M}_{V,E,\varepsilon}^c(\theta_0) &= \bigl\{\Phi(\theta_0) : \Phi\in\mathcal{C}_{V,E,\varepsilon}\bigr\}, \\
\mathcal{M}_{V,E,\varepsilon}^u(\theta_0) &= \bigl\{\Phi(\theta_0) : \Phi\in\mathcal{U}_{V,E,\varepsilon}\bigr\}.
\end{aligned}
$$
\end{lemma}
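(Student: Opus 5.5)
The key issue is well-definedness: if $\Phi,\Psi\in\mathcal M^b_{V,E,\varepsilon}$ agree at $\theta_0$, we need $(\mathbb P^\sharp\Phi)(\theta_0)=(\mathbb P^\sharp\Psi)(\theta_0)$. By linearity it suffices to show that $\Phi(\theta_0)=0$ implies $(\mathbb P^\sharp\Phi)(\theta_0)=0$. I would deduce this from Corollary~\ref{commu} by choosing the indicator function $\varphi=\mathbf 1_{\{\theta_0\}}\in B(\Omega,\mathbb C)$. Note that $M_\varphi$ preserves $\mathcal M^b_{V,E,\varepsilon}$, since fiberwise it either keeps $\Phi(\theta_0)$ or replaces the value by $0$, which is a solution. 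Then
\[
(\mathbb P^\sharp\Phi)(\theta_0)=(M_\varphi\mathbb P^\sharp\Phi)(\theta_0)=(\mathbb P^\sharp M_\varphi\Phi)(\theta_0)=0,
\]
because $M_\varphi\Phi\equiv 0$ when $\Phi(\theta_0)=0$. This is the only delicate step. It uses that sections are merely bounded, not continuous, which is exactly why $B(\Omega,\mathbb C)$ appears in Corollary~\ref{commu}. I would also double-check that Corollary~\ref{commu} really applies on the section space $\mathcal M^b$, where the projections live; it does, by its proof via Lemma~\ref{prop:growth}.

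Once well-definedness holds, the projection identities transfer directly from \eqref{equ:riesz}. For $v=\Phi(\theta_0)$ we have $\mathbb P^\sharp(\theta_0)^2v=(\mathbb P^\sharp\mathbb P^\sharp\Phi)(\theta_0)=\mathbb P^\sharp(\theta_0)v$. In the same way $\mathbb P^\sharp(\theta_0)\mathbb P^\flat(\theta_0)=0$ for $\sharp\ne\flat$, and the three projections sum to the identity on $\mathcal M_{V,E,\varepsilon}(\theta_0)$. This uses $\mathcal M_{V,E,\varepsilon}(\theta_0)=\{\Phi(\theta_0)\}$, which is stated just before the lemma; any $v$ extends, say, by the zero section away from $\theta_0$. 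Linearity of $\mathbb P^\sharp(\theta_0)$ is immediate, and so is its range lying in $\mathcal M_{V,E,\varepsilon}(\theta_0)$, since $\mathbb P^\sharp$ maps $\mathcal M^b$ into itself. Complementary idempotents summing to the identity give the direct sum decomposition $\mathcal M=\mathcal M^u\oplus\mathcal M^c\oplus\mathcal M^s$ at $\theta_0$.

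Finally, for the identification of the ranges, the inclusion $\operatorname{Ran}\mathbb P^\sharp(\theta_0)\subset\{\Phi(\theta_0):\Phi\in\operatorname{Ran}\mathbb P^\sharp\}$ holds by definition, since $\mathbb P^\sharp(\theta_0)v=(\mathbb P^\sharp\Phi)(\theta_0)$ with $\mathbb P^\sharp\Phi\in\operatorname{Ran}\mathbb P^\sharp$. Conversely, if $\Phi\in\mathcal S_{V,E,\varepsilon}$ (respectively $\mathcal C$ or $\mathcal U$), then $\mathbb P^\sharp\Phi=\Phi$. Hence $\Phi(\theta_0)=\mathbb P^\sharp(\theta_0)\Phi(\theta_0)$ lies in the range, which completes the proof.
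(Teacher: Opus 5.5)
Your proposal is correct and follows essentially the same route as the paper: well-definedness comes from Corollary~\ref{commu} applied to an indicator function (you use $\mathbf 1_{\{\theta_0\}}$, the paper uses its complement $\chi_{\theta_0}$, which is equivalent), after which the Riesz identities \eqref{equ:riesz} descend pointwise. The range identification is then read off from $\mathbb P^\sharp\Phi=\Phi$ on the global ranges, together with extending fiber vectors to bounded sections by zero.
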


\begin{proof}
We first show $\mathbb{P}^\sharp_{V,E,\varepsilon}(\theta_0)$ is well-defined.
Let $\Phi_1,\Phi_2\in\mathcal{M}_{V,E,\varepsilon}^b$
satisfy
$
\Phi_1(\theta_0)=\Phi_2(\theta_0).
$
Set $\Psi:=\Phi_1-\Phi_2$. Then $\Psi(\theta_0)=0$.
Define the bounded scalar function
$$
\chi_{\theta_0}(\theta):=
\begin{cases}
0,&\theta=\theta_0,\\
1,&\theta\neq \theta_0.
\end{cases}
$$
Since $\Psi(\theta_0)=0$, we have pointwise
$
\Psi=M_{\chi_{\theta_0}}\Psi.
$
By Corollary \ref{commu}, we have$$
\mathbb{P}^\sharp_{V,E,\varepsilon}\Psi=
\mathbb{P}^\sharp_{V,E,\varepsilon}M_{\chi_{\theta_0}}\Psi=
M_{\chi_{\theta_0}}\mathbb{P}^\sharp_{V,E,\varepsilon}\Psi.
$$
Evaluating at $\theta_0$, we obtain
$$
\bigl(\mathbb{P}^\sharp_{V,E,\varepsilon}\Psi\bigr)(\theta_0)=
\chi_{\theta_0}(\theta_0) 
\bigl(\mathbb{P}^\sharp_{V,E,\varepsilon}\Psi\bigr)(\theta_0)=0.
$$
Thus
$
\bigl(\mathbb{P}^\sharp_{V,E,\varepsilon}\Phi_1\bigr)(\theta_0)=
\bigl(\mathbb{P}^\sharp_{V,E,\varepsilon}\Phi_2\bigr)(\theta_0),
$
so  is independent of the chosen section.

The projection identities \eqref{equ:riesz}
descend pointwise to
$$
(\mathbb{P}^\sharp_{V,E,\varepsilon}(\theta_0))^2=\mathbb{P}^\sharp_{V,E,\varepsilon}(\theta_0),
\quad
\mathbb{P}^\sharp_{V,E,\varepsilon}(\theta_0)P^\flat_{E,\varepsilon}(\theta_0)=0
\quad (\sharp\neq\flat),
$$
and
$$
\mathbb{P}^s_{V,E,\varepsilon}(\theta_0)+\mathbb{P}^c_{V,E,\varepsilon}(\theta_0)+\mathbb{P}^u_{V,E,\varepsilon}(\theta_0)=
\mathrm{Id}
$$
on $\mathcal{M}_{V,E,\varepsilon}(\theta_0)$, where $\sharp,\flat\in\{s,c,u\}.$
Therefore the ranges form the direct-sum
decomposition.

For any $v\in \{\Phi(\theta_0):\Phi\in\mathcal{S}_{V,E,\varepsilon}\}$, there exists $\Phi\in\mathcal{S}_{V,E,\varepsilon}$, such that $\Phi(\theta_0)=v$, it follows that 
$\mathbb{P}^s_{V,E,\varepsilon}(\theta_0)v=(\mathbb{P}^s_{V,E,\varepsilon}\Phi)(\theta_0)=\Phi(\theta_0)=v$, therefore, $v\in \mathcal{M}_{V,E,\varepsilon}^s(\theta_0)$. On the other hand, if $v\in\mathcal{M}_{V,E,\varepsilon}^s(\theta_0) $, let $\Phi(\theta)=v$,  if $\theta=\theta_0$; $\Phi(\theta)=0$, if $\theta\neq\theta_0$. Then  $\mathbb{P}^s_{V,E,\varepsilon}\Phi\in \mathcal{S}_{V,E,\varepsilon}$ and $(\mathbb{P}^s_{V,E,\varepsilon}\Phi)(\theta_0)=\mathbb{P}^s_{V,E,\varepsilon}(\theta_0)\Phi(\theta_0)=v.$   It follows that $v\in \{\Phi(\theta_0):\Phi\in\mathcal{S}_{V,E,\varepsilon}\}.$ The proofs of others
are completely analogous. 
\end{proof}

\subsection{Proof of Theorem \ref{thm:sec7final}}

The invariant decomposition is given by Lemma  \ref{prop:fiber}. Fix $\theta_0 \in \Omega$ and $u\in \mathcal{M}_{V,E,\varepsilon}^s(\theta_0)$. Define $\Phi \in \mathcal{BS}_\xi$ by $\Phi(\theta_0) = u$ and $\Phi(\theta) = 0$ for $\theta \neq \theta_0$, which satisfies $\|\Phi\|_\infty = \|u\|_\xi$. The projected section $\Phi^s := \mathbb{P}^s_{V,E,\varepsilon}\Phi$ belongs to $\mathcal{S}_{V,E,\varepsilon}$. Because the projection acts fiberwise, evaluating at the basepoint yields $\Phi^s(\theta_0) = \mathbb{P}^s_{V,E,\varepsilon}(\theta_0)u = u$.

By Lemma \ref{prop:growth}, the fiberwise translation translates to the section norm as follows:
$$
\|S^n u\|_\xi = \bigl\|(\mathcal{L}^n\Phi^s)(T^n\theta_0)\bigr\|_\xi \leq \|\mathcal{L}^n\Phi^s\|_\infty \leq C r_s^n\|\Phi^s\|_\infty \leq C\|\mathbb{P}^s_{V,E,\varepsilon}\|_{\mathfrak{B}(\mathcal{M}_{V,E,\varepsilon}^b)} r_s^n\|u\|_\xi.
$$
Setting a uniform constant $C_0 := C \max_{\sharp \in \{s,c,u\}} \|\mathbb{P}^\sharp_{V,E,\varepsilon}\|_{\mathfrak{B}(\mathcal{M}_{V,E,\varepsilon}^b)}$, we obtain the contraction bound $\|S^n u\|_\xi \le C_0 r_s^n \|u\|_\xi$. The rest estimates follows the same line.  \qed

\subsection{The Dual-Bridge Framework}
With the fibered solution bundle and the associated Riesz projections rigorously established, we now perform the core infinite-to-finite reduction outlined in Figure \ref{fig:logic_architecture}. The subsequent analysis constructs the Dual-Bridge framework. First, the \emph{Algebraic Bridge}  reduces the center dynamics of the $l$-truncated system to a finite-dimensional companion cocycle, yielding an exact dimension count. Subsequently, the \emph{Analytic Bridge} lifts the associated spectral projections to a fixed section space $\mathcal{BS}_\xi$, providing the uniform resolvent convergence necessary to propagate this finite dimension to the infinite-range limit.

\subsubsection{The Algebraic Bridge}

\begin{proposition}\label{thm:bridge1}
Assume $l\in [l_0,\infty)$. Define the window map evaluating the central $2l$ coordinates:
$$
\Pi_{[-l,l-1]}(\theta) : \mathcal M_{V_l,E,\varepsilon}^c(\theta) \to \C^{2l},
\qquad
\Pi_{[-l,l-1]}(\theta)u := 
\begin{pmatrix}
u_{l-1}, \dots, u_{-l}
\end{pmatrix}^\top.
$$
For $v=(v_{l-1},\dots,v_{-l})^\top\in\C^{2l}$, define the induced weighted norm 
\begin{equation}\label{weight-norm}
\|v\|_{\xi}:=\sum_{j=-l}^{l-1}|v_j|e^{-\xi|j|},
\end{equation}
and denote  $\mathcal C_{2l}(\theta) := \Pi_{[-l,l-1]}(\theta)\mathcal M_{V_l,E,\varepsilon}^c(\theta) \subset \C^{2l}$. Then the following properties hold:

\begin{enumerate}
    \item The map 
    $
    \Pi_{[-l,l-1]}(\theta) : \mathcal M_{V_l,E,\varepsilon}^c(\theta) \xrightarrow{\cong} \mathcal C_{2l}(\theta)
    $ is a  linear isomorphism.

    \item There exists a universal constant $C_*>0$, independent of $l$, such that the subspace $\mathcal C_{2l}(\theta)$ is exactly characterized by the absolute center growth bounds:
    $$
    \mathcal C_{2l}(\theta)   =
    \left\{v\in\C^{2l}:\ 
    \begin{aligned}
        &\|(A_\varepsilon^l)_n(E,\theta)v\|_{\xi}\le C_* r_+^n\|v\|_{\xi},\\ 
    &\|(A_\varepsilon^l)_{-n}(E,\theta)v\|_{\xi}\le C_* r_-^{-n}\|v\|_{\xi},
    \end{aligned}\ 
    \forall n\ge0
    \right\}.
    $$
    
    \item  The window map exactly conjugates the spatial shift to the transfer matrix:
    $$
    \Pi_{[-l,l-1]}(T\theta)\circ \mathscr T\big|_{\mathcal M_{V_l,E,\varepsilon}^c(\theta)} =
    A_{\varepsilon}^{l}(E,\theta) \circ \Pi_{[-l,l-1]}(\theta).
    $$ 
    Consequently, $\mathcal C_{2l}(\theta)$ forms an invariant subspace for the cocycle:
    \begin{equation}\label{in-cocy}
         A_{\varepsilon}^{l}(E,\theta)\mathcal C_{2l}(\theta) = \mathcal C_{2l}(T\theta).
    \end{equation}
\end{enumerate}
\end{proposition}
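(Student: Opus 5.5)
The plan is to exploit that, for the finite-range operator $\mathbf L_{V_l,\varepsilon W,T,\theta}$, a solution of the eigenvalue equation is determined by any $2l$ consecutive coordinates, because $\hat v_l\neq 0$ and the recursion can be run both forwards and backwards. Recall that $V$ is real, so $\hat v_{-l}=\overline{\hat v_l}\neq0$.

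I would prove (3) first, since it is purely algebraic. Write the equation at site $n=l-1+1-l=0$ after shifting. Concretely, for $u\in\mathcal M_{V_l,E,\varepsilon}(\theta)$ the equation at index $0$ solves for $u_l$ in terms of $u_{l-1},\dots,u_{-l}$. The resulting coefficients are exactly the first row of $A^l_\varepsilon(E,\theta)$ in \eqref{equ:cocycle}. The window of $Su$ is $(u_l,\dots,u_{-l+1})^\top$, which gives $\Pi(T\theta)Su=A^l_\varepsilon(E,\theta)\Pi(\theta)u$. Invariance \eqref{in-cocy} then follows from Theorem~\ref{thm:sec7final}, which gives $S\mathcal M^c(\theta)=\mathcal M^c(T\theta)$.

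For (1), injectivity follows because $u\in\ker\Pi(\theta)$ has $2l$ consecutive zeros. Running the recursion forward (divide by $\hat v_l$) and backward (divide by $\hat v_{-l}$) then forces $u\equiv0$. Surjectivity onto $\mathcal C_{2l}(\theta)$ holds by definition. More usefully, I would record the explicit inverse $\mathcal B(\theta)v$. It is the sequence whose window at position $n$ is $(A^l_\varepsilon)_n(E,\theta)v$; that is, $u_n$ is the first coordinate of $(A^l_\varepsilon)_{n-l+1}v$ for $n\ge l$, and analogously for negative $n$.

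The main content is (2). For the inclusion $\subseteq$, take $v=\Pi(\theta)u$ with $u$ central. By (3), $(A^l_\varepsilon)_n v=\Pi(T^n\theta)S^nu$. Two bounds are then needed. First, the trivial bound $\|\Pi w\|_\xi\le\|w\|_\xi$ (restricting the sum). Second, a reverse bound $\|u\|_\xi\le C\|\Pi(\theta)u\|_\xi$ on $\mathcal M^c(\theta)$ with $C$ independent of $l$ and $\theta$. Combined with the center bounds of Theorem~\ref{thm:sec7final} (uniform in $l$ by Remark~\ref{uniform-constant}), these give $\|(A^l)_nv\|_\xi\le C r_+^n\|u\|_\xi\le C_*r_+^n\|v\|_\xi$, and similarly for negative times.

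For the inclusion $\supseteq$, take $v$ satisfying the growth bounds and set $u=\mathcal B(\theta)v$. This is a formal solution. The bounds on $(A^l)_nv$ control $|u_n|$ by $Cr_+^{|n|}e^{\xi l}\|v\|_\xi$ for $n>0$ (and by $r_-^{-|n|}$ for $n<0$). Since $r_+<e^\xi$, we get $u\in\mathcal X_\xi$, hence $u\in\mathcal M(\theta)$. Moreover $S^nu$ has growth at most $r_+^n$ forward and $r_-^{-n}$ backward. Decomposing $u=u_s+u_c+u_u$ and arguing as in Lemma~\ref{prop:growth} gives $u_u=0$ (since $r_u>r_+$) and $u_s=0$ (since $r_s<r_-$ under backward iteration). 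So $u\in\mathcal M^c$.

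The hard step is the uniform reverse inequality $\|u\|_\xi\le C\|\Pi u\|_\xi$ on the center fiber with $C$ independent of $l$. Compactness (finite dimensionality) only gives an $l$-dependent constant. I would obtain it through the resolvent, using the representation $u=\mathbb P^c(\theta)u=\frac1{2\pi i}\oint_{\Gamma_c}\mathcal R^l_\varepsilon(z)u\,dz$. The point is that $\mathcal R^l_\varepsilon(z)$ acting on a solution depends only on its zeroth coordinate data $f=D^l(z)^{-1}\mathcal G^l_z(\Psi)$. Here $\mathcal G^l_z$ reads only the coordinates $\Psi_j$ with $|j|\le l$, with weights $\hat v_k r^{k-1-j}$. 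I expect a weighted estimate of the form $|\mathcal G^l_z(\Psi)|\le C\sum_{|j|\le l}|\Psi_j|e^{-\xi|j|}$ to hold uniformly, since $\sum_k|\hat v_k|\max(r,e^\xi)^k<\infty$ exactly as in Proposition~\ref{thm:D-regular}. Together with the bounds on $\mathcal P_z$ and $\mathcal Q_z$, this would give $\|u\|_\xi\le C\|\Pi u\|_\xi$ modulo the boundary index $j=l$, which is handled by one application of the recursion. If this fails, the fallback is to prove (2) with the norm of the full $u$ in place of $\|v\|_\xi$ and then transfer using the uniform bound $\|\mathbb P^c\|$.
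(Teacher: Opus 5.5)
Your arguments for (1), for (3), for the reduction of the inclusion $\subseteq$ to a reverse inequality, and for the reconstruction half $\supseteq$ all follow the paper. (The paper reads off the $j=0$ coordinate, $|u_n|\le\|(A^l_\varepsilon)_n(E,\theta)v\|_\xi$, which avoids your $e^{\xi l}$ loss; for fixed $l$ that loss is harmless.) The real difference is how you get the $l$-uniform bound $\|u\|_\xi\le C\|\Pi_{[-l,l-1]}(\theta)u\|_\xi$.

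The paper uses tail absorption. For central $u$, the bound $|u_{\pm m}|\le\|S^{\pm m}u\|_\xi$ together with the $l$-uniform center bounds of Theorem~\ref{thm:sec7final} gives $\|u\|_\xi\le\|v\|_\xi+\delta_l\|u\|_\xi$, where $\delta_l=C\sum_{m\ge l}(r_+e^{-\xi})^m+C\sum_{m\ge l+1}(r_-^{-1}e^{-\xi})^m\to0$. Hence $\|u\|_\xi\le 2\|v\|_\xi$ for $l\ge l_*$. Finite dimensionality is then needed only for the finitely many $l_0\le l<l_*$. So your remark that only an $l$-dependent constant is available overlooks that the center bounds already do most of the work.

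Your resolvent route is genuinely different and does work. It also buys something: one constant uniform in $l$ and $\theta$ at once. By contrast, the paper's finite-dimensionality step for small $l$ gives, as written, a constant fiber by fiber. However, the hard step as you sketched it needs two corrections.

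\emph{First correction: the choice of section.} You must feed the resolvent the section $\Psi=u\,\mathbf{1}_{\{\theta\}}$ concentrated at $\theta$, as in Lemma~\ref{prop:fiber}. For a general extension, $\mathcal G^l_z(\Psi)$ reads coordinates of $\Psi$ on other fibers, which $\Pi_{[-l,l-1]}(\theta)u$ does not control. With this choice, $\mathcal G^l_z(\Psi)$ reads exactly $u_{-l},\dots,u_{l-1}$, with coefficients $O(e^{-h|j|})$. Hence $\|\mathcal G^l_z(\Psi)\|_\infty\le C\|\Pi_{[-l,l-1]}(\theta)u\|_\xi$ uniformly in $l$, and no boundary index $j=l$ ever appears.

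\emph{Second correction: the $\mathcal Q_z$ term.} Your claim that $\mathcal R^l_\varepsilon(z)\Psi$ depends only on $f$ is false, and you cannot simply bound $\mathcal Q_z\Psi$. For $n\le-1$, $(\mathcal Q_z\Psi)_n(\theta)$ contains the term $z^{-1}u_n$, so any norm bound reintroduces the full $\|u\|_\xi$. What saves you is that $(\mathcal Q_z\Psi)_n(\theta)$ is a Laurent polynomial in $z$, so its integral over $\Gamma_c=\Gamma_+-\Gamma_-$ vanishes. This leaves $u_n=\frac{1}{2\pi i}\oint_{\Gamma_c}z^n f(z,T^n\theta)\,dz$, with $f=D^l_{E,\varepsilon}(z)^{-1}\mathcal G^l_z(\Psi)$. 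Now apply Lemma~\ref{thm:D-abstract} with $\mathfrak A=B(\Omega,\mathbb C)$, which is admissible. This yields $\|u\|_\xi\le C\sup_{z\in\Gamma_c}\|f(z,\cdot)\|_\infty\le C M_D\|\Pi_{[-l,l-1]}(\theta)u\|_\xi$.

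With these two fixes your fallback is unnecessary; as stated, it would not produce the inequality with $\|v\|_\xi$ on the right anyway.
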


\begin{proof}
(1) We first prove that $\Pi_{[-l,l-1]}(\theta)$ is injective. If $u\in \mathcal M_{V_l,E,\varepsilon}^c(\theta)$ and $\Pi_{[-l,l-1]}(\theta) u=0$, then $u$ is a solution to a $2l$-order linear difference equation containing $2l$ consecutive zeros. This forces $u\equiv 0$. Surjectivity is immediate from the definition of $\mathcal C_{2l}(\theta)$. Thus $\Pi_{[-l,l-1]}(\theta)$ is a linear isomorphism.

(2) Take $u=(u_n)_{n\in\Z}\in \mathcal{M}_{V_l,E,\varepsilon}^c(\theta)$, and set $v:=\Pi_{[-l,l-1]}(\theta) u$. Since $u$ lies  in the center space, Theorem \ref{thm:sec7final} (see also Remark \ref{uniform-constant}) guarantees the existence of a constant $C>0$ (independent of $l$) such that $|u_m| \le \|S^m u\|_\xi \le C r_+^m\|u\|_\xi$ for $m \ge 0$, and $|u_{-m}| \le C r_-^{-m}\|u\|_\xi$ for $m \ge 0$. Splitting the norm yields:
$$
\begin{aligned}
\|u\|_{\xi} &= \sum_{m=-l}^{l-1}|u_m|e^{-\xi|m|} + \sum_{m\ge l}|u_m|e^{-\xi m} + \sum_{m\ge l+1}|u_{-m}|e^{-\xi m}\\
&\le \|v\|_{\xi} + C\|u\|_{\xi}\sum_{m\ge l}(r_+e^{-\xi})^m + C\|u\|_{\xi}\sum_{m\ge l+1}(r_-^{-1}e^{-\xi})^m\\
&= \|v\|_{\xi}+\delta_l\|u\|_{\xi},
\end{aligned}
$$
where $\delta_l := C\sum_{m\ge l}(r_+e^{-\xi})^m + C\sum_{m\ge l+1}(r_-^{-1}e^{-\xi})^m$. Since $e^{-\xi} < \min(r_+^{-1}, r_-)$, the series converge and $\delta_l \to 0$ as $l \to \infty$. Therefore, there exists $l_*>0$ such that for all $l\geq l_*$, we have $\delta_l \le 1/2$, which implies $\|u\|_{\xi}\leq 2 \|v\|_{\xi}$.

For the finitely many truncations $l_0\leq l<l_*$, the map $\Pi_{[-l,l-1]}(\theta)$ is a linear isomorphism between finite-dimensional spaces, yielding a constant $C_l>0$ such that $\|u\|_{\xi}\le C_l\|v\|_{\xi}$. Defining $C_0:=\max\{2, C_{l_0},\dots,C_{l_*-1}\}$ explicitly ensures that 
$
\|v\|_{\xi} \leq  \|u\|_{\xi} \le C_0\,\|v\|_{\xi}
$
holds uniformly for all $l\ge l_0$.

For each $n\in\Z$, define $U_n:= \Pi_{[-l,l-1]}(T^n\theta)(S^n u)$. As $u \in \mathcal{M}_{V_l,E,\varepsilon}^c(\theta)$,  it follows that  $U_n = (A^{l}_\varepsilon)_{n}(E,\theta)v$. Thus,
$$
\|(A_\varepsilon^l)_n(E,\theta)v\|_{\xi} = \|U_n\|_{\xi} = \sum_{j=-l}^{l-1}|u_{n+j}|e^{-\xi|j|} \le \|S^n u\|_{\xi} \le C r_+^n\|u\|_{\xi} \leq C C_0 r_+^n\|v\|_{\xi},  n\ge0.
$$
A symmetric argument yields $\|(A_\varepsilon^l)_{-n}(E,\theta)v\|_{\xi} \le C C_0 r_-^{-n}\|v\|_{\xi}$. Setting $C_* := C C_0$ confirms that $\mathcal C_{2l}(\theta)$ is contained in the characterizing set.

Conversely, assume $v \in \C^{2l}$ satisfies the exponential bounds with constant $C_*$. Defining $V_n := (A^{l}_\varepsilon)_{n}(E,\theta)v$ generates a unique bi-infinite sequence $u$ satisfying $\Pi_{[-l,l-1]}(\theta) u=v$. Extracting the $0$-th coordinate of $V_n$ gives $|u_n| \le \|V_n\|_\xi \le C_* r_+^n\|v\|_\xi$ for $n \ge 0$, and similarly $|u_{-n}| \le C_* r_-^{-n}\|v\|_\xi$. For $n \ge 0$, we have
$$
\begin{aligned}
\|S^nu\|_{\xi} &= \sum_{k\in\Z} |u_{n+k}|e^{-\xi |k|} = \sum_{m\ge 0}|u_m|e^{-\xi|m-n|} + \sum_{m < 0}|u_m|e^{-\xi|m-n|}\\
&\le C_*\|v\|_\xi \sum_{m\ge 0} r_+^m e^{-\xi|m-n|} + C_*\|v\|_\xi \sum_{j\ge 1} r_-^{-j} e^{-\xi(n+j)},
\end{aligned}
$$
where we substituted $j = -m \ge 1$ in the second sum. Using the above bounds and $e^{-\xi} < \min(r_+^{-1}, r_-)$, we obtain
$$
\|S^nu\|_{\xi} \le C_* r_+^n \|v\|_\xi \left( \sum_{m\ge0}(r_+e^{-\xi \text{sgn}(m-n)})^{|m-n|} + \sum_{j\ge1}(r_-^{-1}e^{-\xi})^j \right) \le \tilde{C} r_+^n \|v\|_\xi.
$$
Symmetrically, $\|S^{-n}u\|_\xi \le \tilde{C} r_-^{-n}\|v\|_\xi$. Since $\tilde{C}$ is independent of $n$, 
$u$ possesses no stable or unstable components  as described in  Theorem \ref{thm:sec7final}, forcing $u\in \mathcal M_{V_l,E,\varepsilon}^c(\theta)$. Hence 
$v \in \mathcal C_{2l}(\theta)$, concluding (2).

(3) Let $u\in \mathcal M_{V_l,E,\varepsilon}^c(\theta)$. Evaluating the window map at the shifted base point yields:
$$
\Pi_{[-l,l-1]}(T\theta)(\mathscr T u) = A_{\varepsilon}^{l}(E,\theta)\Pi_{[-l,l-1]}(\theta) u.
$$
Since $\mathscr T\bigl(\mathcal M_{V_l,E,\varepsilon}^c(\theta)\bigr) = \mathcal M_{V_l,E,\varepsilon}^c(T\theta)$, this immediately implies \eqref{in-cocy}.
\end{proof}

Proposition \ref{thm:bridge1}  reduces the center dynamics to the finite-dimensional invariant subspace $\mathcal C_{2l}(\theta)$, actually governed  by the restricted companion cocycle $A_{\varepsilon}^{l}(E,\theta)$. This  reduction motivates the analysis in Section \ref{PHFC}, where we establish the partial hyperbolicity of the cocycle and explicitly compute the dimension of the center bundle.

\subsubsection{The Analytic Bridge}
 By Proposition \ref{thm:D-regular}, we define the extended center projection on the Banach space $\mathcal{BS}_\xi$ by the contour integral
\begin{equation}\label{eq:ambient-P-def}
\mathbb{ P }_{\varepsilon, c}^{l}
:=
\frac{1}{2\pi i}\int_{\Gamma_c}\mathcal R^{l}_\varepsilon(z) dz
\in \mathfrak{B}(\mathcal{BS}_\xi).
\end{equation}
 We first clarify this definition. Let $\Psi \in \mathcal{BS}_\xi$ and set $u := \mathbb{ P }_{\varepsilon,c}^{l}\Psi$. Recall the decomposition of the extended resolvent $\mathcal{R}_\varepsilon^l(z)\Psi = \mathcal{P}_z f + \mathcal{Q}_z \Psi$, where  $f(\cdot) := D_{\varepsilon,E}^{ l}(z)^{-1}\mathcal{G}_z^l(\Psi)$. By construction,  $\mathcal Q_z\Psi$ can extend holomorphically to the annular region
bounded by $\Gamma_c$.  Consequently, its contour integral over $\Gamma_c$ vanishes identically. The extended projection is thus entirely determined by the homogeneous component:
$$
u_n(\theta) = \frac{1}{2\pi i}\int_{\Gamma_c} z^n f(z,T^n\theta) dz, \qquad n\in\Z.
$$
The next lemma gives a sufficient condition under which this sequence
is in fact a center solution. This criterion will be used below for the extended projection, and later for the
explicit construction of center sections.

\begin{lemma}\label{recons}
 Let $F(z,\theta)$ be defined for
$z\in\Gamma_c$ and $\theta\in\Omega$. Assume that
$
f(z,\cdot):=D^{(l)}_{E,\varepsilon}(z)^{-1}F(z,\cdot) 
$ is well-defined and 
$ \sup_{z\in\Gamma_c}\|f(z,\cdot)\|_{B(\Omega,\C)}<\infty.$
Assume moreover that, for every $n\in\mathbb Z$ and every
$\theta\in\Omega$,
$$
\frac{1}{2\pi i}\int_{\Gamma_c}
z^nF(z,T^n\theta)dz=0.
$$ Define
$$
u_n(\theta):=\frac{1}{2\pi i}\int_{\Gamma_c}
z^n f(z,T^n\theta)dz,
\qquad n\in\mathbb Z.
$$
Then $
u(\theta)\in \mathcal M^c_{V_l,E,\varepsilon}(\theta).
$
\end{lemma}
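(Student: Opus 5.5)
The plan is to regard $u$ as a global section $\Phi(\theta):=u(\theta)$ and prove three things: $\Phi\in\mathcal{BS}_\xi$, $\Phi\in\mathcal M^b_{V_l,E,\varepsilon}$, and $\Phi\in\mathcal C_{V_l,E,\varepsilon}$. The fiberwise claim $u(\theta)\in\mathcal M^c_{V_l,E,\varepsilon}(\theta)$ then follows from Lemma~\ref{prop:fiber}. This avoids working with individual fibers, where the growth bounds of Theorem~\ref{thm:sec7final} point in the wrong direction for excluding an unstable component.

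\textbf{Step 1: boundedness.} Set $K:=\sup_{z\in\Gamma_c}\|f(z,\cdot)\|_{B(\Omega,\mathbb C)}<\infty$. Since $\Gamma_c=\Gamma_+\cup\{-\Gamma_-\}$, we get $|u_n(\theta)|\le K\,(r_+^{n+1}+r_-^{n+1})$ for every $n\in\mathbb Z$. By \eqref{radius} we have $e^{-\xi}<r_-<1<r_+<e^{\xi}$, so $\sum_n e^{-\xi|n|}(r_+^{n}+r_-^{n})<\infty$. Hence $\sup_\theta\|u(\theta)\|_\xi\le CK$ and $\Phi\in\mathcal{BS}_\xi$.

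\textbf{Step 2: eigen-equation.} For each $n$ and $\theta$, the finite sum $\sum_{|k|\le l}$ commutes with the contour integral. This gives
$$\sum_{|k|\le l}\hat v_k u_{n+k}(\theta)+(\varepsilon W(T^n\theta)-E)u_n(\theta)=\frac{1}{2\pi i}\int_{\Gamma_c}z^n\Bigl(\sum_{|k|\le l}\hat v_k z^k f(z,T^{k}T^n\theta)+(\varepsilon W-E)f(z,\cdot)\big|_{T^n\theta}\Bigr)dz .$$
The bracket is exactly $\bigl(D^{l}_{E,\varepsilon}(z)f(z,\cdot)\bigr)(T^n\theta)=F(z,T^n\theta)$ by \eqref{full-sym}. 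The right-hand side therefore vanishes by hypothesis, so $u(\theta)\in\mathcal M_{V_l,E,\varepsilon}(\theta)$ for every $\theta$, i.e.\ $\Phi\in\mathcal M^b_{V_l,E,\varepsilon}$.

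\textbf{Step 3: center growth.} The formula has a built-in shift structure. From \eqref{mather-operator} we get $(\mathcal L^k\Phi)_n(\theta)=u_{n+k}(T^{-k}\theta)=\frac{1}{2\pi i}\int_{\Gamma_c}z^{n+k}f(z,T^n\theta)\,dz$ for all $k\in\mathbb Z$. Repeating the estimate of Step~1 with $z^{n+k}$ gives, for $k\ge0$,
$$\|\mathcal L^k\Phi\|_\infty\le CK\,(r_+^{k}+r_-^{k})\le 2CK\,r_+^k,$$
since $r_-\le1\le r_+$. Symmetrically, $\|\mathcal L^{-k}\Phi\|_\infty\le 2CK\,r_-^{-k}$. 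By the characterization of $\mathcal C_{V_l,E,\varepsilon}$ in Lemma~\ref{prop:growth}, $\Phi\in\mathcal C_{V_l,E,\varepsilon}$. Lemma~\ref{prop:fiber} then yields $u(\theta)=\Phi(\theta)\in\mathcal M^c_{V_l,E,\varepsilon}(\theta)$.

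\textbf{Main obstacle.} The delicate point is the use of Lemma~\ref{prop:growth}. Its proof bounds the off-center components using the operator norms of the Riesz projections on $\mathcal M^b$. So I must make sure that the radii in Step~3 are the same $r_\pm$ as in Lemma~\ref{prop:growth}, and that the reverse inclusion there holds for the center space; its proof is only sketched as ``analogous''. If needed, I would spell out that step directly. Let $\Phi^u=\mathbb P^u\Phi$. From $\|\mathcal L^{k}\Phi^u\|_\infty\le\|\mathbb P^u\|\,2CK\,r_+^k$ and $\|\Phi^u\|_\infty=\|\mathcal L^{-k}\mathcal L^k\Phi^u\|_\infty\le C_*r_u^{-k}\|\mathcal L^k\Phi^u\|_\infty$, letting $k\to\infty$ with $r_+<r_u$ forces $\Phi^u=0$. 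The stable part is handled the same way with $\mathcal L^{-k}$ and $r_s<r_-$.

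A secondary technical issue is measurability and integrability of $z\mapsto f(z,\theta)$ on $\Gamma_c$, which the contour integrals need. In all intended applications $f$ is continuous in $z$, by the continuity of $z\mapsto D^l_{E,\varepsilon}(z)^{-1}$ from Lemma~\ref{thm:D-abstract}. I would state this as implicit in ``well-defined''.
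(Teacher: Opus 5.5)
Your proof is correct, and its computational core matches the paper's. The eigen-equation comes from moving the finite sum inside the contour integral and recognizing $\bigl(D^{l}_{E,\varepsilon}(z)f(z,\cdot)\bigr)(T^n\theta)=F(z,T^n\theta)$. The growth control is the same contour estimate $|u_n(\theta)|\le K(r_+^{n+1}+r_-^{n+1})$.

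The difference is in the final step.
\begin{itemize}
\item The paper stops at the fiberwise bounds $|u_n(\theta)|\le Cr_+^n$ and $|u_{-n}(\theta)|\le Cr_-^{-n}$, and then asserts $u(\theta)\in\mathcal M^c_{V_l,E,\varepsilon}(\theta)$. It leaves implicit how these bounds place $u(\theta)$ in the range of the Riesz projection.
\item You lift $u$ to a section $\Phi$ and use $(\mathcal L^k\Phi)_n(\theta)=\frac{1}{2\pi i}\int_{\Gamma_c}z^{n+k}f(z,T^n\theta)\,dz$. This gives $\|\mathcal L^{k}\Phi\|_\infty\le 2CKr_+^{k}$ and $\|\mathcal L^{-k}\Phi\|_\infty\le 2CKr_-^{-k}$ directly. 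You then conclude with Lemma~\ref{prop:growth}, whose reverse inclusion you correctly write out using $r_+<r_u$ and $r_s<r_-$, and Lemma~\ref{prop:fiber}.
\end{itemize}
Your version makes the link to the Riesz-projection definition of the center space explicit. It also checks $u(\theta)\in\mathcal X_\xi$, which the paper takes for granted.

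Your stated worry about the fiberwise route is unnecessary, though.
\begin{itemize}
\item The coordinate bounds give $\|S^n u\|_\xi\le\tilde C r_+^n$, as in the proof of Proposition~\ref{thm:bridge1}(2).
\item Testing on sections supported at a single point shows $\|\mathbb P^u_{V_l,E,\varepsilon}(\theta)\|\le\|\mathbb P^u_{V_l,E,\varepsilon}\|_{\mathfrak B(\mathcal M^b_{V_l,E,\varepsilon})}$.
\item The fiber projections intertwine with $S$.
\item So $\|S^nu_u\|_\xi\le C r_+^n$, and the expansion estimate of Theorem~\ref{thm:sec7final} then forces $u_u=0$. Symmetrically, $u_s=0$.
\end{itemize}
This is what the paper implicitly relies on. Your global argument is a cleaner packaging of the same estimate rather than a necessity.
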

\begin{proof}
    A direct calculous shows that 
$$
\begin{aligned}
&(\mathbf L_{ V_l,\varepsilon W,T,\theta}u)_n(\theta)-Eu_n(\theta)\\
&=\frac{1}{2\pi i}\int_{\Gamma_c}
z^n\left[
\sum_{k=-l}^{l}\hat v_k z^k
f(z,T^{n+k}\theta)
+\varepsilon W(T^n\theta)f(z,T^n\theta)-
E f(z,T^n\theta)
\right]dz  \\
&=\frac{1}{2\pi i}\int_{\Gamma_c}z^n
\left(D_{\varepsilon,E}^{(l)}(z)f(z,\cdot)
\right)(T^n\theta)
dz=\frac{1}{2\pi i}\int_{\Gamma_c}
z^n F(z,T^n\theta)
dz=0 .
\end{aligned}
$$
Therefore
$
\mathbf L_{V_l,\varepsilon W,T,\theta}u(\theta)=
E u(\theta).
$

It remains to prove the center growth estimates. 
 For $n\ge 1$,
\begin{align*}
|u_n(\theta)|
&\le
\frac{1}{2\pi}\int_{\Gamma_c} |z|^n\,|f(z,T^n\theta)|\,|dz|\le
\left( r_+ r_+^n+ r_- r_-^n\right)
\sup_{z\in\Gamma_c}\|f(z,\cdot)\|_{B(\Omega,\C)}\\ &\leq C r_+^n.
\end{align*}
 Similarly, $|u_{-n}(\theta)|\leq  C r_-^{-n}$ .
 Therefore, $u(\theta)\in \mathcal M^c_{V_l,E,\varepsilon}(\theta).$
\end{proof}

 The following result establishes the structural properties and the uniform norm convergence of these extended operators, which serve as the analytical foundation for the subsequent dimension reduction.

\begin{proposition}\label{thm:ambient-projection}
For every $l\in[l_0,\infty]$,  $\mathbb{ P }_{\varepsilon, c}^{l}$ satisfies the following properties:
\begin{enumerate}
\item The restriction of $\mathbb{ P }_{\varepsilon, c}^{l}$ to the section space $\mathcal M_{V_l,E,\varepsilon}^b$ coincides with the intrinsic Riesz projections:    $
    \mathbb P_{\varepsilon,c}^{l}\big|_{\mathcal M_{V_l,E,\varepsilon}^b} = \mathbb P_{V_l,E,\varepsilon}^c.
    $
    \item  
    $
    \operatorname{Ran}(\mathbb{ P }_{\varepsilon,c}^{l}) \subset \mathcal{C}_{V_l,E,\varepsilon}.
    $
    \item  $
\|\mathbb{ P }_{\varepsilon,c}^{\infty}-\mathbb{ P }_{\varepsilon,c}^{l}\|_{\mathfrak{B}( \mathcal{BS}_\xi)}\to 0,
 l\to\infty.
$
\end{enumerate}
\end{proposition}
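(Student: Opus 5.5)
We take the three items in turn, all built on the representation $\mathcal R^l_\varepsilon(z)\Psi=\mathcal P_z f^l(z)+\mathcal Q_z\Psi$ with $f^l(z)=D^l_{E,\varepsilon}(z)^{-1}\mathcal G^l_z(\Psi)$.

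\emph{Item (1).} By Proposition~\ref{thm:D-regular}(2), for $z\in\mathcal A'(E)$ we have $\mathcal R^l_\varepsilon(z)|_{\mathcal M^b_{V_l,E,\varepsilon}}=(z-\mathcal L)^{-1}$. The circles $\Gamma_\pm$ lie in $\mathcal A'(E)$, so integrating over $\Gamma_c=\Gamma_+\cup\{-\Gamma_-\}$ gives
$$
\mathbb P^l_{\varepsilon,c}\big|_{\mathcal M^b_{V_l,E,\varepsilon}}=\frac1{2\pi i}\oint_{\Gamma_+}(z-\mathcal L)^{-1}dz-\frac1{2\pi i}\oint_{\Gamma_-}(z-\mathcal L)^{-1}dz .
$$
The right-hand side is precisely $\mathbb P^c_{V_l,E,\varepsilon}$. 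This uses the Cauchy-theorem identities recorded before \eqref{equ:riesz} together with $\mathbb P^u=\mathrm{Id}-\mathbb P^c-\mathbb P^s$; equivalently, it is the Riesz projection onto the spectral part in $\{r_-<|z|<r_+\}$.

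\emph{Item (2).} First, $\mathcal Q_z\Psi$ is a polynomial in $z,z^{-1}$ coordinatewise, so its integral over the closed contour $\Gamma_c$ vanishes. Hence $u:=\mathbb P^l_{\varepsilon,c}\Psi$ has coordinates $u_n(\theta)=\frac1{2\pi i}\int_{\Gamma_c}z^n f^l(z,T^n\theta)\,dz$. We then apply Lemma~\ref{recons} with $F(z,\cdot)=\mathcal G^l_z(\Psi)$. The bound $\sup_{z\in\Gamma_c}\|f^l(z)\|<\infty$ follows from Lemma~\ref{thm:D-abstract} and the estimate on $\mathcal G^l_z$ in Proposition~\ref{thm:D-regular}(1).

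The vanishing hypothesis $\frac1{2\pi i}\int_{\Gamma_c}z^n\mathcal G^l_z(\Psi)(T^n\theta)\,dz=0$ holds because, for $l<\infty$, $\mathcal G^l_z$ is a Laurent polynomial in $z$. For $l=\infty$ it is a series that converges uniformly on the closed annulus between $\Gamma_-$ and $\Gamma_+$, which lies inside $\{|z|<e^h\}$ by the estimate in Proposition~\ref{thm:D-regular}(1). So $z^n\mathcal G^l_z$ is holomorphic there and Cauchy's theorem kills the difference of the two circle integrals.

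Lemma~\ref{recons} then gives $u(\theta)\in\mathcal M^c_{V_l,E,\varepsilon}(\theta)$ for every $\theta$. We also have $u\in\mathcal{BS}_\xi$ by the boundedness of the integral. It remains to see that $u$ lies in $\mathcal C_{V_l,E,\varepsilon}$, and not merely fiberwise in the center. For this, apply Lemma~\ref{prop:growth}: the pointwise bounds $|u_n|\le C r_+^n$ for $n\ge0$ and $|u_{-n}|\le Cr_-^{-n}$, uniform in $\theta$, give $\|\mathcal L^n u\|_\infty\le C'r_+^n$ and $\|\mathcal L^{-n}u\|_\infty\le C'r_-^{-n}$. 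This is the same summation as in the converse part of Proposition~\ref{thm:bridge1}(2), using $e^{-\xi}<\min(r_+^{-1},r_-)$.

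\emph{Item (3).} Write
$$
\mathbb P^\infty_{\varepsilon,c}-\mathbb P^l_{\varepsilon,c}=\frac1{2\pi i}\int_{\Gamma_c}\mathcal P_z\bigl(f^\infty(z)-f^l(z)\bigr)\,dz .
$$
The operator $\mathcal P_z$ is uniformly bounded for $z\in\Gamma_c$, so it suffices to show $\sup_{z\in\Gamma_c}\|f^\infty(z)-f^l(z)\|_\infty\lesssim o(1)\|\Psi\|_\infty$. We split
$$
f^\infty-f^l=\bigl(D^{\infty}(z)^{-1}-D^l(z)^{-1}\bigr)\mathcal G^\infty_z(\Psi)+D^l(z)^{-1}\bigl(\mathcal G^\infty_z-\mathcal G^l_z\bigr)(\Psi).
$$
The first term tends to zero by \eqref{equ:symcon}, after passing from $\mathfrak A$ to $B(\Omega,\mathbb C)$, which is itself admissible. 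The second is bounded by $M_D\|\Psi\|_\infty\sum_{|k|>l}k|\hat v_k|\max\{r,e^\xi\}^{k-1}\to0$, the tail of the convergent series in Proposition~\ref{thm:D-regular}(1).

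\emph{Main obstacle.} The hardest point is item (2) at $l=\infty$. There one must justify the contour vanishing for the infinite series $\mathcal G^\infty_z$, uniformly on the annulus, and promote the fiberwise center property to membership in the global section space $\mathcal C$, via uniform-in-$\theta$ growth bounds and Lemma~\ref{prop:growth}.
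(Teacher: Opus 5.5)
Your proposal is correct and follows the paper's proof essentially step by step. Item (1) comes from Proposition~\ref{thm:D-regular}(2) by integrating over $\Gamma_c$; item (2) kills the $\mathcal Q_z\Psi$ and $\mathcal G^l_z$ contributions via Cauchy's theorem on the annulus and then invokes Lemma~\ref{recons}; item (3) uses the same two-term splitting of $D^l(z)^{-1}\mathcal G^l_z-D^\infty(z)^{-1}\mathcal G^\infty_z$ together with \eqref{equ:symcon}. Your additions only make explicit what the paper leaves implicit: promoting the fiberwise conclusion of Lemma~\ref{recons} to membership in $\mathcal C_{V_l,E,\varepsilon}$ via uniform-in-$\theta$ growth bounds and Lemma~\ref{prop:growth}, noting that $B(\Omega,\mathbb C)$ is admissible, and giving the explicit tail bound for $\mathcal G^\infty_z-\mathcal G^l_z$.
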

\begin{proof}
(1) follows directly from Proposition \ref{thm:D-regular}. 

(2)  Let $\Psi \in \mathcal{BS}_\xi$. Since $z^n\mathcal G_z^{(l)}(\Psi)(T^n\theta)$ can
extend holomorphically to the annular region bounded by $\Gamma_c$. Hence, by
Cauchy's theorem,
$$
\frac{1}{2\pi i}\int_{\Gamma_c}
z^n\mathcal G_z^{(l)}(\Psi)(T^n\theta)dz=0.
$$ Therefore Lemma \ref{recons} implies
$
u:= \mathbb{ P }_{\varepsilon,c}^{l}\Psi\in  \mathcal{C}_{V_l,E,\varepsilon}.
$

(3) For every $z\in\Gamma_c$, we recall the definition of  $\mathcal R^{l}_\varepsilon(z)$ given in \eqref{def:rec},
$$
\mathcal R^{l}_\varepsilon(z)-\mathcal R^{\infty}_{\varepsilon}(z)=
\mathcal{P}_z\Bigl(
(D_{\varepsilon,E}^{ l}(z))^{-1}\mathcal{G}_z^l-
D^{\infty}_{\varepsilon,E}(z)^{-1}\mathcal{G}_z^\infty
\Bigr),
$$
because the operator $\mathcal{Q}_z$ is the same in both constructions.

Hence we have
\begin{align*}
\|\mathcal R_{\varepsilon}^{\infty}(z)-\mathcal R^{l}_\varepsilon(z)\|
&\leq
\|\mathcal{P}_z\| \Bigl\|
(D_{\varepsilon,E}^{ l}(z))^{-1}\mathcal{G}_z^l-
D^{\infty}_{\varepsilon,E}(z)^{-1}\mathcal{G}_z^\infty\Bigr\|
\\
&\leq
\|\mathcal{P}_z\|\Bigl(
\|(D_{\varepsilon,E}^{ l}(z))^{-1}-D^{\infty}_{\varepsilon,E}(z)^{-1}\| \|\mathcal{G}_z^\infty\|
+\|(D_{\varepsilon,E}^{ l}(z))^{-1}\| \|\mathcal{G}_z^l-\mathcal{G}_z^\infty\|\Bigr).
\end{align*}
By \eqref{equ:symcon} and the definition of $\mathcal{G}_z^l$, 
we conclude that
$
\sup_{z\in\Gamma_c}\|\mathcal R^{\infty}_{\varepsilon}(z)-\mathcal R^{l}_\varepsilon(z)\|\to 0.
$
By the definitions of $\mathbb{ P }_{\varepsilon,c}^{\infty}$ and $\mathbb{ P }_{\varepsilon,c}^{l}$,
$$
\mathbb{ P }_{\varepsilon,c}^{\infty}-\mathbb{ P }_{\varepsilon,c}^{l}=
\frac{1}{2\pi i}\int_{\Gamma_c}
\bigl(\mathcal R^{\infty}_{\varepsilon}(z)-\mathcal R^{l}_\varepsilon(z)\bigr) dz.
$$
Hence by \eqref{contour},
$
\|\mathbb{ P }_{\varepsilon,c}^{\infty}-\mathbb{ P }_{\varepsilon,c}^{l}\|
\leq(r_++r_-)
\sup_{z\in\Gamma_c}
\|\mathcal R^{\infty}(z)-\mathcal R^{l}_\varepsilon(z)\|\to0.
$
\end{proof}

\section{Partial hyperbolicity of companion cocycle}\label{PHFC}

In this section, we study the basic properties of the companion cocycle associated with the truncated long-range operator. While this finite-dimensional matrix is non-normal and suffers from coordinate-dependent norm explosions, its algebraic roots securely establish the essential spectral gap, which allows us to establish the  partially hyperbolic splitting  of the companion cocycle.
Our main result of this section is as follows.

\begin{theorem}\label{thm:sec4-main}
There exists $\varepsilon_*=\varepsilon_*(\delta_0, r_\pm,\eta_\pm)>0$ such that for all $l\in[l_0,+\infty)$ and $|\varepsilon|\le\varepsilon_*$, the cocycle $A_{\varepsilon}^{l}(E,\theta)$ admits an invariant splitting
$$
\mathbb{C}^{2l}
=
\mathcal{E}^{l}_{\varepsilon,s}(\theta)
\oplus
\mathcal{E}^{l}_{\varepsilon,c}(\theta)
\oplus
\mathcal{E}^{l}_{\varepsilon,u}(\theta),
\qquad \theta\in\Omega.
$$
The splitting is partially hyperbolic: there exists a constant $C\ge 1$, independent of $l$, such that for all $\theta\in\Omega$, $n\ge 1$, and any vectors $u_\sharp\in\mathcal{E}^{l}_{\varepsilon,\sharp}(\theta)$, $\sharp\in\{s,c,u\}$,
\begin{enumerate}
    \item $\|(A^{l}_\varepsilon)_n(E,\theta) u_s\|_\xi \le C r_s^n \|u_s\|_\xi$,
    \item $\|(A^{l}_\varepsilon)_n(E,\theta) u_u\|_\xi \ge C^{-1} r_u^n \|u_u\|_\xi$,
    \item $C^{-1} r_-^n \|u_c\|_\xi \le \|(A^{l}_\varepsilon)_n(E,\theta) u_c\|_\xi \le C r_+^n \|u_c\|_\xi$,
\end{enumerate}
 where the spectral radii satisfy \eqref{radius}. Moreover, $\dim\mathcal{E}^{l}_{\varepsilon,c}(\theta)=2m(E)$.
\end{theorem}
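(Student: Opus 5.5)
The plan is to run the same Mather transfer-operator argument as in Section~\ref{sec:ambient_framework}, now for the finite-dimensional cocycle. Consider the operator
\[
(\mathcal L_A v)(\theta):=A^l_\varepsilon(E,T^{-1}\theta)\,v(T^{-1}\theta)
\]
on $B(\Omega,\C^{2l})$, normed by $\sup_\theta\|v(\theta)\|_\xi$ with the weighted norm \eqref{weight-norm}. The target is a resolvent bound $\|(z-\mathcal L_A)^{-1}\|\le C$ for $z\in\mathcal A'(E)$, with $C$ independent of $l\ge l_0$ and of $|\varepsilon|\le\varepsilon_*$. Given that bound:
\begin{itemize}
\item the Riesz projections over $\Gamma_s$, $\Gamma_c$ and the complement give the splitting;
\item the fiberwise localization argument (Corollary~\ref{commu}, Lemma~\ref{prop:fiber}) carries over verbatim, because multiplication operators commute with $\mathcal L_A$ up to composition with $T$;
\item the growth characterization (Lemma~\ref{prop:growth}) then yields estimates (1)--(3), exactly as in the proof of Theorem~\ref{thm:sec7final}.
\end{itemize}

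To solve $(z-\mathcal L_A)v=w$, I would use the companion structure. All rows of $A^l_\varepsilon$ except the first are pure shifts, so the last $2l-1$ coordinates of $v$ are determined recursively from a single scalar function, as in \eqref{eq:recurrence}. The first row then becomes a scalar equation in that function. Its principal part is $\hat v_l^{-1}D^l_{E,\varepsilon}(z)$ applied to it, plus a source term built from $w$.

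The difficulty is that a naive treatment produces two $l$-dependent losses:
\begin{itemize}
\item the recursion generates factors $z^{\pm j}$ for $j\le 2l$, whose size reflects the non-normality of $A^l_0(E)$;
\item the source carries the diverging coefficient $\hat v_l^{-1}$.
\end{itemize}
I would remove both with a diagonal gauge $G_l=\mathrm{diag}(\rho_j)$, with weights chosen so that the unperturbed conjugated recursion becomes a summable geometric series in $|z|e^{-\xi}<1$ and $|z|^{-1}e^{-\xi}<1$; this is the analogue of the $\mathcal P_z$ and $\mathcal Q_z$ bounds in Proposition~\ref{thm:D-regular}. The point to check is a cancellation. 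The first-row source is, after the gauge, $\hat v_l^{-1}\sum_k \hat v_k(\cdots)$, and applying $D^l_{E,\varepsilon}(z)^{-1}$ produces a solution whose $j$-th coordinate carries a factor $\hat v_l$. That factor cancels the $\hat v_l^{-1}$, leaving a sum controlled by $\sum_k k|\hat v_k|\max\{r,e^\xi\}^{k}<\infty$, just as for $\mathcal G^l_z$. The inverse $D^l_{E,\varepsilon}(z)^{-1}$ itself is uniformly bounded by Lemma~\ref{thm:D-abstract}, with $\mathfrak A=B(\Omega,\C)$. I expect this bookkeeping, namely verifying that the gauged resolvent is uniformly bounded and that the $1/\hat v_l$ singularity cancels exactly, to be the main obstacle; $\varepsilon_*$ is whatever keeps the Neumann series in Lemma~\ref{thm:D-abstract} convergent.

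For the dimension, two routes are available.
\begin{itemize}
\item Continuity route: at $\varepsilon=0$ the cocycle is the constant matrix $A^l_0(E)$, whose eigenvalues are the zeros of $z^lL^l_E(z)$. By Lemma~\ref{prop:assume}(3), exactly $2m(E)$ of them lie in $r_-<|z|<r_+$, so $\operatorname{rank}$ of the fiber center projection is $2m(E)$. The resolvent, and hence the fiberwise projection $\mathbb P^c(\theta)$, depends norm-continuously on $\varepsilon\in[-\varepsilon_*,\varepsilon_*]$ by the resolvent identity. The rank of a continuous family of finite-rank idempotents is locally constant, so it stays $2m(E)$ for all $\theta$.
\item Bridge route: Proposition~\ref{thm:bridge1}(2) characterizes $\mathcal C_{2l}(\theta)$ by the same growth bounds. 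Hence $\mathcal E^l_{\varepsilon,c}(\theta)=\mathcal C_{2l}(\theta)\cong\mathcal M^c_{V_l,E,\varepsilon}(\theta)$, which ties this theorem to the fibered solution bundle.
\end{itemize}
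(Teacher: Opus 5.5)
Your proposal is correct, but you obtain the central $l$-uniform resolvent bound by a different route from the paper.

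\textbf{The paper's route.} It works with the matrix itself:
\begin{enumerate}
\item It computes the resolvent of the companion matrix $A^l_0(E)$ entrywise (Lemma~\ref{lem:unperturbed-resolvent}).
\item It proves that, after conjugation by $R_l$, this resolvent is bounded uniformly in $l$, with the directional bound $O(|\hat v_l|e^{\xi(l-1)})$ on $\mathbf e_1$ (Lemma~\ref{prop:weighted}).
\item It assembles $(z-\mathcal L^l_{0,E})^{-1}$ from the Laurent coefficients of $\lambda\mapsto(z-\lambda\widehat A^l_0(E))^{-1}$.
\item It treats $\varepsilon W$ as the rank-one perturbation $b^l_\varepsilon\mathbf e_1\mathbf e_l^\ast$. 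The coefficient $\varepsilon W e^{-\xi(l-1)}/\hat v_l$ is cancelled by the directional bound inside a Neumann series (Proposition~\ref{prop:improved}).
\end{enumerate}

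\textbf{Your route.} You eliminate the $2l-1$ shift rows starting from the $u_0$-coordinate and reduce to a scalar equation for $f$. Carried out, this is exactly Lemma~\ref{lem:zeroth} restricted to the window. Multiply the first row by $\hat v_l$ and extend the chain one step to the coordinate $u_l$. The equation becomes $D^l_{E,\varepsilon}(z)f=\mathcal G^l_z(w)$, and the first coordinate $w_{l-1}$ enters only as the $k=l$ term of $\mathcal G^l_z$. Hence, fiberwise,
\[
(z-\mathcal L_A)^{-1}=\Pi_{[-l,l-1]}\,\mathcal R^l_\varepsilon(z)\,\iota_l ,
\]
and the uniform bound is just Proposition~\ref{thm:D-regular}(1).

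In this form there is no singular coefficient to cancel, so the step you flagged as the ``main obstacle'' disappears. The only trace of $\hat v_l^{-1}$ is the term $l|\hat v_l|\max\{r,e^\xi\}^{l-1}$ in the bound for $\mathcal G^l_z$. That term is harmless because $\xi<h$.

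\textbf{What each route buys.}
\begin{itemize}
\item Yours is shorter and reuses the machinery of Section~\ref{sec:ambient_framework}. It gives $\varepsilon_*=\varepsilon_0$ directly from Lemma~\ref{thm:D-abstract} with no new constant, and it makes the identification with the solution bundle transparent.
\item The paper's stays entirely at the level of the cocycle. It gives explicit entrywise control of the non-normal companion resolvent (Remark~\ref{abnormal}), and it is organized as a genuine perturbation argument around the constant weighted matrix.
\end{itemize}

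\textbf{Dimension count.} Your continuity argument is the paper's Lemma~\ref{prop:center-dimension-2m}. The bridge route is not an independent dimension count, however. In the paper, $\dim\mathcal M^c_{V_l,E,\varepsilon}(\theta)=2m(E)$ is itself deduced from this theorem via Proposition~\ref{thm:bridge1}, so using it here would be circular.
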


\begin{remark}We give two remarks concerning this theorem:
\begin{enumerate}
    \item By the Algebraic Bridge (Proposition \ref{thm:bridge1}), one  proves that
$
\mathcal{E}^{l}_{\varepsilon,c}(\theta)=\mathcal{C}_{2l}(\theta),
$
and hence it establishes a  linear isomorphism $\mathcal M_{V_l,E,\varepsilon}^c(\theta) \cong \mathcal{E}^{l}_{\varepsilon,c}(\theta)$. Consequently, $\dim\mathcal M_{V_l,E,\varepsilon}^c(\theta) = 2m(E)$ for $l\in[l_0,\infty)$.
\item By the stability of partial hyperbolicity, $(T, A^{l}_\varepsilon(E,\cdot))$ remains partially hyperbolic for $\varepsilon$ sufficiently small. Crucially, our analysis ensures that this smallness condition on $\varepsilon$ is  uniform in the truncation parameter $l$.
\end{enumerate}

\end{remark}

\subsection{Resolvent estimate of companion matrix}

The following lemma provides the explicit resolvent formula of the unperturbed cocycle.

\begin{lemma}
\label{lem:unperturbed-resolvent}
Assume $L^{l}_{E}(z)\neq 0$. Then $z-A_{0}^{l}(E)$ is invertible. Set
$$
(z-A_{0}^{l}(E))^{-1} = \bigl[r_{i,j}^0(z)\bigr]_{i,j=1}^{2l}.
$$
Then for $i,j\in\{1,\dots,2l\}$, we have
$$
r_{i,j}^0(z)=
\begin{cases}
z^{-(i-j)}r_{j,j}^0(z), & i\ge j,\\
z^{j-i}r_{j,j}^0(z)-z^{j-i-1}, & i<j,
\end{cases}
\qquad 
r_{j,j}^0(z) = \frac{T_j(z)}{L^{l}_{E}(z)},
$$
where  
$$
T_j(z)=\sum_{k=l-j+1}^{l}\widetilde v_k z^{k-1}, \qquad \widetilde v_k=
\begin{cases}
\hat{v}_k, & k\neq 0,\\
\hat{v}_0-E, & k=0.
\end{cases}
$$
\end{lemma}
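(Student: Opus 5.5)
The plan is to obtain the resolvent by solving $(z-A_0^l(E))x=y$ directly, using the companion structure, rather than inverting the matrix abstractly. Index coordinates so that $x=(x_1,\dots,x_{2l})^\top$, with $x_1$ the top entry; row $1$ of $A_0^l(E)$ is $\bigl(-\tilde v_{l-1}/\hat v_l,\dots,-\tilde v_{-l}/\hat v_l\bigr)$ up to the sign convention of \eqref{equ:cocycle}, with $\tilde v_0=\hat v_0-E$ and the $E$-entry rewritten accordingly. Rows $2,\dots,2l$ encode the subdiagonal shift. To compute the $j$-th column of the inverse, take $y=e_j$.

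\textbf{Step 1: the lower rows.} Rows $i\ge 2$ read $z x_i - x_{i-1}=\delta_{ij}$. These determine $x_2,\dots,x_{2l}$ recursively in terms of $x_1$, or, more conveniently, in terms of the diagonal value $x_j=r^0_{j,j}$:
\begin{itemize}
\item for $i>j$ one gets $x_i=z^{-1}x_{i-1}$, hence $x_i=z^{-(i-j)}x_j$;
\item for $i<j$ one inverts the recursion upward, $x_{i-1}=z x_i-\delta_{ij}$, which gives $x_{j-1}=z x_j-1$ and then $x_i=z^{j-i}x_j-z^{j-i-1}$.
\end{itemize}
This is exactly the claimed off-diagonal structure, so everything reduces to identifying $x_j$.

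\textbf{Step 2: the first row.} Substitute these expressions into row $1$, $z x_1-\sum_k a_{1k}x_k=\delta_{1j}$. Multiply through by $\hat v_l$ and use $x_k=z^{j-k}x_j-(\text{correction for }k<j)$. The coefficient of $x_j$ becomes $z^{j-1}\cdot z^{-(l-1)}\cdot z^{l}\cdot(\ldots)$, which collapses to $z^{j-1-l}L^l_E(z)$ times a power of $z$. In other words, $\sum_k \tilde v_{l-k} z^{j-k}$ equals $z^{j-l}\sum_{|m|\le l}\tilde v_m z^{m}\cdot(\text{normalizing power})$, i.e. a power of $z$ times $L^l_E(z)$. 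The correction terms, coming from the indices $k<j$ (the $-z^{j-k-1}$ pieces) together with the $z x_1$ term and $\delta_{1j}$, assemble into a partial sum of the coefficients $\tilde v_k$ for the indices $k=l-j+1,\dots,l$. That partial sum is $T_j(z)$ up to the same power of $z$. Solving gives $x_j=T_j(z)/L^l_E(z)$; this step needs $L^l_E(z)\ne0$, and it is there that invertibility follows.

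\textbf{Step 3: invertibility.} This comes cheaply. Either note that the unique solvability shown above gives $z-A^l_0(E)$ full rank, or use $\det(z-A^l_0(E))=z^{l}L^l_E(z)/\hat v_l$ up to sign, which is the standard companion determinant.

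The main obstacle is the bookkeeping in Step 2: matching the index shift between the matrix column $k$ and the Fourier index $l-k$, tracking the powers of $z$, and checking that the boundary case $j=1$, where $\delta_{1j}$ enters the first row, fits the same formula $T_1(z)=\tilde v_l z^{l-1}$. I would verify this first on $l=1$ as a sanity check, then do the general telescoping.
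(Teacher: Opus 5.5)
Your proposal is correct and is essentially the paper's argument run in the constructive direction. The appendix verifies $(z-A_0^l(E))R_0^l(z)=\mathrm{Id}$ entrywise using the same two ingredients: the subdiagonal rows give the shift relations, and row $1$ collapses via $\sum_{k=0}^{2l}\widetilde v_{l-k}z^{j-k}=z^{j-l}L^l_E(z)$. To make your Step 2 bookkeeping exact, treat $zx_1-\delta_{1j}$ as a $k=0$ term $x_0=z^{j}x_j-z^{j-1}$; row $1$ then reads $\sum_{k=0}^{2l}\widetilde v_{l-k}x_k=0$, whose $x_j$-coefficient is exactly $z^{j-l}L^l_E(z)$ and whose correction terms $-\sum_{k=0}^{j-1}\widetilde v_{l-k}z^{j-k-1}$ sum to exactly $-z^{j-l}T_j(z)$.
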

The proof is mainly linear algebra, which we will leave it in Appendix \ref{appendixa}.

\begin{remark}\label{abnormal}
By Lemma \ref{lem:unperturbed-resolvent}, the upper-triangular terms  are given by $r_{i,j}^0(z) = z^{j-i}r_{j,j}^0(z) - z^{j-i-1}$. Summing these absolute values over $1 \le i < j$ yields a column sum that grows at least linearly with the truncation size $l$ for $|z| \approx 1$ (and exponentially if $|z| > 1$). Therefore, the  unweighted resolvent norm $\|(z - A_{0}^{l}(E))^{-1}\|_{1}$ diverges as $l \to \infty$. 
\end{remark}

The above discussions also motivate us to use the weighted norm. 
Recall that for $v\in\mathbb{C}^{2l}$, $\|v\|_\xi = \sum_{j=-l}^{l-1} |v_j| e^{-\xi|j|}$. 
Define
$$
R_l = \operatorname{diag}\bigl(e^{-\xi|l-1|}, e^{-\xi|l-2|}, \dots, e^{-\xi|-l|}\bigr),
$$
so that $\|v\|_\xi = \|R_l v\|_{1}$. 
For any matrix $M$ on $\mathbb{C}^{2l}$, the induced operator norm is therefore
\begin{equation}\label{equ:induced}
\|M\|_\xi = \|R_l M R_l^{-1}\|_{1}.
\end{equation}
Under the induced norm $\|\cdot\|_\xi$, the divergent powers $z^{p-q}$ are  rescaled by $e^{-\xi|p|}e^{\xi|q|}z^{p-q}$, these exponential weights  suppress the off-diagonal growth into summable series. 
As a consequence,  the  weighted resolvent norm $\|(z-R_l A_{0}^{l}(E)R_l^{-1})^{-1}\|_{1}$  converges: 

\begin{lemma}
\label{prop:weighted}
There exists $C_r = C_r(\delta_0, r_\pm,\eta_\pm)>0$ such that for all $l\in[l_0,+\infty)$ and $z\in\mathcal{A}(E)$,
\begin{equation}\label{prop:inverse1}
\|(z-R_l A_{0}^{l}(E)R_l^{-1})^{-1}\|_{1}\le C_r.
\end{equation}
Moreover, with $\mathbf e_1$ the first standard basis vector of $\mathbb{C}^{2l}$, 
\begin{equation}\label{prop:inverse2}
\sup_{z\in\mathcal{A}(E)}\|(z-R_l A_{0}^{l}(E)R_l^{-1})^{-1}\mathbf e_1\|_{1}\le C_r|\hat{v}_l|e^{\xi(l-1)}.
\end{equation}
\end{lemma}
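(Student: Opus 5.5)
The plan is to start from the explicit entries of $(z-A_0^l(E))^{-1}$ in Lemma~\ref{lem:unperturbed-resolvent}. Conjugation by $R_l$ then multiplies each entry by a weight. Index rows and columns by the sequence positions $p,q\in\{l-1,\dots,-l\}$, with $i=l-p$ and $j=l-q$, so that $i-j=q-p$. Then
\[
\bigl[(z-R_lA_0^lR_l^{-1})^{-1}\bigr]_{ij}=e^{-\xi|p|}e^{\xi|q|}\,r^0_{i,j}(z).
\]
Since the $\ell^1$ operator norm is the maximal column sum, it suffices to bound, uniformly in the column $q$, the quantity
\[
\sum_p e^{\xi(|q|-|p|)}|r^0_{i,j}(z)|.
\]

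The first and main step is to control the diagonal factor $r^0_{j,j}=T_j(z)/L^l_E(z)$. The denominator is harmless: by \eqref{delta0} and Lemma~\ref{prop:assume}, $|L^l_E(z)|\ge\delta_0/2$ on $\mathcal A(E)$. For the numerator $T_j(z)=\sum_{k=q+1}^{l}\widetilde v_kz^{k-1}$, which is a tail of the Laurent polynomial, use two representations.
\begin{itemize}
\item For $q\ge0$, use the tail directly: $|T_j(z)|\le\sum_{k>q}|\hat v_k||z|^{k-1}\lesssim (|z|e^{-h'})^{q}$ for some $h'$ with $\max(r_u,e^\xi)<e^{h'}<e^h$.
\item For $q<0$, use the complementary sum. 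Since $\sum_{k=-l}^l\widetilde v_kz^{k}=L^l_E(z)$, we have $zT_j(z)=L^l_E(z)-\sum_{k\le q}\widetilde v_kz^{k}$. The first piece cancels the denominator exactly and contributes the constant $z^{-1}$ to $r^0_{j,j}$. The remainder satisfies $|\sum_{k\le q}\widetilde v_k z^k|\lesssim (|z|^{-1}e^{-h'})^{|q|}$, i.e.\ it is exponentially small in $|q|$ relative to $|z|^{-|q|}$.
\end{itemize}
This cancellation is the key point. I expect the main obstacle to be checking that the constant piece $z^{-1}$, which appears in the lower-triangular entries $r^0_{i,j}=z^{-(i-j)}r^0_{j,j}$ (rows with $p<q$), combines correctly with the explicit correction $-z^{j-i-1}$ in the upper-triangular entries (rows with $p>q$). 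The combination must leave only geometric factors of the form $(|z|e^{\mp\xi})^{|p-q|}$ or tails decaying like $e^{-(h'-\xi)|q|}$. If I do this carefully, I should be able to write every weighted entry as $\lesssim \delta_0^{-1}\rho^{|p-q|}$ with $\rho=\max(r_ue^{-\xi},e^{-\xi}/r_s)<1$ by \eqref{radius}. Summing over $p$ then gives a column bound $C_r$ depending only on $\delta_0,r_\pm,\eta_\pm$ (and $\xi,V$), independent of $l$.

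The second estimate \eqref{prop:inverse2} is the first column, $j=1$, i.e.\ $q=l-1$. Here $T_1(z)=\hat v_lz^{l-1}$, so the entries are $r^0_{i,1}=z^{-(i-1)}\hat v_l z^{l-1}/L^l_E(z)=\hat v_l z^{p}/L^l_E(z)$. The weighted column sum is
\[
e^{\xi(l-1)}|\hat v_l|\sum_p e^{-\xi|p|}|z|^p/|L^l_E(z)|.
\]
Because $e^{-\xi}<|z|<e^\xi$ on $\mathcal A(E)$, the sum over $p$ converges geometrically, giving $\le C_r|\hat v_l|e^{\xi(l-1)}$ after possibly enlarging $C_r$. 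This part is routine once the entry formula is in hand.
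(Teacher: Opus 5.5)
Your proposal is essentially the paper's proof. Like the paper, you bound weighted column sums of the explicit entries from Lemma~\ref{lem:unperturbed-resolvent}, using the tail $T_q^+(z)=\sum_{k=q+1}^{l}\widetilde v_k z^{k-1}$ and the complementary sum $T_q^-(z)=\sum_{k=-l}^{q}\widetilde v_k z^{k}$ through the identity $z r^0_{j,j}(z)-1=-T_q^-(z)/L^l_E(z)$. The step you flag as the main obstacle is therefore an exact cancellation, $z^{j-i}z^{-1}-z^{j-i-1}=0$, which leaves $r^0_{i,j}=-z^{p-q-1}T_q^-/L^l_E$ for $p>q$. Your $\mathbf e_1$ estimate is the same as the paper's.

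The differences are cosmetic. The paper uses $T_q^+$ for every entry with $p\le q$ and $T_q^-$ for every entry with $p>q$, rather than splitting by the sign of $q$. If you want a uniform bound of the form $\rho^{|p-q|}$, your $\rho$ must also dominate $e^{-(h'-\xi)}$. It must also be computed with the extreme radii $r_+e^{\eta_+}$ and $r_-e^{-\eta_-}$ of $\mathcal A(E)$, not with $r_u$ and $r_s$.
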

\begin{proof}
We first prove \eqref{prop:inverse1}. We assume $z\in\mathcal{A}_+(E)$, the other case can be estimated similarly. Set $r_1:=r_+(E) e^{-\eta_+(E)}$ and $r_2:=r_+(E) e^{\eta_+(E)}$. Then $e^{-\xi}<r_1\le |z|\le r_2<e^\xi$ for all $z\in\mathcal A_+(E)$.

Let $(z-A_{0}^{l}(E))^{-1} = \bigl[r_{i,j}^0(z)\bigr]_{i,j=1}^{2l}$. Define the weighted resolvent as
$
\widehat R^{l}_0(z):= R_l (z-A_{0}^{l}(E))^{-1}R_l^{-1}.
$
Its entries are given by
$$
\widehat r_{i,j}^0(z) := e^{-\xi|l-i|}r_{i,j}^0(z)e^{\xi|l-j|}, \qquad i,j\in\{1,\dots,2l\}.
$$

Fix $j\in\{1,\dots,2l\}$, and set $q:=l-j$. By Lemma \ref{lem:unperturbed-resolvent},
$$
r_{j,j}^0(z)=\frac{T_q^+(z)}{L^{l}_{E}(z)}, \qquad T_q^+(z):=\sum_{k=q+1}^{l}\widetilde v_k z^{k-1},
$$
and
$$
zr_{j,j}^0(z)-1 = -\frac{T_q^-(z)}{L^{l}_{E}(z)}, \qquad T_q^-(z):=\sum_{k=-l}^{q}\widetilde v_k z^k.
$$

For each $i\in\{1,\dots,2l\}$, set $p:=l-i$. Then $\widehat r_{i,j}^0(z) = e^{-\xi|p|}r_{i,j}^0(z)e^{\xi|q|}$. Let $s:=|z|$. We have $r_1\le s\le r_2$.

\textbf{1. Bounds on $T_q^+(z)$ and $T_q^-(z)$.}

Since $V$ is analytic, we have $|\widetilde v_k|\le C_0 e^{-h|k|}$ for $|k|\le l$, for some constant $C_0=C_0(V,E)$.

If $q\ge 0$, then
\begin{align}
|T_q^+(z)| \le \sum_{k=q+1}^{l} |\widetilde v_k||z|^{k-1} \le C_0\sum_{k=q+1}^{\infty} e^{-h k}s^{k-1} 
= C_0 s^{-1}\sum_{k=q+1}^{\infty}(s e^{-h})^k \le C_1 (s e^{-h})^q .
\label{eq:Tplus}
\end{align}
Also,
\begin{align}
|T_q^-(z)| &\le \sum_{k=-l}^{-1} |\widetilde v_k||z|^k+|\widetilde v_0| +\sum_{k=1}^{q}|\widetilde v_k||z|^k 
\le C_0\sum_{k=-\infty}^{-1}(s e^h)^k +|\widetilde v_0| +C_0\sum_{k=1}^{\infty}(s e^{-h})^k \le C_2 .
\label{eq:Tminus}
\end{align}

If $q<0$, then
\begin{align}
|T_q^+(z)| &\le \sum_{k=q+1}^{-1}|\widetilde v_k||z|^{k-1} +|\widetilde v_0||z|^{-1} +\sum_{k=1}^{l}|\widetilde v_k||z|^{k-1} \nonumber\\
&\le C_0 s^{-1}\sum_{k=-\infty}^{-1}(s e^h)^k +|\widetilde v_0|s^{-1} +C_0 s^{-1}\sum_{k=1}^{\infty}(s e^{-h})^k \le C_3 ,
\label{eq:Tplusn}
\end{align}
and
\begin{align}
|T_q^-(z)| &\le C_0\sum_{k=-\infty}^{q} e^{h k}s^k = C_0\sum_{k=-\infty}^{q}(s e^h)^k \le C_4(s e^h)^q .
\label{eq:Tminusn}
\end{align}
Since $r_1\le s\le r_2$, the constants $C_1,C_2,C_3,C_4$ may be chosen depending only on $v,h,r_\pm,\eta_\pm,E$, independent of $l,q,z$.

\textbf{2. Estimate of the part $i\ge j$ (equivalently $p\le q$).}

If $i\ge j$, then by Lemma \ref{lem:unperturbed-resolvent},
$$
r_{i,j}^0(z) = z^{-(i-j)}\frac{T_q^+(z)}{L^{l}_{E}(z)} = z^{p-q}\frac{T_q^+(z)}{L^{l}_{E}(z)}.
$$
Using $|L^{l}_{E}(z)|\ge \delta_0$, we obtain
$$
|\widehat r_{i,j}^0(z)| \le \delta_0^{-1}e^{-\xi|p|}s^{p-q}e^{\xi|q|}|T_q^+(z)|.
$$

If $q\ge 0$, then by \eqref{eq:Tplus},
$$
|\widehat r_{i,j}^0(z)| \le C_r e^{-\xi|p|}s^{p-q}e^{\xi q}(s e^{-\xi})^q = C_r e^{-\xi|p|}s^p.
$$
Therefore,
\begin{align*}
\sum_{p\le q} |\widehat r_{i,j}^0(z)| \le C_r\sum_{p=-l}^{l-1} e^{-\xi|p|}s^p \le C_r\left( \sum_{p=0}^{\infty}(r_2e^{-\xi})^p + \sum_{m=1}^{\infty}(r_1^{-1}e^{-\xi})^m \right) \le C_r.
\end{align*}

If $q<0$, then by \eqref{eq:Tplusn},
$$
|\widehat r_{i,j}^0(z)| \le C_r e^{-\xi|p|}s^{p-q}e^{-\xi q} = C_r(se^\xi)^{p-q}.
$$
Therefore,
$$
\sum_{p\le q} |\widehat r_{i,j}^0(z)| \le C_r\sum_{m=0}^{\infty}(r_1e^\xi)^{-m} \le C_r.
$$
Thus, $\sum_{i\ge j} |\widehat r_{i,j}^0(z)|\le C_r$ uniformly in $j,l,z$.

\textbf{3. Estimate of the part $i<j$ (equivalently $p>q$).}

If $i<j$, then
$$
r_{i,j}^0(z) = z^{j-i}r_{j,j}^0(z)-z^{j-i-1} = z^{p-q-1}\bigl(zr_{j,j}^0(z)-1\bigr) = -z^{p-q-1}\frac{T_q^-(z)}{L^{l}_{E}(z)}.
$$
Therefore,
$$
|\widehat r_{i,j}^0(z)| \le \delta_0^{-1}e^{-\xi|p|}s^{p-q-1}e^{\xi|q|}|T_q^-(z)|.
$$

If $q\ge 0$, then by \eqref{eq:Tminus},
$$
|\widehat r_{i,j}^0(z)| \le C_r e^{-\xi|p|}s^{p-q-1}e^{\xi q} = C_r s^{-1}(s e^{-\xi})^{p-q}.
$$
Therefore,
$$
\sum_{p>q} |\widehat r_{i,j}^0(z)| \le C_r r_1^{-1}\sum_{m=1}^{\infty}(r_2e^{-\xi})^m \le C_r.
$$

If $q<0$, then by \eqref{eq:Tminusn},
$$
|\widehat r_{i,j}^0(z)| \le C_r e^{-\xi|p|}s^{p-q-1}e^{-\xi q}(s e^\xi)^q = C_r e^{-\xi|p|}s^{p-1}.
$$
Hence,
$$
\sum_{p>q}|\widehat r_{i,j}^0(z)| \le \sum_{\substack{p>q, p\ge 0}}|\widehat r_{i,j}^0(z)| + \sum_{\substack{p>q, p<0}}|\widehat r_{i,j}^0(z)|.
$$
For $p\ge 0$, 
$$
\sum_{\substack{p>q, p\ge 0}}|\widehat r_{i,j}^0(z)| \le C_r r_1^{-1}\sum_{p=0}^{\infty}(r_2e^{-\xi})^p \le C_r.
$$
For $p<0$, 
$$
\sum_{\substack{p>q, p<0}}|\widehat r_{i,j}^0(z)| \le C_r r_1^{-1}\sum_{p=-\infty}^{-1}(r_1e^\xi)^p \le C_r.
$$
Combining these parts, we obtain $\sum_{i<j} |\widehat r_{i,j}^0(z)|\le C_r$ uniformly in $j,l,z$. 

Therefore, $\|\widehat R^{l}_0(z)\|_{1}\le C_r$, which proves \eqref{prop:inverse1}.
We next prove \eqref{prop:inverse2}. By Lemma \ref{lem:unperturbed-resolvent}, for $j=1$ one has $r_{i,1}^{0}(z)=z^{-(i-1)}r_{1,1}^{0}(z)$ for $1\le i\le 2l$, and $r_{1,1}^{0}(z)=\frac{\hat{v}_l z^{l-1}}{L^{l}_{E}(z)}$. Therefore,
$
r_{i,1}^{0}(z)=\frac{\hat{v}_lz^{l-i}}{L^{l}_{E}(z)}.
$
Defining $p:=l-i\in\{-l,\dots,l-1\}$, we have $r_{i,1}^{0}(z)=\frac{\hat{v}_lz^{p}}{L^{l}_{E}(z)}$. The weighted vector entries are
$
(\widehat R^{l}_0(z)\mathbf e_1)_i = e^{-\xi|p|}r_{i,1}^{0}(z)e^{\xi(l-1)}.
$
Hence,
$$
|(\widehat R^{l}_0(z)\mathbf e_1)_i| \le e^{-\xi|p|} \frac{|\hat{v}_l||z|^p}{|L^{l}_{E}(z)|} e^{\xi(l-1)}.
$$
This yields
\begin{align*}
\|\widehat R^{l}_0(z)\mathbf e_1\|_{1} &= \sum_{i=1}^{2l} |(\widehat R^{l}_0(z)\mathbf e_1)_i| \le \frac{|\hat{v}_l|e^{\xi(l-1)}}{|L^{l}_{E}(z)|} \sum_{p=-l}^{l-1} e^{-\xi|p|}|z|^p.
\end{align*}
Since $z\in \mathcal A_+(E)$, we have
\begin{align*}
\sum_{p=-l}^{l-1} e^{-\xi|p|}|z|^p &\le \sum_{p=0}^{\infty}(r_2e^{-\xi})^p + \sum_{m=1}^{\infty}((r_1)^{-1}e^{-\xi})^m =: C_r'.
\end{align*}
Using $|L^{l}_{E}(z)|\ge \delta_0$, we obtain
$
\|\widehat R^{l}_0(z)\mathbf e_1\|_{1} \le \frac{C_r'}{\delta_0}|\hat{v}_l|e^{\xi(l-1)}.
$
By absorbing constants, this proves \eqref{prop:inverse2}.
\end{proof}

\subsection{Weighted transfer operators}
Following the spectral decomposition of the infinite-dimensional transfer operator on $\mathcal{M}_{V,E,\varepsilon}(\theta)$ in Section \ref{sec:ambient_framework}, this subsection analyzes the finite-range truncated model. Its induced finite-dimensional cocycle serves as the exact algebraic counterpart to the global transfer operator.

For $l\in[l_0,\infty)$, define the bounded section space as 
$$
\mathcal{BS}_l = \bigl\{ \Phi:\Omega\to \C^{2l} \text{ bounded} \bigr\},
\qquad
\|\Phi\|_{\mathcal{BS}_l} = \sup_{\theta\in\Omega}\|\Phi(\theta)\|_{1}.
$$
The weighted cocycle matrices are
$$
\widehat A_{\varepsilon}^{l}(E,\theta) = R_l A_{\varepsilon}^{l}(E,\theta)R_l^{-1},
$$
and the associated transfer operators $\mathcal{L}^{l}_{\varepsilon,E}$ act on $\mathcal{BS}_l$ by
$$
(\mathcal{L}^{l}_{\varepsilon,E}\Phi)(\theta) = \widehat A^{l}_{\varepsilon}(E,T^{-1}\theta)\Phi(T^{-1}\theta).
$$
In particular, $\mathcal{L}^{l}_{0,E}\Phi(\theta) = \widehat A_{0}^{l}(E)\Phi(T^{-1}\theta)$. 
The goal of this subsection is to show the invertibility of the transfer operator.
Formally, 
\begin{equation}\label{eq:reso}
    (z-\mathcal{L}^{l}_{\varepsilon,E})^{-1} = \bigl(\mathrm{Id} - (z-\mathcal{L}^{l}_{0,E})^{-1}(\mathcal{L}^{l}_{\varepsilon,E} - \mathcal{L}^{l}_{0,E})\bigr)^{-1}(z-\mathcal{L}^{l}_{0,E})^{-1},
\end{equation}
while boundedness  $(z-\mathcal{L}^{l}_{0,E})^{-1}$ is essentially given by Lemma \ref{prop:weighted}, the key is to prove $ (z-\mathcal{L}^{l}_{0,E})^{-1}(\mathcal{L}^{l}_{\varepsilon,E} - \mathcal{L}^{l}_{0,E})$ is actually uniformly bounded.

\begin{proposition}
\label{prop:improved}
There exist $\varepsilon_*>0$ and $M=M(\delta_0, r_\pm, \eta_\pm)>0$,  such that for all $l\in[l_0,\infty)$, $|\varepsilon|\le\varepsilon_*$, and $z\in\mathcal{A}'(E)$,
$\bigl\|(z-\mathcal{L}^{l}_{\varepsilon,E})^{-1}\bigr\|_{\mathfrak{B}(\mathcal{BS}_l )} \le M.$
\end{proposition}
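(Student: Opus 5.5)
We follow the factorization \eqref{eq:reso}: first invert the unperturbed transfer operator, then treat the potential as a rank-one perturbation.

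\textbf{Step 1: the unperturbed resolvent.} Since $\mathcal{L}^{l}_{0,E}$ acts by the constant matrix $\widehat A^l_0(E)$ composed with the isometric translation $\Phi\mapsto\Phi\circ T^{-1}$, it is natural to expand in powers of the translation. For $|z|$ in the annulus $\mathcal A'(E)$ we use the Laurent expansion
$$
(z-\widehat A^l_0(E)w)^{-1}=\sum_k B_k(z)w^k ,
$$
exactly as in the proof of Lemma \ref{thm:D-abstract}. Here $w\mapsto (z-\widehat A^l_0 w)^{-1}=w^{-1}(zw^{-1}-\widehat A^l_0)^{-1}$ is analytic for $|w|$ in a small annulus $e^{\pm\kappa}$ around $1$, because $zw^{-1}$ stays in $\mathcal A(E)$. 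Cauchy estimates combined with Lemma \ref{prop:weighted} then give $\sum_k\|B_k(z)\|_1\le C C_r$, uniformly in $l$. Hence $(z-\mathcal L^l_{0,E})^{-1}=\sum_k B_k(z)\mathbf T^{-k}$ is bounded on $\mathcal{BS}_l$ by a constant $M_0$ independent of $l$. (Alternatively, one can write the inverse as the two one-sided Neumann-type series using the spectral projections of $\widehat A^l_0$; we prefer the Laurent route since it mirrors Lemma \ref{thm:D-abstract}.)

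\textbf{Step 2: the perturbation and the cancellation.} From \eqref{equ:cocycle}, $\widehat A^l_\varepsilon-\widehat A^l_0$ is rank one:
$$
-\frac{\varepsilon W(\theta)}{\hat v_l}\,R_l\mathbf e_1\mathbf e_{l+1}^\top R_l^{-1}
=-\frac{\varepsilon W(\theta)}{\hat v_l}\,e^{-\xi(l-1)}\,\mathbf e_1\mathbf e_{l+1}^\top ,
$$
since $R_l\mathbf e_1=e^{-\xi(l-1)}\mathbf e_1$ and the $(l+1)$-th weight is $e^{0}=1$. Thus $(\mathcal L^l_{\varepsilon,E}-\mathcal L^l_{0,E})\Phi(\theta)=\varepsilon c_l\,W(T^{-1}\theta)\Phi_{l+1}(T^{-1}\theta)\mathbf e_1$ with $|c_l|=|\hat v_l|^{-1}e^{-\xi(l-1)}$. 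This coefficient diverges, and that divergence is the main obstacle. Composing with $(z-\mathcal L^l_{0,E})^{-1}$ produces terms $\sum_k B_k(z)\mathbf e_1\,(\cdots)$, so we need
$$
\sum_k\|B_k(z)\mathbf e_1\|_1\le C|\hat v_l|e^{\xi(l-1)} .
$$
This follows by applying the Cauchy-estimate argument of Step 1 to the vector-valued function $w\mapsto (z-\widehat A^l_0w)^{-1}\mathbf e_1$, now using \eqref{prop:inverse2}. We must check that \eqref{prop:inverse2} holds on the slightly enlarged annulus $\mathcal A(E)$, which it does since it is stated for all $z\in\mathcal A(E)$. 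The product is then bounded by $C\|W\|_\infty|\varepsilon|$, uniformly in $l$; this is the ``directional response cancels the diverging coefficient'' mechanism.

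\textbf{Step 3: conclusion.} Choose $\varepsilon_*$ so that $C\|W\|_\infty\varepsilon_*\le 1/2$. A Neumann series inverts $\mathrm{Id}-(z-\mathcal L^l_{0,E})^{-1}(\mathcal L^l_{\varepsilon,E}-\mathcal L^l_{0,E})$ with norm at most $2$. Inserting this into \eqref{eq:reso} gives $M=2M_0$, and one verifies two-sided invertibility by the symmetric factorization.

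The delicate point is Step 1/2: making sure the translation-Laurent expansion genuinely reduces the operator resolvent to matrix resolvents evaluated on the annulus $\mathcal A(E)$, where Lemma \ref{prop:weighted} applies. This is why $z$ is restricted to the smaller annulus $\mathcal A'(E)$.
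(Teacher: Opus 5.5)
Your proposal is correct and follows the paper's proof essentially step by step. You Laurent-expand $\lambda\mapsto (z-\lambda\widehat A_0^l(E))^{-1}$ on a thin annulus around $|\lambda|=1$, using Lemma~\ref{prop:weighted} on $\mathcal A(E)$, to obtain $(z-\mathcal L_{0,E}^l)^{-1}=\sum_m K_m^l(z)\mathbf T^m$. You then use the directional bound \eqref{prop:inverse2} to absorb the diverging coefficient $e^{-\xi(l-1)}/\hat v_l$ of the rank-one perturbation, and close with a Neumann series.

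One harmless indexing slip: the $W$-entry of $A_\varepsilon^l$ sits in column $l$ (coordinate $u_0$, weight $e^{0}=1$). The perturbation is therefore $\mathbf e_1\mathbf e_l^\ast$, not $\mathbf e_1\mathbf e_{l+1}^\top$, whose weight would be $e^{-\xi}$. Your coefficient is nonetheless the correct one, so this changes nothing in the estimate.
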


\begin{proof}
We detail the proof for the outer annulus component $z\in\mathcal A'_+(E)$ defined in \eqref{eq:a+}, the inner annulus case follows by identical reasoning. Fix such a $z$, we first show that $z-\mathcal{L}^{l}_{0,E}$ is invertible for all $l\in[l_0,\infty)$. Let
$$
\mathbb{A} := \{\lambda\in\mathbb{C}: e^{-\eta_+(E)/2} < |\lambda| < e^{\eta_+(E)/2}\}.
$$
By Lemma~\ref{prop:weighted}, there exists $C=C(\delta_0, r_\pm,\eta_\pm)>0$, independent of $l$, such that for all $l\in[l_0,+\infty)$,
\begin{equation}\label{eq:Est-lambda}
    \sup_{\lambda\in\mathbb{A}} \bigl\| (z-\lambda\widehat A_{0}^{l}(E))^{-1} \bigr\|_{1} \le C, \qquad\sup_{\lambda\in\mathbb{A}} \bigl\| (z-\lambda\widehat A_{0}^{l}(E))^{-1} \mathbf{e}_1\bigr\|_{1} \le C|v_l| e^{\xi(l-1)}. 
\end{equation}
Hence, the function $\lambda\mapsto (z-\lambda\widehat A_{0}^{l}(E))^{-1}$ is analytic on $\mathbb{A}$ and admits a Laurent expansion
\begin{equation}\label{eq:Laurent}
    (z-\lambda\widehat A_{0}^{l}(E))^{-1} = \sum_{m\in\mathbb{Z}} K_m^{l}(z)\,\lambda^m, \qquad \lambda\in\mathbb{A},
\end{equation}
where each $K_m^{l}(z)$ is a $2l\times 2l$ matrix.
A standard Cauchy estimate gives $\|K_m^{l}(z)\|_{1} \le C e^{-\eta_+(E)|m|/4}$ for all $m\in\mathbb Z$. Summing over $m$ yields
\begin{equation}\label{eq:est-Kml}
\sum_{m\in\mathbb Z}\|K_m^{l}(z)\|_{1}
\le C\sum_{m\in\mathbb Z} e^{-\eta_+(E)|m|/4}
\le \frac{2C}{1-e^{-\eta_+(E)/4}} =: M,
\end{equation}
with $M$ independent of $l$.

Define
$\widehat{\mathcal{R}^{l}_{0}} = \sum_{m\in\mathbb Z}K_m^{l} \mathbf T^m ,$
where $(\mathbf T\Phi)(\theta)=\Phi(T^{-1}\theta)$ for $\Phi\in\mathcal{BS}_l$. Note $\mathbf T$ is an isometry, \eqref{eq:est-Kml} gives $$
\|\widehat{\mathcal{R}^{l}_{0}}\|_{\mathfrak{B}(\mathcal{BS}_l )} \le \sum_{m\in\mathbb Z}\|K_m^l\|_{1} \le M.
$$
We claim $\widehat{\mathcal{R}^{l}_{0}}=(z-\mathcal{L}^{l}_{0,E})^{-1}$. Indeed, from \eqref{eq:Laurent}, we obtain
\begin{equation*}\label{equ:laurent}
zK_m^{l} - \widehat A_{0}^{l}(E) K_{m-1}^{l} = \delta_{m0}\,\mathrm{Id}, \qquad m\in\mathbb Z.
\end{equation*}
A direct computation then yields
$$
\begin{aligned}
(z-\mathcal{L}^{l}_{0,E})\widehat{\mathcal{R}^{l}_{0}}\Phi
&= (z-\widehat A_{0}^{l}(E)\mathbf T)\sum_{m\in\mathbb Z}K_m^{l} \mathbf T^m\Phi \\
&= \sum_{m\in\mathbb Z}zK_m^{l}\mathbf T^m\Phi - \sum_{m\in\mathbb Z}\widehat A_{0}^{l}(E)K_m^{l}\mathbf T^{m+1}\Phi \\
&= \sum_{m\in\mathbb Z}\bigl(zK_m^{l}-\widehat A_{0}^{l}(E)K_{m-1}^{l}\bigr)\mathbf T^{m}\Phi = \Phi.
\end{aligned}
$$
Thus $(z-\mathcal{L}^{l}_{0,E})\widehat{\mathcal{R}^{l}_{0}}=\mathrm{Id}$. Similarly, we have $\widehat{\mathcal{R}^{l}_{0}}(z-\mathcal{L}^{l}_{0,E})=\mathrm{Id}$.

Furthermore, define $k_m^{l}(z)=K_m^{l}(z)\mathbf e_{1}$. By \eqref{eq:Est-lambda},
a standard Cauchy estimate gives
$$
\|k_m^{l}(z)\|_{1} \le C |\hat{v}_l| e^{\xi(l-1)} e^{-\eta_+(E)|m|/4}, \qquad m\in\mathbb Z.
$$
Summing over $m$ gives
\begin{equation}
\label{eq:km}
\sum_{m\in\mathbb Z}\|k_m^{l}(z)\|_{1}
\le D |\hat{v}_l| e^{\xi(l-1)},
\end{equation}
where $D = C\sum_{m\in\mathbb Z}e^{-\eta_+(E)|m|/4}$ depends only on $\delta_0, r_\pm,\eta_\pm$ and is independent of $l$.

Note $A_{\varepsilon}^{l}(E,\theta)-A_{0}^{l}(E) = -\frac{\varepsilon W(\theta)}{\hat{v}_l} \mathbf e_{1}\mathbf e_l^\ast$, conjugating by $R_l$ gives
$$
\widehat A_{\varepsilon}^{l}(E,\theta)-\widehat A_{0}^{l}(E) = b^{l}_{\varepsilon}(\theta)\mathbf e_{1}\mathbf e_l^\ast,
$$
with
\begin{equation}
\label{eq:bepsilon}
b^{l}_{\varepsilon}(\theta) = -\varepsilon W(\theta)\frac{e^{-\xi(l-1)}}{\hat{v}_l}.
\end{equation}
Hence, for $\Phi\in\mathcal{BS}_l$,
\begin{equation}
\label{eq:global}
\bigl((\mathcal{L}^{l}_{\varepsilon,E}-\mathcal{L}^{l}_{0,E})\Phi\bigr)(\theta)
= b^{l}_{\varepsilon}(T^{-1}\theta) \mathbf e_{1} \mathbf e_l^\ast\Phi(T^{-1}\theta).
\end{equation}

Let
$
\Psi=(\mathcal{L}^{l}_{\varepsilon,E}-\mathcal{L}^{l}_{0,E})\Phi.
$
Using \eqref{eq:global}, we obtain
\begin{align*}
(\widehat{\mathcal{R}^{l}_{0}}(z)\Psi)(\theta)
&=\sum_{m\in\mathbb Z}
K_m^{l}(z)
\Bigl(b^{l}_{\varepsilon}(T^{-(m+1)}\theta)\mathbf e_{1}
\mathbf e_l^\ast \Phi(T^{-(m+1)}\theta)\Bigr)\\
&=\sum_{m\in\mathbb Z}
k_m^{l}(z)
b^{l}_{\varepsilon}(T^{-(m+1)}\theta)
\mathbf e_l^\ast \Phi(T^{-(m+1)}\theta).
\end{align*}
Hence,
\begin{equation*}
\|\widehat{\mathcal{R}^{l}_{0}}(z)(\mathcal{L}^{l}_{\varepsilon,E}-\mathcal{L}^{l}_{0,E})\Phi\|_{\mathcal{BS}_l}
\le\left(\sum_{m\in\mathbb Z}\|k_m^{l}(z)\|_{1}\right)
\sup_{\theta\in\Omega}|b^{l}_{\varepsilon}(\theta)|
\|\Phi\|_{\mathcal{BS}_l}.
\end{equation*}
By \eqref{eq:bepsilon},
$
\sup_{\theta\in\Omega}|b^{l}_{\varepsilon}(\theta)|
\le|\varepsilon|\|W\|
\frac{e^{-\xi(l-1)}}{|\hat{v}_l|}.
$
Combining this with \eqref{eq:km}, we obtain
\begin{equation*}
\|\widehat{\mathcal{R}^{l}_{0}}(z)(\mathcal{L}^{l}_{\varepsilon,E}-\mathcal{L}^{l}_{0,E})\|_{\mathfrak{B}(\mathcal{BS}_l )}
\le D\|W\|_\infty|\varepsilon|.
\end{equation*}

Take
$
\varepsilon_\ast:=\frac{1}{2D\|W\|_\infty}
$, which depends only on $\delta_0, r_\pm,\eta_\pm$. 
Then for every $|\varepsilon|\le \varepsilon_\ast$,
$
\|\widehat{\mathcal{R}^{l}_{0}}(z)(\mathcal{L}^{l}_{\varepsilon,E}-\mathcal{L}^{l}_{0,E})\|_{\mathfrak{B}(\mathcal{BS}_l )}
\le \frac12
$
uniformly in $l\in[l_0,\infty)$.
Hence
$
\mathrm{Id}-\widehat{\mathcal{R}^{l}_{0}}(z)(\mathcal{L}^{l}_{\varepsilon,E}-\mathcal{L}^{l}_{0,E})
$
is invertible and
$$
\left\|\Bigl(
\mathrm{Id}-\widehat{\mathcal{R}^{l}_{0}}(z)(\mathcal{L}^{l}_{\varepsilon,E}-\mathcal{L}^{l}_{0,E})\Bigr)^{-1}
\right\|_{\mathfrak{B}(\mathcal{BS}_l )}
\le 2.
$$

Therefore by \eqref{eq:reso}, 
$$
\bigl\|(z-\mathcal{L}^{l}_{\varepsilon,E})^{-1}\bigr\|_{\mathfrak{B}(\mathcal{BS}_l )}
\le 2\bigl\|\widehat{\mathcal{R}^{l}_{0}}(z)\bigr\|_{\mathfrak{B}(\mathcal{BS}_l )} \le 2M,
$$
uniformly in $l\in[l_0,\infty)$, provided $|\varepsilon|\le\varepsilon_\ast$.
\end{proof}

\begin{remark}\label{rem:mechanism}
The finite truncation introduces a singular rank-one perturbation diverges as $\mathcal{O}(|\hat{v}_l|^{-1})$. The weighted framework resolves this via an exact algebraic cancellation. The directional estimate \eqref{prop:inverse2} ensures that the unperturbed resolvent's response along the perturbation vector $\mathbf{e}_1$ is strictly bounded by $\mathcal{O}(|\hat{v}_l| e^{\xi(l-1)})$. In the operator composition $\widehat{\mathcal{R}^{l}_{0}}(z)(\mathcal{L}^{l}_{\varepsilon,E} - \mathcal{L}^{l}_{0,E})$, this  cancels the rescaled diverging coefficient $\mathcal{O}(|\hat{v}_l|^{-1}e^{-\xi(l-1)})$. 
\end{remark}

\subsection{Fiber decomposition}

Proposition~\ref{prop:improved} ensures the uniform invertibility of $z-\mathcal{L}^{l}_{\varepsilon,E}$ for all $|\varepsilon|\le\varepsilon_\ast$ and $z\in\mathcal{A}'(E)$. This allows us to define the global Riesz spectral projections via the contours in \eqref{contour}:
$$
\widehat{\mathbb P}^l_{\varepsilon,s} := \frac{1}{2\pi i}\int_{\Gamma_s} (z-\mathcal{L}^{l}_{\varepsilon,E})^{-1} dz, \qquad
\widehat{\mathbb P}^l_{\varepsilon,c} := \frac{1}{2\pi i} \int_{\Gamma_c} (z-\mathcal{L}^{l}_{\varepsilon,E})^{-1}dz,
$$
and $\widehat{\mathbb P}^l_{\varepsilon,u} := \mathrm{Id} - \widehat{\mathbb P}^l_{\varepsilon,s} - \widehat{\mathbb P}^l_{\varepsilon,c}$.

For any basepoint $\theta_0\in\Omega$, evaluating these global operators defines the fiberwise projections:
$$
\widehat{\mathbb P}^l_{\varepsilon,\sharp}(\theta_0)v := (\widehat{\mathbb P}^l_{\varepsilon,\sharp}\Phi)(\theta_0), \qquad \sharp \in \{s, c, u\},
$$
where $\Phi$ is any bounded section satisfying $\Phi(\theta_0)=v$. By the same reasoning as in Lemma \ref{prop:fiber}, $\widehat{\mathbb P}^l_{\varepsilon,\sharp}(\theta_0)$  is well-defined, and is independent of the chosen extension $\Phi$. Moreover, we have the following  partially hyperbolic splitting.

\begin{lemma}\label{prop:fibe2}
For each $\sharp \in \{s, c, u\}$, the subspaces $\widehat{\mathcal E}_{\varepsilon,\sharp}^l(\theta) := \operatorname{Ran}\widehat{\mathbb P}^l_{\varepsilon,\sharp}(\theta)$ form an invariant direct sum decomposition of $\mathbb{C}^{2l}$:
$$
\mathbb{C}^{2l} = \widehat{\mathcal E}^{l}_{\varepsilon,s}(\theta) \oplus \widehat{\mathcal E}^{l}_{\varepsilon,c}(\theta) \oplus \widehat{\mathcal E}^{l}_{\varepsilon,u}(\theta),
\qquad
\widehat A_{\varepsilon}^{l}(E,\theta) \widehat{\mathcal E}_{\varepsilon,\sharp}^l(\theta) = \widehat{\mathcal E}_{\varepsilon,\sharp}^l(T\theta).
$$
Furthermore, there exists a constant $C\ge 1$, independent of $l$ and $\varepsilon\leq \varepsilon_*$, such that for all $\theta\in\Omega$ and $n\ge 0$:
\begin{equation*}
\begin{aligned}
\|(\widehat A^{l}_{\varepsilon})_n(E, \theta)\widehat v_s\|_{1} &\le C r_s^n \|\widehat v_s\|_{1}, &&\quad \text{for } \widehat v_s\in \widehat{\mathcal E}^{l}_{\varepsilon,s}(\theta), \\
\|(\widehat A^{l}_\varepsilon)_{-n}(E,\theta)\widehat v_u\|_{1} &\le C r_u^{-n} \|\widehat v_u\|_{1}, &&\quad \text{for } \widehat v_u\in \widehat{\mathcal E}^{l}_{\varepsilon,u}(\theta), \\
\|(\widehat A^{l}_\varepsilon)_{n}(E,\theta)\widehat v_c\|_{1} \le C r_+^{n} \|\widehat v_c\|_{1},\quad
&\|(\widehat A^{l}_\varepsilon)_{-n}(E,\theta)\widehat v_c\|_{1} \le C r_-^{-n} \|\widehat v_c\|_{1}, &&\quad \text{for } \widehat v_c\in \widehat{\mathcal E}^{l}_{\varepsilon,c}(\theta).
\end{aligned}
\end{equation*}
The subspaces $\mathcal E_{\varepsilon,\sharp}^l(\theta) := R_l^{-1}\widehat{\mathcal E}_{\varepsilon,\sharp}^l(\theta)$ yield the corresponding invariant splitting for the original cocycle $A_{\varepsilon}^{l}(E,\theta)$.
\end{lemma}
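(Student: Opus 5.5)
The plan is to transplant the argument of Lemma~\ref{prop:growth}, Corollary~\ref{commu}, Lemma~\ref{prop:fiber} and the proof of Theorem~\ref{thm:sec7final} to the finite-dimensional weighted setting on $\mathcal{BS}_l$. The only new input is Proposition~\ref{prop:improved}, which makes every constant uniform in $l$. Since all three circles $\Gamma_s,\Gamma_\pm,\Gamma_u$ lie in $\mathcal A'(E)$, the resolvent $(z-\mathcal L^l_{\varepsilon,E})^{-1}$ is holomorphic in a neighbourhood of each contour and bounded there by $2M$. Cauchy's theorem then shows, exactly as for $\mathcal L$, that $\Gamma_s$ and $\Gamma_-$ give the same integral, and likewise $\Gamma_+$ and $\Gamma_u$. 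Hence $\widehat{\mathbb P}^l_{\varepsilon,\sharp}$ are the Riesz projections of the three spectral pieces $\{|z|<r_s\}$, $\{r_-<|z|<r_+\}$, $\{|z|>r_u\}$. They satisfy the identities \eqref{equ:riesz}, commute with $\mathcal L^l_{\varepsilon,E}$, and have norms at most $2M\max(r_s,r_++r_-,1+r_u+r_s)$, independent of $l$.

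\textbf{Step 1: growth bounds on the global section spaces.} I would use the holomorphic functional calculus with $f(z)=z^{\pm n}$. The weighted companion matrix $\widehat A^l_\varepsilon(E,\theta)$ is invertible, since its last entry is $-\hat v_{-l}/\hat v_l\neq0$ and hence its determinant is nonzero. Therefore $\mathcal L^l_{\varepsilon,E}$ is invertible on $\mathcal{BS}_l$, $0$ lies in its resolvent set, and $z^{-n}$ is admissible on the center and unstable pieces. This gives the representations
\[
(\mathcal L^l_{\varepsilon,E})^{n}\widehat{\mathbb P}^l_{\varepsilon,s}=\tfrac{1}{2\pi i}\oint_{\Gamma_s}z^n(z-\mathcal L^l_{\varepsilon,E})^{-1}dz,\qquad
(\mathcal L^l_{\varepsilon,E})^{\pm n}\widehat{\mathbb P}^l_{\varepsilon,c}=\tfrac{1}{2\pi i}\oint_{\Gamma_c}z^{\pm n}(z-\mathcal L^l_{\varepsilon,E})^{-1}dz,
\]
together with $(\mathcal L^l_{\varepsilon,E})^{-n}\widehat{\mathbb P}^l_{\varepsilon,u}=-\frac{1}{2\pi i}\oint_{\Gamma_u}z^{-n}(z-\mathcal L^l_{\varepsilon,E})^{-1}dz$ for $n\ge1$; here the integrand decays at infinity and the unstable region is the exterior of $\Gamma_u$. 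Estimating each integral trivially with Proposition~\ref{prop:improved} yields the bounds $C r_s^n$, $C r_+^n$, $C r_-^{-n}$ and $C r_u^{-n}$ on the respective ranges, with $C$ depending only on $M$ and $r_s,r_\pm,r_u$. In particular $C$ is independent of $l$ and of $|\varepsilon|\le\varepsilon_*$.

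\textbf{Step 2: descent to fibers.} Following Lemma~\ref{prop:growth}, I would characterize each range by its growth rate. Combining this with the intertwining identity $(\mathcal L^l_{\varepsilon,E})^n M_\varphi=M_{\varphi\circ T^{-n}}(\mathcal L^l_{\varepsilon,E})^n$ shows that each range is invariant under multiplication by bounded scalar functions. It follows, as in Corollary~\ref{commu}, that $\widehat{\mathbb P}^l_{\varepsilon,\sharp}M_\varphi=M_\varphi\widehat{\mathbb P}^l_{\varepsilon,\sharp}$. The indicator-function argument of Lemma~\ref{prop:fiber} then shows that the fiber maps $\widehat{\mathbb P}^l_{\varepsilon,\sharp}(\theta)$ are well defined. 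They are projections summing to $\mathrm{Id}$ with pairwise zero products, so their ranges give the direct sum $\mathbb C^{2l}=\widehat{\mathcal E}^l_{\varepsilon,s}\oplus\widehat{\mathcal E}^l_{\varepsilon,c}\oplus\widehat{\mathcal E}^l_{\varepsilon,u}$.

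\textbf{Step 3: invariance and the fiberwise estimates.} Evaluating $\mathcal L^l_{\varepsilon,E}\widehat{\mathbb P}^l_{\varepsilon,\sharp}=\widehat{\mathbb P}^l_{\varepsilon,\sharp}\mathcal L^l_{\varepsilon,E}$ at $T\theta$ gives the inclusion $\widehat A^l_\varepsilon(E,\theta)\widehat{\mathcal E}^l_{\varepsilon,\sharp}(\theta)\subset\widehat{\mathcal E}^l_{\varepsilon,\sharp}(T\theta)$. The same identity for $(\mathcal L^l_{\varepsilon,E})^{-1}$ gives the reverse inclusion, hence equality. For the fiberwise estimates, given $\widehat v\in\widehat{\mathcal E}^l_{\varepsilon,\sharp}(\theta_0)$, I would take the section equal to $\widehat v$ at $\theta_0$ and $0$ elsewhere, and project it by $\widehat{\mathbb P}^l_{\varepsilon,\sharp}$. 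The resulting section agrees with $\widehat v$ at $\theta_0$ and has norm at most $\|\widehat{\mathbb P}^l_{\varepsilon,\sharp}\|\,\|\widehat v\|_1$. Reading off Step~1 at the point $T^{\pm n}\theta_0$ then gives the stated inequalities, with constant $C\cdot\max_\sharp\|\widehat{\mathbb P}^l_{\varepsilon,\sharp}\|$, which is again uniform in $l$. Finally, $\|v\|_\xi=\|R_lv\|_1$ and $\widehat A^l_\varepsilon=R_lA^l_\varepsilon R_l^{-1}$, so $R_l^{-1}\widehat{\mathcal E}^l_{\varepsilon,\sharp}$ is the corresponding splitting for $A^l_\varepsilon(E,\theta)$ in the $\|\cdot\|_\xi$ norm.

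The main point needing care is Step~1 for the backward bounds: one must check that $z^{-n}$ may legitimately be fed into the functional calculus, i.e.\ that $0$ is not in the spectrum of $\mathcal L^l_{\varepsilon,E}$. The invertibility of the companion matrix settles this. The other point to watch is that no constant ever involves $\|\widehat A^l_\varepsilon\|$ itself, which is not uniform in $l$. Only the resolvent bound $M$ and the contour radii may enter.
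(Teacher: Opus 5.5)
Your proposal is correct and follows the paper's route. The paper's proof says only that the arguments of Lemma~\ref{prop:growth} and Lemma~\ref{prop:fiber} carry over, together with the section-extension step from the proof of Theorem~\ref{thm:sec7final}, with Proposition~\ref{prop:improved} supplying the $l$-uniform resolvent bound. Your Step~1 (explicit contour bounds for $z^{\pm n}$, with invertibility of $\mathcal L^l_{\varepsilon,E}$ coming from $\hat v_{-l}=\overline{\hat v_l}\neq 0$) makes explicit the ``standard Riesz calculus'' estimates that the paper leaves implicit.
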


\begin{proof}
    Same proof as in Lemma \ref{prop:growth} and Lemma \ref{prop:fiber}, we omit the details. 
\end{proof}

\subsection{Dimension of the center fiber}
We now determine the center fiber dimension. The proof relates the perturbed projection to the unperturbed constant-coefficient case.
\begin{lemma}
\label{prop:center-dimension-2m}
For all $l\in[l_0,\infty)$, $|\varepsilon|\le\varepsilon_\ast$, and $\theta\in\Omega$,
$\dim \widehat{\mathcal E}^{l}_{\varepsilon,c}(\theta)=2m(E).$
\end{lemma}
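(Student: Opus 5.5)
Our plan is a homotopy argument in $\varepsilon$: compute the dimension directly at $\varepsilon=0$, then propagate it along $\varepsilon$ using the continuity of the Riesz projection and the integer-valuedness of rank.

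\textbf{Step 1: the unperturbed case.} At $\varepsilon=0$ the cocycle is the constant matrix $\widehat A^l_0(E)=R_lA^l_0(E)R_l^{-1}$, whose characteristic polynomial is, up to the nonzero factor $\hat v_l$ and a power of $z$, $z^lL^l_E(z)$. Hence its eigenvalues are exactly the $2l$ zeros of $L^l_E$ counted with algebraic multiplicity. We would then use that the resolvent $(z-\mathcal L^l_{0,E})^{-1}=\sum_m K^l_m(z)\mathbf T^m$ was built in Proposition~\ref{prop:improved} from the Laurent coefficients of $(z-\lambda\widehat A^l_0)^{-1}$. Integrating over $\Gamma_c$ and using Cauchy's theorem in $z$, only the $m=0$ coefficient can survive. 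Indeed, for constant matrices one checks that $\widehat{\mathbb P}^l_{0,c}$ acts on sections as the constant matrix
\[
P_0:=\frac1{2\pi i}\int_{\Gamma_c}(z-\widehat A^l_0(E))^{-1}dz .
\]
A cleaner way to see this is to test on constant sections $\Phi\equiv v$, for which $(z-\mathcal L^l_{0,E})^{-1}\Phi\equiv (z-\widehat A^l_0)^{-1}v$, and then use fiberwise well-definedness. $P_0$ is the classical Riesz projection of the matrix onto the generalized eigenspaces whose eigenvalues lie in $r_-<|z|<r_+$. By Lemma~\ref{prop:assume}(3) there are $P_j=2m(E)$ such zeros, so $\operatorname{rank}\widehat{\mathbb P}^l_{0,c}(\theta)=2m(E)$ for every $\theta$.

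\textbf{Step 2: continuity in $\varepsilon$.} By Proposition~\ref{prop:improved} and the second resolvent identity,
\[
(z-\mathcal L^l_{\varepsilon,E})^{-1}-(z-\mathcal L^l_{\varepsilon',E})^{-1}=(z-\mathcal L^l_{\varepsilon,E})^{-1}(\mathcal L^l_{\varepsilon,E}-\mathcal L^l_{\varepsilon',E})(z-\mathcal L^l_{\varepsilon',E})^{-1}.
\]
For fixed $l$, the middle factor has norm at most $|\varepsilon-\varepsilon'|\,\|W\|_\infty e^{-\xi(l-1)}/|\hat v_l|$. This bound is not uniform in $l$, but uniformity is not needed here since $l$ is fixed. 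So $\varepsilon\mapsto\widehat{\mathbb P}^l_{\varepsilon,c}$ is norm-continuous on $[-\varepsilon_*,\varepsilon_*]$, uniformly in $z\in\Gamma_c$. Fiber evaluation satisfies
\[
\|\widehat{\mathbb P}^l_{\varepsilon,c}(\theta)-\widehat{\mathbb P}^l_{\varepsilon',c}(\theta)\|_1\le\|\widehat{\mathbb P}^l_{\varepsilon,c}-\widehat{\mathbb P}^l_{\varepsilon',c}\|,
\]
as one sees by extending $v$ by zero off $\theta$. Therefore $\varepsilon\mapsto\widehat{\mathbb P}^l_{\varepsilon,c}(\theta)$ is a continuous family of idempotent $2l\times 2l$ matrices.

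\textbf{Step 3: conclusion.} Two idempotents at distance less than $1$ have equal rank (the standard Kato argument), and the rank of an idempotent equals its trace, which is integer-valued and continuous in $\varepsilon$. Hence the rank is constant on the connected interval $[-\varepsilon_*,\varepsilon_*]$, and it equals $2m(E)$ by Step 1.

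\textbf{Main obstacle.} The delicate point is Step 1, namely identifying the fiber projection of the transfer operator with the matrix Riesz projection. The operator $\mathcal L^l_{0,E}$ involves the base shift $\mathbf T$, so its spectrum is rotation-invariant, and one must check that the annulus contours still isolate exactly the generalized eigenspaces with $r_-<|\lambda|<r_+$. The constant-section computation handles this: the fiber projection is determined by its action on constant vectors, and there it reduces to the scalar contour integral of the matrix resolvent.
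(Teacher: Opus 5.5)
Your proposal is correct and follows the paper's proof essentially step for step: continuity of $\varepsilon\mapsto\widehat{\mathbb P}^l_{\varepsilon,c}(\theta)$ along the contours $\Gamma_\pm$, integer-valuedness of the rank (equivalently, the trace) of an idempotent, and evaluation at $\varepsilon=0$. There, testing on constant sections reduces the fiber projection to the matrix Riesz projection of $\widehat A^l_0(E)$, which has rank $2m(E)$. Your explicit second-resolvent estimate for fixed $l$ and the zero-extension bound for the fiber evaluation simply fill in details the paper leaves implicit.
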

\begin{proof}
Fix $l$ and $\theta\in\Omega$. By definition,
$$
\widehat{\mathbb{P}}^{l}_{\varepsilon,c} = \frac{1}{2\pi i}\int_{\Gamma_+} (z-\mathcal{L}^{l}_{\varepsilon,E})^{-1} dz - \frac{1}{2\pi i}\int_{\Gamma_-} (z-\mathcal{L}^{l}_{\varepsilon,E})^{-1} dz.
$$
The circles $\Gamma_\pm$ remain in the resolvent set for $|\varepsilon|\le\varepsilon_\ast$, so the resolvent is continuous in $\varepsilon$ on these contours. Hence $\varepsilon\mapsto\widehat{\mathbb{P}}^{l}_{\varepsilon,c}(\theta)$ is continuous. Since each $\widehat{\mathbb{P}}^{l}_{\varepsilon,c}(\theta)$ is a projection, its rank equals its trace, which is integer-valued. Continuity of the trace implies the rank is constant in $\varepsilon$, so it suffices to compute it at $\varepsilon=0$.

We claim that for every $z\in \Gamma_+\cup\Gamma_-$ and every $v\in\mathbb C^{2l}$,
$
(z-\mathcal{L}^{l}_{0,E})^{-1}\widetilde v=\widetilde u,
$
where $\widetilde v(\theta)\equiv v$, $\widetilde u(\theta)\equiv
{(z-\widehat A_{0}^{l}(E))^{-1}v}$ denotes the constant section.
Indeed, let
$
u:=(z-\widehat A_{0}^{l}(E))^{-1}v.
$ Then
$
((z-\mathcal{L}^{l}_{0,E})\widetilde u)(\theta)=
(z-\widehat A_{0}^{l}(E))u=v.
$
Therefore
$
(z-\mathcal{L}^{l}_{0,E})\widetilde u=\widetilde v.
$
Since $z\in \Gamma_+\cup\Gamma_-$ lies in the resolvent set of $\mathcal{L}^{l}_{0,E}$,
the solution is unique, so
$
(z-\mathcal{L}^{l}_{0,E})^{-1}\widetilde v=
\widetilde u.
$

By definition of $\widehat{\mathbb{P}}^{l}_{0,c}(\theta)$,
\begin{align*}
\widehat{\mathbb{P}}^{l}_{0,c}(\theta)v=(\widehat{\mathbb{P}}^{l}_{0,c}\widetilde v)(\theta)
&=
\frac{1}{2\pi i}\int_{\Gamma_+}(z-\mathcal{L}^{l}_{0,E})^{-1}\widetilde v(\theta)dz-
\frac{1}{2\pi i}\int_{\Gamma_-}(z-\mathcal{L}^{l}_{0,E})^{-1}\widetilde v(\theta)dz \\
&=\frac{1}{2\pi i}\int_{\Gamma_+}\widetilde u(\theta)dz-
\frac{1}{2\pi i}\int_{\Gamma_-}\widetilde u(\theta)dz\\
&=\frac{1}{2\pi i}\int_{\Gamma_+}(z- \widehat A_{0}^{l}(E))^{-1}vdz-
\frac{1}{2\pi i}\int_{\Gamma_-}(z- \widehat A_{0}^{l}(E))^{-1}vdz.
\end{align*}
Hence
$$
\widehat{\mathbb{P}}^{l}_{0,c}(\theta)
=\frac{1}{2\pi i}\int_{\Gamma_+}(z-\widehat A_0^l(E))^{-1}\,dz
-\frac{1}{2\pi i}\int_{\Gamma_-}(z-\widehat A_0^l(E))^{-1}\,dz,
\qquad \forall\,\theta\in\Omega.
$$
By construction of  $\Gamma_{\pm}$, taking ranks gives
$\operatorname{rank}\widehat{\mathbb{P}}^{l}_{0,c}(\theta)=2m(E)$ for all $\theta\in\Omega.$
\end{proof}
\subsection{Proof of Theorem \ref{thm:sec4-main}}
The invariant splitting and partial hyperbolicity follow from Lemma~\ref{prop:fibe2} and  \eqref{equ:induced}; the center dimension follows from Lemma~\ref{prop:center-dimension-2m}.
\qed

\section{Global Trivialization and the Center Cocycle}\label{trivial-center}

In this section, we prove that the $\boldsymbol{\mathcal M}^c_{V,E,\varepsilon}:=\bigsqcup_{\theta\in\Omega}\mathcal{M}_{V,E,\varepsilon}^c(\theta)$ constitute a globally trivial vector subbundle, a geometric property that rigorously guarantees the existence of the center cocycle.

Recall that $\mathfrak A$ is the admissible Banach algebra introduced in Definition~\ref{def:admissible}. 
If $A=(a_{ij})_{1\le i,j\le r}$ with
$a_{ij}\in\mathcal H_\delta(\mathfrak A)$, we write 
$A\in M_r(\mathcal H_\delta(\mathfrak A))$
and define
$$\|A\|_{\mathfrak A,\delta}:=
\max_{1\le i,j\le r}\|a_{ij}\|_{\mathfrak A,\delta},
\qquad
\|A\|_{\infty,\delta}:=
\sup_{\zeta\in I_\delta}
\sup_{\theta\in\Omega}
\|A(\zeta,\theta)\|.$$
Since $\|f\|_\infty\le \|f\|_{\mathfrak A}$,
$\|A\|_{\infty,\delta}\le r\|A\|_{\mathfrak A,\delta}.$
Finally, let $\mathcal H_{\delta,\xi}(\mathfrak A)$ denote the space of sequences
$u=(u_k)_{k\in\mathbb Z}$ with $u_k\in\mathcal H_\delta(\mathfrak A)$ and
$$
\|u\|_{\mathfrak A,\delta,\xi}:=
\sum_{k\in\mathbb Z}\|u_k\|_{\mathfrak A,\delta}e^{-\xi|k|}
<\infty.
$$

\begin{theorem}\label{thm:centert}
Suppose $W(\cdot)\in \mathfrak A$, $V(\cdot)\in C_h^{\omega}(\T,\R)$. There exists $\varepsilon_{a} > 0$ such that for all $|\varepsilon| \le \varepsilon_{a}$, the center bundle $\boldsymbol{\mathcal M}^c_{V,E,\varepsilon}$ over the complexified base $I_{\delta_{1}}\times \Omega$ satisfies the following:
\begin{enumerate}
    \item $\dim \mathcal M_{V,\zeta,\varepsilon}^c(\theta) = 2m(E)$ for all $(\zeta,\theta) \in I_{\delta_1} \times \Omega$.
    \item $\boldsymbol{\mathcal M}^c_{V,E,\varepsilon}$ is globally trivialized  by a point-wise linearly independent  frame
    $$
    U_\varepsilon^{\infty}(\zeta,\theta) := \bigl( u_{\varepsilon}^{\infty,0}(\zeta,\theta), \dots, u_{\varepsilon}^{\infty,2m-1}(\zeta,\theta) \bigr),
    $$
    where $u_{\varepsilon}^{\infty,p}(\zeta,\theta) \in \mathcal H_{\delta,\xi}(\mathfrak A),p=0,\cdots 2m-1 $.
    \item  There exists invertible matrix  $C_\varepsilon^{\infty }(\zeta,\theta) \in\rm M_{2m(E)}(\mathcal H_\delta(\mathfrak A))$, such that 
     $$
S U^{\infty}_\varepsilon(\zeta,\theta) = U^{\infty}_\varepsilon(\zeta,T\theta) C_{\varepsilon}^{\infty}(\zeta,\theta).
    $$
    Moreover, the center cocycle $C_{\varepsilon}^{\infty}(\zeta,\theta)$ has the companion form:
   \begin{equation}
\label{eq:comicc}
C_{\varepsilon}^{\infty}(\zeta,\theta):=
\begin{pmatrix}
0 & 0 & \cdots & 0 & *\\
1 & 0 & \cdots & 0 & *\\
0 & 1 & \cdots & 0 & *\\
\vdots & \vdots & \ddots & \vdots & \vdots\\
0 & 0 & \cdots & 1 & *
\end{pmatrix},
\end{equation}
\end{enumerate}
where $\delta_1$ is defined in  Lemma \ref{thm:D-abstract}.
\end{theorem}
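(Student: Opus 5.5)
The plan is to build the frame explicitly through the contour-integral reconstruction of Lemma~\ref{recons}, taking the unperturbed center solutions as the seed. For $p=0,\dots,2m-1$ and $l\in[l_0,\infty]$ we set
$$
u^{l,p}_{\varepsilon,n}(\zeta,\theta):=\frac{1}{2\pi i}\int_{\Gamma_c} z^{n+p}\, g^l_{\zeta,\varepsilon}(z,T^n\theta)\,dz,\qquad g^l_{\zeta,\varepsilon}(z,\cdot)=\bigl(D^l_{\zeta,\varepsilon}(z)\bigr)^{-1}\mathbf 1 .
$$
This corresponds to $F(z,\theta)=z^p$, which is entire on the annulus, so the vanishing hypothesis of Lemma~\ref{recons} holds by Cauchy's theorem. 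By Lemma~\ref{thm:D-abstract}(3), $g$ is holomorphic in $\zeta\in I_{\delta_1}$, lies in $\mathfrak A$ uniformly, and satisfies the bound \eqref{eq:unig}. Hence each $u^{l,p}_\varepsilon$ belongs to $\mathcal M^c_{V_l,\zeta,\varepsilon}(\theta)$ and lies in $\mathcal H_{\delta,\xi}(\mathfrak A)$; the weight is summable because $r_+e^{-\xi}<1$ and $r_-^{-1}e^{-\xi}<1$. Closedness of $\mathfrak A$ under the Bochner integral gives the $\mathfrak A$-regularity. The covariance is immediate: $(Su^{l,p})_n(\theta)=u^{l,p}_{n+1}(\theta)=u^{l,p+1}_n(T\theta)$. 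So $S$ maps the $p$-th vector at $\theta$ to the $(p+1)$-th vector at $T\theta$ for $p<2m-1$. This yields the first $2m-1$ columns of the companion form \eqref{eq:comicc}, provided the frame spans.

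For part (1), combine the bridges. For finite $l$, Theorem~\ref{thm:sec4-main} together with Proposition~\ref{thm:bridge1} gives $\dim\mathcal M^c_{V_l,\zeta,\varepsilon}(\theta)=2m$; the remark after Theorem~\ref{thm:sec4-main} records this, and it holds uniformly for $\zeta\in I_{\delta_1}$ after shrinking $\varepsilon$ to $\min\{\varepsilon_0,\varepsilon_*\}$. To pass to $l=\infty$, use Proposition~\ref{thm:ambient-projection}(3): $\mathbb P^l_{\varepsilon,c}\to\mathbb P^\infty_{\varepsilon,c}$ in norm. Norm-close projections have equal rank, but ranks here are of global section spaces, so we argue fiberwise. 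Upper semicontinuity gives $\dim\mathcal M^c_\infty(\theta)\le 2m$, via the fiber evaluations of Lemma~\ref{prop:fiber} and the fact that point-supported sections realize fibers. The lower bound will come from exhibiting $2m$ independent vectors, namely the frame itself.

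The main step, and the main obstacle, is \textbf{uniform linear independence} of $\{u^{\infty,p}_\varepsilon(\zeta,\theta)\}_p$. This must hold uniformly in $(\zeta,\theta)$, so that a bounded left inverse exists and $C^\infty_\varepsilon$ is in $M_{2m}(\mathcal H_\delta(\mathfrak A))$. I would first treat $\varepsilon=0$. There $g=1/L_\zeta(z)$ is constant in $\theta$, so $u^{0,p}_n=\frac{1}{2\pi i}\oint_{\Gamma_c} z^{n+p}/L_\zeta(z)\,dz$. A vanishing combination $\sum_p c_p u^{0,p}_n=0$ for all $n$ means $\oint z^n P(z)/L_\zeta(z)\,dz=0$ for all $n$, with $P=\sum_p c_pz^p$. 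This forces the meromorphic function $P/L_\zeta$ to have no poles inside the annulus, i.e.\ $P$ vanishes at the $2m$ center zeros with multiplicity. Since $\deg P\le 2m-1$, Hermite interpolation gives $P\equiv 0$. Compactness of $\overline{I_{\delta_1}}$ (the zero count is constant by Lemma~\ref{lem:locals}) then gives a uniformly positive lower bound on the Gram-type quantity $\min_{|c|=1}\|\sum c_pu^{0,p}\|_\xi$, in fact already on a finite window of coordinates. The perturbation estimate of Lemma~\ref{thm:D-abstract}(3) gives $\|u^{\infty,p}_\varepsilon-u^{0,p}\|\le C|\varepsilon|$, so independence persists for $|\varepsilon|\le\varepsilon_a$ small. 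This produces $2m$ independent center vectors, giving $\dim=2m$ and the trivialization in (2).

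Finally, for (3), $Su^{\infty,2m-1}(\zeta,\theta)\in\mathcal M^c(T\theta)$, which is spanned by the frame at $T\theta$. Its coefficients are obtained by applying the left inverse: choose $2m$ coordinates (a fixed window) on which the $2m\times2m$ minor matrix is uniformly invertible, as given by the previous step. Inverse-closedness of $\mathfrak A$ (Definition~\ref{def:admissible}(3)), applied to the determinant, shows the coefficients lie in $\mathcal H_\delta(\mathfrak A)$. They are holomorphic in $\zeta$ by Cramer's rule. This gives the last column and the companion form. Invertibility of $C^\infty_\varepsilon$ follows because $S$ is invertible and maps fibers onto fibers (Theorem~\ref{thm:sec7final}).
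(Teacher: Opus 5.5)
Your proposal is correct. Parts (2) and (3) follow the paper:
\begin{itemize}
\item the same contour-integral frame with $F=z^p$, and the shift covariance giving the first $2m-1$ companion columns;
\item the Hermite-interpolation argument on the window $[0,2m-1]$ at $\varepsilon=0$, followed by an $O(|\varepsilon|)$ perturbation of the window determinant;
\item Cramer's rule on the window matrix for the last column. Your explicit appeal to inverse-closedness of $\mathfrak A$ there is left implicit in the paper.
\end{itemize}

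Where you genuinely differ is the dimension count in (1). The paper proceeds as follows:
\begin{itemize}
\item It proves independence of the finite-range frames $U^l_\varepsilon$ uniformly in $l\in[l_0,\infty]$ (Lemma~\ref{lem:lindep-uniform}).
\item It builds Gram matrices $\mathbf G^l_\varepsilon$ with uniformly bounded inverses that converge as $l\to\infty$ (Lemma~\ref{prop:extended}).
\item For $u=\Phi(\theta)\in\mathcal M^c_{V,E,\varepsilon}(\theta)$, it writes $(\mathbb P^l_{\varepsilon,c}\Phi)(\theta)=U^l_\varepsilon(\theta)c_l$, shows $c_l\to c_\infty$, and concludes $u=U^\infty_\varepsilon(\theta)c_\infty$.
\end{itemize}

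You instead use a soft semicontinuity argument, which you should write out explicitly; the rank-stability of norm-close projections plays no role.
\begin{itemize}
\item Suppose $u_1,\dots,u_{2m+1}\in\mathcal M^c_{V,\zeta,\varepsilon}(\theta)$ were independent. Realize them as $\Phi_i(\theta)$ with $\Phi_i\in\mathcal C_{V,\zeta,\varepsilon}$ (Lemma~\ref{prop:fiber}), so that $\mathbb P^\infty_{\varepsilon,c}\Phi_i=\Phi_i$.
\item By Proposition~\ref{thm:ambient-projection}, the vectors $(\mathbb P^l_{\varepsilon,c}\Phi_i)(\theta)$ lie in $\mathcal M^c_{V_l,\zeta,\varepsilon}(\theta)$ and converge to $u_i$ in $\mathcal X_\xi$.
\item That fiber has dimension at most $2m$, so these $2m+1$ vectors are dependent. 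Normalize the dependence coefficients to unit length and pass to a convergent subsequence; this gives a nontrivial relation among the $u_i$, a contradiction.
\end{itemize}

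This route needs only $\dim\mathcal M^c_{V_l,\zeta,\varepsilon}(\theta)\le 2m$ for finite $l$. It also lets you prove independence for the single frame $U^\infty_\varepsilon$: one Hermite argument at $l=\infty$ plus perturbation in $\varepsilon$. You thereby bypass uniformity in $l$ and the Gram-matrix machinery. For the statement at hand it is the more economical proof.

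The paper's heavier route buys quantitative information. It gives $U^l_\varepsilon\to U^\infty_\varepsilon$ with uniformly bounded left inverses. It therefore also gives $C^l_\varepsilon\to C^\infty_\varepsilon$ with uniform bounds on $C$ and $C^{-1}$ (Lemma~\ref{thm:center}). Those are what the Johnson-type Theorem~\ref{thm:Johnson_generalization}, the Center Thouless formula (Proposition~\ref{prop:thouless}) and Lemma~\ref{lem:acce} later rely on.
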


The proof proceeds in two steps. First, we resolve the finite-range case: utilizing the Algebraic Bridge (Proposition \ref{thm:bridge1}), we construct a global holomorphic frame that explicitly trivializes the truncated space $\mathcal{C}_{V_l,E,\varepsilon}$. Second, we pass this trivialization to the infinite-range limit via the uniform convergence established in the Analytic Bridge (Proposition \ref{thm:ambient-projection}).

\subsection{Global trivialization for finite-range via the Algebraic Bridge}
By the Algebraic Bridge  (Proposition \ref{thm:bridge1}) and Theorem \ref{thm:sec4-main}, we obtain the exact algebraic identification $\mathcal{C}^{2l}(\theta)=\mathcal{E}^{l}_{\varepsilon,c}(\theta)$. Consequently, for  $l \in[ l_0,\infty)$, $\boldsymbol{\mathcal M}^c_{V,E,\varepsilon}$ constitute a vector subbundle of constant fiber dimension $2m(E)$. To globally trivialize this bundle, it thus suffices to construct $2m(E)$ global sections and prove they are everywhere point-wise linearly independent. For notational simplicity, in the sequel we write $m=m(E)$.

To construct a global trivialization for the center bundle, we apply the integral representation of Lemma \ref{recons} to the monomials $F_p(z) := z^p$. For each $l\in[l_0,\infty]$, $|\varepsilon|\le \varepsilon_0$, $\zeta\in I_{\delta_1}$, $\theta\in \Omega$, and $k\in \Z$, we define the sections:
\begin{equation}\label{equ:basis1}
u^{l,p}_{\varepsilon; k}(\zeta,\theta) := \frac{1}{2\pi i}\int_{\Gamma_c} z^{k+p} g^{l}_{\zeta,\varepsilon}(z,T^k\theta) dz \in \mathcal{M}^c_{V_l,\zeta,\varepsilon}(\theta),
\end{equation}
where $l_0, \varepsilon_0$, and $g^{l}_{\zeta,\varepsilon}$ are specified in Lemma \ref{thm:D-abstract}.
The contour integral definition directly yields the index-shift relation 
\begin{equation*}
    u^{l,p}_{\varepsilon;k+1}(\zeta,\theta)=u^{l,p+1}_{\varepsilon; k}(\zeta,T\theta).
\end{equation*}This translates to the exact shift covariance:
\begin{equation}\label{equ:strans}
S u^{l,p}_{\varepsilon}(\zeta,\theta) = u^{l,p+1}_{\varepsilon}(\zeta,T\theta).
\end{equation}
This explicit algebraic construction ensures that the center cocycle takes a companion matrix form,  adapted to the shift dynamics. 
The following lemma establishes the basic estimates for these sections:

\begin{lemma}\label{lem:uest}
Let $R:=\max(r_+,r_-^{-1})$. There exist constants $\widetilde M_U,M_U,C_D,C_U>0$,
independent of $l,\varepsilon,\zeta,\theta$, such that for every
$0\le p\le 2m$,
\begin{equation}\label{equ:basisbound}
\sup_{\substack{l\in[l_0,\infty]\\ |\varepsilon|\le \varepsilon_0}}
\|u^{l,p}_{\varepsilon;k}(\zeta,\theta)\|_{\mathfrak A,\delta_1}
\le\widetilde M_U R^{|k|},
\qquad \sup_{\substack{l\in[l_0,\infty]\\ |\varepsilon|\le \varepsilon_0}}
\|u^{l,p}_{\varepsilon}(\zeta,\theta)\|_{\mathfrak A,\delta_1,\xi}
\le M_U.
\end{equation} 
Moreover, we have the following: 
\begin{eqnarray}
\label{equ:basiscon}
&&\sup_{|\varepsilon|\le \varepsilon_0}
\|u^{l,p}_{\varepsilon}(\zeta,\theta)-
u^{\infty,p}_{\varepsilon}(\zeta,\theta)\|_{\mathfrak A,\delta_1,\xi}
\longrightarrow 0,
\qquad l\to\infty, \\
\label{equ:basiszero}
&&\sup_{l\in[l_0,\infty]}
\|u^{l,p}_{\varepsilon;k}(\zeta,\theta)
-u^{l,p}_{0;k}(\zeta)\|_{\mathfrak A,\delta_1}\le
C_D |\varepsilon| R^{|k|},\\
\label{equ:basisper}
&&\sup_{l\in[l_0,\infty]}
\|u^{l,p}_{\varepsilon}(\zeta,\theta)
-u^{l,p}_{0}(\zeta)\|_{\mathfrak A,\delta_1,\xi}
\le C_U |\varepsilon|.
\end{eqnarray}
\end{lemma}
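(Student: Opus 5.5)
All estimates will follow from the contour representation \eqref{equ:basis1} combined with the uniform symbol bounds of Lemma~\ref{thm:D-abstract}. On $\Gamma_c=\Gamma_+\cup\{-\Gamma_-\}$ we have $|z|\in\{r_+,r_-\}$, so $|z^{k+p}|\le R^{|k|}\max(r_+,r_-^{-1})^{2m}$ for $0\le p\le 2m$. Moreover, $\mathbf T^k$ is an isometry of $\mathfrak A$, so
\[
\|g^l_{\zeta,\varepsilon}(z,T^k\cdot)\|_{\mathfrak A}=\|g^l_{\zeta,\varepsilon}(z,\cdot)\|_{\mathfrak A}\le M_D
\]
by \eqref{eq:unig}. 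Holomorphy in $\zeta$ is preserved under the contour integral, because $g^l_{\zeta,\varepsilon}(z,\cdot)=D^l_{\zeta,\varepsilon}(z)^{-1}\mathbf 1$ depends holomorphically on $\zeta$: the symbol is affine in $\zeta$ and inversion is analytic. To bound the $\mathfrak A$-norm of the integral, I will use the Bochner integral in $\mathfrak A$, which gives
\[
\|u^{l,p}_{\varepsilon;k}\|_{\mathfrak A,\delta_1}\le (r_++r_-)\,R^{2m}M_D\,R^{|k|}=:\widetilde M_U R^{|k|},
\]
uniformly in $l\in[l_0,\infty]$ and $|\varepsilon|\le\varepsilon_0$. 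The second bound in \eqref{equ:basisbound} then follows by summing $\widetilde M_U\sum_k R^{|k|}e^{-\xi|k|}$. This series converges because $R<e^{\xi}$, which is exactly the ordering \eqref{radius}: $e^{-\xi}<r_-$ and $r_+<e^\xi$.

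For \eqref{equ:basiszero} I will repeat the same computation with $g^l_{\zeta,\varepsilon}$ replaced by $g^l_{\zeta,\varepsilon}-g^l_{\zeta,0}$. The perturbation estimate of Lemma~\ref{thm:D-abstract}(3) bounds this difference by $M_D^2\|W\|_{\mathfrak A}|\varepsilon|$ uniformly in $z\in\mathcal A'(E)\supset\Gamma_c$, so $C_D=(r_++r_-)R^{2m}M_D^2\|W\|_{\mathfrak A}$. One point to record is that $g^l_{\zeta,0}$ is constant in $\theta$: $D^l_{\zeta,0}(z)$ commutes with $\mathbf T$ and maps constants to constants, namely $D^l_{\zeta,0}(z)c=L^l_\zeta(z)c$, so its inverse applied to $\mathbf 1$ is $1/L^l_\zeta(z)$. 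This justifies writing $u^{l,p}_{0;k}(\zeta)$ without a $\theta$-dependence. Estimate \eqref{equ:basisper} then follows from \eqref{equ:basiszero} by the same weighted summation, with $C_U=C_D\sum_k R^{|k|}e^{-\xi|k|}$.

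The convergence \eqref{equ:basiscon} is the only step needing care. By \eqref{equ:symcon} applied to $\mathbf 1$,
\[
\eta_l:=\sup_{\zeta,\varepsilon,z\in\Gamma_c}\|g^l_{\zeta,\varepsilon}(z,\cdot)-g^\infty_{\zeta,\varepsilon}(z,\cdot)\|_{\mathfrak A}\to0,
\]
so the coordinatewise difference satisfies $\|u^{l,p}_{\varepsilon;k}-u^{\infty,p}_{\varepsilon;k}\|_{\mathfrak A,\delta_1}\le C\eta_l R^{|k|}$. Summing against $e^{-\xi|k|}$ gives a bound $C'\eta_l\to0$, uniform in $|\varepsilon|\le\varepsilon_0$. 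The main thing to verify is that \eqref{equ:symcon} is uniform in $\zeta\in I_{\delta_1}$ and in $z$ on the contour, with $\Gamma_\pm\subset\mathcal A'(E)$. The lemma states this uniformity, so the obstacle is only bookkeeping: making sure every constant is independent of $l$, which holds because $M_D$, $r_\pm$ and $\eta_\pm$ were fixed uniformly over $l\in[l_0,\infty]$ via Lemma~\ref{prop:assume}.
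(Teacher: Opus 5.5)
Your proposal is correct and follows the paper's proof: the contour representation \eqref{equ:basis1} together with the uniform bounds of Lemma~\ref{thm:D-abstract} on $\Gamma_c$ gives the coordinatewise estimates with the same constants (your $r_++r_-$ is the paper's $|\Gamma_c|/2\pi$), and the weighted summation uses $Re^{-\xi}<1$. The one minor difference is in \eqref{equ:basiscon}: you use the uniformity in \eqref{equ:symcon} to get the explicit rate $C'\eta_l$ directly, whereas the paper proves convergence for each fixed $k$ and then applies dominated convergence with the majorant $C(Re^{-\xi})^{|k|}$; your version is slightly more direct but equivalent.
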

\begin{proof}
By Lemma \ref{thm:D-abstract},   $\|g^{l}_{\cdot,\varepsilon}(z,\cdot)\|_{\mathfrak A,\delta_1} \le M_D$ on $\Gamma_c$, this directly implies
\begin{equation*}
\|u^{l,p}_{\varepsilon;k}(\zeta,\theta)\|_{\mathfrak A,\delta_1}
\le\frac{|\Gamma_c|}{2\pi}M_D
\sup_{z\in\Gamma_c}|z|^{k+p}
\le \frac{|\Gamma_c|}{2\pi}M_D R^{|k|}R^p,
\end{equation*}
set $\tilde  M_U=\frac{|\Gamma_c|}{2\pi}M_DR^{2m}$, \eqref{equ:basisbound} immediately follows.
For each $k\in\Z$,
$$
u_{\varepsilon;k}^{l,p}(\zeta,\theta)-u_{\varepsilon;k}^{\infty,p}(\zeta,\theta)
=\frac{1}{2\pi i}\int_{\Gamma_c}
z^{k+p}\Bigl(
g_{\zeta,\varepsilon}^{l}(z,T^k \theta)-
g_{\zeta,\varepsilon}^{\infty}(z,T^k \theta)
\Bigr)\,dz.
$$
Since  $g_{\zeta,\varepsilon}^{l}(z,\theta)=(D_{\varepsilon,\zeta}^{(l)}(z))^{-1}\mathbf{1}(\theta)$, by Lemma \ref{thm:D-abstract} (2), for each fixed $k\in\Z$, we have 
\begin{equation}\label{eq:conv}
    \sup_{|\varepsilon|\le \varepsilon_0}
\|u_{\varepsilon;k}^{l,p}(\zeta,\theta)-u_{\varepsilon;k}^{\infty,p}(\zeta,\theta)\|_{\mathfrak A,\delta_1}\to 0
\qquad\text{as }l\to\infty.
\end{equation}
By \eqref{equ:basisbound}, there exist constants
$C>0$, independent of $l,\varepsilon,\zeta,\theta$, such that
$$
\sup_{|\varepsilon|\le \varepsilon_0}
\|u_{\varepsilon;k}^{l,p}(\zeta,\theta)-u_{\varepsilon;k}^{\infty,p}(\zeta,\theta)\|_{\mathfrak A,\delta_1}
\,e^{-\xi|k|}\le
C(Re^{-\xi})^{|k|}.
$$
Since $Re^{-\xi}<1$, \eqref{equ:basiscon} follows from \eqref{eq:conv} and  Lebesgue Dominated Convergence Theorem.

Finally, by Lemma \ref{thm:D-abstract} (3), 
we get
$$
 \sup_{l \in [l_0, \infty]}\|u_{\varepsilon;k}^{l,p}(\zeta,\theta)-
u_{0;k}^{l,p}(\zeta,\theta)\|_{\mathfrak A,\delta_1}
\le\frac{|\Gamma_c|}{2\pi}M_D^2\|W\|_\mathcal{h} |\varepsilon|R^{|k|+p}.
$$
Set
$
C_D:=\frac{|\Gamma_c|}{2\pi}M_D^2\|W\|_\mathcal{h}R^{2m},
$ \eqref{equ:basiszero} and \eqref{equ:basisper}  follow immediately.
\end{proof}

Arranging the first $2m$ sections, we define the candidate frame:
\begin{equation}\label{equ:basis}
    U^{l}_{\varepsilon}(\zeta,\theta):=\bigl(
u^{l,0}_{\varepsilon}(\zeta,\theta),\dots,
u^{l,2m-1}_{\varepsilon}(\zeta,\theta)\bigr),
\end{equation}
then we have the following:

\begin{proposition}\label{thm:sec8-final}
Assume $W(\cdot) \in \mathfrak A$. There exist constants $l_a \ge 1$ and $\varepsilon_a > 0$ such that for every $l \in [l_a, \infty]$ and $|\varepsilon| \le \varepsilon_a$, the following hold:
\begin{enumerate}
    \item $\boldsymbol{\mathcal M}^c_{V,E,\varepsilon}$ admits a global frame
    $
    U^l_\varepsilon(\zeta,\theta)
    $
    on $I_{\delta_1} \times \Omega$.
    \item As a consequence,  for every $l \in [l_a,\infty)$, the columns of $U^l_\varepsilon(\zeta,\theta)$ form a basis of the fiber $\mathcal{M}^c_{V_l,\zeta,\varepsilon}(\theta)$.
\end{enumerate}
\end{proposition}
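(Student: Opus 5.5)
Since every column of $U^l_\varepsilon(\zeta,\theta)$ already lies in the center fiber $\mathcal M^c_{V_l,\zeta,\varepsilon}(\theta)$ (by \eqref{equ:basis1} and Lemma \ref{recons}), and that fiber has dimension $2m$ for $l\in[l_0,\infty)$ (Algebraic Bridge plus Theorem \ref{thm:sec4-main}), the whole proposition reduces to one claim: the $2m$ sections are pointwise linearly independent, with a bound that is uniform in $l$, $\varepsilon$, $\zeta$, $\theta$. Part (2) then follows at once for finite $l$. For $l=\infty$ one gets at least a $2m$-dimensional subspace of $\mathcal M^c_{V,\zeta,\varepsilon}(\theta)$, and the dimension identification is handled afterwards through Proposition \ref{thm:ambient-projection}. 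To make "uniform independence" quantitative, I will show that the Gram-type matrix
\[
G^l_\varepsilon(\zeta,\theta):=\Pi_N U^l_\varepsilon(\zeta,\theta)\in M_{2m}(\mathbb C)
\]
has $|\det G^l_\varepsilon|\ge c>0$ for a suitable finite window $\Pi_N$ of coordinates, or some equivalent left-inverse bound.

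\textbf{Step 1 (unperturbed, $l=\infty$).} At $\varepsilon=0$, $g^\infty_{\zeta,0}(z,\theta)=1/L_\zeta(z)$ is independent of $\theta$. By the residue theorem, $u^{\infty,p}_{0;k}(\zeta)=\sum_{j}\operatorname{Res}_{z=z_j}z^{k+p}/L_\zeta(z)$, summed over the $2m$ zeros $z_j(\zeta)$ inside the center annulus. Suppose $\sum_p c_p u^{\infty,p}_{0;k}=0$ for all $k$. Then, with $Q(z):=\sum_p c_pz^p$, the residues of $z^kQ(z)/L_\zeta(z)$ sum to zero for every $k\in\mathbb Z$. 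Integer powers $z^k$, and hence Laurent polynomials, separate the principal-part data of any meromorphic function with poles at these nonzero points. Therefore $Q/L_\zeta$ must be holomorphic inside the annulus, so $Q$ vanishes at all $2m$ zeros with their multiplicities (this is the Hermite interpolation step). Since $\deg Q\le 2m-1$, this forces $Q\equiv0$. Concretely, the finite matrix $(u^{\infty,p}_{0;k})_{0\le k,p\le 2m-1}$ is a confluent Vandermonde-type matrix times a diagonal factor involving the leading Laurent coefficients of $1/L_\zeta$ at each zero. It is therefore nonsingular, and its determinant is a holomorphic nonvanishing function of $\zeta$.

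\textbf{Step 2 (uniformity in $\zeta$).} The same argument applies to the truncated symbols $L^l_\zeta$, $l\ge l_0$. Uniform lower bounds over $\zeta\in I_{\delta_1}$ and all large $l$ follow from compactness together with the convergence $u^{l,p}_{0}\to u^{\infty,p}_0$ in \eqref{equ:basiscon}. Some care is needed, because zeros may collide as $\zeta$ varies, which changes the confluence pattern. For that reason I will not work with explicit Vandermonde formulas. Instead I will argue by contradiction via Hurwitz/normal families: if the determinant had a sequence tending to $0$, extract convergent subsequences $(\zeta_j,l_j)$. The limit relation $\sum_p c_pu^{\cdot,p}_{0;k}=0$ with $|c|=1$ then contradicts Step 1 at the limit point. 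Shrinking $\delta_1$ if necessary keeps the limit point inside the domain where Step 1 applies.

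\textbf{Step 3 (perturbation).} Estimates \eqref{equ:basiszero} and \eqref{equ:basisper} give $\|U^l_\varepsilon-U^l_0\|\le C_U|\varepsilon|$ uniformly in $l,\zeta,\theta$. So for $|\varepsilon|\le\varepsilon_a$ small, the determinant of the finite window stays bounded below by $c/2$, which gives independence everywhere. Choose $l_a\ge l_0$ large enough for Step 2, and take $\varepsilon_a\le\min(\varepsilon_0,\varepsilon_*)$.

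I expect the main obstacle to be Step 2: the lower bound must be uniform across collisions of the center zeros and across $l$. The compactness/contradiction route via Step 1 at limit points is what I will try first.
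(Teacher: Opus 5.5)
Your proposal is correct and follows essentially the paper's proof. The paper uses the same window $k,p\in\{0,\dots,2m-1\}$ (its matrix $B^l_\varepsilon$) and shows $\det B^l_0(\zeta)\neq 0$ by the same residue and Hermite-interpolation argument. It then gets a uniform bound $c_*$ from compactness of $\overline{I_{\delta_1}}$ together with $l\to\infty$ convergence, and perturbs the determinant by $O(|\varepsilon|)$ via \eqref{equ:basiszero}. Two small corrections: in the confluent case the middle factor in your Vandermonde factorization is block Hankel-triangular rather than diagonal, and you should not shrink $\delta_1$, since the symbol lower bound already holds on $\overline{I_{\delta_1}}$.
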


\begin{proof}
It remains to establish their point-wise linear independence. Let $\Pi_{[0,2m-1]}: \mathcal{X}_\xi \to \C^{2m}$ denote the canonical projection onto the central coordinates $k \in \{0, \dots, 2m-1\}$. For any $l \in [l_0, \infty]$ and $(\zeta,\theta) \in I_{\delta_1} \times \Omega$, we define the $2m \times 2m$  matrix:
$$
B_\varepsilon^l(\zeta,\theta) := \Bigl( \Pi_{[0,2m-1]}u_{\varepsilon}^{l,0}(\zeta,\theta), \dots, \Pi_{[0,2m-1]}u_{\varepsilon}^{l,2m-1}(\zeta,\theta) \Bigr).
$$
The linear independence  reduces entirely to verifying the uniform non-singularity of $B_\varepsilon^l$. We first establish this non-degeneracy for the unperturbed case ($\varepsilon=0$). Remarkably, the non-degeneracy of the contour integrals reduces to a pure polynomial algebra problem, which we resolve via Hermite interpolation.

\begin{lemma}\label{lem:lindep-uniform}
There exists a universal constant $c_* > 0$ such that for every $l \in [l_0, \infty]$ and $\zeta \in \overline{I_{\delta_1}}$, we have
$
|\det B_0^l(\zeta)| \ge c_*.
$
\end{lemma}

\begin{proof}

Fix $\zeta\in \overline{I_{\delta_1}}$ and $l$. 
Assume there exist coefficients
$
\beta_0,\dots,\beta_{2m-1}\in\mathbb C,
$ such that
$
\sum_{p=0}^{2m-1}\beta_p u^{l,p}_{0;k}(\zeta)=0
$
 for every $0\le k\le 2m-1$.
Define the polynomial
$
Q(z):=\sum_{p=0}^{2m-1}\beta_p z^p.
$
Then, by the definition of the vectors $u_{0}^{l,p}(\zeta)$,
$$
0=\sum_{p=0}^{2m-1}\beta_p u^{l,p}_{0;k}(\zeta)=
\frac{1}{2\pi i}\int_{\Gamma_c}\frac{z^k Q(z)}{L_\zeta^l(z)} dz
\qquad 0\le k\le 2m-1.
$$
Hence, for every polynomial $P$ with $\deg P\le 2m-1$,
$$
\frac{1}{2\pi i}\int_{\Gamma_c}\frac{P(z)Q(z)}{L_\zeta^l(z)}dz=0.
$$
Let $\lambda_1,\dots,\lambda_r$ be the zeros of $L_\zeta^l(z)$ inside the center annulus $\{z\in\mathbb C: r_-(E) \le |z| \le r_+(E)\}$, with multiplicities
$
n_1,\dots,n_r,
$
so that
$
n_1+\cdots+n_r=2m.
$
Set
$
R(z)=\frac{Q(z)}{{L_\zeta^l(z)}}.
$
 Hence near $\lambda_j$ we can write
$$
R(z)=\sum_{s=1}^{n_j}c_{j,s}(z-\lambda_j)^{-s}+h_j(z),
$$
where $h_j$ is analytic near $\lambda_j$.
By the residue theorem,
\begin{equation}\label{res-eq}
    0=\frac{1}{2\pi i}\int_{\Gamma_c}P(z)R(z) dz
=\sum_{j=1}^{r}
\operatorname{Res}_{z=\lambda_j}(P(z)R(z))=\sum_{j=1}^{r}\sum_{s=1}^{n_j}
c_{j,s}\frac{P^{(s-1)}(\lambda_j)}{(s-1)!}.
\end{equation}
Fix indices $j_0$ and $s_0$ with $1\le j_0\le r$ and
$1\le s_0\le n_{j_0}$. By Hermite interpolation,
there exists a polynomial $P$ of degree at most $2m-1$ such that 
$$
P^{(s_0-1)}(\lambda_{j_0})=(s_0-1)! \quad \text{and}\quad P^{l}(\lambda_j)=0 \quad\text{for all other }(j,l).
$$
Substituting this $P$ into \eqref{res-eq} gives
$c_{j_0,s_0}=0.$
Since $(j_0,s_0)$ was arbitrary, we conclude that
$c_{j,s}=0$ for all  $1\leq j\le r,1\le s\le n_j.$
Hence $R(z)=Q(z)/L(z)$ has no poles inside $\Gamma_c$ and is therefore
analytic there. Consequently $Q$ must vanish at each $\lambda_j$ with
multiplicity at least $n_j$. The total multiplicity of these zeros is
$n_1+\cdots+n_r=2m.$
But $\deg Q\le 2m-1$, so the only possibility is
$Q\equiv 0.$
Therefore $\beta_0=\cdots=\beta_{2m-1}=0$.

 Then for every fixed $\zeta\in I_{\delta_1}$ and every
$l\in [l_0,\infty]$, one has
$
\det B_0^l(\zeta)\neq 0.
$
For each fixed $l$, the map
$
\zeta\mapsto \det B_0^l(\zeta)
$
is continuous on the compact set $\overline{I_{\delta_1}}$, we have
$
c_l:=\min_{\zeta\in\overline{I_{\delta_1}}}| \det B_0^l(\zeta)|>0.
$ On the other hand, for every $0\le k,p\le 2m-1$,
$$
u_{0;k}^{l,p}(\zeta)
=\frac{1}{2\pi i}\int_{\Gamma_c}\frac{z^{k+p}}{L_{0,\zeta}^{l}(z)}dz\to
\frac{1}{2\pi i}\int_{\Gamma_c}\frac{z^{k+p}}{L_{0,\zeta}(z)}dz
=u_{0;k}^{\infty,p}(\zeta)
$$
uniformly on $\overline{I_{\delta_1}}$, we get
$
\sup_{\zeta\in\overline{I_{\delta_1}}}
\|\det B_0^l(\zeta)-\det B_0^\infty(\zeta)\|\to 0, 
$ as $l\to\infty$.

Since $c_\infty>0$, there exists $l_1\geq l_0$, such that for $l\in[ l_1,\infty]$,
$
|\det B_0^l(\zeta)|\ge \frac{c_\infty}{2}
$ for all $ \zeta\in\overline{I_{\delta_1}}.$
Taking the minimum with the finitely many positive constants $c_l$ for
$
l_0\le l<l_1,
$
we obtain a constant $c_*>0$ such that
$
| \det B_0^l(\zeta)|\ge c_*
$ for all $\zeta\in\overline{I_{\delta_1}}, l\in[l_0,\infty].$
\end{proof}

By the multilinearity of the determinant with respect to its columns, we have the  telescoping sum:
\begin{align*}
\det B_\varepsilon^l(\zeta,\theta)-\det B_0^l(\zeta,\theta) =
\sum_{q=0}^{2m-1}\det\Bigl(
b_0^{l,0},\ldots,b_0^{l,q-1},
b_\varepsilon^{l,q}-b_0^{l,q},
b_\varepsilon^{l,q+1},\ldots,b_\varepsilon^{l,2m-1}
\Bigr)(\zeta,\theta),
\end{align*}
where
$
b_\varepsilon^{l,p}(\zeta,\theta):=
\Pi_{[0,2m-1]}u_\varepsilon^{l,p}(\zeta,\theta),
p=0,\ldots,2m-1.
$
Using \eqref{equ:basisbound}, there exists a constant $M$, such that $$\sup_{l\in[l_0,\infty]}\sup_{|\varepsilon|<\varepsilon_0}\sup_{0\le p\le 2m-1}
\bigl\|\Pi_{[0,2m-1]}u_0^{l,p}(\zeta,\theta)\bigr\|_{\mathfrak A,\delta_1}\leq \tilde M_U \sum_{k=0}^{2m-1}R^{|k|}\leq M,$$
where the vector norm takes the vector $1$-norm.
Combine with \eqref{equ:basiszero}, we obtain
\begin{align*}
\bigl|
\det B_\varepsilon^l(\zeta,\theta)-\det B_0^l(\zeta,\theta)\bigr| 
&\le\sum_{q=0}^{2m-1}C_m M^{2m-1}\bigl\|\Pi_{[0,2m-1]}\bigl(u_\varepsilon^{l,q}(\zeta,\theta)
-u_0^{l,q}(\zeta,\theta)\bigr)\bigr\|_{\mathfrak A,\delta_1}\\
&\leq
\sum_{q=0}^{2m-1}C_m M^{2m-1}C_D |\varepsilon| 
\sum_{k=0}^{2m-1}R^{|k|}
\le\tilde C_m |\varepsilon|.
\end{align*}
Here $C_m>0$ is only depend on $m$, and $\tilde C_m:=\sum_{q=0}^{2m-1}
C_m M^{2m-1}\sum_{k=0}^{2m-1}R^{|k|}.$

Setting $l_a = l_0$ and $\varepsilon_a := \min\{1, \frac{c_*}{2\tilde C_m }\}$, we immediately obtain for all $|\varepsilon| \le \varepsilon_a$:
\begin{equation}\label{equ:det}
    |\det B_\varepsilon^l(\zeta,\theta)| \ge |\det B_0^l(\zeta,\theta)| - \tilde C_m |\varepsilon| \ge \frac{c_*}{2} > 0.
\end{equation}
This uniform non-degeneracy  guarantees that the projected vectors are linearly independent in $\C^{2m}$, which consequently forces $\{u_\varepsilon^{l,p}(\zeta,\theta)\}_{p=0}^{2m-1}$ to be point-wise linearly independent in $\mathcal{X}_\xi$. 
\end{proof}

\subsection{Global trivialization for infinite-range via the Analytic Bridge }

The goal of this subsection is to pass from the finite-range trivializations to the infinite-range center space.  For the global frame $U_{\varepsilon}^{l}(\zeta,\theta)$, we define its Gram matrix by
\begin{equation*}\label{eq:Gram-matrix}
\mathbf{G}^{l}_{\varepsilon}(\zeta,\theta) := \Bigl( \langle u^{l,p}_{\varepsilon}(\zeta,\theta), u^{l,q}_{\varepsilon}(\zeta,\theta) \rangle_\xi \Bigr)_{0\le p,q\le 2m-1},
\end{equation*}
equipped with  $\langle y,z\rangle_\xi := \sum_{k\in\Z} e^{-2\xi|k|}y_k\overline{z_k}$. 

The uniform invertibility of $\mathbf{G}^{l}_{\varepsilon}$  guarantees that the frame admits a uniformly bounded left inverse. This structural non-degeneracy provides the  algebraic mechanism required to extract and stabilize local coordinates as $l \to \infty$.

\begin{lemma}\label{prop:extended}
There exist universal constants $c_0, C_G > 0$ such that for all $l \in [l_a,\infty]$, $|\varepsilon| \le \varepsilon_a$, and $(\zeta,\theta) \in I_{\delta_1} \times \Omega$, the Gram matrix satisfies the following properties:

\begin{enumerate}
    \item $\mathbf{G}_\varepsilon^l(\zeta,\theta)$ is uniformly invertible with 
    \begin{equation}\label{eq:Gbound}
        \|(\mathbf{G}_\varepsilon^l(\zeta,\theta))^{-1}\|_{\infty,\delta_1} \le c_0.
    \end{equation}
    Consequently, the frame  $U_\varepsilon^l(\zeta,\theta)$ admits an exact algebraic left inverse:
    \begin{equation}\label{eq:left-inverse}
        \bigl(\mathbf{G}_\varepsilon^l(\zeta,\theta)\bigr)^{-1} \bigl(\mathcal{W}_\xi U_\varepsilon^l(\zeta,\theta)\bigr)^* \mathcal{W}_\xi \cdot  U_\varepsilon^l(\zeta,\theta) = I_{2m},
    \end{equation}
where $
(\mathcal W_\xi u)_k:=e^{-\xi|k|}u_k.
$
    \item  As $l \to \infty$, the inverse of Gram matrix converges uniformly:
    \begin{equation}\label{equ:gramcon}
        \lim_{l\to\infty} \sup_{|\varepsilon|\le \varepsilon_a} \|(\mathbf{G}_\varepsilon^l)^{-1} - (\mathbf{G}_\varepsilon^\infty)^{-1}\|_{\infty,\delta_1} = 0.
    \end{equation}

    \item The inverse of Gram matrix is uniformly Lipschitz continuous with the perturbation:
    \begin{equation}\label{eq:invG}
        \sup_{l\in[l_a,\infty]} \|(\mathbf{G}^{l}_\varepsilon)^{-1} - (\mathbf{G}^{l}_0)^{-1}\|_{\infty,\delta_1} \le C_G |\varepsilon|.
    \end{equation}
\end{enumerate}
\end{lemma}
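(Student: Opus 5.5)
The plan is to derive all three statements from a single uniform lower bound on the Gram matrix. That lower bound comes from the projected determinant bound \eqref{equ:det} together with the uniform section estimates of Lemma~\ref{lem:uest}.

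\textbf{Step 1: uniform upper bounds.} I first bound the entries of $\mathbf G^l_\varepsilon$ uniformly. By \eqref{equ:basisbound},
\[
|\langle u^{l,p}_\varepsilon,u^{l,q}_\varepsilon\rangle_\xi|\le \sum_k e^{-2\xi|k|}|u^{l,p}_{\varepsilon;k}||u^{l,q}_{\varepsilon;k}|\le \widetilde M_U^2\sum_k (R e^{-\xi})^{2|k|}.
\]
This is finite because $Re^{-\xi}<1$. Hence $\|\mathbf G^l_\varepsilon\|_{\infty,\delta_1}\le C$, uniformly in $l$, $\varepsilon$, $\zeta$, $\theta$.

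\textbf{Step 2: uniform lower bound on the Gram form.} Next I bound the Gram form from below. For $\beta\in\C^{2m}$,
\[
\beta^*\mathbf G^l_\varepsilon\beta=\|\mathcal W_\xi U^l_\varepsilon\beta\|_{\ell^2}^2\ge \sum_{k=0}^{2m-1}e^{-2\xi k}\,|(B^l_\varepsilon\beta)_k|^2\ge e^{-2\xi(2m-1)}\,\|B^l_\varepsilon\beta\|^2 .
\]
Here the Hermitian form is taken with the appropriate convention. Now $|\det B^l_\varepsilon|\ge c_*/2$ by \eqref{equ:det}, and the entries of $B^l_\varepsilon$ are bounded by $M$. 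Cramer's rule therefore gives $\|(B^l_\varepsilon)^{-1}\|\le C(m,M)/c_*$, so $\|B^l_\varepsilon\beta\|\ge c\|\beta\|$. Thus $\mathbf G^l_\varepsilon\ge c' I$ as a Hermitian matrix, and $\|(\mathbf G^l_\varepsilon)^{-1}\|\le 1/c'=:c_0$. The left-inverse identity \eqref{eq:left-inverse} is then just the algebraic identity $(\mathcal W_\xi U)^*\mathcal W_\xi U=\mathbf G$ (in the index convention $\mathbf G_{pq}=\langle u^p,u^q\rangle_\xi$, up to a transpose). This gives (1).

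\textbf{Step 3: convergence and Lipschitz bounds.} For (2) and (3) I use the resolvent identity
\[
\mathbf G_1^{-1}-\mathbf G_2^{-1}=\mathbf G_1^{-1}(\mathbf G_2-\mathbf G_1)\mathbf G_2^{-1},
\]
so both reduce to controlling $\|\mathbf G^l_\varepsilon-\mathbf G^\infty_\varepsilon\|$ and $\|\mathbf G^l_\varepsilon-\mathbf G^l_0\|$. Writing the difference of inner products bilinearly gives
\[
|\langle a,b\rangle_\xi-\langle a',b'\rangle_\xi|\le \|a-a'\|_{\xi}\,\sup_k e^{-\xi|k|}|b_k|+\cdots,
\]
where the weighted $\ell^1$ norm $\|\cdot\|_{\mathfrak A,\delta_1,\xi}$ dominates, and $\sup_k e^{-\xi|k|}|b_k|\le \|b\|_{\xi}\le M_U$. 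Then \eqref{equ:basiscon} gives the uniform convergence in (2), and \eqref{equ:basisper} gives the bound $C|\varepsilon|$ in (3). Combining these with the uniform bound $c_0$ from Step 2, applied for $l\in[l_a,\infty]$, concludes the argument.

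\textbf{Main obstacle.} The delicate point is Step 2. It needs the lower bound on $\det B^l_\varepsilon$ for all $l\in[l_a,\infty]$, including $l=\infty$. That is exactly why Lemma~\ref{lem:lindep-uniform} and \eqref{equ:det} were formulated with $l=\infty$ allowed. The remaining care is to keep every constant independent of $l$, $\varepsilon$, $\zeta$ and $\theta$.
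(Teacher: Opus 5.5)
Your proposal is correct and follows essentially the same route as the paper: a uniform lower bound on the smallest singular value of $B^l_\varepsilon$ from \eqref{equ:det} and the bounded entries, transferred through the $2m$ central coordinates to $\mathbf G^l_\varepsilon\ge c'I$, and then the resolvent identity combined with \eqref{equ:basiscon} and \eqref{equ:basisper} for (2) and (3). Your quadratic-form identity, with $\beta^*\mathbf G^l_\varepsilon\beta$ equal to a squared weighted $\ell^2$ norm (up to conjugating $\beta$ and a transpose in the index convention), is the accurate form of the step the paper writes with the weighted $\ell^1$ norm $\|\cdot\|_\xi$, and the lower bound goes through either way.
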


\begin{proof}
(1)
By \eqref{equ:det},
there exists $c_*>0$ such that
$
\left|\det(B_\varepsilon^l(\zeta,\theta))\right|\ge \frac{c_*}{2}
$
for every $l\in[l_a,\infty]$, every $|\varepsilon|\le \varepsilon_a$, and every
$(\zeta,\theta)\in{I_{\delta_1}}\times \Omega$.
Since the matrices
$
B_\varepsilon^l(\zeta,\theta)
$
have uniformly bounded entries by \eqref{equ:basisbound}, the lower bound on the determinant implies a uniform
lower bound on the smallest singular value. Hence there exists $\tilde c_0>0$ such that
$$
\|B_\varepsilon^l(\zeta,\theta)a\|\ge \tilde c_0 \|a\|
\qquad
\text{for all }a=(a_0,a_1,\cdots,a_{2m-1})^{\!\top}\in\C^{2m},
$$
uniformly in $l,\varepsilon,\zeta,\theta$.

Now let
$
v_a^l(\zeta,\theta):=\sum_{p=0}^{2m-1}a_pu_\varepsilon^{l,p}(\zeta,\theta).
$
Then
$
\Pi_{[0,2m-1]}v_a^l(\zeta,\theta)=B_\varepsilon^l(\zeta,\theta)a,
$
so
$$
\|v_a^l(\zeta,\theta)\|_\xi
\ge e^{-2\xi m}
\|\Pi_{[0,2m-1]}v_a^l(\zeta,\theta)\|
= e^{-2\xi m}\|B_\varepsilon^l(\zeta,\theta)a\|\ge e^{-2\xi m}\tilde c_0\|a\|.
$$
Therefore
$$
a^*\mathbf G_\varepsilon^l(\zeta,\theta)a
=\left\|\sum_{p=0}^{2m-1}a_pu_\varepsilon^{l,p}(\zeta,\theta)\right\|_\xi^2
=\|v_a^l(\zeta,\theta)\|_\xi^2
\ge e^{-4\xi m}\tilde c_0^2\|a\|^2.
$$
 We obtain
$
\mathbf G_\varepsilon^l(\zeta,\theta)\ge  e^{-4\xi m}\tilde c_0^2 I_{2m}.
$
The bound on the inverse follows immediately.
On the other hand,
it is easy to  check that 
\begin{equation*}
\mathbf{G}^{l}_{\varepsilon}(\zeta,\theta) =\Bigl( \langle u^{l,p}_{\varepsilon}(\zeta,\theta), u^{l,q}_{\varepsilon}(\zeta,\theta) \rangle_\xi \Bigr)_{0\le p,q\le 2m-1}=
\bigl(\mathcal W_\xi U_\varepsilon^l(\zeta,\theta)\bigr)^*
\mathcal W_\xi U_\varepsilon^l(\zeta,\theta).
\end{equation*}
The left inverse \eqref{eq:left-inverse}  follows the definition of $\mathbf G_\varepsilon^l(\zeta,\theta)$.

(2)  Fix $0\le p,q\le 2m-1$. By the definition of the Gram matrix,
$$
\begin{aligned}
\big|(\mathbf G_\varepsilon^l(\zeta,\theta)-\mathbf G_\varepsilon^\infty(\zeta,\theta))(p,q)\big| 
\leq  \|u_\varepsilon^{l,p}\|_{\mathfrak A,\delta_1,\xi} \|u_\varepsilon^{l,q} - u_\varepsilon^{\infty,q}\|_{\mathfrak A,\delta_1,\xi}  + \|u_\varepsilon^{l,p} - u_\varepsilon^{\infty,p}\|_{\mathfrak A,\delta_1,\xi} \|u_\varepsilon^{\infty,q}\|_{\mathfrak A,\delta_1,\xi},
\end{aligned}
$$
by \eqref{equ:basisbound} and \eqref{equ:basiscon}, 
the right-hand side vanishes uniformly in $\varepsilon$ as $l \to \infty$. Since the matrix dimension $2m$ is finite, this component-wise decay immediately guarantees the uniform convergence of the matrix operator norm: 
\begin{equation}\label{equ:gramcon1}
   \sup_{|\varepsilon|\le \varepsilon_a} \|\mathbf G_\varepsilon^l(\zeta,\theta)-\mathbf G_\varepsilon^\infty(\zeta,\theta)\|_{\infty,\delta_1}\to 0,\quad l\to\infty.
\end{equation}
On the other hand, by the resolvent identity
$$
(\mathbf{G}_\varepsilon^l)^{-1} - (\mathbf{G}_\varepsilon^\infty)^{-1} 
= (\mathbf{G}_\varepsilon^l)^{-1} \bigl( \mathbf{G}_\varepsilon^\infty - \mathbf{G}_\varepsilon^l \bigr) (\mathbf{G}_\varepsilon^\infty)^{-1},
$$
by  \eqref{equ:gramcon1} and \eqref{eq:Gbound}, we have
$$
\begin{aligned}
     &\quad\sup_{|\varepsilon|\le \varepsilon_a}\bigl\|
(\mathbf G_\varepsilon^l(\zeta,\theta))^{-1}-
(\mathbf G_\varepsilon^\infty(\zeta,\theta))^{-1}
\bigr\|_{\infty,\delta_1}\\
&
\le \sup_{|\varepsilon|\le \varepsilon_a}
\|\mathbf G_\varepsilon^l(\zeta,\theta)-\mathbf G_\varepsilon^\infty(\zeta,\theta)\|_{\infty,\delta_1}
\|(\mathbf G_\varepsilon^l(\zeta,\theta))^{-1}\|_{\infty,\delta_1}
\|(\mathbf G_\varepsilon^\infty(\zeta,\theta))^{-1}\|_{\infty,\delta_1}
\to 0.
\end{aligned}
$$

(3) The Lipschitz bound \eqref{eq:invG} follows by the same argument, simply replacing the limit convergence \eqref{equ:basiscon} with the perturbation bound \eqref{equ:basisper}. \qedhere

\end{proof}

\smallskip

\subsection*{Proof of Theorem \ref{thm:centert}}
(1)  Let $u \in \mathcal{M}_{V,E,\varepsilon}^c(\theta)$. By definition, there exists a global section $\Phi \in \mathcal{C}_{V,E,\varepsilon}$ evaluating to $\Phi(\theta) = u$. By the Analytic Bridge (Proposition \ref{thm:ambient-projection}), this section is  invariant under the infinite-range projection: $$ 
(\mathbb{ P }_{\varepsilon,c}^{\infty}\Phi)(\theta)= (\mathbb P_{V,E,\varepsilon}^c\Phi)(\theta)=\Phi(\theta)=u.
$$ For $l \ge l_a$, we define the truncated sections $\Phi^l := \mathbb{P}_{\varepsilon,c}^l\Phi$. Proposition \ref{thm:ambient-projection} guarantees $\Phi^l \in \operatorname{Ran}(\mathbb{P}_{\varepsilon,c}^l) = \mathcal{C}_{V_l,E,\varepsilon}$, alongside the uniform norm convergence:
\begin{equation}\label{phicon}
    \|\Phi^l - \Phi\|_\infty \le \|\mathbb{P}_{\varepsilon,c}^l - \mathbb{P}_{\varepsilon,c}^\infty\|_{\mathfrak{B}( \mathcal{BS}_\xi)} \|\Phi\|_\infty \to 0.
\end{equation}
Consequently, the truncated sequence is uniformly bounded: $\sup_{l \ge l_a} \|\Phi^l\|_\infty < \infty$.

Since $\Phi^{l}\in\mathcal{C}_{V_l,E,\varepsilon}$, by Proposition \ref{thm:sec8-final}, there exists
a unique vector $c_l(\theta)\in\C^{2m}$ such that
$
\Phi^{l}(\theta)= U_\varepsilon^{l}(E,\theta)c_l(\theta). 
$ Applying the left inverse \eqref{eq:left-inverse}, we have
$c_l(\theta)=\bigl(\mathbf G_\varepsilon^l(\zeta,\theta)\bigr)^{-1}d_l(\theta)$,
where 
\begin{eqnarray*}{d}_l(\theta)&=& \bigl(\mathcal{W}_\xi U_\varepsilon^l(\zeta,\theta)\bigr)^* \mathcal{W}_\xi \Phi^{l}(\theta)\\
&=&\Bigl(\langle u^{l,0}_{\varepsilon}(E,\theta),\Phi^{l}(\theta)
\rangle_\xi,\dots,\langle
u^{l,2m-1}_{\varepsilon}(E,\theta),
\Phi^{l}(\theta)\rangle_\xi\Bigr)^{\!\top}.
\end{eqnarray*}
We estimate its asymptotic behavior. For $0\leq p\leq 2m-1$, 
$$\begin{aligned}
   &\quad \big|\langle
u^{l,p}_{\varepsilon}(E,\theta),\Phi^{l}(\theta)\rangle_\xi-
\langle u^{\infty,p}_{\varepsilon}(E,\theta),
\Phi(\theta)\rangle_\xi\big|\\&\leq \big|\langle
u^{l,p}_{\varepsilon}(E,\theta)- u^{\infty,p}_{\varepsilon}(E,\theta),\Phi^{l}(\theta)\rangle_\xi\big|+\big|\langle  u^{\infty,p}_{\varepsilon}(E,\theta),
\Phi^{l}(\theta)-\Phi(\theta)\rangle_\xi\big|\\
&\leq\|u^{l,p}_{\varepsilon}(E,\theta)- u^{\infty,p}_{\varepsilon}(E,\theta)\|_{\mathfrak A,\delta_1,\xi}\|\Phi_l\|_\infty+\|u^{\infty,p}_{\varepsilon}(E,\theta)\|_{\mathfrak A,\delta_1,\xi}\|\Phi^{l}-
\Phi\|_\infty \rightarrow 0,
\end{aligned}$$
where we used \eqref{equ:basisbound}, \eqref{equ:basiscon}, \eqref{phicon}.
Thus, the pairing vectors converge pointwise: $d_l(\theta) \to d_\infty(\theta) $.
Coupled with the uniform metric convergence $\bigl(\mathbf{G}_\varepsilon^l\bigr)^{-1} \to \bigl(\mathbf{G}_\varepsilon^\infty\bigr)^{-1}$ established in \eqref{equ:gramcon}, we have
$
c_l(\theta) \to c_\infty(\theta) := \bigl(\mathbf{G}_\varepsilon^\infty(E,\theta)\bigr)^{-1} d_\infty(\theta).
$
Passing to the limit in  $\Phi^l(\theta) = U_\varepsilon^l(E,\theta) c_l(\theta)$ enforces:
$
\Phi(\theta) = U_\varepsilon^\infty(E,\theta) c_\infty(\theta).
$
It follows that $u = U_\varepsilon^\infty(E,\theta) c_\infty(\theta)$. Therefore, $\dim{{\mathcal{M}_{V,E,\varepsilon}^c(\theta)}}\leq 2m(E),$ and since $U_\varepsilon^{\infty}(E,\theta)$ is linearly independent, we have $\dim{{\mathcal{M}_{V,E,\varepsilon}^c(\theta)}}= 2m(E).$ 

(2) Consequently, for all $l \in [l_a, \infty]$, the global holomorphic frame $U_\varepsilon^l(\zeta,\theta)$ exactly trivializes the truncated center bundle:
$$
\mathcal{M}_{V_l,E,\varepsilon}^c(\theta) = \mathrm{span}\{u_\varepsilon^{l,0}(\zeta,\theta), \dots, u_\varepsilon^{l,2m-1}(\zeta,\theta)\}.
$$

(3) The invariance of the center bundle under the shift dynamics $S$ guarantees the existence of a Center Cocycle $C_\varepsilon^l(\zeta,\theta) $ such that:
\begin{equation*}\label{equ:s}
S U_\varepsilon^l(\zeta,\theta) = U_\varepsilon^l(\zeta,T\theta) C_\varepsilon^l(\zeta,\theta).
\end{equation*}
By the shift covariance \eqref{equ:strans}, this cocycle necessarily has the  companion form:
\begin{equation}\label{eq:defAc}
C_\varepsilon^l(\zeta,\theta) :=
\begin{pmatrix}
0 & 0 & \cdots & 0 & -a^l_{\varepsilon,0}(\zeta,\theta)\\
1 & 0 & \cdots & 0 & -a^l_{\varepsilon,1}(\zeta,\theta)\\
0 & 1 & \cdots & 0 & -a^l_{\varepsilon,2}(\zeta,\theta)\\
\vdots & \vdots & \ddots & \vdots & \vdots\\
0 & 0 & \cdots & 1 & -a^l_{\varepsilon,2m-1}(\zeta,\theta)
\end{pmatrix}.
\end{equation}
The final column is explicitly determined by the shift of the final frame section \eqref{equ:strans}. Specifically, there exists a unique coefficient vector $a_\varepsilon^l(\zeta,\theta) \in \C^{2m}$ such that:
\begin{equation}\label{eq:defa}
u_\varepsilon^{l,2m}(\zeta,T\theta) = - U_\varepsilon^l(\zeta,T\theta) a_\varepsilon^l(\zeta,\theta).
\end{equation}
 By \eqref{equ:det}, the finite-dimensional block $B_\varepsilon^l$ is  invertible, enforcing:
$$
a^{l}_{\varepsilon}(\zeta,\theta)
=-\bigl(B_\varepsilon^l(\zeta,T\theta))^{-1}
\Pi_{[0,2m-1]}u^{l,2m}_\varepsilon(\zeta,T\theta).
$$
Therefore
$
a_{\varepsilon,i}^l\in\mathcal H_{\delta_1}(\mathfrak A),0\leq i\leq 2m-1.
$
It follows that
$
C_\varepsilon^l\in\rm M_{2m}(\mathcal H_{\delta_1}(\mathfrak A)).
$
\qed

\subsection{Gap convergence to the Canonical Center Bundle}
We have constructed the CCB
$\boldsymbol{\mathcal M}^c_{V,E,\varepsilon}$,
directly from the shift dynamics. As a coordinate-free geometric object, it is independent of any finite-range approximation. The purpose of this subsection is to show that the center structures arising from finite truncations converge to this bundle. More precisely, after embedding the finite-range center spaces into a common Banach space, we prove that they converge in the gap topology to the CCB. Thus the CCB appears as the canonical geometric limit of the finite-range center dynamics.

For each finite $l$, let
$
\mathcal E_{\varepsilon,c}^l(E,\theta)\subset \mathbb C^{2l}
$
be the center subspace of the companion cocycle
$A_\varepsilon^l(E,\theta)$ defined in Theorem \ref{thm:sec4-main}. We regard it as a subspace of $\mathcal X_\xi$ by
the zero-extension map
$$
\iota_l:\mathbb C^{2l}\rightarrow \mathcal X_\xi,
\qquad
\iota_l(a_{-l},\ldots,a_{l-1})=(\ldots,0,a_{-l},\ldots,a_{l-1},0,\ldots),
$$
and write
$$
\widetilde{\mathcal E}_{\varepsilon,c}^l(\theta):=
\iota_l\mathcal E_{\varepsilon,c}^l(\theta)
\subset \mathcal X_\xi .
$$

We use the standard gap topology on closed subspaces. 
\begin{definition}\cite{kato}
Let $X$ be a Banach space, and let $M,N\subset X$ be closed subspaces. Define
$$
\delta(M,N):=\sup_{\substack{u\in M\\ \|u\|_X=1}}
\operatorname{dist}(u,N),\qquad\operatorname{dist}(u,N):=\inf_{v\in N}\|u-v\|_X .
$$
The gap distance between $M$ and $N$ is
$$
\widehat\delta(M,N):=\max\{\delta(M,N),\delta(N,M)\}.
$$
We say that $M_l$ converges to $M_\infty$ in the gap topology if
$\widehat\delta(M_l,M_\infty)\to 0 .
$
\end{definition}

The gap topology is the natural geometric topology on the Grassmannian of closed subspaces of a Banach space, which is independent of the choice of coordinates or frames. Consequently, gap convergence provides an intrinsic notion of bundle convergence, describing the convergence of the entire family of subspaces as geometric objects rather than merely the convergence of  bases, or individual vectors.

\begin{proposition}
\label{prop:gap}
For every fixed $E$,
$$\sup_{\theta\in\Omega}\widehat\delta\left(
\widetilde{\mathcal E}_{\varepsilon,c}^l(\theta),
\mathcal M_{V,E,\varepsilon}^{c}(\theta)\right)
\rightarrow 0\qquad (l\to\infty).
$$
in the gap topology of closed subspaces of $\mathcal X_\xi$.
\end{proposition}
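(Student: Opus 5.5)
The plan is to insert the finite-range center fiber $\mathcal M^c_{V_l,E,\varepsilon}(\theta)$ as an intermediate subspace of $\mathcal X_\xi$. Two comparisons are then needed: first $\widetilde{\mathcal E}^l_{\varepsilon,c}(\theta)$ against $\mathcal M^c_{V_l,E,\varepsilon}(\theta)$, which is a window-truncation effect, and second $\mathcal M^c_{V_l,E,\varepsilon}(\theta)$ against $\mathcal M^c_{V,E,\varepsilon}(\theta)$, which is a frame-convergence effect. These are combined with the elementary gap triangle inequality
$$\delta(M,P)\le \delta(M,N)+\bigl(1+\delta(M,N)\bigr)\delta(N,P).$$
It is applied in both directions, with every estimate kept uniform in $\theta$.

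\emph{Step 1 (truncation).} By Theorem~\ref{thm:sec4-main} and Proposition~\ref{thm:bridge1}, $\mathcal E^l_{\varepsilon,c}(\theta)=\mathcal C_{2l}(\theta)=\Pi_{[-l,l-1]}(\theta)\mathcal M^c_{V_l,E,\varepsilon}(\theta)$. Hence every element of $\widetilde{\mathcal E}^l_{\varepsilon,c}(\theta)$ has the form $\iota_l\Pi_{[-l,l-1]}u$ with $u\in\mathcal M^c_{V_l,E,\varepsilon}(\theta)$, and $\iota_l\Pi$ is a bijection between the two spaces. The tail computation in the proof of Proposition~\ref{thm:bridge1}(2) uses the center bounds of Theorem~\ref{thm:sec7final}, which are uniform in $l$ by Remark~\ref{uniform-constant}. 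It gives
$$\|u-\iota_l\Pi_{[-l,l-1]}u\|_\xi\le \delta_l\|u\|_\xi,$$
where $\delta_l\to0$ independently of $\theta$. Using $\|\iota_l\Pi u\|_\xi\ge(1-\delta_l)\|u\|_\xi$, we normalize and bound both one-sided gaps between $\widetilde{\mathcal E}^l_{\varepsilon,c}(\theta)$ and $\mathcal M^c_{V_l,E,\varepsilon}(\theta)$ by $\delta_l/(1-\delta_l)$.

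\emph{Step 2 (frames).} By Proposition~\ref{thm:sec8-final} and Theorem~\ref{thm:centert}, $\mathcal M^c_{V_l,E,\varepsilon}(\theta)$ is spanned by $U^l_\varepsilon(E,\theta)$ for all $l\in[l_a,\infty]$. Take $u=U^l_\varepsilon c$ with $\|u\|_\xi=1$. The Gram lower bound of Lemma~\ref{prop:extended}(1), together with the equivalence of $\|\cdot\|_\xi$ and $\langle\cdot,\cdot\rangle_\xi^{1/2}$ up to constants on these finite spans, gives $\|c\|\le C$ uniformly in $l$ and $\theta$. This equivalence comes from the uniform bound \eqref{equ:basisbound} and the nondegenerate determinant \eqref{equ:det}. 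Then
$$\operatorname{dist}\bigl(u,\mathcal M^c_{V,E,\varepsilon}(\theta)\bigr)\le\|(U^l_\varepsilon-U^\infty_\varepsilon)c\|_\xi\le C\sum_p\|u^{l,p}_\varepsilon-u^{\infty,p}_\varepsilon\|_{\mathfrak A,\delta_1,\xi},$$
and the right side tends to $0$ by \eqref{equ:basiscon}, uniformly in $\theta$. The reverse direction is the same argument with the roles of $l$ and $\infty$ swapped.

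\emph{Combining.} Inserting Steps 1 and 2 into the gap triangle inequality gives the claim. The main obstacle I expect is the uniformity of the coefficient bound $\|c\|\le C$. Specifically, the lower bound for $\|U^l_\varepsilon c\|_\xi$ must hold in the $\ell^1$-weighted norm, not just in the Gram inner product. I would handle this exactly as in the proof of Lemma~\ref{prop:extended}(1): project onto the coordinates $[0,2m-1]$ and use $|\det B^l_\varepsilon|\ge c_*/2$, which gives $\|U^l_\varepsilon c\|_\xi\ge e^{-2\xi m}\tilde c_0\|c\|$ directly.
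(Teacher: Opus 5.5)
Your argument is correct, but it is organized differently from the paper's.

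\textbf{The paper's route.} The paper never introduces the intermediate fiber $\mathcal M^c_{V_l,E,\varepsilon}(\theta)$ or a gap triangle inequality. It writes $\widetilde{\mathcal E}^l_{\varepsilon,c}(\theta)=\operatorname{Ran}\widetilde U^l_\varepsilon(E,\theta)$ with $\widetilde U^l_\varepsilon=\iota_l\Pi_{[-l,l-1]}U^l_\varepsilon$. It then asserts $\sup_\theta\|\widetilde U^l_\varepsilon(E,\theta)-U^\infty_\varepsilon(E,\theta)\|\to0$ directly from Lemma \ref{lem:uest}; implicitly, the window tail is controlled by the coordinate bound $\widetilde M_U R^{|k|}$ in \eqref{equ:basisbound} and the rest by \eqref{equ:basiscon}. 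Next it takes a lower bound $\|U^\infty_\varepsilon c\|_\xi\ge\sigma|c|$, transfers $\sigma/2$ to $\widetilde U^l_\varepsilon$ by perturbation, and bounds both one-sided gaps by a multiple of $\|\widetilde U^l_\varepsilon-U^\infty_\varepsilon\|$. So the paper compares frames in a single step, while you compare subspaces in two steps glued by the gap triangle inequality.

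\textbf{What your version gains.} It makes the uniformity in $\theta$ explicit. The paper obtains $\sigma$ only from finite-dimensionality, i.e.\ pointwise in $\theta$, although the statement takes $\sup_\theta$. Your use of \eqref{equ:det} together with Lemma \ref{prop:extended} supplies exactly the uniform constant that this step needs.

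\textbf{What it costs.} Your Step 1 relies on the uniform-in-$l$ center growth constants of Remark \ref{uniform-constant}, which the paper only sketches. You can remove that dependence by doing the truncation at the frame level: write $u=U^l_\varepsilon c$ with $|c|\le C$, and use \eqref{equ:basisbound} to get $\|u-\iota_l\Pi_{[-l,l-1]}u\|_\xi\le C\sum_{|k|\ge l}(Re^{-\xi})^{|k|}$.

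\textbf{The obstacle you flag is not real.} $\langle y,y\rangle_\xi^{1/2}$ is a weighted $\ell^2$ norm and $\|y\|_\xi$ is the corresponding weighted $\ell^1$ norm, so $\langle y,y\rangle_\xi^{1/2}\le\|y\|_\xi$ always. The Gram lower bound alone therefore already yields $|c|\le C$ for unit vectors in the weighted $\ell^1$ norm. No two-sided norm equivalence, and no separate determinant argument, is needed there.
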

\begin{proof}
By the Algebraic Bridge  (Proposition \ref{thm:bridge1}), the space
$\widetilde{\mathcal E}_{\varepsilon,c}^l(E,\theta)$ is spanned by 
$
\widetilde u_{\varepsilon}^{l,0}(E,\theta),\ldots,
\widetilde u_{\varepsilon}^{l,2m-1}(E,\theta)\in \mathcal X_\xi .
$
Here
$
\widetilde u_{\varepsilon}^{l,p}(E,\theta):=
\iota_l\Pi_{[-l,l-1]} u_{\varepsilon}^{l,p}(E,\theta), 0\le p\le 2m-1,
$
where $\Pi_{[-l,l-1]}$ denotes the projection onto the coordinates
$\{-l,\ldots,l-1\}$. 
Define the linear maps $\widetilde U_{\varepsilon}^{l}(E,\theta),U_{\varepsilon}^{\infty}(E,\theta):\C^{2m}\to \mathcal X_\xi$ by
$$
\widetilde U_{\varepsilon}^{l}(E,\theta)c
:=\sum_{p=0}^{2m-1}c_p\widetilde u_{\varepsilon}^{l,p}(E,\theta),\quad U_{\varepsilon}^{\infty}(E,\theta)c
:=\sum_{p=0}^{2m-1}c_p u_{\varepsilon}^{\infty,p}(E,\theta).
$$
Thus
$
\widetilde{\mathcal E}_{\varepsilon,c}^l(E,\theta)
=\operatorname{Ran}\widetilde U_{\varepsilon}^{l}(E,\theta),
\mathcal M_{V,E,\varepsilon}^{c}(\theta)
=\operatorname{Ran}U_{\varepsilon}^{\infty}(E,\theta).
$
By Lemma \ref{lem:uest}, we have
$
\sup_{\theta\in\Omega}\left\|\widetilde U_{\varepsilon}^{l}(E,\theta)-
U_{\varepsilon}^{\infty}(E,\theta)\right\|_{\mathcal B(\mathbb C^{2m},\mathcal X_\xi)}
\rightarrow 0 .
$
Moreover, the columns of $U_{\varepsilon}^{\infty}(E,\theta)$ form a basis of
$\mathcal M_{V,E,\varepsilon}^{c}(\theta)$. Hence
$U_{\varepsilon}^{\infty}(E,\theta)$ is injective. Since the domain is
finite-dimensional, there exists $\sigma>0$ such that for all $c\in  \C^{2m}$,
$
\|U_{\varepsilon}^{\infty}(E,\theta)c\|_{\xi}\ge \sigma |c|.
$
Therefore, for all sufficiently large $l$,
$
\|\widetilde U_{\varepsilon}^{l}(E,\theta)c\|_{\xi}\ge \frac{\sigma}{2}|c|.
$

We first prove
$
\delta\left(\widetilde{\mathcal E}_{\varepsilon,c}^l(E,\theta),\mathcal M_{V,E,\varepsilon}^{c}(\theta)\right)\to 0 .
$
Take
$
w_l\in \widetilde{\mathcal E}_{\varepsilon,c}^l(E,\theta),
\|w_l\|_{\xi}=1 .
$
Then there exists $c_l\in\mathbb C^{2m}$ such that
$
w_l=\widetilde U_{\varepsilon}^{l}(E,\theta)c_l .
$
The uniform lower bound above gives
$$
1=\|w_l\|_{\xi}
=\|\widetilde U_{\varepsilon}^{l}(E,\theta)c_l\|_{\xi}
\ge \frac{\sigma}{2}|c_l|,
$$
and hence
$|c_l|\le \frac{2}{\sigma}.$
Now set
$
v_l:=U_{\varepsilon}^{\infty}(E,\theta)c_l
\in\mathcal M_{V,E,\varepsilon}^{c}(\theta).
$
Then
$
\operatorname{dist}\left(w_l,\mathcal M_{V,E,\varepsilon}^{c}(\theta)
\right)\le\|w_l-v_l\|_{\xi}
$
and therefore
$$
\begin{aligned}
\|w_l-v_l\|_{\xi}&=\left\|
\left(\widetilde U_{\varepsilon}^{l}(E,\theta)-U_{\varepsilon}^{\infty}(E,\theta)
\right)c_l\right\|_{\xi}\le\left\|\widetilde U_{\varepsilon}^{l}(E,\theta)
-U_{\varepsilon}^{\infty}(E,\theta)
\right\|_{\mathcal B(\mathbb C^{2m},\xi)}
|c_l|  \\
&\le
\frac{2}{\sigma}\left\|
\widetilde U_{\varepsilon}^{l}(E,\theta)-U_{\varepsilon}^{\infty}(E,\theta)
\right\|_{\mathcal L(\mathbb C^{2m},\mathcal X_\xi)}
\rightarrow 0 .
\end{aligned}
$$
Taking the supremum over all unit vectors
$w_l\in\widetilde{\mathcal E}_{\varepsilon,c}^l(E,\theta)$ yields
$
\delta\left(\widetilde{\mathcal E}_{\varepsilon,c}^l(E,\theta),\mathcal M_{V,E,\varepsilon}^{c}(\theta)\right)\to 0 .
$

Conversely, we prove
$
\delta\left(\mathcal M_{V,E,\varepsilon}^{c}(\theta),\widetilde{\mathcal E}_{\varepsilon,c}^l(E,\theta)\right)\to 0 .
$
Take
$
w\in\mathcal M_{V,E,\varepsilon}^{c}(\theta),
\|w\|_{\xi}=1 .
$
Since the columns of $U_{\varepsilon}^{\infty}(E,\theta)$ form a basis of
$\mathcal M_{V,E,\varepsilon}^{c}(\theta)$, there exists a unique
$c\in\mathbb C^{2m}$ such that
$
w=U_{\varepsilon}^{\infty}(E,\theta)c .
$
The lower bound for $U_{\varepsilon}^{\infty}(E,\theta)$ gives
$$
1=\|w\|_{\xi}
=\|U_{\varepsilon}^{\infty}(E,\theta)c\|_{\xi}\ge \sigma |c|,
$$
hence
$
|c|\le \frac{1}{\sigma}.
$
Define
$
w_l:=\widetilde U_{\varepsilon}^{l}(E,\theta)c\in\widetilde{\mathcal E}_{\varepsilon,c}^l(E,\theta).
$
Then
$$
\operatorname{dist}\left(w,\widetilde{\mathcal E}_{\varepsilon,c}^l(E,\theta)\right)\le\|w-w_l\|_{\xi},
$$
and
$$
\|w-w_l\|_{\xi}=\left\|\left(U_{\varepsilon}^{\infty}(E,\theta)-
\widetilde U_{\varepsilon}^{l}(E,\theta)\right)c\right\|_{\xi}\le\frac{1}{\sigma}
\left\|U_{\varepsilon}^{\infty}(E,\theta)-
\widetilde U_{\varepsilon}^{l}(E,\theta)\right\|_{\mathcal B(\mathbb C^{2m},\mathcal X_\xi)}
\rightarrow 0 .
$$
Taking  supremum over all unit vectors
$w\in\mathcal M_{V,E,\varepsilon}^{c}(\theta)$ gives
$
\delta\left(\mathcal M_{V,E,\varepsilon}^{c}(\theta),\widetilde{\mathcal E}_{\varepsilon,c}^l(E,\theta)\right)\to 0 .
$

Combining the two one-sided estimates, we obtain
$
\widehat\delta\left(\widetilde{\mathcal E}_{\varepsilon,c}^l(E,\theta),
\mathcal M_{V,E,\varepsilon}^{c}(\theta)\right)\to 0 .
$
This proves the convergence in the gap topology.
\end{proof}

\subsection{Center Cocycle}
We now prove several basic properties of the center cocycle, which will be crucial in the subsequent sections.

\begin{lemma}
    \label{thm:center}
  Suppose $W(\cdot)\in \mathfrak A$. For every $l\in[ l_a,\infty]$ and every
$|\varepsilon|\le \varepsilon_{a}$, 
there exists an invertible  matrix-valued function
$
C_\varepsilon^l(\zeta,\theta)\in \rm M_{2m}(\mathcal H_{\delta_1}(\mathfrak A))  ,
$
such that
\begin{equation}\label{shift}
SU_\varepsilon^l(\zeta,\theta)
=U_\varepsilon^l(\zeta,T\theta)C_\varepsilon^l(\zeta,\theta),
\end{equation}
and there exists  constant $C$, such that the following hold: 

\begin{enumerate}
\item  $\sup_{|\varepsilon|\leq \varepsilon_a}\|C_{\varepsilon}^{l}(\zeta,\theta)-C_{\varepsilon}^{\infty}(\zeta,\theta)\|_{\mathfrak A,\delta_{1}}\to 0.$

\item
$
\|C_\varepsilon^l(\zeta,\theta)\|_{\mathfrak A,\delta_{1}}\le C,
\|(C_\varepsilon^l(\zeta,\theta))^{-1}\|_{\mathfrak A,\delta_{1}}\le C.
$

\item
 For all $|\varepsilon|<\varepsilon_a$,
$
\|C_\varepsilon^l(\zeta,\theta)-C_0^l(\zeta)\|_{\mathfrak A,\delta_{1}}
\le C|\varepsilon|.
$
\end{enumerate}
\end{lemma}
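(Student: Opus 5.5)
The plan is to work directly with the explicit companion form \eqref{eq:defAc} from the proof of Theorem~\ref{thm:centert}(3). The only nontrivial column of $C_\varepsilon^l$ is the vector $a_\varepsilon^l(\zeta,\theta)$, so every assertion reduces to properties of the formula
$$
a^{l}_{\varepsilon}(\zeta,\theta)=-\bigl(B_\varepsilon^l(\zeta,T\theta)\bigr)^{-1}\Pi_{[0,2m-1]}u^{l,2m}_\varepsilon(\zeta,T\theta).
$$
Existence and the relation \eqref{shift} are already in Theorem~\ref{thm:centert}(3) for every $l\in[l_a,\infty]$. Invertibility also follows from the companion form: expanding along the first row gives $\det C_\varepsilon^l=\pm a^l_{\varepsilon,0}$. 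So I need a lower bound for $|a^l_{\varepsilon,0}|$.

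For the $\mathfrak A$-norm estimates, I will write $(B_\varepsilon^l)^{-1}=\operatorname{adj}(B_\varepsilon^l)/\det B_\varepsilon^l$. The entries of $B_\varepsilon^l$ lie in $\mathcal H_{\delta_1}(\mathfrak A)$ and are bounded by $\widetilde M_U R^{2m}$, by \eqref{equ:basisbound}. Since $\mathfrak A$ is a Banach algebra, the adjugate entries are uniformly bounded in $\|\cdot\|_{\mathfrak A,\delta_1}$.

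The determinant needs more care. By \eqref{equ:det} we have $|\det B_\varepsilon^l|\ge c_*/2$ pointwise, and inverse-closedness of $\mathfrak A$ (Definition~\ref{def:admissible}(3)) gives $1/\det B_\varepsilon^l\in\mathfrak A$. However, inverse-closedness alone does not give a uniform $\mathfrak A$-norm bound. I expect this to be the main obstacle. My first attempt is a Neumann series around the constant $\det B_0^l(\zeta)$:
$$
\det B_\varepsilon^l=\det B_0^l\Bigl(1+\tfrac{\det B_\varepsilon^l-\det B_0^l}{\det B_0^l}\Bigr).
$$
The telescoping estimate before \eqref{equ:det} can be carried out in $\|\cdot\|_{\mathfrak A,\delta_1}$ instead of the sup norm, because \eqref{equ:basiszero} is already an $\mathfrak A$-estimate. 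This gives $\|\det B_\varepsilon^l-\det B_0^l\|_{\mathfrak A,\delta_1}\le\tilde C_m|\varepsilon|$. With $|\det B_0^l|\ge c_*$ from Lemma~\ref{lem:lindep-uniform} and, if needed, $\varepsilon_a$ shrunk further, the series converges with norm at most $2/c_*$. This yields $\|C_\varepsilon^l\|_{\mathfrak A,\delta_1}\le C$.

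The same Neumann argument gives (3) and invertibility. The unperturbed $C_0^l(\zeta)$ is constant in $\theta$. All its ingredients differ from the perturbed ones by $O(|\varepsilon|)$ in $\mathfrak A$-norm: the $B$-block and the $u^{l,2m}$-coordinates by \eqref{equ:basiszero} with $p=2m$, which Lemma~\ref{lem:uest} allows. Inversion is Lipschitz on the uniformly invertible set. Hence $\|a^l_\varepsilon-a^l_0\|_{\mathfrak A,\delta_1}\le C|\varepsilon|$, which is (3).

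For the inverse bound in (2), I will check that $C_0^l(\zeta)$ is uniformly invertible. Its eigenvalues are the $2m$ center zeros of $L^l_\zeta$: the frame comes from residues at these roots, so $C_0^l$ is the companion matrix of $\prod(z-\lambda_j)^{n_j}$. These zeros lie in $r_-\le|z|\le r_+$, so $|a^l_{0,0}|=\prod|\lambda_j|^{n_j}\ge r_-^{2m}$. Then $(C_\varepsilon^l)^{-1}$ is a Neumann-series perturbation of $(C_0^l)^{-1}$ in the Banach algebra $M_{2m}(\mathcal H_{\delta_1}(\mathfrak A))$, giving a uniform bound for small $\varepsilon$.

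For (1), convergence as $l\to\infty$ follows from \eqref{equ:basiscon} applied coordinatewise. For the finitely many coordinates $k\in[0,2m-1]$, coordinate convergence in $\|\cdot\|_{\mathfrak A,\delta_1}$ follows from the weighted norm, up to the factor $e^{\xi\cdot 2m}$. Combining this with the uniform $\mathfrak A$-invertibility of $\det B^l_\varepsilon$ and the identity $X^{-1}-Y^{-1}=X^{-1}(Y-X)Y^{-1}$ gives $\|a^l_\varepsilon-a^\infty_\varepsilon\|_{\mathfrak A,\delta_1}\to0$, uniformly in $|\varepsilon|\le\varepsilon_a$.
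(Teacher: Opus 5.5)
Your argument is correct, but it takes a different route from the paper's.

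\textbf{The paper's route.} The paper never isolates the companion column. It writes $C^l_\varepsilon(\zeta,\theta)=(\mathbf G^l_\varepsilon(\zeta,T\theta))^{-1}(\mathcal W_\xi U^l_\varepsilon(\zeta,T\theta))^*\mathcal W_\xi SU^l_\varepsilon(\zeta,\theta)$ using the Gram left inverse \eqref{eq:left-inverse}, and uses the analogous formula with $S^{-1}$ for the inverse. It obtains invertibility from injectivity of $S$ plus independence of the frame, and reads (1)--(3) off Lemma~\ref{lem:uest} and Lemma~\ref{prop:extended}. This is coordinate-free and needs no smallness beyond $\varepsilon_a$. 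However, the Gram matrix contains complex conjugates of the frame, so it is not holomorphic in $\zeta$, and \eqref{eq:Gbound}--\eqref{eq:invG} are only $\|\cdot\|_{\infty,\delta_1}$ bounds. Strictly speaking, that route gives sup-norm control rather than $\|\cdot\|_{\mathfrak A,\delta_1}$ control.

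\textbf{What your route buys.} Your formula $a^l_\varepsilon=-(B^l_\varepsilon)^{-1}\Pi_{[0,2m-1]}u^{l,2m}_\varepsilon$ stays inside the Banach algebra $\mathcal H_{\delta_1}(\mathfrak A)$. You correctly note that inverse-closedness gives no uniform norm bound, and you replace it by a Neumann series around the $\theta$-independent $\det B^l_0(\zeta)$. That is what gives genuine $\mathfrak A$-norm estimates and holomorphy in $\zeta$.

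\textbf{Where it costs more.} For the inverse you perturb off $C^l_0$ and lower-bound $a^l_{\varepsilon,0}$. This imposes a second smallness condition, so as written you prove the lemma on a possibly smaller $\varepsilon$-range than the fixed $\varepsilon_a$. This is easy to avoid:
\begin{itemize}
\item Invertibility for all $|\varepsilon|\le\varepsilon_a$ follows as in the paper from $C^l_\varepsilon c=0\Rightarrow SU^l_\varepsilon c=0\Rightarrow c=0$.
\item For the inverse bound, note $S^{-1}U^l_\varepsilon(\zeta,T\theta)=U^l_\varepsilon(\zeta,\theta)(C^l_\varepsilon)^{-1}=(u^{l,-1}_\varepsilon,u^{l,0}_\varepsilon,\dots,u^{l,2m-2}_\varepsilon)(\zeta,\theta)$. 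Here $u^{l,-1}_\varepsilon$ is the contour section with $p=-1$; it is a center section by Lemma~\ref{recons} and obeys the bounds of Lemma~\ref{lem:uest} by the same proof.
\item Hence $(C^l_\varepsilon)^{-1}$ has first column $(B^l_\varepsilon(\zeta,\theta))^{-1}\Pi_{[0,2m-1]}u^{l,-1}_\varepsilon(\zeta,\theta)$ and remaining columns $\mathbf e_1,\dots,\mathbf e_{2m-1}$.
\end{itemize}
The inverse is therefore controlled by the same $\det B$ Neumann step you already need.
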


\begin{proof}
    (1) Applying the left inverse \eqref{eq:left-inverse} to \eqref{shift}, we have 
\begin{equation}\label{equ:cenrep}
    C_{\varepsilon}^{l}(\zeta,\theta)=\bigl(\mathbf{G}^{l}_\varepsilon(\zeta,T\theta)\bigr)^{-1}U_\varepsilon^{l}(\zeta,T\theta)^*\mathcal W_{\xi}SU_\varepsilon^{l}(\zeta,\theta).
\end{equation}
By  \eqref{equ:basisbound}, \eqref{equ:basiscon}, \eqref{equ:gramcon},  and triangle inequality, we get
  $\sup_{|\varepsilon|\leq \varepsilon_a}\bigl\|C_{\varepsilon}^{l}(\zeta,\theta)-C^{\infty}_\varepsilon(\zeta,\theta)\bigr\|_{\mathfrak A,\delta_1}\to0.$

(2) By \eqref{equ:cenrep},  the boundedness of $\|C_\varepsilon^{l}(\zeta,\theta)\|_{\mathfrak A,\delta_1}$ follows   \eqref{equ:basisbound} and \eqref{eq:Gbound}.
Next, we prove $C_\varepsilon^l(\zeta,\theta)$ is invertible.
Let
$
c=(c_0,\dots,c_{2m-1})^\top\in\C^{2m}
$
satisfy
$
C_\varepsilon^{l}(\zeta,\theta)c=0,
$
which implies that
$
SU_\varepsilon^{l}(\zeta,\theta)c=U_\varepsilon^{l}(\zeta,T\theta)C_\varepsilon^{l}(\zeta,\theta)c=0.
$
Since $S$ is injective on $\mathcal{X}_{\xi}$, it follows that
$
U_\varepsilon^{l}(\zeta,\theta)c=0.
$
Because the columns of $U_\varepsilon^{l}(\zeta,\theta)$ are linearly independent, we must have
$c=0,$
and therefore $C_\varepsilon^{l}(\zeta,\theta)$ is  invertible.
Moreover, applying the left inverse \eqref{eq:left-inverse} to  $ U_\varepsilon^l(\zeta,\theta) C_\varepsilon^l(\zeta,\theta)^{-1}=
S^{-1}\,U_\varepsilon^l(\zeta,T\theta)\,
$, we obtain $$C_\varepsilon^{l}(\zeta,\theta)^{-1}=\bigl(\mathbf G_\varepsilon^{(l)}(\zeta,\theta)\bigr)^{-1}
\bigl(U_\varepsilon^{l}(\zeta,\theta)\bigr)^*\mathcal W_{\xi}
S^{-1}
U_\varepsilon^{l}(\zeta,T\theta).$$ 
By  \eqref{equ:basisbound} and \eqref{eq:Gbound}, it follows that $\|C_\varepsilon^{l}(\zeta,\theta)^{-1}\|_{\mathfrak A,\delta_1}\leq C.$

(3) follows from  \eqref{equ:basisbound}, \eqref{equ:basisper}, \eqref{eq:invG}, \eqref{equ:cenrep}  and triangle inequality.

\end{proof}

The companion structure of the center cocycle provides a convenient way to compute the spectrum  of $C_{0}^{\infty}(E)$.
\begin{corollary}\label{cor:zero}
The spectrum of the unperturbed center cocycle is exactly
$
\sigma\bigl(C_{0}^{\infty}(E)\bigr) = \{\mu_{1}(E),\dots,\mu_{2m}(E)\},
$
where $\mu_{1}(E),\dots,\mu_{2m}(E)$ are the roots of $L_{E}(z)$ in the annulus $\{z\in\mathbb{C}: r_-(E) \le |z| \le r_+(E)\}$, counted with algebraic multiplicities.
\end{corollary}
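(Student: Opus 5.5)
At $\varepsilon=0$ everything is constant in $\theta$, so I would compute $C_0^\infty(E)$ directly from its companion form. Lemma~\ref{thm:D-abstract} gives $g^\infty_{E,0}(z,\theta)=(D^\infty_{E,0}(z))^{-1}\mathbf 1 = 1/L_E(z)$, since $\mathbf T$ fixes constants and $D^\infty_{E,0}(z)$ acts on constants as multiplication by $L_E(z)$. Hence
$$
u^{\infty,p}_{0;k}(E)=\frac{1}{2\pi i}\int_{\Gamma_c}\frac{z^{k+p}}{L_E(z)}\,dz ,
$$
and the shift relation \eqref{equ:strans} reads $Su^{\infty,p}_0=u^{\infty,p+1}_0$. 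By \eqref{eq:defAc}, $C_0^\infty(E)$ is the companion matrix of the monic polynomial $Q_a(z):=z^{2m}+\sum_{i=0}^{2m-1}a_i z^i$, where $a=a^\infty_0(E)$ is determined by $u^{\infty,2m}_0=-\sum_i a_i u^{\infty,i}_0$ (equation \eqref{eq:defa}). A companion matrix has characteristic polynomial equal to its defining polynomial, so the claim reduces to showing $Q_a(z)=\prod_{j}(z-\mu_j(E))$, i.e.\ $Q_a=:Q_0$, the monic polynomial whose roots are the $2m$ zeros of $L_E$ in the center annulus (counted with multiplicity; there are $2m$ of them by Lemma~\ref{lem:locals}).

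To identify $a$, observe first that $\sum_{p}a_pu^{\infty,p}_{0;k}+u^{\infty,2m}_{0;k}=\frac{1}{2\pi i}\int_{\Gamma_c} z^kQ_a(z)/L_E(z)\,dz$ (with $a_{2m}=1$). Take $Q=Q_0$. Then $Q_0(z)/L_E(z)$ has no poles in the closed center annulus, because $Q_0$ cancels every center zero of $L_E$ with full multiplicity, and $L_E$ has no zeros on $\Gamma_\pm$. So for every $k\in\mathbb Z$ the integrand $z^kQ_0/L_E$ is holomorphic in a neighborhood of the region bounded by $\Gamma_c=\Gamma_+\cup(-\Gamma_-)$, and Cauchy's theorem gives zero for every $k$. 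Thus $u^{\infty,2m}_0=-\sum_p (Q_0)_p\, u^{\infty,p}_0$ as elements of $\mathcal X_\xi$.

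Uniqueness of $a$ follows from the linear independence of $\{u^{\infty,p}_0\}_{p<2m}$: Lemma~\ref{lem:lindep-uniform} makes $B_0^\infty$ invertible, so it suffices to compare coordinates $k=0,\dots,2m-1$. Therefore $a$ equals the lower coefficients of $Q_0$, and $\sigma(C_0^\infty(E))$ is the root set of $Q_0$ with multiplicities.

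I expect no real obstacle. The only care needed is, first, justifying $g=1/L_E$ via uniqueness of the inverse of the symbol operator on constants; and second, checking that the companion form in \eqref{eq:defAc} gives characteristic polynomial $z^{2m}+\sum a_i z^i$ and not some reversed version. The latter is the standard expansion along the last column.
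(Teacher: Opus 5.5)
Your proposal is correct and follows essentially the same route as the paper. The paper also forms the monic polynomial $Q_E(z)=\prod_j(z-\mu_j(E))$, uses Cauchy's theorem on $z^kQ_E/L_E$ to obtain the linear relation among the $u^{\infty,p}_{0}(E)$, invokes \eqref{eq:defa} together with the linear independence of the frame to identify the last column of $C_0^\infty(E)$, and reads off the characteristic polynomial of the companion matrix. You additionally make explicit the identity $g^\infty_{E,0}=1/L_E$ and the companion-matrix determinant expansion, both of which the paper uses without comment.
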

\begin{proof}
Define the monic polynomial
$$
Q_E(z):=\prod_{j=1}^{2m}\bigl(z-\mu_{j}(E)\bigr)=
z^{2m}+\beta_{2m-1}(E)z^{2m-1}+\cdots+\beta_{1}(E)z+\beta_{0}(E).
$$
For every $k\in\Z$, using the definition of $u^{\infty,p}_{0}(E)$ and of the coefficients of
$Q_E$, we have
\begin{align*}
u^{\infty,2m}_{0;k}(E)
+\sum_{q=0}^{2m-1}\beta_{q}(E) u^{\infty,q}_{0;k}(E)
&=\frac{1}{2\pi i}\int_{\Gamma_c}\frac{z^{k+2m}+\sum_{q=0}^{2m-1}\beta_{q}(E)z^{k+q}}{L_{E}(z)} dz\\
&=\frac{1}{2\pi i}\int_{\Gamma_c}z^k \frac{Q_E(z)}{L_{E}(z)} dz.
\end{align*}
By construction, the zeros of $Q_E$ are precisely the zeros of
$L_{E}$ lying in $\{z\in\mathbb C: r_-(E) \le |z| \le r_+(E)\}$, with the same algebraic multiplicities.
 Cauchy's theorem on the annulus gives
$
\int_{\Gamma_c}z^k \frac{Q_E(z)}{L_{E}(z)} dz=0.
$
Thus
$
u^{\infty,2m}_{0;k}(E)+\sum_{q=0}^{2m-1}\beta_{q}(E) u^{\infty,q}_{0;k}(E)=0.
$
On the other hand, by \eqref{eq:defa} and  linear independence of
$
u^{\infty,0}_{0}(E),\dots,u^{\infty,2m-1}_{0}(E),
$
we obtain
$$
\alpha^{\infty}_{0,q}(E)=\beta_{q}(E),
\qquad q=0,\dots,2m-1.
$$
And characteristic polynomial of
the matrix $C_{0}^{\infty}(E)$  is
$$
\det\bigl(\lambda -C_{0}^{\infty}(E)\bigr)
=\lambda^{2m}+\alpha^{\infty}_{0,2m-1}(E)\lambda^{2m-1}
+\cdots+\alpha^{\infty}_{0,1}(E)\lambda+\alpha^{\infty}_{0,0}(E).
$$
 We get
$\det\bigl(\lambda -C_{0}^{\infty}(E)\bigr)=Q_E(\lambda).$
Therefore,
$
\sigma\bigl(C_{0}^{\infty}(E)\bigr)=\{\mu_{1}(E),\dots,\mu_{2m}(E)\}.
$

\end{proof}

\subsection*{Proof of Theorem \ref{thm:Johnson_generalization}:} 
\textbf{``Only if'' part:}
For every sufficiently large $l$, there exists
$
 E_l\in \sigma(\mathbf{L}_{V_l,\varepsilon W,T,\theta})
$
such that
$
| E_l-E|<C_0e^{-h l}.
$
Suppose, by contradiction, that
$(T, C^\infty_\varepsilon(E,\theta))
$
is uniformly hyperbolic.
By Theorem \ref{thm:centert} and Lemma \ref{thm:center}, the cocycle $C^\infty_\varepsilon(\zeta,\theta)$ is analytic in $\zeta$,
and
$
\|C^l_\varepsilon(\zeta,\theta)-C^\infty_\varepsilon(\zeta,\theta)\|_{\mathfrak A,\delta_1}
\to 0 ,l\to\infty.
$
Hence we have
\begin{eqnarray*}
\sup_{\theta\in\Omega}\|C^l_\varepsilon( E_l,\theta)-C^\infty_\varepsilon(E,\theta)\|
&\leq&
\sup_{\theta\in\Omega}\|C^l_\varepsilon( E_l,\theta)-C^\infty_\varepsilon( E_l,\theta)\| +\sup_{\theta\in\Omega}
\|C^\infty_\varepsilon( E_l,\theta)-C^\infty_\varepsilon(E,\theta)\| \\
&\leq&
C\|C^l_\varepsilon(\zeta,\theta)-C^\infty_\varepsilon(\zeta,\theta)\|_{\mathfrak A,\delta_1}
+C| E_l-E|\to 0.
\end{eqnarray*}
Since uniform hyperbolicity is open, it follows that for all sufficiently large $l$,
$(T, C^l_\varepsilon( E_l,\theta))
$
is uniformly hyperbolic.
By the Algebraic Bridge (Proposition \ref{thm:bridge1}), we have a fiberwise linear isomorphism
$$
 \Pi_{[-l,l-1]}(\theta):\mathcal M^c_{V_l,E_l,\varepsilon}(\theta) \xrightarrow{\cong} \mathcal{C}_{2l}(\theta) =\mathcal E^l_{\varepsilon,c}(\theta)
,
$$
and we have
    $$
    \Pi_{[-l,l-1]}(T\theta)\circ \mathscr T\big|_{\mathcal M_{V_l,E_l,\varepsilon}^c(\theta)}
    =  A_{\varepsilon}^{l}(E_l,\theta) \circ \Pi_{[-l,l-1]}(\theta).
    $$ 
It follows that $$ A_{\varepsilon}^{l}(E_l,\theta)\Pi_{[-l,l-1]}(\theta)U_\varepsilon^l(E_l,\theta)=\Pi_{[-l,l-1]}(T\theta)U_\varepsilon^l(E_l,T\theta)C^l_\varepsilon(E_l,\theta).$$
Therefore the cocycle $C^l_\varepsilon( E_l,\theta)$ is precisely the restriction of
$A^l_\varepsilon( E_l,\theta)$ to the center subspace
$\mathcal E^l_{\varepsilon,c}(\theta)$, and the center restriction
is uniformly hyperbolic.

By Theorem~\ref{thm:sec4-main}, the  cocycle
$(T,A^l_\varepsilon( E_l,\theta))$ is partially hyperbolic, and  admits an invariant splitting
$
\mathbb C^{2l}=\mathcal E^l_{\varepsilon,s}(\theta)\oplus\mathcal E^l_{\varepsilon,c}(\theta)
\oplus\mathcal E^l_{\varepsilon,u}(\theta),
 \theta\in\Omega.
$
 It immediately implies that $(T,A^l_\varepsilon( E_l,\theta))$ is uniformly hyperbolic. This directly contradicts the classical Johnson's theorem for finite-range operators \cite{HaroPuig}, which asserts that $E_l \in \sigma(\mathbf{L}_{V_l,\varepsilon W,T,\theta})$ if and only if $(T, A_{\varepsilon}^{l}(E_l,\theta))$ is not uniformly hyperbolic. This completes the ``Only if'' part.

\medskip

\noindent
\textbf{``If'' part:}
First by the Sacker-Sell theory \cite{SS1,SS}, there exist $\theta_0\in\Omega$ and
$v_0\in\C^{2m}\setminus\{0\}$ such that
\begin{equation}\label{bound}
    \|(C^\infty_\varepsilon)_n(E,\theta_0)v_0\|\leq C,\quad 
 \forall n\in\Z.
\end{equation}
By trivialization of CCB (Theorem~\ref{thm:centert}), there exists
$$
U^\infty_\varepsilon(\zeta,\theta)
=\bigl(
u^{\infty,0}_\varepsilon(\zeta,\theta),\dots,
u^{\infty,2m-1}_\varepsilon(\zeta,\theta)
\bigr),
$$
such that
$
u^{\infty,p}_\varepsilon(\zeta,\theta)\in \mathcal M^c_{\zeta,V,\varepsilon}(\theta),
0\le p\le 2m-1,
$
and
$
SU^\infty_\varepsilon(\zeta,\theta)
=U^\infty_\varepsilon(\zeta,T\theta)\,
C^\infty_\varepsilon(\zeta,\theta).
$
Iterating this identity, we obtain
\begin{equation}\label{lin}
S^nU^\infty_\varepsilon(E,\theta_0)v_0
=U^\infty_\varepsilon(E,T^n\theta_0)
(C^\infty_\varepsilon)_n(E,\theta_0)v_0.
\end{equation}

Let
$
u:=U^\infty_\varepsilon(E,\theta_0)v_0.
$
Since each
$
u^{\infty,p}_\varepsilon(E,\theta)\in \mathcal M^c_{E,V,\varepsilon}(\theta),
$
it follows that
$
u\in \mathcal M^c_{E,V,\varepsilon}(\theta_0).
$
In particular, $u$ is a nontrivial solution of
$
\mathbf{L}_{V,\varepsilon W,T,\theta_0}u=Eu.
$
Now, by \eqref{lin}, the $n$-th coordinate of $u$ is given by
$$
u_n=\bigl(S^nU^\infty_\varepsilon(E,\theta_0)v_0\bigr)_0 =
\sum_{k=0}^{2m-1}u^{\infty,k}_{\varepsilon;0}(E,T^n\theta_0)
\bigl((C^\infty_\varepsilon)_n(E,\theta_0)v_0\bigr)_k.
$$
By \eqref{equ:basisbound}, there exists $\widetilde M_U$ such that 
we have
$
\sup_{\theta\in\Omega}|u^{\infty,p}_{\varepsilon;0}(E,\theta)|\leq\widetilde M_U,
 0\le p\le 2m-1.
$
Together with \eqref{bound}, this yields
$$
|u_n|\leq\sum_{k=0}^{2m-1}
\sup_{\theta\in\Omega}|u^{\infty,k}_{\varepsilon;0}(E,\theta)|
\|(C^\infty_\varepsilon)_n(E,\theta_0)v_0\|
\leq 2m\widetilde M_U C.
$$
Hence $u$ is a nontrivial bounded solution of
$
\mathbf{L}_{V,\varepsilon W,T,\theta_0}u=Eu.
$
By the standard Weyl's criterion, we conclude that
$
E\in \sigma(\mathbf{L}_{V,\varepsilon W,T,\theta_0}).
$
Since the spectrum is phase-independent, it follows that
$
E\in \sigma(\mathbf{L}_{V,\varepsilon W,T,\theta}).
$
This completes the proof.\qed

\section{The Center Thouless Formula and Spectral Regularity}

\subsection{The Center Thouless formula}

Having established the global trivialization of the center bundle, we proceed to activate the left spectral branch of our framework (see Figure \ref{fig:logic_architecture}). The objective of this subsection is to establish the Center Thouless Formula  (Proposition \ref{prop:thouless}). From the perspective of potential theory, the limit \eqref{eq:com} formulates an explicit analytic cancellation. We first record the weak convergence of the density of
states measures, which will be used to pass the logarithmic potential term in
the finite-range Thouless formula to the limit.

\begin{lemma}\label{lemma:IDSweak}
Let $d \widehat{\mathcal{N}}$ and $d\widehat{\mathcal{N}}_l$ denote the density of states measures associated with
$\mathbf{L}_{V,\varepsilon W,\alpha,\theta}$ and $\mathbf{L}_{V_l,\varepsilon W,\alpha,\theta}$, respectively.
Then
$
d\widehat{\mathcal{N}}_l \overset{*}{\rightharpoonup} d \widehat{\mathcal{N}}, l\to\infty,
$
that is,
$$
\int_{\mathbb R}\varphi(E)d\widehat{\mathcal{N}}_l(E)
\longrightarrow
\int_{\mathbb R}\varphi(E)d \widehat{\mathcal{N}}(E), \varphi\in C_b(\mathbb R),
$$
where $C_b(\mathbb R)$ denotes the space of bounded continuous functions on $\mathbb R$.
\end{lemma}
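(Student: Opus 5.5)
The plan is to reduce weak-$*$ convergence of the density of states measures to convergence of moments, or of resolvent traces, using that both families of operators are uniformly bounded self-adjoint operators with uniformly supported spectra. Here $\mathbf{L}_{V_l,\varepsilon W,\alpha,\theta}$ is the long-range operator with hopping given by the Fourier coefficients of the truncation $V_l$. Since $|\widehat v_k|\le Ce^{-h|k|}$, we have
\begin{equation*}
\|\mathbf{L}_{V_l,\varepsilon W,\alpha,\theta}-\mathbf{L}_{V,\varepsilon W,\alpha,\theta}\|_{\mathfrak B(\ell^2(\mathbb Z))}\le \sum_{|k|>l}|\widehat v_k|\le C e^{-hl}\to 0,
\end{equation*}
uniformly in $\theta\in\mathbb T^d$. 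Indeed, the difference is a convolution operator with $\ell^1$ kernel, and Schur's test applies. In particular every spectrum lies in the fixed compact interval $K:=[-\|V\|_\infty-|\varepsilon|\|W\|_\infty-1,\ \|V\|_\infty+|\varepsilon|\|W\|_\infty+1]$ for $l$ large. All measures $d\widehat{\mathcal N}_l$ and $d\widehat{\mathcal N}$ are therefore probability measures supported in $K$.

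Next I would use the definition in Section~\ref{def:IDS}. For a continuous $\varphi$,
\begin{equation*}
\int\varphi\,d\widehat{\mathcal N}_l=\int_{\mathbb T^d}\langle \delta_0,\varphi(\mathbf{L}_{V_l,\varepsilon W,\alpha,\theta})\delta_0\rangle\,d\theta,
\end{equation*}
and similarly for $d\widehat{\mathcal N}$. It suffices to test against polynomials $\varphi(E)=E^k$. Weak-$*$ convergence against $C_b(\mathbb R)$ then follows from two facts. By Weierstrass, polynomials are dense in $C(K)$. By the uniform support in $K$ together with total mass $1$, a $C_b$ test function can be replaced by its restriction to $K$, and a standard $\epsilon/3$ argument finishes the reduction.

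For polynomials, the telescoping identity $A^k-B^k=\sum_{j} A^{j}(A-B)B^{k-1-j}$ gives
\begin{equation*}
\|\mathbf{L}_l^k-\mathbf{L}^k\|\le k\,M^{k-1}\|\mathbf{L}_l-\mathbf{L}\|,
\end{equation*}
where $M$ is a uniform operator-norm bound. Hence $\langle\delta_0,\mathbf L_l^k\delta_0\rangle\to\langle\delta_0,\mathbf L^k\delta_0\rangle$ uniformly in $\theta$, and integrating over $\theta$ gives convergence of the moments.

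Alternatively, one could avoid polynomials by using the functional-calculus estimate $\|\varphi(A)-\varphi(B)\|\to0$ for continuous $\varphi$ and $\|A-B\|\to0$ within a fixed compact spectral window. This estimate is itself proved via polynomial approximation, so it is equivalent.

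The only point needing care is whether the density of states is defined through the spectral measure at $\delta_p$ averaged in phase, as in Section~\ref{def:IDS}, or as a limit of eigenvalue counting on boxes. If the latter, I would first invoke the standard equivalence, via ergodicity, with the phase-averaged spectral measure. This is mild, so I expect no real obstacle beyond the uniform norm convergence above. That convergence is where the analyticity of $V$, that is, the exponential decay of $\widehat v_k$, enters.
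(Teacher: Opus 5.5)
Your proposal is correct and follows essentially the same route as the paper. Both arguments use the uniform-in-$\theta$ operator-norm convergence $\mathbf L_{V_l,\varepsilon W,\alpha,\theta}\to\mathbf L_{V,\varepsilon W,\alpha,\theta}$ to get $\langle\delta_0,\varphi(\cdot)\delta_0\rangle$ converging uniformly in $\theta$, and then integrate over the phase. The one difference is that the paper cites continuity of the continuous functional calculus directly, while you prove it via Weierstrass approximation on a fixed compact window $K$ and the telescoping moment bound; this is exactly the justification the paper's citation relies on, including for $\varphi\in C_b(\mathbb R)$ once all spectra lie in $K$.
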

\begin{proof}Since $V$ is analytic, its Fourier truncation $V_l$ converges uniformly to $V$ on $\mathbb{T}$. This implies  the uniform operator norm convergence $\sup_{\theta\in\mathbb{T}^d} \|\mathbf{L}_{V_l,\varepsilon W,\alpha,\theta} - \mathbf{L}_{V,\varepsilon W,\alpha,\theta}\| \to 0$ as $l \to \infty$. By the standard continuous functional calculus for uniformly convergent families of bounded self-adjoint operators, for any $\varphi \in C_b(\mathbb{R})$, we obtain the uniform operator limit:$\sup_{\theta\in\mathbb{T}^d} \|\varphi(\mathbf{L}_{V_l,\varepsilon W,\alpha,\theta}) - \varphi(\mathbf{L}_{V,\varepsilon W,\alpha,\theta})\| \to 0.$ This direct implies that :$$\begin{aligned}
\left|\int_{\mathbb{R}}\varphi(E)d\widehat{\mathcal{N}}_l(E) - \int_{\mathbb{R}}\varphi(E)d\widehat{\mathcal{N}}(E)\right|
&\leq \int_{\mathbb{T}^d}\left|\bigl\langle\delta_0,\bigl(\varphi(\mathbf{L}_{V_l,\varepsilon W,\alpha,\theta}) - \varphi(\mathbf{L}_{V,\varepsilon W,\alpha,\theta})\bigr)\delta_0\bigr\rangle\right|d\theta\\
&\leq \sup_{\theta\in\mathbb{T}^d}\|\varphi(\mathbf{L}_{V_l,\varepsilon W,\alpha,\theta}) - \varphi(\mathbf{L}_{V,\varepsilon W,\alpha,\theta})\| \to 0.
\end{aligned}$$\end{proof}

\subsection*{Proof of Proposition \ref{prop:thouless}}
Let $m := m(E)$. By the classical Thouless formula for the finite-range operator \cite{HaroPuig}, the sum of the top $l$ Lyapunov exponents of the companion cocycle $(\alpha,A^l_\varepsilon(\zeta,\theta))$ satisfies, for all $l \ge l_a$ and $\zeta \in I_{\delta_1}$:
$$\sum_{i=1}^{l-m}L_{i,l}(\zeta) + \sum_{j=1}^{m}L_{l-m+j,l}(\zeta)
=
\int_\Sigma \log|\zeta-z|\,d\widehat{\mathcal{N}}_l(z) - \log|\hat{v}_l|,$$
where $\widehat{\mathcal{N}}_l$ denotes the integrated density of states of $\mathbf{L}_{V_l,\varepsilon W,\alpha,\theta}$. 

By the partial hyperbolicity established in Theorem \ref{thm:sec4-main}, the cocycle admits the invariant splitting $\mathbb{C}^{2l} = \mathcal{E}^l_{\varepsilon,u}(\theta) \oplus \mathcal{E}^l_{\varepsilon,c}(\theta) \oplus \mathcal{E}^l_{\varepsilon,s}(\theta)$. The Algebraic Bridge  (Proposition \ref{thm:bridge1}) explicitly identifies the center cocycle $C^l_\varepsilon(\zeta,\theta)$ as the exact restriction of $A^l_\varepsilon(\zeta,\theta)$ to the center bundle $\mathcal{E}^l_{\varepsilon,c}(\theta)$.
More precisely, $$A^l_\varepsilon(\zeta,\theta)|_{\mathcal{E}^l_{\varepsilon,c}(\theta)}=\Pi_{[-l,l-1]}(\theta+\alpha)U_\varepsilon^l(E_l,\theta+\alpha)C^l_\varepsilon(E_l,\theta)\big(\Pi_{[-l,l-1]}(\theta)U_\varepsilon^l(E_l,\theta))\big)^{-1},$$
where $\big(\Pi_{[-l,l-1]}U_\varepsilon^l(\zeta,\theta)\big)^{-1}$ denotes the inverse from
$\mathcal E_{\varepsilon,c}^l(\zeta,\theta)$ to $\mathbb C^{2m}$.
Consequently, by the conjugate invariance of the Lyapunov exponent, the $m$ smallest non-negative exponents precisely constitute the Lyapunov spectrum of the center cocycle. In particular,
$$\sum_{j=1}^{m}L_{l-m+j,l}(\zeta)
=\sum_{j=1}^{m}L_j\bigl(C_{\varepsilon}^{l}(\zeta,\theta)\bigr)
=: F_l(\zeta),$$
which is subharmonic on $I_{\delta_1}$.

Simultaneously, the restriction of $A^l_{\varepsilon}(\zeta,\theta)$ to the unstable bundle $\mathcal{E}^l_{\varepsilon,u}(\theta)$ defines a uniformly expanding analytic cocycle. Therefore, its sum of Lyapunov exponents, $\sum_{i=1}^{l-m}L_{i,l}(\zeta)$, is rigorously harmonic on $I_{\delta_1}$ \cite{positive,AJS}. This guarantees that the term$$H_l(\zeta) := -\log|\hat{v}_l| - \sum_{i=1}^{l-m}L_{i,l}(\zeta)$$is harmonic on $I_{\delta_1}$. The truncated Thouless formula thus can be rewritten as:
\begin{equation}\label{eq:finitethouless}F_l(\zeta) = \int_{\Sigma}\log|\zeta-z|d\widehat{\mathcal{N}}_l(z) + H_l(\zeta).\end{equation}

We now pass to the infinite-range limit. By Lemma \ref{lemma:IDSweak}, the weak-$*$ convergence $d\widehat{\mathcal{N}}_l \overset{*}{\rightharpoonup} d\widehat{\mathcal{N}}$ ensures that for every $\zeta \in I_{\delta_1} \setminus \Sigma$, the logarithmic potentials converge:$$\int_{\Sigma}\log|\zeta-z|\,d\widehat{\mathcal{N}}_l(z) \longrightarrow \int_{\Sigma}\log|\zeta-z|\,d\widehat{\mathcal{N}}(z).$$

Furthermore, by Lemma \ref{thm:center}, we have
$\sup_{|\varepsilon|\leq \varepsilon_a}\|C_{\varepsilon}^{l}(\zeta,\theta)-C_{\varepsilon}^{\infty}(\zeta,\theta)\|_{h,\delta_{1}}\to 0.$
If $\alpha\in \R\backslash\Q$,  the continuity of
Lyapunov exponents for analytic cocycles \cite{AJS} implies that
$$
F_l(\zeta)\rightarrow F_\infty(\zeta):=
\sum_{i=1}^{m}L_i\bigl(C_\varepsilon^\infty(\zeta,\theta)\bigr).
$$ 
If $\alpha\in\mathbb{T}^d$ and $\alpha$ is Diophantine, the same conclusion holds by
\cite{DK}. In both cases, the convergence is pointwise in $\zeta\in I_{\delta_1}$.
Moreover, the uniform convergence of the cocycles on compact subsets of
$I_{\delta_1}$ implies that the family $\{F_l\}$ is locally uniformly bounded. 
Evaluating $H_l(\zeta) = F_l(\zeta) - \int_{\Sigma}\log|\zeta-z|\,d\widehat{\mathcal{N}}_l(z)$, we observe that $\{H_l\}$ converges pointwise on the $I_{\delta_1} \setminus \Sigma$. Since the logarithmic potential is uniformly bounded from above on compact sets, its negation is bounded from below. Combined with the local boundedness of the convergent sequence $\{F_l\}$, the harmonic sequence $\{H_l\}$ is locally uniformly bounded from below. By Harnack's principle, the pointwise convergence upgrades to local uniform convergence on the entire domain $I_{\delta_1}$, yielding a harmonic limit $h_\varepsilon(\zeta)$.Taking the limit in \eqref{eq:finitethouless} yields
$$F_\infty(\zeta) = \int_{\Sigma}\log|\zeta-z|\,d\widehat{\mathcal{N}}(z) + h_\varepsilon(\zeta)$$
for all $\zeta \in I_{\delta_1} \setminus \Sigma$. Since both $F_\infty(\zeta)$ and the logarithmic potential are subharmonic on $I_{\delta_1}$ and differ exactly by a harmonic function almost everywhere, this identity extends uniquely to the entire domain $I_{\delta_1}$, completing the proof. \qed

\smallskip

Following the logical flow of Figure \ref{fig:logic_architecture}, the establishment of the Center Thouless Formula  unlocks the spectral regularity of the operator. We now deploy this  identity to prove the absolute and H\"older continuity of the IDS, which will ultimately provide the  measure-theoretic protection  in the localization proof. 

\subsection{Quantitative Regularity: H\"older Continuity.}
Before turning to the H\"older continuity of the IDS, we first prove that, for $\alpha \in \mathbb{R}\setminus\mathbb{Q}$, the quantity $\nu(E)$ coincides with Avila's acceleration.
\begin{lemma}\label{lem:acce}
    Assume $\alpha\in \mathbb{R}\setminus\mathbb{Q}$. Then for every $E$,
the quantity $\nu(E)$ defined in \eqref{acce} coincides with Avila's acceleration $\omega(E)$.
\end{lemma}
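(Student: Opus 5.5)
The plan is to prove the identity first for the trigonometric truncations $V_l$, where the quantitative global theory of \cite{GJYZ} is available, and then pass to $l\to\infty$. Passing to the limit is done not on the integers $\nu,\omega$ but on the complexified Lyapunov exponent as a function of $y$, which behaves continuously. Throughout, $d=1$, $W=2\cos$, the Schr\"odinger cocycle is $(\alpha,S_E)$ attached to $\mathbf H_{\varepsilon W,V,\alpha,x}$, and $L(y):=L(\alpha,S_E(\cdot+\ii y))$.

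\textbf{Step 1 (finite range).} For $V_l$, the result of \cite{GJYZ} (see also \cite{HS4}) expresses $L_l(y)$, for $|y|$ small, as a piecewise affine convex function. Its break points are the Lyapunov exponents $\gamma^l_1\ge\cdots\ge\gamma^l_{2l}$ of the dual companion cocycle $(\alpha,A^l_\varepsilon(E,\cdot))$, up to the factor $2\pi$. The resulting identity, up to an additive normalization, is
\begin{equation*}
L_l(y)=\sum_{j=1}^{2l}\max\{\gamma^l_j+2\pi y,0\}-\sum_{j=1}^{2l}\max\{\gamma^l_j,0\}+L_l(0).
\end{equation*}
It comes from the fact that complexifying the phase by $\ii y$ conjugates the dual operator by $\diag(e^{2\pi n y})$, which shifts every dual exponent by $2\pi y$. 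The one-sided slope at $y=0^+$, divided by $2\pi$, therefore counts the exponents with $\gamma_j^l\in[-2\pi y,0]$ for small $y$. By the symmetry $\gamma_j\leftrightarrow-\gamma_{2l+1-j}$ (Hermitian-symplectic structure), this count is half the number of zero exponents, so $\omega_l(E)=\tfrac12\#\{j:\gamma^l_j=0\}$. By Theorem \ref{thm:sec4-main} and the Algebraic Bridge (Proposition \ref{thm:bridge1}), all exponents with $|\gamma|<\log r_u$ lie in the center part, and these are exactly the exponents of $C^l_\varepsilon(E,\cdot)$. Hence $\omega_l(E)=\nu_l(E)$.

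\textbf{Step 2 (a uniform formula in $y$).} For $2\pi|y|<\min\{\log r_u-\log r_+,\ \log r_--\log r_s\}$, the hyperbolic exponents of $A^l_\varepsilon$ keep their sign after the shift by $2\pi y$. Only the $2m$ center exponents contribute a nonlinear part, and the formula of Step 1 reduces to
\begin{equation*}
L_l(y)-L_l(0)=a_l\,y+\sum_{i=1}^{2m}\Bigl(\max\{L_i(C^l_\varepsilon)+2\pi y,0\}-\max\{L_i(C^l_\varepsilon),0\}\Bigr).
\end{equation*}
Here $a_l y$ is the linear contribution of the $l-m$ unstable exponents. By the same symmetry as in Step 1, together with $\sum_{i\le m}=\sum_{i>m}$ type cancellation, the coefficient $a_l$ must vanish in the normalization of the Thouless formula \eqref{eq:com}. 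The constants in this window are uniform in $l$ by Remark \ref{uniform-constant}.

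\textbf{Step 3 (limit and differentiation).} As $l\to\infty$:
\begin{itemize}
\item $L_l(y)\to L(y)$ for each small $y$, because $V_l\to V$ in $C^\omega$ and the Lyapunov exponent of one-frequency analytic $\mathrm{SL}(2,\mathbb C)$ cocycles is continuous \cite{AJS}.
\item $L_i(C^l_\varepsilon)\to L_i(C^\infty_\varepsilon)$, by Lemma \ref{thm:center}(1) and continuity of all Lyapunov exponents \cite{AJS}.
\end{itemize}
The identity of Step 2 therefore holds for $V$ with the limiting center exponents. Taking the right derivative at $y=0$ gives $2\pi\omega(E)=2\pi\cdot\tfrac12\#\{i:L_i(C^\infty_\varepsilon)=0\}=2\pi\nu(E)$, using the symmetry of the center spectrum again. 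This is how the argument avoids comparing the integer counts $\nu_l$, $\nu$ directly, since those counts are not continuous in $l$.

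\textbf{Main obstacle.} The delicate point is Step 2: proving that the unstable and stable blocks contribute only a term affine in $y$, with a coefficient independent of $l$ or at least convergent, and that the symmetry of the center exponents survives in the infinite-range limit. I would first try to identify the complexification $y$ with conjugation of the full transfer operator $\mathcal L$ by the weight $e^{2\pi n y}$. This maps the Riesz projections of Section \ref{sec:ambient_framework} to those of the rescaled symbol $V(e^{2\pi y}z)-E$ on the same contours, so the splitting is unchanged for small $y$. The harmonicity of the hyperbolic sum, as in the proof of Proposition \ref{prop:thouless}, then forces the non-center contribution to be affine in $y$.
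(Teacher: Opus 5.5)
\paragraph{Verdict.} Your strategy is workable: pass to the limit in the finite-range formula for the complexified Lyapunov exponent, restricted to a window in $y$ where only center exponents matter. However, the formula in Step~1 is wrong, and the error propagates, so Steps~2--3 do not prove the lemma as written.

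\paragraph{The error in Step~1.} The finite-range formula of Ge--Jitomirskaya--You--Zhou sums only over the $l$ nonnegative dual exponents:
\[
L_l(y)=c_l+\sum_{j=1}^{l}\max\{\gamma^l_j,2\pi y\},\qquad y\ge 0,\quad \gamma^l_1\ge\cdots\ge\gamma^l_l\ge 0.
\]
By $\gamma^l_{2l+1-j}=-\gamma^l_j$ this is equivalent to $L_l(y)-L_l(0)=\sum_{j=l+1}^{2l}\max\{\gamma^l_j+2\pi y,0\}$. Your expression, summed over all $2l$ exponents, exceeds this by exactly $2\pi l y$, since every nonnegative $\gamma^l_j$ contributes $2\pi y$. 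Its slope at $0^+$ is therefore $2\pi\bigl(l+\tfrac12\#\{j:\gamma^l_j=0\}\bigr)$. Your verbal description of the slope matches the corrected formula, not the one you wrote.

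\paragraph{How the error propagates.} The wrong formula creates your ``main obstacle.'' With it, the $l-m$ unstable exponents give $a_l=2\pi(l-m)\to\infty$. So the claim that $a_l$ ``must vanish in the normalization of the Thouless formula'' is false for your identity, and it is not argued. The harmonicity in the proof of Proposition~\ref{prop:thouless} is in the energy variable $\zeta$, not in $y$, and the issue is the value of the coefficient, not affineness. Even granting $a_l=0$, Step~3 does not follow from Step~2. The right derivative at $y=0$ of
\[
\sum_{i=1}^{2m}\bigl(\max\{L_i(C^\infty_\varepsilon)+2\pi y,0\}-\max\{L_i(C^\infty_\varepsilon),0\}\bigr)
\]
is $2\pi\#\{i:L_i(C^\infty_\varepsilon)\ge0\}=2\pi(m+\nu(E))$, not $2\pi\nu(E)$.

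\paragraph{The repair.} With the correct formula the difficulty disappears. By Theorem~\ref{thm:sec4-main} and Proposition~\ref{thm:bridge1}, the top $l$ exponents of $A^l_\varepsilon$ are:
\begin{itemize}
\item the $l-m$ unstable ones, all $\ge\log r_u$ uniformly in $l$;
\item the top half $L_1(C^l_\varepsilon)\ge\cdots\ge L_m(C^l_\varepsilon)\ge 0$ of the center spectrum.
\end{itemize}
Hence for $0\le 2\pi y<\log r_u$,
\[
L_l(y)-L_l(0)=\sum_{i=1}^{m}\bigl(\max\{L_i(C^l_\varepsilon),2\pi y\}-L_i(C^l_\varepsilon)\bigr),
\]
with no linear term at all. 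Your limiting argument then transfers this to $V$ and $C^\infty_\varepsilon$; it uses Lemma~\ref{thm:center}(1) plus continuity of Lyapunov exponents for both the Schr\"odinger and the center cocycles. Differentiating at $0^+$ gives $\omega(E)=\#\{i\le m:L_i(C^\infty_\varepsilon)=0\}$. The symmetry $L_j(C^l_\varepsilon)=-L_{2m+1-j}(C^l_\varepsilon)$, passed to the limit, then gives $\nu(E)=\omega(E)$.

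\paragraph{Comparison with the paper.} Repaired, your route is valid and genuinely different from the paper's. The paper writes no formula for $L(y)$ and differentiates nothing. It quotes the GJYZ theorem for analytic $V$, which characterizes $\omega(E)$ as the number of vanishing limits $\lim_{l}L^l_i(E)$ of the smallest nonnegative dual exponents. It then identifies those limits with $L_{m-j+1}(C^\infty_\varepsilon)$ using the same ingredients you need: the bridge, convergence $C^l_\varepsilon\to C^\infty_\varepsilon$, continuity, the bound $\log r_u$, and symmetry. Your version in effect re-derives that black box in this setting. As a by-product it gives an explicit description of $L(y)$ near $y=0$ in terms of center exponents alone.
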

\begin{proof}
    For the Schr\"odinger cocycle $S_E^V(\theta)=\begin{pmatrix}
    E-V(\theta)&-1\\1&0
\end{pmatrix}$, we have the following quantitative version of Avila's Global theory.

\begin{theorem}\cite{GJYZ}\label{quanglobal}
    For $\alpha\in\R\backslash\Q$, $V\in C_h^\omega(\T,\R)$. For every $E\in\mathbb R$, there exist $p(E)\in[1,\infty]$ and non negative $\{{L}_i(E)\}_{i=1}^{p(E)}$, such that   ${L}_i(E):=\lim_{l\to\infty}L_i^l(E), 1\leq i\leq p(E),$ where $0\leq L_1^l(E)\leq\cdots\leq L_l^l(E)$ is the Lyapunov exponent of cocycle $(\alpha, A_{\varepsilon}^{l}(E,\theta)).$ And $$\omega(E)=\begin{cases}
        0,   &{L}_1(E)>0\\ 
        \#\{j:{L}_j(E)=0\},&{L}_1(E)=0
    \end{cases}.$$
\end{theorem}

By the Algebraic Bridge  (Proposition \ref{thm:bridge1})  identifies the center cocycle $C^l_\varepsilon(\zeta,\theta)$ as the exact restriction of $A^l_\varepsilon(\zeta,\theta)$ to the center bundle $\mathcal{E}^l_{\varepsilon,c}(\theta)$, it follows that  $L_{l-j+1}(A^{l}_{\varepsilon}(\theta,E))=L_{m(E)-j+1}((C_{\varepsilon}^{l}(E,\theta))$ for $1\leq j\leq m(E)$. By Lemma \ref{thm:center} (3),   $C_{\varepsilon}^{l}(E,\theta)\rightarrow C_{\varepsilon}^{\infty}(E,\theta)$ in analytic topology, by the continuity of Lyapunov exponent \cite{AJS}, we have $$L_j(E)=\lim_{l\to\infty}L_{l-j+1}(A^{l}_{\varepsilon}(\theta,E))=L_{m(E)-j+1}(C_{\varepsilon}^{\infty}(E,\theta)).$$
 By Theorem \ref{thm:sec4-main}, $L_{l-i+1}(A^{l}_{\varepsilon}(\theta,E))\geq \log r_u>0$ for all $i\in[m(E)+1,l]$ and $l\in[l_0,\infty).$ Therefore, $\limsup_{i\to\infty} L_i(A^{l}_{\varepsilon}(\theta,E))\geq \log r_u>0$ for $i\in[m(E)+1,\infty)$. By Theorem \ref{quanglobal}, it follows that
$$\omega(E)=\#\{1\leq j\leq m(E):\,L_j(E)=0\}=\#\Bigl\{1\leq j\leq m(E):
 L_j\bigl(C_{\varepsilon}^{\infty}(E,\theta)\bigr)=0 \Bigr\}.$$
Furthermore, by symplectic structure of $A_{\varepsilon}^{l}(E,\theta)$,
we have $L_{l-j+1}(A^{l}_{\varepsilon}(\theta,E))=-L_{l+j}(A^{l}_{\varepsilon}(\theta,E))$, since
$
L_{l-j+1}(A^{l}_{\varepsilon}(\theta,E))=L_{m(E)-j+1}((C_{\varepsilon}^{l}(E,\theta))$, we have
$$
L_j\bigl(C_{\varepsilon}^{l}(E,\theta)\bigr)
=-L_{2m-j+1}\bigl(C_{\varepsilon}^{l}(E,\theta)\bigr),\quad 1\leq j\leq m(E).
$$
Passing to the limit as $l\to\infty$, we obtain
$
L_{m+j}\bigl(C_{\varepsilon}^{\infty}(E,\theta)\bigr)=0,
 1\leq j\leq \omega(E),
$
and
$
L_{m+j}\bigl(C_{\varepsilon}^{\infty}(E,\theta)\bigr)<0,
 \omega(E)+1\leq j\leq m.
$
Therefore, $2\omega(E)=\#\bigl\{1\leq j\leq 2m(E): L_j(\alpha, C_{\varepsilon}^{\infty}) = 0\bigr\}.$
\end{proof}

We  recall the H\"older continuity of Lyapunov exponents.
\begin{theorem}[\cite{GYZ22}]\label{thm:holder-GYZ}
Let $\alpha\in \mathrm{DC}_d(\kappa,\tau)$ and let
$A_0\in \mathrm{GL}(q,\mathbb C)$
be a constant matrix.  For every $\epsilon>0$, there exists
$\varepsilon_1=\varepsilon_1(\alpha,h,q,\|A_0\|,\|A_0^{-1}\|,\epsilon)>0$
such that, if
$F\in C_h^\omega(\mathbb T^d,\mathfrak{gl}(q,\mathbb C))$
satisfies
$\|F\|_h\le \varepsilon_1,$  and if we set
$A:=A_0+F,$ then
for every
$\widetilde A\in C_h^\omega(\mathbb T^d,\mathrm{GL}(q,\mathbb C))$, one has
$$
\left|L_i(\alpha,A)-L_i(\alpha,\widetilde A)\right|
\le C_0\|\widetilde A-A\|_h^{\frac1{q}-\epsilon},
\qquad 1\le i\le q.
$$where $C_0$ is a constant depending on $\|A\|, d, \kappa, \tau, q, \epsilon$.
Moreover, let
$$
L_1(\alpha,A)=\cdots=L_{n_1}(\alpha,A)
>\cdots> L_{n_{k-1}+1}(\alpha,A)=\cdots=L_{n_{k-1}+n_k}(\alpha,A)
$$
be the Lyapunov exponent, and write
$
N_k=n_1+\cdots+n_k
$
with $N_0=0$.
 Then
$$
\left|
\sum_{\ell=N_{k-1}+1}^{N_k}\bigl(L_\ell(\alpha,\widetilde A)-L_\ell(\alpha,A)\bigr)
\right|\le C_1\|\widetilde A-A\|_h^{1/2},
$$
and, for every $N_{k-1}<\ell\le N_k$,
$$
\left|L_\ell(\alpha,\widetilde A)-L_\ell(\alpha,A)\right|
\le C_1\|\widetilde A-A\|_h^{\frac1{n_k}-\epsilon}.
$$
where $C_1$ is a constant depending on $A, d, \kappa, \tau, q, \epsilon$.
\end{theorem}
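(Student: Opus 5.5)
The plan is to deduce the estimate from a quantitative almost reducibility theorem for analytic quasi-periodic cocycles close to a constant, combined with an elementary perturbation bound for constant cocycles. Since $\alpha\in\mathrm{DC}_d(\kappa,\tau)$ and $\|F\|_h\le\varepsilon_1$, I would run the standard KAM scheme (Eliasson's, in the $\mathrm{GL}(q,\mathbb C)$ form of Hou--You / \cite{GYZ22}) on $(\alpha,A_0+F)$.

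\textbf{Step 1: almost reducibility with controlled conjugations.} Fix a geometric sequence of scales $\varepsilon_{j+1}=\varepsilon_j^{1+\sigma}$ with $\varepsilon_0=\|F\|_h$, and shrinking strips $h_j\downarrow h/2$. Produce $B_j\in C^\omega_{h_j}(\mathbb T^d,\mathrm{GL}(q,\mathbb C))$, constants $A_j\in\mathrm{GL}(q,\mathbb C)$ and perturbations $f_j$ such that
$$
B_j(\theta+\alpha)^{-1}A(\theta)B_j(\theta)=A_je^{f_j(\theta)},\qquad \|f_j\|_{h_j}\le\varepsilon_j,\qquad \|B_j\|_{h_j},\ \|B_j^{-1}\|_{h_j}\le \varepsilon_j^{-\eta},
$$
where $\eta>0$ can be made arbitrarily small by taking $\varepsilon_1$ small; this is where the dependence on $\epsilon$ enters. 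At each step the homological equation is solved away from resonances $\lambda_i-\lambda_{i'}e^{2\pi\mathrm i\langle k,\alpha\rangle}\approx0$ between eigenvalues of $A_j$. At resonant steps a rotation $\theta\mapsto e^{2\pi\mathrm i\langle k,\theta\rangle}$ is inserted on the relevant eigenspaces, with $|k|\lesssim|\log\varepsilon_j|$, costing a factor $e^{C|k|h}$. The Diophantine condition makes resonances sparse, and this is what keeps $\|B_j\|$ polynomial in $\varepsilon_j^{-1}$.

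\textbf{Step 2: comparison at a well-chosen scale.} Let $\delta:=\|\widetilde A-A\|_h$ and choose $j$ with $\varepsilon_{j+1}\le\delta<\varepsilon_j$. Conjugating $\widetilde A$ by the same $B_j$ gives
$$
A_j+G,\qquad \|G\|_{h_j}\lesssim \varepsilon_j+\varepsilon_j^{-2\eta}\delta\le \delta^{1-\eta'} ,
$$
with $\eta'\to0$ as $\sigma,\eta\to0$. Lyapunov exponents are conjugation invariant and $B_j$ is homotopically trivial, so both $L_i(\alpha,A)$ and $L_i(\alpha,\widetilde A)$ are exponents of an $O(\delta^{1-\eta'})$-perturbation of the same constant $A_j$.

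\textbf{Step 3: constant matrix plus small perturbation.} For a constant $A_j$ and a perturbation of size $\rho$:
\begin{itemize}
\item Put $A_j$ in Jordan form and rescale each block by $\mathrm{diag}(1,\rho^{1/n},\dots,\rho^{(n-1)/n})$. This turns $A_j+G$ into a diagonal matrix plus a perturbation of size $\lesssim\rho^{1/n}$, where $n\le q$ is the block size.
\item Bounding $\|(A_j+G)_N\|$ above and below then moves each exponent by at most $C\rho^{1/n}$.
\end{itemize}
Taking $\rho=\delta^{1-\eta'}$ yields the first estimate with exponent $\frac1q-\epsilon$. Restricting to the cluster of exponents with multiplicity $n_k$, the dominated splitting between different clusters persists under perturbation, and only Jordan blocks of size $\le n_k$ occur inside the cluster. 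This gives the refined $\frac1{n_k}-\epsilon$ bound.

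\textbf{Step 4: sum over a cluster.} The sum of the exponents in a cluster equals $\int\log|\det(\text{restriction to the invariant cluster bundle})|$. I would treat this through the exterior power $\Lambda^{N_k}$, where the cluster corresponds to an isolated top exponent of multiplicity one modulo the lower part. Second-order perturbation of a simple isolated exponent, with the first-order term averaging out after the KAM normalization, then gives the $\frac12$ exponent.

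\textbf{Main obstacle.} The expected hard step is Step 1: showing that the resonance-removing conjugations keep $\|B_j\|\le\varepsilon_j^{-\eta}$ uniformly in the dimension-$q$ clustering, so that the loss in Step 2 is only $\delta^{-\eta'}$. I would first try to prove that between consecutive resonant steps the scale gains a fixed power, so that the total rotation cost is geometric in $\log\varepsilon_j^{-1}$.
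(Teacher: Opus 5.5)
The paper gives no proof of this statement; it is quoted from \cite{GYZ22} and only applied to the center cocycles at nearby energies. Your architecture is the standard one: quantitative almost reducibility, conjugating $A$ and $\widetilde A$ by the same $B_j$ at the scale $\varepsilon_{j+1}\le\delta<\varepsilon_j$, then a perturbation lemma for constant cocycles. However, Step~1 as you state it is false, and it is exactly what governs the loss $\epsilon$.

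A bound $\|B_j^{\pm1}\|_{h_j}\le\varepsilon_j^{-\eta}$ with $\eta\to0$ in the \emph{strip} norm contradicts your own bookkeeping. A resonance conjugacy $\mathrm{diag}(e^{2\pi\mathrm i\langle k,\theta\rangle}I,I)$ with $|k|$ comparable to $|\log\varepsilon_j|$ (in fact up to $N_j\sim|\log\varepsilon_j|/(h_j-h_{j+1})$) has strip norm $e^{c|k|h_j}$. That is a power of $\varepsilon_j^{-1}$ of size $\gtrsim h$, and no smallness of $\varepsilon_1$ reduces it. So your plan to make the rotation cost geometric in $\log\varepsilon_j^{-1}$ attacks the wrong quantity.

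What rescues the argument is that Steps~2--3 only use sup norms over the real torus, where these rotations are unitary. There the loss comes only from the constant conjugacies that block-diagonalize $A_j$ along non-resonant eigenvalue groups; each costs a small power of $\varepsilon_j^{-1}$, governed by the non-resonance threshold. The sparsity of resonant steps forced by the Diophantine condition keeps their product a small power as well. The statement to prove is therefore $\|B_j^{\pm1}\|_{C^0(\mathbb T^d)}\le\varepsilon_j^{-\eta}$. Also, $B_j$ is not homotopic to the identity after a resonant step; this is harmless, since any continuous conjugacy preserves Lyapunov exponents.

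Finally, the estimates can only hold for $\|\widetilde A-A\|_h$ below a threshold. For $q=1$, $A\equiv1$, $\widetilde A\equiv s\in(0,1)$, the left-hand side is $|\log s|$ while $\|\widetilde A-A\|_h<1$. Your Step~2 tacitly assumes $\delta<\|F\|_h$; the range $\|F\|_h\le\delta\le\delta_0$ is handled with $B=\mathrm{Id}$.

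Step~3 needs repair too. The Jordan form of $A_j$ is not uniformly conditioned in $j$: right after a resonance, eigenvalues of $A_j$ nearly collide. Use instead a unitary Schur triangularization rescaled by $\mathrm{diag}(1,t,\dots,t^{q-1})$ with $t=\rho^{1/q}$. This gives a diagonal matrix plus $O(\rho^{1/q})$, with constants depending only on $q$ and $\|A_j^{\pm1}\|$. Moreover, two-sided bounds on $\|(A_j+G)_N\|$ control only $L_1$ (and $L_q$ via the inverse). For the middle exponents you need three things:
\begin{itemize}
\item exterior powers;
\item domination between groups of eigenvalues whose log-moduli are separated by $\gg\rho^{1/q}$;
\item $\int\log|\det|$ within each group.
\end{itemize}

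The mechanism of Step~4 is wrong. First-order terms do not average out: for $q=1$, $L(\alpha,\widetilde A)-L(\alpha,A)=\int\log|\widetilde A/A|$ is generically first order. Likewise, a simple isolated top exponent of $\Lambda^{N_k}A$ varies Lipschitz, not quadratically. The correct argument is simpler:
\begin{itemize}
\item Step~3 applied to $A$ itself gives $\bigl|L_i(\alpha,A)-\log|\lambda_i(A_j)|\bigr|\lesssim\varepsilon_j^{1/q}$.
\item Hence, for $\delta$ small depending on the gaps of the Lyapunov spectrum of $A$, the eigenvalues of $A_j$ split into groups matching the clusters, separated uniformly in $j$.
\item The spectral block-diagonalization of $A_j$ is then uniformly bounded, and domination persists.
\item The $k$-th cluster sum equals $\int\log|\det|$ of the restricted $n_k\times n_k$ cocycle, an $O(\rho)$-perturbation of the block $A_j^{(k)}$.
\end{itemize}
This gives $O(\delta^{1-\eta'})$, which is stronger than $\delta^{1/2}$. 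Running the Schur rescaling inside this block, in dimension $n_k$, gives the $\frac1{n_k}-\epsilon$ bound.
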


\subsection*{Proof of Theorem \ref{thm:IDSholder}}
Let $I=[\min V(\theta)-1,\max V(\theta)+1]$. Take $\varepsilon$ sufficiently small such that $\sigma(\mathbf{L}_{V,\varepsilon W,\alpha,\theta})\subset I.$
By Lemma \ref{prop:assume}, there exist finitely many open intervals $J^1,\dots,J^k$ covering $I$, radii $e^{-h}<r_-^{(j)}<1<r_+^{(j)}<e^h$, numbers $\eta_-^{(j)}>0,\eta_+^{(j)}>0$, integers $m_j\ge 0$ for $1\leq j\leq k$, and constant $\delta_0>0$ such that the following hold for every $j\in\{1,\dots,k\}$:
For every $E\in \overline{J^j}$, $|L_{E}(z)|\ge \delta_0$ for all $z$ satisfying 
    $$
    r_-^{(j)}e^{-\eta_-^{(j)}}\le |z|\le r_-^{(j)}e^{\eta_-^{(j)}},\quad r_+^{(j)}e^{-\eta_+^{(j)}}\le |z|\le r_+^{(j)}e^{\eta_+^{(j)}}
    $$
and
    $$
    \#\{z:\ r_-^{(j)}<|z|<r_+^{(j)},\ L_{E}(z)=0\}=2m_j,
    $$
    counting multiplicities.
For each fixed $j$, Theorem \ref{thm:centert} gives a number
$\varepsilon_{a,j}>0$ such that, whenever
$
|\varepsilon|<\varepsilon_{a,j},
$ there exists a center cocycle
$
C_{\varepsilon}^{j,\infty}(E,\theta)
\in C^\omega(J^j_{\delta_1}\times\mathbb T_h^d,\GL(2m_j,\mathbb C)),
$ where $\delta_1=\frac{\delta_0}{4}.$
By Lemma \ref{thm:center} (3), the
perturbed cocycle is uniformly close to the unperturbed one,  there exists $C_j$ such that
$
\|C_\varepsilon^{j,\infty}(E,\theta)-C_0^{j,\infty}(E)\|_{h,\delta_1}
\le C_j|\varepsilon|.
$
Hence, there exists $\varepsilon_{r,j}$, such that for  $|\varepsilon|<\varepsilon_{r,j}$, cocycle
$ C_\varepsilon^{j,\infty}(E,\cdot)$
satisfies all assumptions of Theorem \ref{thm:holder-GYZ} on $I_{\delta_1}$.

By Proposition \ref{prop:thouless}, we have $$\sum_{i=1}^m L_i(C_{\varepsilon}^{j,\infty}(\zeta,\theta))=\int_{I_{\delta_{1}}} \log|\zeta-t|d{\widehat{\mathcal{N}}}(t)+h_\varepsilon(\zeta),\qquad \zeta\in J^j_{\delta_{1}}.$$
Since $h_\varepsilon$ is harmonic on $I_{\delta_1}$, it follows that
$$
\begin{aligned}
|d{\widehat{\mathcal{N}}}(E-\varepsilon,E+\varepsilon)|
&\leq\sum_{i=1}^{m_j}\bigl|
L_i\bigl(C_{\varepsilon}^{j,\infty}(E+i\varepsilon,\theta)\bigr)
-L_i\bigl(C_{\varepsilon}^{j,\infty}(E,\theta)\bigr)\bigr|
+C\varepsilon\\
&\leq
\sum_{i=1}^{m_j-\nu(E)}\bigl|
L_i\bigl(C_{\varepsilon}^{j,\infty}(E+i\varepsilon,\theta)\bigr)
-L_i\bigl(C_{\varepsilon}^{j,\infty}(E,\theta)\bigr)\bigr|\\
&\quad+
\sum_{i=m_j-\nu(E)+1}^{m_j}
\bigl|L_i\bigl(C_{\varepsilon}^{j,\infty}(E+i\varepsilon,\theta)\bigr)-
L_i\bigl(C_{\varepsilon}^{j,\infty}(E,\theta)\bigr)\bigr|
+C\varepsilon\\
&\leq
C\varepsilon^{1/2}+C\varepsilon^{\frac{1}{2\nu(E)}-\epsilon}
+C\varepsilon\leq C\varepsilon^{\frac{1}{2\nu(E)}-\epsilon}. 
\end{aligned}
$$
Let $\varepsilon_3=\min_j\{\varepsilon_{r,j}\}$, by Proposition \ref{IDSeq}, the conclusion follows. \qed
\subsection{Absolute Continuity of the IDS}

First let's state the full measure reducibility of the center cocycle:\begin{theorem}\label{main:reduce}
Let $\alpha\in \mathrm{DC}_d$.  There exists $\varepsilon_{r}=\varepsilon_r(\alpha,V,W)>0,$ $h=h(\alpha,V,W)>0$  such that for all $|\varepsilon|<\varepsilon_r$, , there exists $\Theta\subseteq \sigma(\mathbf{L}_{V,\varepsilon W,\alpha,\theta})$ with Hausdorff dimension zero, such that if $E\in\sigma(\mathbf{L}_{V,\varepsilon W,\alpha,\theta})\backslash \Theta$,  there exists $m_E\in \mathbb{N}$, $B(E,\cdot)\in C_{h}^{\omega}(\T^{d},\GL(2m_E,\C))$,  such that
$$
B(E,\theta+\alpha)^{-1}C_{\varepsilon}^\infty(E,\theta)B(E,\theta)=\Lambda(E),
$$  where $\Lambda(E)\in\GL(2m_E,\C)$ is a diagonal matrix. More precisely, there exist finitely many open intervals $J^1,\ldots,J^k$ covering $\sigma(\mathbf L_{V,\varepsilon W,\alpha,\theta})$
and $m_E$ is constant on $J^j,1\leq j\leq k$.
\end{theorem}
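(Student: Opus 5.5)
We will obtain Theorem \ref{main:reduce} by running a quantitative KAM (almost reducibility) scheme on the analytic center cocycle $(\alpha, C^\infty_\varepsilon(E,\cdot))$ constructed in Theorem \ref{thm:centert}, uniformly over each interval $J^j$ of the finite cover from Lemma \ref{prop:assume}. On a fixed $J^j$ the dimension $2m_j$ is constant, and Lemma \ref{thm:center}(3) gives $\|C^\infty_\varepsilon(E,\cdot)-C^\infty_0(E)\|_{h,\delta_1}\le C|\varepsilon|$. The cocycle is therefore a small analytic perturbation of a constant matrix, analytic in $(E,\theta)\in J^j_{\delta_1}\times\mathbb T^d_h$. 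By Corollary \ref{cor:zero}, the unperturbed matrix $C^\infty_0(E)$ is a companion matrix whose eigenvalues are exactly the roots of $V(z)=E$ in the center annulus, that is, the $e^{2\pi i x}$ with $V(x)=E$. Since $V$ is non-constant and analytic, these roots move with $E$ at a rate bounded below by a power: $|\partial_E^k(\mu_i-\mu_j)|$ does not vanish identically for some $k\le k_0(V)$. This is the non-degeneracy (Rüssmann-type transversality) condition mentioned in Remark \ref{non-dege} and Lemma \ref{lem:G-analytic}.

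The scheme itself is the standard multi-dimensional Eliasson/Hou--You style KAM. At step $n$ we have $C_n(E,\theta)=A_n(E)e^{F_n(E,\theta)}$ with $\|F_n\|_{h_n}\le\epsilon_n$, $\epsilon_{n+1}=\epsilon_n^{1+\sigma}$. We solve the linearized homological equation
\[
A_n Y(\theta+\alpha)-Y(\theta)A_n=-\text{(nonresonant part of }F_n),
\]
mode by mode. The small divisors are $e^{2\pi i\langle k,\alpha\rangle}-\lambda_i\lambda_j^{-1}$, where $\lambda_i$ are the eigenvalues of $A_n$. The Diophantine condition $\alpha\in\mathrm{DC}_d$ controls these for $|k|\le N_n$ outside resonances.

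The resonant case is handled in two ways. If the resonance is removable, we eliminate it by a rotation conjugacy $e^{2\pi i\langle k,\cdot\rangle}$ on the relevant eigenspaces, losing analyticity width in the usual controlled way. Otherwise we excise the energy set $\mathcal O_n$ where
\[
|e^{2\pi i\langle k,\alpha\rangle}-\lambda_i(E)\lambda_j(E)^{-1}|<\epsilon_n^{\sigma'}\quad\text{for some }0<|k|\le N_n .
\]
Hyperbolic pairs ($|\lambda_i|\ne|\lambda_j|$) need no excision, since they have a uniform gap.

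We expect the main obstacle to be the measure and dimension estimate of the excluded set $\Theta=\bigcap_N\bigcup_{n\ge N}\mathcal O_n$, intersected with the spectrum. We will first transport the transversality of $E\mapsto\mu_i(E)/\mu_j(E)$ from $C^\infty_0$ to the eigenvalues of $A_n(E)$. This works because $A_n-C_0^\infty=O(\varepsilon)$ in $C^{k_0}$ in $E$, by Cauchy estimates on the complex neighbourhood $J_{\delta_1}$. A Pyartli/Rüssmann sublevel estimate then gives $|\mathcal O_n\cap J^j|\lesssim N_n^{d}\epsilon_n^{\sigma'/k_0}$. Each $\mathcal O_n$ is a union of at most $CN_n^d$ intervals of length $\lesssim\epsilon_n^{\sigma'/k_0}$, and $\epsilon_n$ decays super-exponentially while $N_n\sim|\log\epsilon_n|$. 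So for every $s>0$, $\sum_n N_n^d(\epsilon_n^{\sigma'/k_0})^s<\infty$, which yields Hausdorff dimension zero.

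Degenerate coalescences $\mu_i=\mu_j$, i.e.\ multiple roots of $V(x)=E$, occur only at finitely many $E$ and can be added to $\Theta$. For $E$ outside $\Theta$, all but finitely many steps are nonresonant, and the conjugacies $B_n$ converge in $C^\omega_{h'}$ for some $h'>0$ depending on $\alpha,V,W$. The limit constant $A_\infty(E)$ has simple eigenvalues, since its spectrum is $\varepsilon$-close to the simple distinct roots, so it is diagonalizable, giving $\Lambda(E)$.

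Finally, $m_E=m_j$ is constant on $J^j$. We take $\varepsilon_r=\min_j\min(\varepsilon_{a,j},\varepsilon_{\mathrm{KAM},j})$, where the KAM threshold depends on $\alpha$, $\|C_0^\infty\|$, $\|(C_0^\infty)^{-1}\|$ and the transversality constants, all uniform on $\overline{J^j}$.
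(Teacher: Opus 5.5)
Your architecture matches the paper's. On each $J^j$ of the finite cover, $C^\infty_\varepsilon(E,\cdot)$ is an $O(\varepsilon)$ analytic perturbation of the constant $C^\infty_0(E)$, whose spectrum is the annular roots of $L_E$ (Corollary \ref{cor:zero}), and KAM with parameter exclusion gives reducibility off a dimension-zero set. The paper does not rerun the scheme. It invokes Theorem \ref{Thm2}, whose key hypothesis is a R\"ussmann bound $\max_{k\le r}|\partial_E^k g(E,u)|\ge c$ for $g(E,u)=\prod_{i\ne j}(\mu_i-e^{2\pi\ii u}\mu_j)$, uniformly in $u\in\mathbb T$ and $E\in I$. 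Its conclusion already includes simple eigenvalues of the reduced constant. The real work is verifying that hypothesis (Lemma \ref{lem:G-analytic}), and your justification of it fails. You cite that lemma, but you attribute to it a different condition. Non-constancy of $V$ giving $\partial_E^k(\mu_i-\mu_j)\not\equiv 0$ only excludes the resonance $\omega=1$. The divisors $e^{2\pi\ii\langle k,\alpha\rangle}-\mu_i/\mu_j$ need $\mu_i(E)-\omega\mu_j(E)\not\equiv0$ for every unimodular $\omega$. This is false in general: if $V$ has period $1/q$, then $L_E(e^{2\pi\ii/q}z)=L_E(z)$, and some ratios are identically $e^{2\pi\ii p/q}$. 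The paper first reduces to $V$ of minimal period $1$. It then shows, via pigeonhole and the identity theorem, that $\mu_{i_0}\equiv\omega\mu_{j_0}$ on an interval forces $V(\omega z)=V(z)$ on the open set $\mu_{j_0}(U)$ (open since $V'(\mu_{j_0})\mu_{j_0}'=1$). Hence the identity holds everywhere, a contradiction.

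Second, your handling of coalescences does not give the uniform constants the KAM threshold needs. You excise the finitely many multiple-root energies and transport transversality along individual branches via $A_n-C_0^\infty=O(\varepsilon)$ in $C^{k_0}$. Near a multiple root the branches have algebraic branch points. Since $C_0^\infty(E)$ is a non-derogatory companion matrix, a double root is a $2\times2$ Jordan block, and its eigenvalues move by $O(\varepsilon^{1/2})$ under $O(\varepsilon)$ perturbations. Removing points does not restore uniformity on their neighbourhoods. Also, after a resonance-removing rotation, $A_n$ is no longer close to $C_0^\infty$. The paper avoids both problems by working with the symmetric function $G(E,\omega)$. It is a polynomial in $s_n(E)=\frac{1}{2\pi\ii}\oint_{\Gamma_c}z^nL_E'(z)/L_E(z)\,dz$, so it is analytic through coalescences, and compactness of $I\times\mathbb T$ gives one pair $(r,c_\infty)$ on the whole interval. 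It depends polynomially on the characteristic polynomial, so it is stable under small perturbations. Being imposed for all $u$, it is insensitive to constant rotations of the ratios. The same issues break your last step. Resonance removal rotates eigenvalues by $e^{-2\pi\ii\langle k,\alpha\rangle}$ precisely to bring the resonant pair together, so $A_\infty(E)$ is neither close to the original roots nor automatically simple. In the paper, simplicity, and hence diagonalizability to $\Lambda(E)$, is part of Theorem \ref{Thm2}'s conclusion, with the failing energies absorbed into $\Theta$. Likewise, ``hyperbolic pairs have a uniform gap'' is false, since center roots leave $|z|=1$ at critical values of $V$; the $g$ formulation treats all pairs at once.
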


To prove this, let us first recall the KAM theory of $\GL(q,\C)$ cocycles in \cite{WXYZ}.
 Let $A: I\rightarrow \GL(q,\C)$ be analytic in $\lambda\in I$.
Let $\Sigma(\lambda):=\Sigma(A(\lambda))$ be the set of eigenvalues of $A(\lambda)\in \GL(q,\C)$, and for any $u\in\T$, let
\begin{equation}\label{trans-func-2}
g(\lambda,u)=\prod_{\sigma_i,\sigma_j\in \Sigma(\lambda),\, {i\neq j}}(\sigma_i-e^{2\pi \ii u}\sigma_j).
\end{equation}
\begin{definition}
We say that $A(\lambda)$ satisfies the $non$-$degeneracy$ condition on an interval $I$, if there exists {$r\geq1$}, $c>0$ such that for $\forall u\in\T$, the following inequality holds for all $\lambda\in I$,
\begin{equation}\label{def-non-degeneracy}
\max_{0\leq l\leq r}|\frac{\partial^{l}g(\lambda,u)}{\partial\lambda^{l}}|\geq c,
\end{equation}
where $g$ is defined as in (\ref{trans-func-2}).
\end{definition}

 \begin{theorem}\cite{WXYZ}\label{Thm2}
Let $h>0,$ $\delta>0$, $\alpha\in\mathrm{DC}_{d}$, and $I\subset\R$ be an interval. 
Suppose that $A\in C_{\delta}^{\omega}(I,\mathrm{GL}(q,\C))$, and  it is non-degenerate on $I$ in the sense of (\ref{def-non-degeneracy}) with some {$r\geq1$}, $c>0$. 
Then there exists $\varepsilon_{0}=\varepsilon_0(\alpha,d,q,\delta,h,r,c,|A|_\delta,|A^{-1}|_\delta)>0$, and $\Theta\subseteq I$ with Hausdorff dimension zero, such that  if $F\in C^{\omega}_{h,\delta}(\T^{d}\times I,\mathfrak{gl}(q,\C))$ satisfying $\|F\|_{h,\delta}\leq\varepsilon_{0}$ and   $\lambda\in I\backslash\Theta$, the cocycle $(\alpha,  A(\lambda)+F(\cdot, \lambda))$ is reducible, i.e., there exists $B_{\lambda}\in C_{\frac{h}{4}}^{\omega}(\T^{d},\GL(q,\C))$ such that
$$
B_\lambda^{-1}(\cdot+\alpha)(A(\lambda)+F(\cdot, \lambda))B_\lambda(\cdot)=\tilde{A}(\lambda)\in \mathrm{GL}(q,\C).
$$
In addition, the $\tilde{A}(\lambda)$ has simple eigenvalues.
\end{theorem}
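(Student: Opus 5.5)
The plan is a KAM iteration with parameter exclusion. At each step one solves a linearized homological equation for a near-constant cocycle $A_n(\lambda)+F_n(\cdot,\lambda)$ with $\|F_n\|_{h_n,\delta_n}\le\varepsilon_n$ and $\varepsilon_{n+1}=\varepsilon_n^{1+\sigma}$. First I would block-diagonalize $A_n(\lambda)$ analytically in $\lambda$, using Riesz projections on small discs around clusters of eigenvalues. Writing $F_n=\sum_k\widehat F_n(k,\lambda)e^{2\pi\ii\langle k,\theta\rangle}$, the equation $Y(\theta+\alpha)A_n-A_nY(\theta)=-F_n^{(N_n)}$ (truncated at $|k|\le N_n$) reduces, in eigen-coordinates, to dividing by the small divisors $\sigma_i(\lambda)-e^{2\pi\ii\langle k,\alpha\rangle}\sigma_j(\lambda)$. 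The diagonal terms with $k=0$ and $i=j$ are not removed; they are absorbed into $A_{n+1}$.

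Next, I control the small divisors through the non-degeneracy hypothesis. The product $g(\lambda,u)$ in \eqref{trans-func-2}, with $u=\langle k,\alpha\rangle$, controls every divisor at once. Since $\max_{l\le r}|\partial_\lambda^l g|\ge c$ and $g$ is analytic in $\lambda$ with bounded derivatives, a Pyartli/R\"ussmann sublevel lemma gives
\[
\bigl|\{\lambda\in I:\ |g(\lambda,\langle k,\alpha\rangle)|<\eta\}\bigr|\le C\eta^{1/r},
\]
and this set is covered by $O(1)$ intervals of that length. Non-degeneracy must be propagated along the iteration: $A_{n+1}-A_n=O(\varepsilon_n)$ in $C^\omega_\delta$, so the condition persists with $c/2$ as long as $\sum\varepsilon_n$ is small, via Cauchy estimates in $\lambda$ on slightly shrinking complex neighbourhoods $\delta_n$. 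At step $n$ I exclude
\[
\Theta_n=\bigcup_{0<|k|\le N_n}\Bigl\{\lambda:\ |g_n(\lambda,\langle k,\alpha\rangle)|<\varepsilon_n^{\varrho}\Bigr\},
\]
with $\varrho$ small. On $I\setminus\Theta_n$ the conjugation $e^{Y_n}$ has $\|Y_n\|\le\varepsilon_n^{1-C\varrho}$ on a strip $h_{n+1}=h_n-\frac{h}{2^{n+3}}$, and the new error is quadratic plus the Fourier tail $e^{-N_n(h_n-h_{n+1})}\varepsilon_n$. Choosing $N_n\sim|\log\varepsilon_n|/(h_n-h_{n+1})$ gives $\varepsilon_{n+1}\le\varepsilon_n^{1+\sigma}$. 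The Diophantine condition enters only through the polynomial loss $N_n^{\tau}$ in passing from $g$ to individual divisors and in the $\alpha$-dependence of constants.

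The step I expect to be the main obstacle is the conclusion that $\Theta:=\limsup\Theta_n$ has Hausdorff dimension zero, rather than merely small measure. One must also handle the parameters that are removed only at finitely many steps, and ensure reducibility with simple final eigenvalues elsewhere. For the dimension claim I would cover $\Theta_n$ by at most $C N_n^d$ intervals of length $\varepsilon_n^{\varrho/r}$. Then for any $s>0$,
\[
\sum_{n\ge n_0}N_n^d\,\varepsilon_n^{s\varrho/r}\le\sum_{n\ge n_0}|\log\varepsilon_n|^{C}\,\varepsilon_n^{s\varrho/r}\to0,
\]
since $\varepsilon_n$ decays super-exponentially. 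Hence $\mathcal H^s(\limsup\Theta_n)=0$ for every $s>0$. For $\lambda\notin\limsup\Theta_n$, the parameter lies outside $\Theta_n$ for all $n\ge n_\lambda$. I would restart the scheme from step $n_\lambda$: the finitely many earlier resonant steps are handled by a resonance-cancelling conjugation $e^{2\pi\ii\langle k,\theta\rangle}$ on the appropriate block, as in the standard treatment, at the cost of a finite analytic width loss down to $h/4$. The product of conjugations then converges in $C^\omega_{h/4}$. Finally, simplicity of the eigenvalues of $\tilde A(\lambda)$ would follow by one more exclusion of the set where the analytic discriminant of $A_\infty(\lambda)$ vanishes. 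I expect that set to be discrete, because $g$ is non-degenerate and $g(\lambda,0)$ divides the discriminant's behaviour, but I am not certain of this divisibility and would need to check it; a discrete set does not affect the dimension.
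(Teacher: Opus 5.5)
The paper does not prove this statement; it is imported from \cite{WXYZ}, so your proposal has to stand on its own. Its non-resonant part is sound in outline. The sublevel bound for $g_n(\cdot,\langle k,\alpha\rangle)$ from order-$r$ non-degeneracy, the superexponential decay of $\varepsilon_n$, and the estimate $\sum_n N_n^d\varepsilon_n^{s\varrho/r}\to0$ are all fine \emph{provided} the $g_n$ are defined on all of $I$, describe the actual cocycle there, and stay uniformly non-degenerate. That proviso is exactly where the argument breaks. In a scheme that only excludes, the step is never performed at $\lambda\in\Theta_n$, so $A_{n+1}$, $F_{n+1}$, $g_{n+1}$ exist only on $I\setminus\bigcup_{m\le n}\Theta_m$. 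The $\Theta_n$ are then pairwise disjoint and $\limsup_n\Theta_n=\emptyset$. The set where the iteration really fails is $\bigcup_n\Theta_n$, which has positive Lebesgue measure as soon as resonances occur. They do occur in the case of interest: the eigenvalues of $C^\infty_0(E)$ lie on the unit circle and their ratios sweep arcs as $E$ varies, so $g_0(\cdot,\langle k,\alpha\rangle)$ has zeros in $I$ for many $k$ at every scale. If instead you extend $A_{n+1}$ across $\Theta_n$ artificially, membership in later $\Theta_m$ says nothing about the true cocycle at those parameters. Either way, the Hausdorff-dimension claim rests entirely on ``restart the scheme from step $n_\lambda$'' after resonance-cancelling conjugations, and that step does not work in the form you need.

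The reasons are as follows.
A conjugation by $e^{\pm2\pi\ii\langle k,\theta\rangle}$ on a spectral block multiplies that block's eigenvalues by $e^{\mp2\pi\ii\langle k,\alpha\rangle}$. So the constant part jumps by $O(1)$, not $O(\varepsilon_n)$, and acquires a nearly colliding pair. The new $g$ is a product of factors evaluated at $u$, $u+\langle k,\alpha\rangle$ and $u-\langle k,\alpha\rangle$, hence is not $C^r$-close to the old one. Its non-degeneracy must be re-proved on a $\lambda$-interval of length about $\varepsilon_n^{\varrho/r}$, where after rescaling the constant degrades like a power of that length. Moreover, the conjugation has size $e^{2\pi|k|h'}$ on a strip of width $h'$, with $|k|$ up to $N_n$. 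This forces the continued iteration onto a strip of width $O(h_n-h_{n+1})$ with a new smallness parameter. So the restarted iteration has its own $A_m$, $g_m$, strips and thresholds. The parameters that must form a dimension-zero set are those resonant infinitely often along \emph{their own} branched iteration, and your covering of the original $\Theta_m$ does not reach them. The missing idea is control of non-degeneracy through resonant steps; in the $\mathrm{SL}(2,\R)$ case the fibered rotation number does this job, and it has no analogue for $\mathrm{GL}(q,\C)$ cocycles. The same gap leaves the uniform width $h/4$ unjustified. It also leaves the simplicity of the eigenvalues of $\tilde A(\lambda)$ unjustified: $g(\lambda,0)$ is, up to sign, the discriminant, but after resonances the relevant discriminant is that of the shifted configuration, and $A_\infty$ is not analytic in $\lambda$.

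Smaller repairs are also needed. The $k=0$, $i\ne j$ modes are governed by $g(\lambda,0)$, which you never exclude, so absorb all of $\widehat F_n(0)$ into $A_{n+1}$. Invert the Sylvester operator $X\mapsto e^{2\pi\ii u}XA_n-A_nX$ rather than working in eigen-coordinates, which are ill-conditioned near collisions and cannot be labelled analytically along $I$. Its determinant is $g(\lambda,u)\det A_n\,(e^{2\pi\ii u}-1)^q$, and the last factor is where the Diophantine condition enters. Finally, $Y_n$ is controlled only on a complex $\lambda$-neighbourhood of width of order $\varepsilon_n^{\varrho}$. So propagate non-degeneracy through $\|\partial_\lambda^l(A_{n+1}-A_n)\|\lesssim\varepsilon_n^{1-l\varrho}$ for $l\le r+1$, with $\varrho<1/(r+1)$.
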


\subsection*{Proof of Theorem \ref{main:reduce}}

Without loss of generality, we may assume that $V$ has minimal positive period $1$.
By the finite-covering argument in the proof of Theorem \ref{thm:IDSholder}, there exist finitely many open intervals $J^1,\dots,J^k$ covering $I=[\min V(\theta)-1,\max V(\theta)+1]$.
For each fixed $1\leq j\leq k$, Theorem \ref{thm:centert} gives a number
$\varepsilon_{a,j}>0$ such that, whenever
$
|\varepsilon|<\varepsilon_{a,j},
$ there exists a center cocycle
$
C_{\varepsilon}^{j,\infty}(E,\theta)
\in C^\omega(J^j_{\delta_1}\times\mathbb T_h^d,\GL(2m_j,\mathbb C)),
$ where $\delta_1=\frac{\delta_0}{4}.$ Furthermore by Lemma \ref{thm:center} (3), the
perturbed cocycle is uniformly close to the unperturbed one: there exists $C_j$ such that
$
\|C_\varepsilon^\infty(E,\theta)-C_0^\infty(E)\|_{h,\delta_1}
\le C_j|\varepsilon|.
$
To apply Theorem \ref{Thm2}, the key reduces to prove $C_0^\infty(E)$ satisfy the non-degeneracy condition \eqref{def-non-degeneracy}. Note  Corollary \ref{cor:zero} shows that $
\sigma\bigl(C_{0}^{\infty}(E)\bigr)
=\{\mu_{1}(E),\dots,\mu_{2m}(E)\},
$
where $
\mu_{1}(E),\dots,\mu_{2m}(E)
$ is the zeros of $L_{E}(z)$ in $\{z\in\mathbb C: r_-(E) \le |z| \le r_+(E)\}$. 
The crucial observation is the  following: 
\begin{lemma}\label{lem:G-analytic}
Let $\mu_1(E),\dots,\mu_{2m}(E)$
be the zeros of $L_{E}(z)$ in $\{z\in\mathbb C: r_-(E) \le |z| \le r_+(E)\}$, counted with multiplicity. For $\omega\in\mathbb C$ define
$$
G(E,\omega):=\prod_{\substack{1\le i,j\le 2m\\ i\neq j}}\bigl(\mu_i(E)-\omega \mu_j(E)\bigr).
$$
Then we have:
\begin{enumerate}
\item $(E,u)\longmapsto G(E,e^{2\pi i u})$ is real-analytic on $I\times\mathbb T$.
\item Assume in addition that $V(\theta)$ has minimal positive period $1$. Then there exist an integer $r\ge 0$ and a constant $c_\infty>0$ such that
$
\max_{0\le k\le r}\left|\partial_E^k G(E,e^{2\pi i u})\right|
\ge 2c_\infty, (E,u)\in I\times\mathbb T.
$
\end{enumerate}
\end{lemma}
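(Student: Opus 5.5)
For part (1), I would write $G$ as a symmetric function of the center roots. The product over ordered pairs $i\neq j$ is invariant under permutations of $(\mu_1,\dots,\mu_{2m})$, so it is a polynomial in $\omega$ whose coefficients are polynomials in the elementary symmetric functions of the $\mu_i(E)$. By Corollary~\ref{cor:zero}, these symmetric functions are, up to sign, the coefficients $\beta_q(E)$ of $Q_E(z)=\prod_j(z-\mu_j(E))$. Newton's identities then express them through the power sums
\[
p_k(E)=\sum_j\mu_j(E)^k=\frac{1}{2\pi i}\int_{\Gamma_c}\frac{z^kV'(z)}{V(z)-E}\,dz .
\]
By Lemma~\ref{lem:locals}, $\Gamma_c$ stays zero-free uniformly for $E$ in a complex neighborhood of $I$, so each $p_k$ is holomorphic in $E$. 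Hence $G(E,\omega)$ is holomorphic in $(E,\omega)$, and $G(E,e^{2\pi i u})$ is real-analytic on $I\times\mathbb T$. This requires no separation of the individual roots.

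For part (2), I would use the following standard principle. If $F(E,u)$ is real-analytic on the compact set $\overline I\times\mathbb T$ and, for each fixed $u$, the map $E\mapsto F(E,u)$ is not identically zero on $I$, then finitely many $E$-derivatives do not vanish simultaneously. Indeed, for each $(E_0,u_0)$ some derivative $\partial_E^kF(E_0,u_0)$ is nonzero: otherwise $F(\cdot,u_0)\equiv0$ near $E_0$, hence on the whole connected interval. By continuity this nonvanishing persists on a neighborhood. Compactness of $\overline I\times\mathbb T$ then gives a finite $r$ and a constant $2c_\infty>0$. (I would work on the closure $\overline I$ or a slightly smaller compact interval.) The whole statement therefore reduces to showing that, for every $u$, the function $E\mapsto G(E,e^{2\pi i u})$ is not identically zero. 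I expect this to be the main obstacle.

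For that nonvanishing, I argue by contradiction. Suppose $G(E,\omega_0)\equiv0$ for some $|\omega_0|=1$. At energies $E$ where all center roots are simple, lie on $|z|=1$, and are distinct (generic $E$ in the spectral range, where $V(x)=E$ has only simple solutions), the roots are $\mu_j=e^{2\pi i x_j(E)}$ with $V(x_j(E))=E$. The identity then forces, on an open set of $E$ and by analyticity after relabeling along continued branches, a relation $x_i(E)-x_j(E)\equiv u_0 \pmod 1$ for some fixed pair $i\neq j$ of analytic branches. Differentiating $V(x_i(E))=E=V(x_j(E))$ with $x_i=x_j+u_0$ gives
\[
V(x+u_0)=V(x)
\]
for $x$ in an open interval, and hence for all $x$ by analyticity. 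The minimal period $1$ assumption then forces $u_0\in\mathbb Z$, i.e. $\omega_0=1$, and $x_i=x_j$, contradicting $i\neq j$. In the case $\omega_0=1$, the identity $G\equiv0$ would mean a root of multiplicity at least two for all $E$, which contradicts the simplicity of generic level sets, since $V$ is nonconstant with finitely many critical values.

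The delicate points are two. First, I must ensure that $I$ contains energies where the center roots are genuinely on the unit circle and simple. Second, I must handle energies or subintervals with $m(E)=0$ or with only off-circle center roots, where the same argument applies to the analytic branches of $V(z)=E$ in the annulus, with complex $x_j$. The periodicity argument still works via analytic continuation, because $V(z\omega_0)\equiv V(z)$ on an open set implies $\omega_0=1$ by minimal period.
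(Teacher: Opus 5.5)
Your proposal is correct and follows essentially the same route as the paper. For (1) you use power sums from the argument principle plus Newton's identities. For (2) you reduce, via compactness of $I\times\mathbb T$, to $E\mapsto G(E,\omega)\not\equiv 0$ for each $|\omega|=1$: the case $\omega=1$ follows from finiteness of critical values of $V$ in the annulus, and for $\omega\neq 1$ analytic labeling of the simple annular roots on a subinterval forces $\mu_i\equiv\omega\mu_j$, hence $V(\omega z)\equiv V(z)$, contradicting minimal period $1$. As your last paragraph anticipates, the detour through roots on $|z|=1$ is unnecessary; the paper works directly with the complex annular roots, which is needed anyway since on $I$ the center roots can leave the unit circle near critical values of $V$.
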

\begin{proof}
(1): For $n\ge 1$, define 
$s_n(E):=\sum_{j=1}^{2m}\bigl(\mu_j(E)\bigr)^n.$
By the residue theorem,
$$
s_n(E)=\frac{1}{2\pi i}\int_{\Gamma_c}z^n \frac{\partial_z L_{E}^{}(z)}{L_{E}^{}(z)} dz.
$$
Since $L_{E}^{}(z)=V(z)-E$ is analytic in $(E,z)$ and $|L_{E}^{}(z)|\ge \delta_0$ on $I\times \Gamma_c$, the integrand is analytic in $E$ and uniformly bounded on $I\times\Gamma_c$. Therefore $s_n(E)$ is analytic in $E$.
Now $G(E,\omega)$ is a symmetric polynomial in the roots
$
\mu_1(E),\dots,\mu_{2m}(E),
$
with coefficients polynomially depending on $\omega$. By the fundamental theorem of symmetric polynomials and Newton identities, there exists a polynomial $ P$ such that
$$
G(E,\omega)= P\bigl(\omega,s_1(E),\dots,s_{2m}(E)\bigr).
$$
Since each $s_n(E)$ is analytic, it follows that $(E,\omega)\mapsto G(E,\omega)$ is analytic.

(2) We first show  for every fixed $u\in\mathbb T$,
$E\longmapsto G(E,e^{2\pi i u})$
is not identically zero on $I$. Fix $\omega=e^{2\pi i u}$.

{\bf Case 1: $\omega=1$.}
Then
$
G(E,1)=\prod_{i\neq j}(\mu_i(E)-\mu_j(E)).
$
If $G(E,1)\equiv 0$ on $I$, then for every $E\in I$ the annular zeros of $L_{E}^{}$ contain a repeated root. A repeated annular root $z$ satisfies
$V(z)-E=0, V'(z)=0.$
Thus every $E\in I$ belongs to the set
$$\{V(z): r_-(E)\leq|z|\leq r_+(E),\ V'(z)=0\}.$$
Since $V$ is non-constant and analytic in a neighborhood of $\overline{A_c}$, the set
$$\{z:r_-(E)\leq|z|\leq r_+(E), V'(z)=0\}$$
is finite. This is a contradiction. Hence $G(\cdot,1)\not\equiv 0$.

{\bf Case 2: $\omega\neq 1$.}
Assume $G(E,\omega)\equiv 0$.
Let $\mathcal C\subset I$ be the set of energies for which $L_{E}$ has a multiple zero in $r_-(E)\leq|z|\leq r_+(E)$. As in Case 1, $\mathcal C$ is finite. Hence there exists a non-empty open interval $J\subset I\setminus \mathcal C$ such that for every $E\in J$ all annular zeros of $L_{E}^{}$ are simple. By the implicit function theorem,  after shrinking $J$ if necessary, there exists an open complex neighborhood $U\subset\mathbb C$ of $J$ such that the annular zeros can be labeled analytically $\mu_1(E),\dots,\mu_{2m}(E), E\in U$.
Since $G(E,\omega)=0$ for every $E\in J$, for each $E\in J$ there exists at least one pair $(i,j)$, $i\neq j$, such that
$
\mu_i(E)-\omega \mu_j(E)=0.
$
There are only finitely many such pairs. Hence by the pigeonhole principle, there exist indices $(i_0,j_0)$, $i_0\neq j_0$, such that the set
$
\{E\in J: \mu_{i_0}(E)-\omega \mu_{j_0}(E)=0\}
$
has an accumulation point in $J$. Since $E\mapsto \mu_{i_0}(E)-\omega \mu_{j_0}(E)$
is analytic, the identity theorem yields $\mu_{i_0}(E)\equiv \omega \mu_{j_0}(E), E\in U.$
Since $V(\mu_{i_0}(E))=E, V(\mu_{j_0}(E))=E.$
Therefore $$
V(\omega \mu_{j_0}(E))=V(\mu_{i_0}(E))=E=V(\mu_{j_0}(E)), E\in J.
$$
Set
$
b(E):=\mu_{j_0}(E).
$
Differentiating $V(b(E))=E$ gives $V'(b(E))  b'(E)=1,$ hence $b'(E)\neq 0$ on $U$. Thus $b(U)$ is an open subset of the annulus. On this open set we have $V(\omega z)=V(z).$
By analytic continuation, this identity holds throughout the connected domain of analyticity of $V$. 
Writing $\omega=e^{2\pi i u}$, we obtain on the unit circle $V(\theta+u)=V(\theta),$ so $V$ has a positive period strictly smaller than $1$,
contradicting the assumption that $V$ has minimal positive period $1$.

Now we prove (2). We claim that there exists $r\ge 0$ such that for every $(E,u)\in I\times\mathbb T$,
$
\max_{0\le k\le r}\left|\partial_E^k G(E,e^{2\pi i u})\right|>0.
$
Suppose not. Then for each $n\ge 0$ there exists $(E_n,u_n)\in I\times\mathbb T$ such that
$
\partial_E^k G(E_n,e^{2\pi i u_n})=0, 0\le k\le n.
$
Passing to a subsequence, we may assume
$
(E_n,u_n)\to(E_*,u_*)\in I\times\mathbb T.
$
By continuity of the derivatives,
$\partial_E^k G(E_*,e^{2\pi i u_*})=0, k\ge 0.$
Hence the analytic function $ G(E,e^{2\pi i u_*})$
has all derivatives vanishing at $E_*$, so it is identically zero on $I$, contradiction. The claim follows.

Now define $H(E,u):=\max_{0\le k\le r}\left|\partial_E^k G(E,e^{2\pi i u})\right|.$
Then $H$ is continuous and strictly positive on the compact set $I\times\mathbb T$. Therefore
$2c_\infty:=\min_{(E,u)\in I\times\mathbb T} H(E,u)>0.$
This proves (2).
\end{proof}

Hence by Corollary \ref{cor:zero} and Lemma \ref{lem:G-analytic}, there exists $\varepsilon_j$, such that $C_0^\infty(E)$ satisfy the non-degeneracy condition \eqref{def-non-degeneracy}. Consequently  the cocycle
$ C_\varepsilon^{j,\infty}(E,\cdot)$
satisfies all assumptions of Theorem \ref{Thm2} on $I_{\delta_1}$. Applying Theorem \ref{Thm2} on each $J^j$, we obtain a Hausdorff dimension zero set $\Theta_j$, such that for every
$E\in J^j\backslash\Theta_j,$
there exist
$B_j(E,\cdot)\in C_h^\omega(\mathbb T^d,\GL(2m_j,\mathbb C))$
and a diagonal matrix
$\Lambda_j(E)\in \GL(2m_j,\mathbb C)$
satisfying
$$
B_j(E,\theta+\alpha)^{-1}C_{\varepsilon}^{j,\infty}(E,\theta)B_j(E,\theta)=\Lambda_j(E).
$$
Let $\varepsilon_r=\min_{1\leq j\leq k}\{\varepsilon_j\}$ and $\Theta=\cup \Theta_j$, 
the conclusion follows. \qed

\subsection*{Proof of Theorem \ref{thm:IDSac}}
    Using Theorem \ref{main:reduce}, 
fix
$E\in\sigma(\mathbf L_{V,\varepsilon W,\alpha,\theta})\setminus\Theta .$
Choose $j$ such that $E\in J^j$.  Then for $\epsilon>0$, with $E+i\epsilon\in J^j_{\delta_1}$, we have
$$
B_{j,E}(\theta+\alpha)^{-1}C_{\varepsilon}^{j,\infty}(E+i\epsilon,\theta)B_{j,E}(\theta)=D_{j,E}+F_{j,E,\epsilon}(\theta),
$$
where, by analyticity in $E$,
$\|F_{j,E,\epsilon}\|_0\le C_j(E)\epsilon.$

 Therefore, we have  
\begin{align*}
\sum_{i=1}^mL_i(C_{\varepsilon}^{j,\infty}(E+i\epsilon,\theta))&=\lim_{n\rightarrow\infty}\frac{1}{n}\int_{\T^d}\ln\|\Lambda^{m}(D_{j,E}(I+D_{j,E}^{-1} F_{j,E,\epsilon}))(\theta;n)\|d\theta \\
&\leq\lim_{n\rightarrow\infty}\frac{1}{n}\int_{\T^d}\ln\|\Lambda^{m}D_{j,E}\|^{n}d\theta+
\lim_{n\rightarrow\infty}\frac{1}{n}\int_{\T^d}\ln\|\Lambda^{m}(I+D_{j,E}^{-1}F_{j,E,\epsilon})\|^{n}d\theta\\
&\leq\sum_{i=1}^mL_i(C_{\varepsilon}^{j,\infty}(E,\theta))+m\ln\|I+D_{j,E}^{-1}F_{j,E,\epsilon}\|_{0}\\
&\leq \sum_{i=1}^mL_i(C_{\varepsilon}^{j,\infty}(E,\theta))+m\tilde C(E)\epsilon.
\end{align*}
Thus by  Proposition \ref{prop:thouless}, we have
$$
\begin{aligned}
   {\widehat{\mathcal{N}}}(E+\epsilon)-{\widehat{\mathcal{N}}}(E-\epsilon)&<\frac{1}{2}\int_{J^j_{\delta_1}\cap\R} \log(1+\frac{\epsilon^{2}}{(E-t)^{2}})d {\widehat{\mathcal{N}}}(t)\\
    &=\sum_{i=1}^mL_i(C_{\varepsilon}^{\infty}(E+i\epsilon,\theta))-\sum_{i=1}^mL_i(C_{\varepsilon}^{\infty}(E,\theta))-h_\varepsilon(E+i\epsilon)+h_\varepsilon(E)\\
    &\leq m\tilde C(E)\epsilon +\hat{C}(E)\epsilon.
\end{aligned}
$$
Hence,  for any $E\notin\Theta$, $\mathcal {N}(E)$ is Lipschitz continuous, so the singular part is supported  only  on $\Theta$.  By Theorem \ref{thm:holder-GYZ}, there exists $C=C(V,W,\alpha)>0$, such that  $d{\widehat{\mathcal{N}}}(U)\leq C|U|^{\frac{1}{4m(E)}}$ for any open interval $U$. 
Now due to $\Theta$ is a set with Hausdorff  dimension zero, then  for any $\zeta>0$ we can find  a cover of $\Theta$, denoted by $\{U_{i}\}_{i=1}^{\infty}$, such that
$\Sigma_{i=1}^\infty\textrm{diam} (U_i)^{\frac{1}{4m(E)}}<C_*^{-1}\zeta.$
Then we have
$$
d\widehat{\mathcal {N}}(\Theta)\leq\Sigma_{i=1}^\infty d{\widehat{\mathcal{N}}}(U_{i})\leq C_*\Sigma_{i=1}^\infty\textrm{diam}(U_i)^{\frac{1}{4m(E)}}<\zeta.
$$
Then by the arbitrariness of $\zeta>0$, we get $d\widehat{\mathcal{N}}(\Theta)=0$ and by Proposition \ref{IDSeq} the result follows. \qed

\section{Anderson localization}

The core proof of Anderson localization  is the following Proposition \ref{prop:rigidity}, which can be viewed as a rigidity type
result in dynamical systems:

\begin{proposition}\label{prop:rigidity}
Let $h>0$, $\varepsilon>0$, and let $\alpha \in \mathrm{DC}_d$. Denote by $\mathcal{R}_c$ the set of energies $E$ for which the $2m$-dimensional center cocycle $(\alpha, C_\varepsilon^\infty(E, \cdot))$ is $C_h^\omega$-reducible, that is, there exists $B \in C_h^\omega(\mathbb{T}^d, \mathrm{GL}(2m, \mathbb{C}))$ such that
\begin{equation*}\label{eq:icc-reducibility}
B(\theta+\alpha)^{-1} C_\varepsilon^\infty(E, \theta) B(\theta) = \Lambda := 
\diag\bigl(e^{2\pi i\rho_1},\dots,e^{2\pi i\rho_{2m}}\bigr).
\end{equation*}
Assume $E \in \mathcal{R}_c$ and $x \in \mathbb{T}$. If $E$ is an eigenvalue of the operator $\mathbf{H}_{\varepsilon W,V,\alpha,x}$ with an eigenfunction $u \in \ell^1(\mathbb{Z}^d)$, then the following hold:
\begin{enumerate}
    \item there exist an index $1 \le j \le 2m$ and $k \in \mathbb{Z}^d$ such that 
    $\rho_j - x = \langle k, \alpha \rangle \mod \mathbb{Z},$
    and in particular, $\Im \rho_j = 0$;
    \item the corresponding eigenfunction $u \in \ell^1(\mathbb{Z}^d)$ decays exponentially: $|u_n|\le C  e^{-2\pi h|n|}.$

\end{enumerate}
\end{proposition}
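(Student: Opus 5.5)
The plan is to use Aubry duality to turn the $\ell^1$ eigenfunction into a continuous dual section lying in the canonical center bundle. Reducibility of the center cocycle then lets us read off the rotational phase and upgrade the regularity of the section.

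\textbf{Step 1: the dual solution lies in the center bundle.} Given $u\in\ell^1(\mathbb Z^d)$ with $\mathbf H_{\varepsilon W,V,\alpha,x}u=Eu$, set $\widehat u(\theta)=\sum_n u_ne^{2\pi i\langle n,\theta\rangle}\in C^0(\mathbb T^d,\mathbb C)$. By the classical Aubry duality, for every $\theta$ the sequence $\widetilde u_n(\theta)=\widehat u(\theta+n\alpha)e^{2\pi i n x}$ solves $\mathbf L_{V,\varepsilon W,\alpha,\theta}\widetilde u=E\widetilde u$. This solution is bounded, so it lies in $\mathcal X_\xi$, and $\Phi(\theta):=\widetilde u(\theta)$ is a section in $\mathcal M^b_{V,E,\varepsilon}$. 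It also satisfies $\mathcal L^n\Phi=e^{2\pi i n x}\Phi$ up to the base translation, so $\|\mathcal L^{\pm n}\Phi\|_\infty$ stays bounded. Since $r_-<1<r_+$, Lemma~\ref{prop:growth} places $\Phi$ in $\mathcal C_{V,E,\varepsilon}$, and hence $\widetilde u(\theta)\in\mathcal M^c_{V,E,\varepsilon}(\theta)$ for every $\theta$.

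By Theorem~\ref{thm:centert} we can then write $\widetilde u(\theta)=U^\infty_\varepsilon(E,\theta)c(\theta)$, with $c(\theta)=(\mathbf G^\infty_\varepsilon)^{-1}(\mathcal W_\xi U^\infty_\varepsilon)^*\mathcal W_\xi\widetilde u(\theta)$. Since $U^\infty_\varepsilon$ is analytic and $\widehat u$ is continuous, $c\in C^0(\mathbb T^d,\mathbb C^{2m})$. The relation $S\widetilde u(\theta)=e^{2\pi i x}\widetilde u(\theta+\alpha)$, together with the cocycle identity, gives
\[
C^\infty_\varepsilon(E,\theta)c(\theta)=e^{2\pi i x}c(\theta+\alpha).
\]

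\textbf{Step 2: conjugate and diagonalize.} Put $w(\theta)=B(\theta)^{-1}c(\theta)$, which is continuous and not identically zero. The relation above becomes $\Lambda w(\theta)=e^{2\pi i x}w(\theta+\alpha)$, so in coordinates $w_j(\theta+\alpha)=e^{2\pi i(\rho_j-x)}w_j(\theta)$. Taking Fourier coefficients gives $\widehat w_j(k)\bigl(e^{2\pi i\langle k,\alpha\rangle}-e^{2\pi i(\rho_j-x)}\bigr)=0$. Choose $j$ with $w_j\not\equiv0$ and $k$ with $\widehat w_j(k)\neq0$; then $\rho_j-x=\langle k,\alpha\rangle\bmod\mathbb Z$, and in particular $\Im\rho_j=0$. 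This is part (1).

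\textbf{Step 3: analytic rigidity.} For each coordinate $j'$ the Fourier support of $w_{j'}$ is contained in $\{k:\rho_{j'}-x\equiv\langle k,\alpha\rangle\}$. Since $\alpha$ is rationally independent, this set has at most one element. Hence every $w_{j'}$ is a pure exponential $a_{j'}e^{2\pi i\langle k_{j'},\theta\rangle}$, and in particular $w\in C^\omega_h$. It follows that $c=Bw\in C^\omega_h$, and then $\widetilde u_0(\theta)=\widehat u(\theta)=\bigl(U^\infty_\varepsilon(E,\theta)c(\theta)\bigr)_0$ is analytic on the strip of width $h$. Here $U^\infty_\varepsilon$ is holomorphic on the same strip by Corollary~\ref{cor:icc-torus}, after shrinking $h$ to the common width if necessary.

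\textbf{Step 4: decay and the main obstacle.} Cauchy estimates for the Fourier coefficients of $\widehat u$ give $|u_n|\le Ce^{-2\pi h|n|}$, up to a harmless loss in $h$, which is part (2). The delicate point is Step 1: one must justify that a bounded dual solution has no stable or unstable component. The cleanest route is the characterization in Lemma~\ref{prop:growth}, applied to the section $\Phi$ whose transfer-operator iterates have modulus exactly preserved.
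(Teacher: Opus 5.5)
Your proposal is correct and follows the paper's proof essentially step by step. Aubry duality gives a continuous dual section, which is placed in the canonical center bundle and expanded in the frame $U^\infty_\varepsilon$. Reducibility then turns the covariance relation into the scalar equations $w_j(\theta+\alpha)=e^{2\pi i(\rho_j-x)}w_j(\theta)$, which force every nonzero component to be a single Fourier mode, and analyticity of $\widehat u$ yields the exponential decay.

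Two of your points make explicit what the paper only asserts. First, you justify the center-bundle membership: the dual section satisfies $\mathcal L\Phi=e^{2\pi i x}\Phi$ exactly, so Lemma~\ref{prop:growth} places it in $\mathcal C_{V,E,\varepsilon}$. Second, you note that the analyticity widths of $B$ and $U^\infty_\varepsilon$ must be reconciled.
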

\begin{proof}
     Let $u=(u_n)_{n\in\mathbb{Z}^d}\in\ell^1(\mathbb{Z}^d)$ satisfy
$\mathbf{H}_{\varepsilon W,V,\alpha,x}u=Eu$.
Define its Fourier transform by
$\widehat u(\theta)=\sum_{n\in\mathbb{Z}^d} u_n  e^{2\pi i\langle n,\theta\rangle}\in C^0(\mathbb{T}^d,\C).
$
By the standard Aubry duality, for  every $\theta\in\mathbb{T}^d$ the sequence
$w_n(\theta):=\widehat u(\theta+n\alpha) e^{2\pi i n x}, n\in\mathbb{Z},$
is a nontrivial solution of the dual equation
$\mathbf{L}_{V,\varepsilon W,\alpha,\theta} w(\theta)=E w(\theta).
$ Embedding this into the global phase space, it follows that 
    $ w(\theta)  \in C^0(\mathbb{T}^d, \mathcal{X}_\xi) $
    and therefore $w(\theta) \in \mathcal{M}^c_{V,E,\varepsilon}(\theta)$ for all $\theta$.  Meanwhile, by the definition of the dual equation and the shift operator $S$, 
    $$(S w)_n(\theta) = w_{n+1}(\theta)= \widehat u(\theta+(n+1)\alpha) e^{2\pi i (n+1) x}=e^{2\pi i x}w_n(\theta+\alpha),$$
    which means $w$ satisfies the exact covariance relation
    \begin{equation}\label{eq:w-cov}
        S w(\theta) = e^{2\pi i x} w(\theta+\alpha).
    \end{equation}

According to the global trivialization of the center bundle (Theorem \ref{thm:centert}), there exists a unique coefficient vector $d(\cdot) \in C^0(\mathbb{T}^d, \mathbb{C}^{2m})$ such that 
    $ w(\theta) = U_\varepsilon^{\infty}(E,\theta) d(\theta),$
    where $U_\varepsilon^{\infty}(E,\theta)$ is the analytic frame defined in Theorem \ref{thm:centert}. Substituting this into \eqref{eq:w-cov} and utilizing the intrinsic cocycle relation $S U_\varepsilon^{\infty}(E,\theta) = U_\varepsilon^{\infty}(E,\theta+\alpha)C_{\varepsilon}^{\infty}(E,\theta)$, we obtain the reduced dynamical equation on $\mathbb{C}^{2m}$:
    $$
    C_{\varepsilon}^{\infty}(E,\theta) d(\theta) = e^{2\pi i x} d(\theta+\alpha).
    $$

We now invoke the reducibility hypothesis.
Since
$B(\theta+\alpha)^{-1}C_{\varepsilon}^{\infty}(E,\theta)B(\theta)=\Lambda$, if we  set $d(\theta) = B(\theta) c(\theta)$, we have
$$\Lambda  c(\theta)=e^{2\pi i x}  c(\theta+\alpha).$$
Writing $c=(c_1,\dots,c_{2m})^\top$, this implies that for each
$1\le j\le 2m$ and every $\theta$,
$$c_j(\theta+\alpha)=e^{2\pi i(\rho_j-x)}  c_j(\theta).
$$
 There exists at least one $j$ such that $c_j(\cdot)\neq0$.  By examining the Fourier series of $c_j$, there exists a unique $k \in \mathbb{Z}^d$ such that $
\rho_j - x = \langle k, \alpha \rangle \mod \mathbb{Z},
$ which in particular implies (1) and $
c_j(\theta) = a_j e^{2\pi i \langle k, \theta \rangle},
$ where $a_j \in \mathbb{C}$ is a constant. 
The same argument applies to each component $c_j$,
every nontrivial component is a single Fourier mode, while the remaining ones
vanish identically.
Consequently, $c(\cdot) \in C_h^\omega(\mathbb{T}^d, \mathbb{C}^{2m})$. Since $U_\varepsilon^{\infty}(E,\theta)$ is analytic, it follows that $w(\cdot) \in C_h^\omega(\mathbb{T}^d)$, and consequently, $\widehat{u}(\theta) =w_0(\theta)  \in C_h^\omega(\mathbb{T}^d, \mathbb{C})$ with $
|u_n| \leq \|w\|_h e^{-2\pi h |n |}.
$
\end{proof}

\subsection*{Proof of Theorem \ref{mainthe:loca}}
The starting point is Eliasson’s famous pure point result:
  \begin{theorem}[\cite{E}]\label{cor-Eliasson}
    Suppose that $\alpha\in \mathrm{DC}_d$.  
Then there exist $\varepsilon_0=\varepsilon_0(\alpha,V,W)>0$  and a Borel measurable function  $E\colon\mathbb{T}\to\mathbb{R}$
such that for all $|\varepsilon|<\varepsilon_0$, for almost every $x\in\mathbb{T}$, the values $
E\bigl(x+\langle k,\alpha\rangle\bigr),$ $k\in\mathbb{Z}^d,$
are eigenvalues of $\mathbf H_{\varepsilon W,V,\alpha,x}$. Furthermore, the corresponding eigenfunctions
$\{u^k\}_{k\in\mathbb{Z}^d}$ belong to $\ell^1(\mathbb{Z}^d)$ and form a complete basis of
$\ell^2(\mathbb{Z}^d)$.
\end{theorem}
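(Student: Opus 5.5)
This statement is Eliasson's pure point theorem \cite{Eli97,E}, and in the logic of this paper it is an external input. If one had to supply the argument, I would follow Eliasson's KAM scheme on the lattice $\mathbb Z^d$ rather than any cocycle argument.

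\textbf{Setting up the iteration.} View $\mathbf H_{\varepsilon W,V,\alpha,x}$ as a perturbation of the diagonal operator $\mathrm{diag}\bigl(V(x+\langle n,\alpha\rangle)\bigr)_{n\in\mathbb Z^d}$. The hopping $\varepsilon W$ is a Toeplitz-type matrix whose entries decay exponentially in $|n-n'|$, because $W$ is analytic. The key structural feature is covariance. All objects are built from a single function of the phase $x$, and the $n$-th diagonal entry is that function evaluated at $x+\langle n,\alpha\rangle$. Hence at each step the current diagonal part has the form $E_j(x+\langle n,\alpha\rangle)$.

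\textbf{The KAM step.} At step $j$ one solves a homological equation for an anti-Hermitian generator $X_j$, so that $e^{X_j}$ conjugates the current operator to a new diagonal part plus a perturbation of size roughly $\varepsilon_j^{3/2}$. The small divisors are the differences
\[
E_j(x+\langle n,\alpha\rangle)-E_j(x+\langle n',\alpha\rangle), \qquad |n-n'|\le N_j .
\]
Where these divisors fall below $\varepsilon_j^{\sigma}$, the resonance is not removed. Instead, following Eliasson, one keeps a finite resonant block in normal form and diagonalizes it afterwards, so each step is only block-diagonal.

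\textbf{Measure estimate (main obstacle).} This is the step I expect to be hardest. One must control the measure of the phases $x$ at which resonances accumulate infinitely often. The tool is a quantitative transversality of the functions $x\mapsto E_j(x)-E_j(x+\langle k,\alpha\rangle)$. Since $V$ is analytic and non-constant, and $\alpha\in\mathrm{DC}_d$, a Łojasiewicz-type bound applies to $V(x)-V(x+\langle k,\alpha\rangle)$, and it persists for the perturbed $E_j$ because $E_j$ is $C^r$-close to $V$. This bound controls the measure of the sublevel sets of the divisors, and the Diophantine condition lets the resulting measures be summed over $k$. A Borel–Cantelli argument then shows that almost every $x$ is resonant only finitely often. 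For such $x$ the block structure stabilizes, so the scheme converges.

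\textbf{Passing to the limit.} The product of conjugations $Q=\lim e^{X_1}\cdots e^{X_j}$ converges to a unitary operator, and $Q^{*}\mathbf H Q$ is diagonal with entries $E(x+\langle k,\alpha\rangle)$, where $E=\lim E_j$ is Borel measurable. The columns $u^k=Q\delta_k$ are then eigenfunctions, and they form a complete orthonormal basis because $Q$ is unitary. The $\ell^1$ property requires more than unitarity. Each $X_j$ has off-diagonal entries bounded by $\varepsilon_j e^{-c_j|n-n'|}$, with $c_j\downarrow c_\infty\ge 0$ and $\sum_j\varepsilon_j<\infty$. Consequently $\sup_k\sum_n|Q_{nk}|<\infty$, that is, each column of $Q$ is summable. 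This is the content extracted in \cite[Theorem 10.2]{WXZ}, and it gives $u^k\in\ell^1(\mathbb Z^d)$.
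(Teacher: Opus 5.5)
Your proposal matches the paper's treatment: the paper imports the statement from Eliasson \cite{Eli97,E} without proof, and it takes the upgrade from $\ell^2$ to $\ell^1(\mathbb Z^d)$ eigenfunctions from \cite[Theorem 10.2]{WXZ}, exactly as in its footnote. One caution about your heuristic sketch: transversality does not persist merely because $E_j$ stays $C^r$-close to $V$, since at resonant steps the eigenvalue branches undergo avoided crossings, which destroys $C^1$-closeness and consistent labeling (this is why the limit $E$ is only Borel measurable), and carrying the non-degeneracy through the block normal forms is the genuinely hard part of \cite{Eli97,E}.
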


Let $\mathcal{A}\subset\mathbb{T}$ denote the full-measure set of phases
for which the conclusion of Theorem~\ref{cor-Eliasson} holds.
Fix $x\in \mathcal{A}$. The operator $\mathbf H_{\varepsilon W,V,\alpha,x}$ has pure point spectrum with eigenvalues
$E(x+\langle k,\alpha\rangle), k\in\mathbb{Z}^d.$
Define the ``bad'' eigenvalues
$$\mathcal{E}_x:=\{E(x+\langle k,\alpha\rangle):\ E(x+\langle k,\alpha\rangle)\in\Theta\},$$
where $\Theta$ is the exceptional set appearing in Theorem~\ref{main:reduce} and 
let
 $\mathcal{E}:=\bigcup_{x\in\mathcal{A}}\mathcal{E}_x.$
Clearly, $\mathcal{E}\subset \Theta$. What's important for us is the following: 

\begin{lemma}\label{nonreducible}
    For almost every $x\in\mathcal{A}$, one has $\mathcal{E}_x = \emptyset$.
\end{lemma}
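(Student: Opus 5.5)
The plan is to show that the set of phases $x$ for which some eigenvalue $E(x+\langle k,\alpha\rangle)$ lies in $\Theta$ has Lebesgue measure zero. The tool is the absolute continuity of the IDS (Theorem~\ref{thm:IDSac}), together with the fact that $\Theta$ has Hausdorff dimension zero, hence Lebesgue measure zero.

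\textbf{Step 1: express the IDS through the Eliasson eigenvalue function.} For $x\in\mathcal A$, Theorem~\ref{cor-Eliasson} gives an $\ell^2$-complete basis of eigenfunctions $\{u^k\}$ with eigenvalues $E(x+\langle k,\alpha\rangle)$. Hence the spectral measure at $\delta_0$ is
$$\mu_{x,\delta_0}=\sum_{k\in\mathbb Z^d}|u^k_0(x)|^2\,\delta_{E(x+\langle k,\alpha\rangle)}.$$
By covariance we have $u^k_n(x)=u^0_{n+k}(x+\langle k,\alpha\rangle)$, up to the labeling convention in Eliasson's construction. Averaging in $x$ and substituting $y=x+\langle k,\alpha\rangle$ then gives, for any Borel set $B$,
$$d\mathcal N(B)=\int_{\mathbb T}\sum_{k}|u^0_{k}(y)|^2\,\mathbf 1_B(E(y))\,dy=\int_{\mathbb T}\mathbf 1_B(E(y))\,dy,$$
where the last equality uses $\|u^0(y)\|_{\ell^2}=1$. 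In words, $d\mathcal N$ is the push-forward of Lebesgue measure under the Borel function $E(\cdot)$. This normalization and indexing bookkeeping is the delicate step. If the covariance labeling is awkward, I would instead argue with the inequality $\mu_{x,\delta_p}(\{E(x+\langle k,\alpha\rangle)\})\ge|u^k_p(x)|^2$ and sum over $p$, which yields the same conclusion.

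\textbf{Step 2: the set of bad base phases is null.} Since $\Theta$ has Lebesgue measure zero, Theorem~\ref{thm:IDSac} gives $d\mathcal N(\Theta)=0$. By Step 1,
$$\operatorname{Leb}\{y\in\mathcal A:\ E(y)\in\Theta\}=d\mathcal N(\Theta)=0.$$

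\textbf{Step 3: remove the translates.} The set
$$\{x:\mathcal E_x\neq\emptyset\}=\bigcup_{k\in\mathbb Z^d}\bigl(\{y:E(y)\in\Theta\}-\langle k,\alpha\rangle\bigr)$$
is a countable union of translates of a null set, so it is null. Therefore $\mathcal E_x=\emptyset$ for almost every $x\in\mathcal A$.

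The main obstacle is Step 1, namely rigorously identifying $d\mathcal N$ with the push-forward of Lebesgue measure, or at least dominating the latter by the former. This needs measurability of $E(\cdot)$, which Theorem~\ref{cor-Eliasson} provides, and a consistent covariant labeling of the eigenfunctions. A fallback is to prove only absolute continuity of the push-forward with respect to $d\mathcal N$, which is enough for Step 2.
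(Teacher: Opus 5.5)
Your proposal is correct, and the fallback you mention already closes it. Your main route, however, is more roundabout than the paper's.

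\textbf{Your route.} You identify $d\mathcal N$ with the push-forward of Lebesgue measure under the eigenvalue function $E(\cdot)$. You then remove the translates $\langle k,\alpha\rangle$ by a countable-union argument.

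\textbf{The paper's route.} It never needs the push-forward identity:
\begin{itemize}
\item From $0=d\mathcal N(\Theta)=\int_{\mathbb T}\mu_{x,\delta_p}(\Theta)\,dx$ it gets $\mu_{x,\delta_p}(\Theta)=0$ for a.e.\ $x$.
\item It intersects these full-measure sets over the countably many $p\in\mathbb Z^d$.
\item It observes that every eigenvalue of $\mathbf H_{\varepsilon W,V,\alpha,x}$ is an atom of $\mu_{x,\delta_p}$ for some $p$, because the spectral projection onto it is nonzero and the $\delta_p$ span $\ell^2$.
\item Hence, for such $x$, no eigenvalue lies in $\Theta$.
\end{itemize}
This is exactly your fallback inequality $\mu_{x,\delta_p}(\{E(x+\langle k,\alpha\rangle)\})\ge |u^k_p(x)|^2$, applied at one fixed phase $x$ to all $k$ simultaneously. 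Used that way, it makes your Steps 1 and 3 unnecessary.

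\textbf{What the paper's route buys.} Economy. It uses only that the elements of $\mathcal E_x$ are eigenvalues. It does not need:
\begin{itemize}
\item completeness or orthonormality of Eliasson's eigenbasis;
\item a covariant labeling $u^k(x)\leftrightarrow u^0(x+\langle k,\alpha\rangle)$;
\item simplicity of the eigenvalues;
\item measurability of $y\mapsto u^0(y)$.
\end{itemize}
Your Step 1 silently requires all of these, and none of them is contained in Theorem~\ref{cor-Eliasson} as quoted.

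\textbf{What your route buys.} A stronger structural statement, $\mathcal N(t)=\operatorname{Leb}\{y\in\mathbb T: E(y)\le t\}$. This is standard for Eliasson's construction, but it is more than the lemma requires.
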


\begin{proof}
Let $d\mathcal{N}$ be the density of states and $\mu_{x,\delta_p}$ the spectral measure of
$\mathbf{H}_{\varepsilon W,V,\alpha,x}$  corresponding to the vector $\delta_p\;(p\in\mathbb{Z}^d)$.
Since $d\mathcal{N}$ is absolutely continuous (Theorem \ref{thm:IDSac}) and the exceptional set
$\Theta$ has Lebesgue measure zero (Theorem \ref{main:reduce}), we have $d\mathcal{N}(\Theta)=0$.  By definition,
$$
d\mathcal{N}(\Theta) = \int_{\mathbb{T}} \mu_{x,\delta_p}(\Theta) dx,
\qquad p\in\mathbb{Z}^d.
$$
Hence for each fixed $p$, the integrand vanishes for almost every $x$. Taking the
countable intersection over all $p$, we obtain a full‑measure set
$\mathcal{F}\subset\mathcal{A}$ such that
$\mu_{x,\delta_p}(\Theta) = 0, $ for all $ x\in\mathcal{F}, p\in\mathbb{Z}^d.$

Because $\mathcal{E}_x\subset\Theta$, it follows that
$\mu_{x,\delta_p}(\mathcal{E}_x)=0$ for all $x\in\mathcal{F}$ and all $p\in\mathbb{Z}^d$.
Now suppose, for some $x\in\mathcal{F}$, that $\mathcal{E}_x\neq\emptyset$. 
By Theorem \ref{cor-Eliasson} the operator $\mathbf{H}_{\varepsilon W,V,\alpha,x}$  has pure point
spectrum with eigenvalues $\{E(x+\langle k,\alpha\rangle)\}_{k\in\mathbb{Z}^d}$.
Consequently any $E\in\mathcal{E}_x$ is an eigenvalue and there exists $p\in\mathbb{Z}^d$
for which $\mu_{x,\delta_p}(\{E\})>0$ and thus
$\mu_{x,\delta_p}(\mathcal{E}_x)\ge\mu_{x,\delta_p}(\{E\})>0,$
contradicting $\mu_{x,\delta_p}(\mathcal{E}_x)=0$ for $x\in \mathcal{F}$. Therefore $\mathcal{E}_x=\emptyset$
for all $x\in\mathcal{F}$, i.e., for almost every $x$.
\end{proof}
By Lemma~\ref{nonreducible}, for almost every $x \in \mathbb{T}$ and all $k \in \mathbb{Z}^d$, the center cocycle at energy $E(x+\langle k,\alpha\rangle)$ is $C_h^\omega$-reducible. Proposition~\ref{prop:rigidity} then implies that the corresponding eigenfunction $u^k$ decays exponentially. Since Theorem~\ref{cor-Eliasson} ensures that the family $\{u^k\}_{k\in\mathbb{Z}^d}$ forms a complete basis of $\ell^2(\mathbb{Z}^d)$, it directly follows that $\mathbf{H}_{\varepsilon W,V,\alpha,x}$ admits a complete set of exponentially decaying eigenfunctions.
\qed

\appendix

\section{Proof of Lemma \ref{lem:unperturbed-resolvent}}\label{appendixa}
Let $R^{l}_0(z)= \bigl[r_{i,j}^0(z)\bigr]_{i,j=1}^{2l}.$
It suffices to verify $(z-A_{0}^{l}(E))R^{l}_0(z)=\mathrm{Id}$.
For $i=j=1$, since $T_1(z)=\widetilde v_l z^{l-1}$, the identity $\sum_{k=0}^{2l}\widetilde v_{l-k}z^{-(k-1)} = z^{-l+1}L^{l}_{E}(z)$ gives
$$
\begin{aligned}
((z-A_{0}^{l}(E))R^{l}_0(z))_{1,1}
&= \frac{1}{\tilde v_l}\Bigl(\tilde v_l z r_{1,1}^0(z) + \sum_{k=1}^{2l}{\tilde v_{l-k}}z^{-(k-1)}r_{1,1}^0(z)\Bigr)\\
&= \frac{z^{-l+1}L^{l}_{E}(z)}{\tilde v_l}\cdot\frac{\widetilde v_l z^{l-1}}{L^{l}_{E}(z)} = 1.
\end{aligned}
$$
For $i=1$, $j\ge 2$,
$$
\begin{aligned}
((z-A_{0}^{l}(E))R^{l}_0(z))_{1,j}
&= \frac{1}{\tilde v_l}\Biggl[
\frac{T_j(z)}{L^{l}_{E}(z)}
\Bigl(
\tilde v_l z^j + \sum_{k=1}^{j-1}\tilde v_{l-k} z^{j-k} + \sum_{k=j}^{2l}\tilde v_{l-k} z^{-(k-j)}
\Bigr)\\
&\qquad - \Bigl(\tilde v_l z^{j-1} + \sum_{k=1}^{j-1}\tilde v_{l-k} z^{j-k-1}\Bigr)\Biggr]\\
&= \frac{1}{\tilde v_l}\Bigl[\frac{T_j(z)}{L^{l}_{E}(z)}z^{-(l-j)}L^{l}_{E}(z) - z^{-(l-j)}T_j(z)\Bigr] = 0.
\end{aligned}
$$
For $i\ge 2$, we have
$$
\begin{aligned}
((z-A_{0}^{l}(E))R^{l}_0(z))_{i,j}
&= zr_{j,j}^0(z)-r_{j-1,j}^0(z) && (\text{if } i=j)\\
&= zr_{j,j}^0(z)-(zr_{j,j}^0(z)-1) = 1;\\
&= -r_{i-1,j}^0(z)+zr_{i,j}^0(z) && (\text{if } i>j)\\
&= -z^{-(i-1-j)}\frac{T_j(z)}{L^{l}_{E}(z)} + z\cdot z^{-(i-j)}\frac{T_j(z)}{L^{l}_{E}(z)} = 0;\\
&= zr_{i,j}^0(z)-r_{i-1,j}^0(z) && (\text{if } i<j)\\
&= z\bigl(z^{j-i}r_{j,j}^0(z)-z^{j-i-1}\bigr) - \bigl(z^{j-i+1}r_{j,j}^0(z)-z^{j-i}\bigr) = 0.
\end{aligned}
$$
\qed

\section*{Acknowledgements} 
Qi Zhou would like to thank A. Avila, S. Jitomirskaya  for insightful discussions, which ultimately led  to this paper. 
Xianzhe Li is supported by an AMS-Simons Travel Grant.
 Zhenfu Wang is supported by NSFC grant (124B2011) and Nankai Zhide Foundation. Jiangong You and  Qi Zhou are supported by NSFC grant (12531006, 12526201) and Nankai Zhide Foundation.

\end{document}